\documentclass{amsart}

\usepackage{amssymb,amsmath,amscd,graphicx,amsthm,color,bm,tikz-cd,mathtools,arydshln,relsize,upgreek}
\usepackage[center]{caption}
\newtheorem{theorem}{Theorem}
\newtheorem{lemma}[theorem]{Lemma}
\newtheorem{proposition}[theorem]{Proposition}
\newtheorem{corollary}[theorem]{Corollary}

\theoremstyle{definition}

\theoremstyle{remark}
\newtheorem{remark}[theorem]{Remark}

\numberwithin{equation}{section}
\numberwithin{theorem}{section}

\def\A{{\mathcal A}}
\def\AA{{\mathbb A}}
\def\B{{\mathcal B}}

\def\C{{\mathbb C}}
\def\CC{{\mathcal C}}
\def\D{{\mathfrak d}}
\def\DD{{\mathcal D}}

\def\FF{{\mathcal F}}

\def\G{{\mathcal G}}
\def\GCC{{\G\CC}}

\def\Hs{\mathbf H}
\def\Is{\mathbf I}

\def\N{\mathcal N}
\def\NA{{\tt A}}
\def\NB{{\tt B}}
\def\NF{{\tt F}}
\def\NG{{\tt G}}
\def\NK{{\tt K}}
\def\Nphi{{\mathsf\Phi}}

\def\O{{\mathcal O}}
\def\P{{\mathcal P}}
\def\PP{{\mathcal P}}
\def\Ps{\mathbf P}

\def\RR{{\mathcal R}}

\def\Rs{\mathbf R}
\def\S{{\mathbf S}}

\def\TE{{\mathcal T}}

\def\Ts{\mathbf T}

\def\UU{\mathrm U}
\def\UC{\overline{\A}}

\def\Ws{\mathbf W}

\def\Z{{\mathbb Z}}

\def\bfG{\mathbf \Gamma}

\def\nablac{{\nabla}}

\def\pu{\varnothing}

\def\b{\mathfrak b}

\def\bgamma{\pmb\gamma}

\def\ec{{\tt c}}
\def\ef{{\tt f}}
\def\eg{{\tt g}}
\def\ek{{\tt k}}

\def\er{{\rm r}}
\def\es{{\tt s}}

\def\la{\langle}
\def\ra{\rangle}

\def\fy{\varphi}

\def\g{\mathfrak g}
\def\gammar{{\gamma^\er}}
\def\gammac{{\gamma^\ec}}

\def\gl{\mathfrak g\mathfrak l}

\def\h{\mathfrak h}

\def\ls{\le}

\def\n{\mathfrak n}
\def\nar{\setlength\arraycolsep{2pt}}
\def\one{\mathbf 1}

\def\sl{\mathfrak {sl}}
\def\smup{{\scriptstyle\Upsilon}}

\def\wx{{\widetilde{X}}}
\def\wY{{\widetilde{Y}}}
\def\u{{\bf u}}
\def\x{{\bf x}}

\def\zero{\mathbf 0}

\def\Ad{\operatorname{Ad}}
\def\CG{{\rm CG}}

\def\End{\operatorname{End}}

\def\Kil{\langle {\cdot},{\cdot}\rangle}

\def\Mat{\operatorname{Mat}}

\def\Poi{{\{{\cdot},{\cdot}\}}}

\def\Tr{\operatorname{Tr}}

\def\deg{{\operatorname{deg}}}
\def\diag{\operatorname{diag}}

\def\sign{{\operatorname{sign}}}

\def\:{{:\ }}

\begin{document}

\title{A generalized cluster structure on $GL_n$ via birational Poisson maps}

\author{Misha Gekhtman}

\address{Department of Mathematics, University of Notre Dame, Notre Dame,
IN 46556}
\email{mgekhtma@nd.edu}

\author{Michael Shapiro}
\address{Department of Mathematics, Michigan State University, East Lansing,
MI 48823}
\email{mshapiro@math.msu.edu}

\author{Alek Vainshtein}
\address{Department of Mathematics \& Department of Computer Science, University of Haifa, Haifa,
Mount Carmel 31905, Israel}
\email{alek@cs.haifa.ac.il}

\begin{abstract}
In a recent work, we constructed a rational map from a simple Lie group $\G$ to itself that intertwines the standard 
Poisson--Lie structure on $\G$ with a Poisson homogeneous one defined by a pair of quasi-triangular solutions to the classical Yang--Baxter equation(CYBE) known as R-matrices. We also showed, in the case of $SL_n$, that if the combinatorial Belavin--Drinfeld data associated with these R-matrices satisfies certain {\em aperiodicity conditions}, the map is, in fact, birational and can be used to obtain an initial cluster for an exotic cluster structure on $SL_n$ via the pullback of Berenstein--Fomin--Zelevinsky cluster variables. The same strategy was later used by the first author and 
D.~Voloshyn to describe generalized cluster structures 
compatible with the Poisson dual of the Poisson--Lie bracket defined by a quasi-
triangular R-matrix.

In this paper we further promote the use of birational Poisson maps in constructing generalized cluster structures by applying it in the situation when the aperiodicity condition is not satisfied. To this end, we describe a generalized cluster structure on $GL_n$ compatible with the Poisson homogeneous bracket defined by 
two Cremmer--Gervais solutions to the
CYBE related via conjugation by the longest element of the Weyl group. The key ingredient to our construction is a birational map that 
connects the bracket under consideration with two other Poisson brackets: the Poisson dual 
to Cremmer--Gervais Poisson--Lie bracket on $GL_{n-1}$ and the bracket 
on a
certain space of complex rational functions of one variable closely related to cluster algebraic interpretation of 
Coxeter--Toda flows. New notions of a {\em 
regular pullback of a seed\/} and of an {\it almost-cluster structure\/} whose detailed description are given in a separate note also play an important role in our construction.
\end{abstract}

\subjclass[2020]{53D17, 13F60}
\keywords{Poisson--Lie group,  cluster algebra, Belavin--Drinfeld triple, birational Poisson map}

\maketitle

\tableofcontents

\section{Introduction} 
A search for cluster structures in rings of regular functions on algebraic varieties has been among the most active directions of research in the theory of cluster algebras starting from the time they were discovered by Fomin and Zelevinsky.   Two approaches to constructing cluster structures can be clearly distinguished in the literature. One relies on the combinatorics of Weyl and braid groups and has been successfully applied to a number of varieties, particularly those  appearing in Lie theory: from Grassmannians~\cite{Scott} and double Bruhat cells of a simply-connected simple Lie group~\cite{BFZ} in the early days of cluster algebras to the recent works on cluster structures on braid 
varieties~\cite{CGGLSS}. The second approach was  initiated by the authors in~\cite{GSV1} where the notion Poisson structures compatible with cluster transformations was introduced and the strategy of constructing a cluster structure in the ring of regular functions on a Poisson variety starting from a {\em regular log-canonical} chart was first implemented in the case of Grassmannians. The Poisson bracket in this case was the Poisson homogeneous bracket with respect to the natural action of $GL_n$ equipped with the {\em standard Poisson--Lie bracket}, and the resulting cluster structure and the one from~\cite{Scott} turned out to coincide. Similarly, in~\cite[Ch.~4.3]{GSVb} we have shown that the cluster structure of~\cite{BFZ} on double Bruhat cells of a simply-connected simple Lie group $\G$ is compatible with the standard Poisson--Lie structure on $\G$.
 
 But what happens if one modifies the Poisson bracket on the underlying variety? Does a compatible cluster structure still exist, and if so, how is it related to the original one? We posed this question, in the case of simple Poisson--Lie groups, in~\cite{GSVM} and, while observing that already in $SL_2$ there exists a Poisson--Lie bracket not compatible with any cluster structure, formulated a conjecture on the existence of compatible cluster structures for Poisson–-Lie brackets corresponding to quasi-triangular solutions of  the classical Yang--Baxter equation  (CYBE) classified by Belavin and Drinfeld in~\cite{BD}.  These solutions to CYBE, called  R-matrices, are  parametrized by discrete data consisting of an isometry between two subsets of positive roots in the root system of the Lie algebra of $\G$ and a continuous parameter that can be described as an element of the tensor square of the Cartan subalgebra that satisfies a system of linear equations governed by the discrete data. The discrete data determines a {\em Belavin--Drinfeld (BD) class\/} of R-matrices and corresponding Poisson--Lie brackets, and continuous data specifies 
a particular R-matrix and a bracket within this class. Given two such brackets on $\G$ associated with representatives of two BD classes, one can define a Poisson--Lie group 
$\G\times \G$ equipped with the direct product Poisson structure and then construct a Poisson-homogeneous structure on $\G$ with respect to the action of  $\G\times \G$ by right and left multiplication. 

In the series of papers culminating in~\cite{GSVple},~\cite{GSVuni}, we worked on an extended version of the conjecture 
of~\cite{GSVM}: {\em for any Poisson-homogeneous bracket associated with an arbitrary pair of BD data, 
 there exists a  compatible regular complete, possibly generalized, cluster structure in the ring of regular functions on $\G$.} In~\cite{GSVple},~\cite{GSVuni}, we proved the conjecture for $SL_n$ in the case of {\em aperiodic\/} BD data. Here aperiodicity means that a certain map $\delta$ naturally induced by the discrete part of the BD data on simple positive roots has no periodic orbits.
The geometric meaning of the aperiodicity condition was clarified in~\cite{GSVuni} where we adopted a new approach to constructing a regular log-canonical chart on $\G$ for a Poisson-homogeneous bracket under consideration, which constitutes the first and crucial step in uncovering a cluster structure we are looking for. In contrast with~\cite{GSVple}, where an {\em ad hoc\/} ansatz  was used to identify an initial log-canonical family of functions and the proof consisted of a long direct computation, in~\cite{GSVuni} we constructed and utilized a rational Poisson map $h^{r,r'}$ between two copies of $\G$ endowed with two different Poisson-homogeneous structures. One is $\Poi_{r,r'}$ determined by a pair of R-matrices from two arbitrary BD classes. The other, $\Poi_{r,r'}^{\rm st}$, corresponds to two R-matrices from the {\em standard\/} BD class whose Cartan parts match those of $r, r'$. The  map $h^{r,r'}$ maps 
$(\G, \Poi_{r,r'}^{\rm st})$ to $(\G, \Poi_{r,r'})$. In the $SL_n$ case with aperiodic BD data, $h^{r,r'}$ is birational and we are able to pull back  a conveniently chosen  cluster in the standard cluster structure on $\G$ to obtain an initial seed for a cluster structure compatible with  $\Poi_{r,r'}$. 

In~\cite{GSVpest}, we considered an example of a non-aperiodic  BD data in $SL_6$ to illustrate the emergence of generalized cluster exchange relations in this case and to sketch a conjectural construction of a compatible generalized cluster structure. We will revisit this construction in future publication.
It is important, however, to emphasize a key point illustrated by the example considered in~\cite{GSVpest}. Namely, in the non-aperiodic case for $SL_n$, periodic orbits of the map $\delta$  on the set of positive simple roots induced by the BD data are  divided into equivalence classes. Each family gives rise to an infinite periodic staircase matrix whose generalized characteristic polynomial provides coefficients of the corresponding generalized cluster exchange relation.

In this paper we shed additional light on generalized cluster structures in $SL_n$ arising in a non-aperiodic case by providing a comprehensive analysis of a {\em generalized cluster structure $\G\CC_n$ on $SL_n$ (and its extensions to $GL_n$ and $\Mat_n$)
compatible with $\Poi=\Poi_{r,r'}$, where $r$ and $r'$ are two Cremmer--Gervais solutions to the CYBE related  via conjugation by the longest element of the Weyl group}. 
We chose this example because it represents a ``purely'' non-aperiodic case in a sense that the corresponding map $\delta$ has only periodic orbits which form a single equivalence class.  In a way, this choice of a non-aperiodic pair $(r,r')$ mirrors the progress of our systematic study of the aperiodic case which started, in~\cite{GSVPNAS, GSVMem}, with the description of the exotic cluster structure compatible with the Cremmer--Gervais Poisson--Lie bracket.

This work further promotes an approach based on using birational Poisson maps to recover a new (generalized) cluster structure from known ones. 
Following~\cite{GSVuni}, this approach was also applied in~\cite{GV} to describe generalized cluster structures compatible with Poisson dual $\Poi^\dag_r$ of the Poisson--Lie bracket defined by a quasi-triangular solution $r$ to the CYBE. We use the construction in \cite{GV} as a crucial ingredient in defining a birational map $\Psi$ that connects the bracket $\Poi$ defined above to two other Poisson brackets: the Poisson dual  $\Poi^\dag$ to Cremmer--Gervais Poisson--Lie bracket on $SL_{n-1}$, 
and the bracket $\Poi^T$ on a certain space $\bar\RR_n$ of complex rational functions of one variable closely related to the one we studied in~\cite{GSV_Acta} in the context of cluster algebraic interpretation of B\"acklund--Darboux transformations of the Coxeter--Toda flows. Using a notion of a {\em regular pullback of a seed\/}, we apply  $\Psi$ to obtain an initial 
seed for $\G\CC_n$
from initial seeds of structures compatible with $\Poi^\dag$ and $\Poi^T$, where the former (generalized) was constructed 
in~\cite{GV} and the latter is a slight modification of the one in~\cite{GSV_Acta}.  We expect the map 
$(SL_n,\Poi) \to (SL_{n-1},\Poi^\dag)$ featured in our construction of $\Psi$ to have an analogue for every equivalence class of periodic orbits of $\delta$ for any non-aperiodic BD data, with the target space  $(SL_{k},\Poi^\dag)$, where $k$ is determined by the data but 
$\Poi^\dag$ is the same as in the current paper. The question of what the Poisson space $(\bar\RR_n, \Poi^T)$ should be replaced with so that an analogue of the map $\Psi$ could be defined for any non-aperiodic BD data is one of the main directions of our future research.

The paper is organized as follows. Section~\ref{sec:prelim} begins with a brief overview of the necessary background on Poisson--Lie groups and Poisson-homogeneous structures on simple Lie groups arising from the 
Belavin--Drinfeld classification of quasi-triangular $r$-matrices, and on generalized cluster structures compatible with Poisson brackets. We also discuss, in Section~\ref{seedpullback},  the notion of a {\em regular pullback of a seed\/} which plays an important role in our main construction;  a more detailed treatment of this subject is contained in a separate paper~\cite{GSVpullback}. In Section~\ref{mainresult} we define the Poisson homogeneous bracket 
$\Poi$ at the center of our investigation---it corresponds to the non-aperiodic pair of the ``opposite'' Cremmer--Gervais 
data---and formulate our main result, Theorem~\ref{thm:main} on the existence of a generalized cluster structure $\G\CC_n$  compatible with $\Poi$. The rest of Section~\ref{sec:prelim} outlines the proof of the main theorem that hinges on definition and properties of a birational map $\Psi$ that connects the bracket $\Poi$ with two other Poisson brackets: the Poisson dual  $\Poi^\dag$ to Cremmer--Gervais Poisson--Lie bracket on $GL_{n-1}$, and the bracket $\Poi^T$ on a certain space $\bar\RR_n$ of complex rational functions of one variable closely related to the one we studied in~\cite{GSV_Acta} in the context of cluster algebraic interpretation of B\"acklund--Darboux transformations of Coxeter--Toda flows. Using a regular pullback via $\Psi$ of initial clusters for the structures compatible with $\Poi^\dag$ and $\Poi^T$, 
%the former generalized constructed in~\cite{GV}, the latter a slight modification of the one in~\cite{GSV_Acta}, 
we obtain an initial seed for $\G\CC_n$. The proof of the required modification of the construction of~\cite{GSV_Acta} is contained in the Appendix.

The results that constitute steps in  the proof of the main theorem are stated in Section~\ref{outline}. Their detailed proofs are presented in Sections~\ref{sec:twomaps}--\ref{regcomplet}. These include several additional technical claims that we feel may prove useful beyond the specific context of this paper, e.g.~Proposition~\ref{frozenchar} that elucidates a Poisson-geometric nature for frozen variables in a cluster structure compatible with a Poisson bracket.

Our research was supported in part by the NSF research grants DMS \#2100785 (M.~G.) and  DMS \#2100791 (M.~S.), and ISF grants \#876/20 and \#2848/25 (A.~V.).  
 While working further on this project, we benefited from support and hospitality from the following institutions: International Centre for Mathematical Sciences, Edinburgh (M.~G., M.~S., A.~V., Summer 2023; M.~S., Summer 2024, Spring 2025), Institut des Hautes Etudes Scientifiques,
Bures–sur–Yvette (A.~V., Fall 2023),
Mathematical Institute of the University of Heidelberg (M.~G., Summer 2024, supported by the Mercator fellowship), Max Planck Institute for Mathematics in the Sciences (M.~G., Summers 2024/25), Michigan State University (A.~V., Spring 2025),
Istituto Nazionale di Alta Matematica Francesco Severi and University l'Aquila (A.~V., Spring 2025), Instituto de 
Matem\'atica Pura e Aplicada and Pontificia Universidade Cat\'olica do Rio de Janeiro (M.~S., Summer 2025).
 %University of Haifa (M.~G., M.~S., Summer 2022), Michigan State University (A.~V., Fall 2022), University of Notre Dame (A.~V., Spring 2023), Max Planck Institute for Mathematics, Bonn (A.~V., Spring  2023), and Research in Pairs Program at the Mathematisches Forschungsinstitut Oberwolfach 
%(M.~G., M.~S., A.~V., Summer 2023), where the project was completed. 
 We are grateful to all these institutions for their hospitality and outstanding working conditions they provided. 
 Special thanks are due to P.~Pushkar' and D.~Voloshyn for valuable discussions.

\section{Preliminaries and Main Result}\label{sec:prelim}
\subsection{Poisson--Lie groups} 
A reductive complex Lie group $\G$ equipped with a Poisson bracket $\Poi$ is called a {\em Poisson--Lie group\/}
if the multiplication map $\G\times \G \ni (X,Y) \mapsto XY \in \G$
is Poisson. Perhaps, the most important class of Poisson--Lie groups
is the one associated with quasitriangular Lie bialgebras defined in terms of  {\em classical R-matrices\/} 
(see, e.~g., \cite[Ch.~1]{CP}, \cite{r-sts} and \cite{Ya} for a detailed exposition of these structures).

Let $\g$ be the Lie algebra corresponding to $\G$, $\Kil$ be an invariant nondegenerate form on $\g$,
 and let $\mathfrak{t}\in \g\otimes\g$ be the corresponding Casimir element.
For an arbitrary element $r=\sum_i a_i\otimes b_i\in\g\otimes\g$ denote
\[
[[r,r]]=\sum_{i,j} [a_i,a_j]\otimes b_i\otimes b_j+\sum_{i,j} a_i\otimes [b_i,a_j]\otimes b_j+
\sum_{i,j} a_i\otimes a_j\otimes [ b_i,b_j]
\]
and $r^{21}=\sum_i b_i\otimes a_i$.
A {\em classical R-matrix} is an element $r\in \g\otimes\g$ that satisfies
{\em the classical Yang-Baxter equation (CYBE)\/} $[[r, r]] =0$
together with the condition $r + r^{21} = \mathfrak{t}$.
The Poisson--Lie bracket on $\G$ that corresponds to $r$ can be written as
\begin{equation}\label{sklyabra}
\begin{aligned}
\{f_1,f_2\}_r &= \langle R_+(\nabla^L f_1), \nabla^L f_2 \rangle - \langle R_+(\nabla^R f_1), \nabla^R f_2 \rangle\\
&= \langle R_-(\nabla^L f_1), \nabla^L f_2 \rangle - \langle R_-(\nabla^R f_1), \nabla^R f_2 \rangle,
\end{aligned}
\end{equation} 
where $R_+,R_- \in \End \g$ are given by $\langle R_+ \eta, \zeta\rangle = \langle r, \eta\otimes\zeta \rangle$, 
$-\langle R_- \zeta, \eta\rangle = \langle r, \eta\otimes\zeta \rangle$ for any $\eta,\zeta\in \g$ and  
$\nabla^L$, $\nabla^R$ are the right and the left gradients of functions on $\G$ with respect to $\Kil$ 
defined by
\begin{equation*}
\left\langle \nabla^R f(X),\xi\right\rangle=\left.\frac d{dt}\right|_{t=0}f(e^{t\xi}X),  \quad
\left\langle \nabla^L f(X),\xi\right\rangle=\left.\frac d{dt}\right|_{t=0}f(Xe^{t\xi})
\end{equation*}
for any $\xi\in\g$, $X\in\G$. 

The double of $\g$ is 
$D(\g)=\g  \oplus \g$ equipped with an invariant nondegenerate bilinear form
$\langle\langle (\xi,\eta), (\xi',\eta')\rangle\rangle = \langle \xi, \xi'\rangle - \langle \eta, \eta'\rangle$. 
Define subalgebras $\D_\pm$ of $D(\g)$ by
$\D_+=\{( \xi,\xi) : \xi \in\g\}$ and $\D_-=\{ (R_+(\xi),R_-(\xi)) : \xi \in\g\}$.
%where $R_\pm\in \End\g$ is given by $R_\pm=\frac{1}{2} ( R \pm \Id)$. 
%The operator $R_D= \pi_{\D_+} - \pi_{\D_-}$ can be used to 
Using right and left gradients with respect to $\langle\langle {\cdot} ,{\cdot} \rangle\rangle$, 
one can define 
a Poisson--Lie structure on $D(\G)=\G\times \G$, the {\em Drinfeld double\/} of the group $\G$, see \cite{GSVdouble}
for details.

The diagonal subgroup $\{ (X,X)\ : \ X\in \G\}$ is a Poisson--Lie subgroup of $D(\G)$ (whose Lie algebra is $\D_+$) naturally isomorphic to $(\G,\Poi_r)$.
The group $\G^*$  whose Lie algebra is $\D_-$ is a Poisson-Lie subgroup of $D(\G)$ called the {\em dual Poisson--Lie group of $\G$}.
The map $D(\G) \to \G$ given by $(X,Y) \mapsto U=X^{-1} Y$ induces another Poisson bracket on $\G$, see~\cite{r-sts}; we denote this bracket $\Poi^\dag_r$. 
The image of the restriction of this map to $\G^*$ is denoted 
$\G^\dag=(\G,\Poi^\dag_r)$.
As proved in \cite[Lemma~5.5]{GSVdouble} 
(see equation~(5.6) there), this bracket is given by
\begin{equation}\label{dualbra}
\{f_1,f_2\}^{\dagger}_r =\langle R_+([\nabla f_1,U]),[\nabla f_2,U]\rangle-\langle[\nabla f_1,U],\nabla f_2{\cdot} U\rangle.
% \langle R_+(\nabla^Lf_1-\nabla^R f_1), \nabla^L f_2 - \nabla^R f_2\rangle - \langle \nabla^Lf_1-\nabla^R f_1, \nabla^L f_2\rangle.
\end{equation}

 As is explained in detail in~\cite{GSVple, GSVuni},
 it is natural to consider the following generalization of the 
bracket \eqref{sklyabra}.
Let $r, \bar r$ be two classical R-matrices, and $R_+, \bar R_+$ be the corresponding operators, then we write
\begin{equation}\label{sklyabragen}
\{f_1,f_2\}_{r,\bar r} = \langle \bar R_+(\nabla^L f_1), \nabla^L f_2 \rangle -\langle R_+(\nabla^R f_1), \nabla^R f_2 \rangle.
\end{equation} 
By \cite[Proposition 12.11]{r-sts}, the above expression defines a Poisson bracket, which is not Poisson--Lie unless $r=\bar r$,
in which case $\{f_1,f_2\}_{r,r}$ evidently coincides with $\{f_1,f_2\}_{r}$. 
The bracket \eqref{sklyabragen} defines a Poisson homogeneous structure on $\G$ with respect to the left and right multiplication by Poisson--Lie groups $(\G,\Poi_{\bar r})$ and
$(\G,\Poi_{r})$, respectively. It will be one of the main objects of our study in this paper.

The classification of classical R-matrices for simple complex Lie groups was given by Belavin and Drinfeld in \cite{BD}.
Let $\G$ be a simple complex Lie group, $\Phi$ be the root system associated with its Lie algebra $\g$, $\Phi^+$ be the set of positive roots, and $\Pi\subset \Phi^+$ be the set of positive simple roots. 
A {\em Belavin--Drinfeld triple} $\bfG=(\Gamma_1,\Gamma_2, \gamma)$ (in what follows, a {\em BD triple\/})
consists of two subsets $\Gamma_1,\Gamma_2$ of $\Pi$ and an isometry $\gamma\:\Gamma_1\to\Gamma_2$ nilpotent in the 
following sense: for every $\alpha \in \Gamma_1$ there exists $m\in\mathbb{N}$ such that $\gamma^j(\alpha)\in \Gamma_1$ 
for $j\in [0,m-1]$, but $\gamma^m(\alpha)\notin \Gamma_1$.  We call the BD triple 
$\bfG^*=\{\Gamma_2,\Gamma_1, \gamma^*\}$ with $\gamma^*\: \Gamma_2\to \Gamma_1$ defined via 
$\gamma^*(\gamma(\alpha)) = \alpha$ for $\alpha\in \Gamma_1$ the {\em adjoint\/} of $\bfG$. 
Recall that the longest element $w_0$ of the Weyl group of $\g$ acts on $\Phi$ and takes positive simple roots to negative simple roots. We define the {\em $w_0$-conjugate\/} of $\bfG$ %denoted by $\bfG^{w_0}$ 
via $\bfG^{w_0}=\{-w_0\Gamma_1,-w_0\Gamma_2, \gamma^{w_0}\}$ with $\gamma^{w_0}\: -w_0\Gamma_1\to-w_0\Gamma_2$
given by $\gamma^{w_0}(-w_0\alpha) = -w_0\gamma(\alpha)$ for $\alpha\in \Gamma_1$.

 The isometry $\gamma$ yields an isomorphism, also denoted by $\gamma$, between the Lie subalgebras $\g^{\Gamma_1}$ 
and $\g^{\Gamma_2}$ that correspond to $\Gamma_1$ and $\Gamma_2$. It is uniquely defined by the property 
$\gamma e_\alpha = e_{\gamma(\alpha)}$ for $\alpha\in \Gamma_1$, where $e_\alpha$ is the Chevalley generator corresponding to 
the root $\alpha$. The isomorphism $\gamma^*\: \g^{\Gamma_2} \to \g^{\Gamma_1}$ is defined as the adjoint to $\gamma$ with respect to the form $\Kil$.  It corresponds to the isometry $\gamma^*$ and 
is given by $\gamma^* e_{\gamma(\alpha)}=e_{\alpha}$ for $\alpha\in \Gamma_1$.
 Both $\gamma$ and $\gamma^*$ can be extended to maps of $\g$ to itself by applying first the orthogonal
projection on $\g^{\Gamma_1}$ (respectively, on $\g^{\Gamma_2}$) with respect to $\Kil$; clearly, the extended
maps remain adjoint to each other. Note that the restrictions of $\gamma$ and $\gamma^*$ to the positive and the negative nilpotent subalgebras $\n_+$ and $\n_-$ of $\g$ are Lie algebra homomorphisms 
of $\n_+$ and $\n_-$  to themselves, and $\gamma(e_{\pm\alpha})=0$
for all $\alpha\in\Pi\setminus\Gamma_1$. Further, if $\g$ is simply connected $\gamma$ can be lifted to $\bgamma=\exp\gamma$;  note that $\bgamma$ is  defined only on subgroups $\N_+$ and $\N_-$ that correspond to $\n_+$ and $\n_-$,
and is a group homomorphism.

 By the classification theorem, each classical R-matrix is equivalent to an R-matrix $r^\bfG$ from a {\it Belavin--Drinfeld class\/} defined by a BD triple $\bfG$. The operator $R^\bfG_+$ corresponding
to a member of this class is given by
\begin{equation}
\label{Rplusgamma}
R^\bfG_+=R_0^\bfG+\frac1{1-\gamma}\pi_{>}-\frac{\gamma^*}{1-\gamma^*}\pi_{<},
\end{equation}
where $\pi_{>}$, $\pi_{<}$ are projections of  
$\g$ onto $\n_+$ and $\n_-$ and $R_0^\bfG$ acts on $\h$ (see~\cite{GSVple} for more details).

\subsection{Generalized cluster structures and compatible Poisson brackets}
Following \cite{GSVdouble, GSVnewdouble}, we remind the definition of a generalized cluster structure represented by a quiver with multiplicities. Let $Q=(Q,d_1,\dots,d_N)$ be
a quiver on $N$ mutable and $M$ frozen vertices with positive integer multiplicities $d_i$ at mutable vertices. 
A vertex is called {\it special\/} if its multiplicity is greater than~1. 
A frozen vertex is called {\it isolated\/} if it is not connected to any other vertices. 
%Let $\F$ be the field of rational functions in $N+M$ independent variables with rational coefficients. 
%There are $M$  distinguished variables corresponding to frozen vertices; they are denoted $x_{N+1},\dots,x_{N+M}$ and called {\em stable}, or {\em frozen\/} variables. 
%The {\it coefficient group\/} is a free multiplicative abelian group of Laurent monomials in stable variables, 
%and its integer group ring is $\bar\AA=\Z[x_{N+1}^{\pm1},\dots,x_{N+M}^{\pm1}]$ (we write $x^{\pm1}$ instead of $x,x^{-1}$).

An {\em extended seed\/} %in $\F$ 
is a triple $\Sigma=(\x,Q,\P)$, where $\x=(x_1,\dots,x_N, x_{N+1},\dots,\allowbreak x_{N+M})$ are
cluster and frozen variables attached to the vertices of $Q$
 %a transcendence basis of $\F$ over the field of fractions of  $\bar\AA$ 
and $\P$ is a set of $N$ {\em strings}. The $i$th string is a collection of 
monomials $p_{ir}\in\AA=\Z[x_{N+1},\dots,x_{N+M}]$,
 $0\le r\le d_i$, such that  
$p_{i0}=p_{id_i}=1$; it is called {\em trivial\/} if $d_i=1$, and hence both elements of the string are equal to one.
The monomials $p_{ir}$ are called {\em exchange coefficients}.

Given a seed as above, the {\em adjacent cluster\/} in direction $k$, $1\le k\le N$,
is defined by $\x'=(\x\setminus\{x_k\})\cup\{x'_k\}$,
where the new cluster variable $x'_k$ is given by the {\em generalized exchange relation}
\begin{equation}\label{genger}
x_kx'_k=\sum_{r=0}^{d_k}p_{kr}u_{k;>}^r v_{k;>}^{[r]}u_{k;<}^{d_k-r}v_{k;<}^{[d_k-r]}
\end{equation}
with
%where $u_{k;>}$ and $u_{k;<}$, $1\le k\le N$, are %{\em cluster $\tau$-monomials\/}  defined by
\begin{equation*}
u_{k;>}=\prod_{k\to i\in Q} x_i,\qquad  u_{k;<}=\prod_{i\to k \in Q}x_i,
\end{equation*}
%where the products are taken over all edges between $k$ and mutable vertices,
and 
%{\em stable $\tau$-monomials\/} $v_{k;>}^{[r]}$ and $v_{k;<}^{[r]}$, $1\le k\le N$, $0\le r\le d_k$, defined by
\begin{equation*}\label{stable}
v_{k;>}^{[s]}=\prod_{N+1\le i\le N+M}x_i^{\lfloor sb_{ki}/d_k\rfloor},\qquad
v_{k;<}^{[s]}=\prod_{N+1\le i\le N+M}x_i^{\lfloor sb_{ik}/d_k\rfloor},
\end{equation*}
where $b_{ki}$ is the number of edges from $k$ to $i$ and $b_{ik}$ is the number of edges from $i$ to $k$.
The strings remain unchanged, except for the $k$-th one that is reversed:  $p'_{kr}=p_{k,d_k-r}$. 

The standard definition of the {\it quiver mutation\/} in direction $k$ is modified as follows: 
if both vertices $i$ and $j$
in a path $i\to k\to j$ are mutable, then this path contributes $d_k$ edges $i\to j$ to the mutated quiver $Q'$; if one of the vertices $i$ or $j$ is frozen then the path contributes $d_j$ or $d_i$ edges $i\to j$ to $Q'$. The multiplicities at the vertices do not change. Note that isolated vertices remain isolated in $Q'$.

Given an extended seed $\Sigma=(\x,Q,\P)$, we say that a seed
$\Sigma'=(\x',Q',\P')$ is {\em adjacent\/} to $\Sigma$ (in direction
$k$) if $\x'$, $Q'$ and $\P'$ are as above. 
Two such seeds are {\em mutation equivalent\/} if they can
be connected by a sequence of pairwise adjacent seeds. 
The set of all seeds mutation equivalent to $\Sigma$ is called the {\it generalized cluster structure\/}  associated with $\Sigma$ and denoted by $\GCC=\GCC(\Sigma)$.

Fix a ground ring $\widehat{\AA}$ such that $\AA\subseteq\widehat\AA\subseteq\bar\AA=\Z[x_{N+1}^{\pm1},\dots,x_{N+M}^{\pm1}]$. The {\it generalized upper cluster algebra\/}
$\UC(\GCC)$ is the intersection of the rings of Laurent polynomials over $\widehat{\AA}$ in cluster variables taken over all seeds in $\GCC$. Let $V$ be a rational variety over $\C$, $\C(V)$ be the function field of $V$, and $\O(V)$ be the ring of regular functions on $V$. A generalized cluster structure $\GCC$
in $\C(V)$ is an embedding of $\x$ into $\C(V)$ that can be extended to a field isomorphism between 
%$\F_\C=\F\otimes\C$ 
the ambient field tensored with $\C$ and $\C(V)$. 
It is called {\it regular on $V$\/} if any cluster variable in any cluster belongs to $\O(V)$, and {\it complete\/} if 
$\UC(\GCC)$ tensored with $\C$ is isomorphic to $\O(V)$. The choice of the ground ring is discussed 
in~\cite[Section 2.1]{GSVdouble}.

Let $\Poi$ be a Poisson bracket on the ambient field, and $\GCC$ be a generalized cluster structure. 
We say that the bracket and the generalized cluster structure are {\em compatible\/} if any extended
cluster $\widetilde{\x}=(x_1,\dots,x_{N+M})$ is {\em log-canonical\/} with respect to $\Poi$, that is,
$\{x_i,x_j\}=\omega_{ij} x_ix_j$,
where $\omega_{ij}\in\Z$ are constants for all $i,j$, $1\le i,j\le N+M$. Instrumental in the study of compatibility
are certain Laurent monomials in cluster and frozen variables called {\it $y$-variables\/}. They are defined for any mutable vertex $k\in Q$ via
\begin{equation}\label{yvar}
y_k=\frac{u_{k;>}v_{k;>}}{u_{k;<}v_{k;<}}.
\end{equation}

\subsection{Regular pullbacks of a seed and almost-cluster structures}\label{seedpullback} 
Let %$A$ be an affine space, $V$ be a quasi-affine variety and 
$\Psi :\AA^{L'}\to \AA^{L}$ be a dominant rational map. Assume that the algebra of regular functions on $\AA^L$ is endowed with a cluster structure $\CC$ defined by an
initial seed $\Sigma=(\x,Q)$. Let $x_k$ be the cluster variable attached to a vertex $k$ of $Q$; put $\tilde x_k=x_k\circ\Psi$. Clearly, $\tilde x_k$ is a rational function on $\AA^{L'}$; let $u_k$ and $z_k$ be the numerator 
and the denominator of $\tilde x_k$, assuming they are coprime. Further, let $\{w_p\}_{p=1}^P$ be the family of all irreducible factors of the denominators $z_k$ over all vertices of $Q$, so that each $z_k$ can be written
as $z_k=\prod_{p=1}^Pw_p^{\lambda_{pk}}$ for some nonnegative integer $\lambda_{pk}$; the polynomials $w_p$ are called
{\it distinguished\/}.
The {\it discrepancy\/} at vertex $k$ 
relative to $w_p$ is defined via
\begin{equation}\label{discrep}
\delta_{pk}=\sum_{i}b_{ik}\lambda_{pi}-\sum_{j}b_{kj}\lambda_{pj}.
\end{equation}

Define a quiver $\Psi^* Q$ as a copy of $Q$ with $P$ additional frozen vertices corresponding to $w_p$, $p\in[1,P]$. There are $|\delta_{pk}|$ arrows between the frozen vertex $p$ and the vertex $k$ in  $\Psi^*Q$; they are directed from $k$ to $p$ if $\delta_{pk}<0$, and in the opposite direction otherwise. 
We thus get a seed $\Psi^*\Sigma=(\u,\Psi^* Q)$ by attaching $u_k$ to the vertex $k$ and $w_p$ to the frozen 
vertex $p$. This seed is called the {\it regular pullback\/} of $\Sigma$. Depending on the choice of the ground ring, 
it defines a cluster structure on $\AA^{L'}$ or on a quasi-affine variety $U\subset \AA^{L'}$ given by non-vanishing of some of $w_p$.

The following fact is a direct consequence the definition.

\begin{proposition}\label{regpullback}
The $y$-variables for the seed $\Psi^*\Sigma$ coincide with the pullback by $\Psi$ of $y$-variables 
for the seed $\Sigma$.
\end{proposition}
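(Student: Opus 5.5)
The plan is to compute both sides directly as Laurent monomials and compare exponents. Here the quiver $Q$ has only ordinary (non-generalized) multiplicities, and $\Sigma$ has frozen vertices among its vertices, so in \eqref{yvar} $y_k=\prod_i x_i^{b_{ik}}$ up to sign conventions. More precisely, $y_k=\prod_{j} x_j^{b_{kj}}\big/\prod_i x_i^{b_{ik}}$, with products over all vertices (mutable and frozen) of $Q$ and $b_{kj}$ the number of arrows $k\to j$. The pullback is therefore
\[
\Psi^*y_k=\frac{\prod_j \tilde x_j^{\,b_{kj}}}{\prod_i \tilde x_i^{\,b_{ik}}}
=\frac{\prod_j u_j^{\,b_{kj}}}{\prod_i u_i^{\,b_{ik}}}\cdot
\frac{\prod_i z_i^{\,b_{ik}}}{\prod_j z_j^{\,b_{kj}}},
\]
using $\tilde x_k=u_k/z_k$.

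Next we substitute $z_i=\prod_p w_p^{\lambda_{pi}}$. The second factor becomes $\prod_p w_p^{\sum_i b_{ik}\lambda_{pi}-\sum_j b_{kj}\lambda_{pj}}=\prod_p w_p^{\delta_{pk}}$ by \eqref{discrep}.

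On the other side, the $y$-variable of $\Psi^*\Sigma$ at a mutable vertex $k$ is built from the arrows of $\Psi^*Q$. The arrows between $k$ and the old vertices are those of $Q$, and they contribute exactly $\prod_j u_j^{b_{kj}}/\prod_i u_i^{b_{ik}}$. The new frozen vertex $p$ is joined to $k$ by $|\delta_{pk}|$ arrows:
\begin{itemize}
\item if $\delta_{pk}<0$, the arrows point $k\to p$ and contribute $w_p^{|\delta_{pk}|}$ to the numerator;
\item if $\delta_{pk}>0$, they point $p\to k$ and contribute $w_p^{\delta_{pk}}$ to the denominator.
\end{itemize}
I must therefore check the sign convention. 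The computation above gives $w_p^{\delta_{pk}}$ in the numerator when $\delta_{pk}>0$, i.e.\ in that case arrows should point $k\to p$. This is opposite to the stated rule.

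This sign bookkeeping is the only real obstacle. I would recheck whether \eqref{yvar} places outgoing arrows in the numerator, as written via $u_{k;>}=\prod_{k\to i}x_i$. If the conventions clash, I would note that the orientation rule in the definition of $\Psi^*Q$ is exactly what makes the exponents match: the definition is designed so that the frozen contribution equals the denominator correction $\prod_p w_p^{\delta_{pk}}$ coming from $z$. I would then adjust the sign reading of \eqref{discrep} accordingly.

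With matching conventions, the exponents of every $u_i$ and every $w_p$ agree, and the proposition follows. Two remaining points need only a remark. First, coprimality of $u_k$ and $z_k$ plays no role, since the identity holds as rational functions. Second, frozen variables of $\Sigma$ are treated exactly like mutable ones in this computation.
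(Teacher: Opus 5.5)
Your computation is correct, and it is exactly the direct check from the definitions that the paper has in mind; the paper gives no argument beyond calling the proposition an immediate consequence of the definition. The sign clash you found is genuine, and you should resolve it decisively rather than hedge: \eqref{yvar} puts outgoing arrows in the numerator, so with $\delta_{pk}$ as in \eqref{discrep} the arrows must go from $k$ to $p$ when $\delta_{pk}>0$, which means the orientation sentence in Section~\ref{seedpullback} has its two cases swapped --- and this corrected convention is the one the paper actually uses in Proposition~\ref{readytoglue}, where $\delta_{1v}=1$ at $(n-2,1)$ produces the arrow $(n-2,1)\to A$ and $\delta_{1v}=-1$ at $(1,-)$ produces $A\to(1,-)$.
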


Note that in general pullbacks and mutations of seeds do not commute. If they do, the pullback is called coherent. Sufficient conditions for the existence of a coherent regular pullback are given in~\cite{GSVpullback}.

The construction of the regular pullback can be extended to the case of generalized cluster structures. In this case the quiver $\Psi^* Q$ may have 2-cycles of a special type, namely,  opposite arrows between a vertex of multiplicity greater than one and a frozen vertex. %In the corresponding exchange relation the first and the last terms do not need to be coprime, since they might involve certain powers of the same frozen variable. 
Expression~\eqref{discrep} for the discrepancy at vertices of multiplicity $d_k>1$ is modified by multiplying
the contributions of mutable neighbors by $d_k$ and retaining the contributions of frozen neighbors. In the contrast to the case of a regular pullback of  a cluster structure, the discrepancy $\delta_{pk}$ defines only the difference between the number of arrows between $k$ and $p$ in two opposite directions. To find the actual number of arrows, the additional information is needed.
This additional information consists of a function $\chi_{pk}(r)$, $r\in[0,d_k]$, and a slope $\tau_{pk}$ that are defined 
via the degrees of polynomials $w_p$ in the various terms of the generalized exchange relation~\eqref{genger}, 
see~\cite[Section 4]{GSVpullback} for details. If the coherency conditions are satisfied, the seed $\Psi^*\Sigma=(\u,\Psi^* Q,\Psi^*\P)$ defines an {\it almost-cluster\/} structure.   The first and the last terms of exchange relations in
such structures do not need to be coprime, since they might involve certain powers of the same frozen variable. 
The upper cluster algebra for an almost-cluster structure is defined, similarly to the case of cluster structures, as the intersection of the rings of Laurent polynomials over the ring of polynomials in frozen variables taken over all seeds.

A similar situation occurs when a regular generalized cluster structure on a quasi-affine variety $V$ is lifted to the ambient affine space $\AA^L$. In this case the role of distinguished polynomials $w_p$ is played by irreducible polynomials that define the divisors whose union is the complement of $V$ in $\AA^L$. In~\cite[Theorem 6.2]{GSVpullback} we give sufficient conditions for a regular complete generalized cluster structure on $V$
to have a coherent complete regular pullback to $\AA^L$.

\subsection{Main result}\label{mainresult}
The study of cluster structures on $\G$ compatible with the bra\-cket~\eqref{sklyabragen} for $\g = \sl_n$ 
in \cite{GSVple, GSVuni} revealed that there are two essentially different types of pairs of BD triples
$(\bfG,\bar\bfG)$. 
The condition distinguishing these two types can be formulated in terms of 
the map $\delta=(-w_0)\bar\gamma^*(-w_0)\gamma$: we say that a pair $(\bfG,\bar\bfG)$ violates the aperiodicity 
condition if there exists $\alpha\in\Gamma_1$ and natural $m>0$ such that $\delta^m(\alpha)=\alpha$;
otherwise the pair $(\bfG,\bar\bfG)$ is called {\em aperiodic\/} (see
\cite[Section 3.1]{GSVple} for a more detailed discussion of these two types in terms of acyclicity 
of corresponding BD graphs). Papers~\cite{GSVple, GSVuni} provide a comprehensive study of cluster structures on $SL_n$ compatible with aperiodic pairs of BD triples 
The case of pairs that violate the aperiodicity condition is much more complicated and leads to
generalized cluster structures. The simplest way to get such a pair $(\bfG,\bar\bfG)$ is to choose
$\bar\bfG=\bfG^{w_0}$. In the self-dual case $\bfG=\bfG^{w_0}$ this produces generalised cluster structures 
compatible with the Poisson--Lie structure. The smallest case of this type occurs for $n=6$ and is considered
in detail in~\cite{GSVpest}.  

In this paper we study the pair $(\bfG,\bfG^{w_0})$ for the Cremmer--Gervais BD triple $\bfG=\bfG^{\CG}$. Namely,
let $\Pi=\{\alpha_1,\dots,\alpha_{n-1}\}$ be the set of positive simple roots for the $\sl_n$-case; the Cremmer--Gervais triple is given by $\Gamma_1=\{\alpha_1,\dots,\alpha_{n-2}\}$, 
$\Gamma_2=\{\alpha_2,\dots,\alpha_{n-1}\}$, and $\gamma\:\alpha_i\mapsto\alpha_{i+1}$ for $i\in[1,n-2]$. Note that in this 
case $\bfG^{w_0}=\bfG^*$. Recall that the study of the aperiodic pair $(\bfG^\CG,\bfG^\CG)$ for the Cremmer--Gervais triple provided one of the first examples of exotic cluster structures on $GL_n$, see \cite{GSVPNAS, GSVMem}.
Let $\Poi$ denote the bracket~\eqref{sklyabragen} with $r=r^\bfG$ and $\bar r=r^{\bfG^{w_0}}$ for $\bfG=\bfG^\CG$. We retain the same notation for the lift of this bracket to a Poisson bracket on $GL_n$ for which $\det X$
is a Casimir. The main result of the paper is

\begin{theorem}\label{thm:main}
For any $n\ge4$, there exists a regular complete generalized cluster structure $\G\CC_n$ on $GL_n$ compatible with $\Poi$.
\end{theorem}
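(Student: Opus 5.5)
The strategy is the one announced in the introduction: build a birational Poisson map $\Psi$ from $(GL_n,\Poi)$ to a product of two Poisson spaces with known compatible structures, pull back their initial seeds, and check the remaining axioms directly. Concretely, $\Psi$ will take $(GL_n,\Poi)$ to the product of $(GL_{n-1},\Poi^\dag)$, the Poisson dual of the Cremmer--Gervais Poisson--Lie bracket, and $(\bar\RR_n,\Poi^T)$, a space of rational functions of one variable.

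\emph{Step 1: the map $\Psi$.} I would define the first component $X\mapsto U$ by a gauge normalization adapted to the periodic map $\delta=(-w_0)\gamma(-w_0)\gamma$. Since every simple root lies on a single periodic orbit, I expect one can factor $X$ using $\bgamma$ on $\N_\pm$ and produce an element of $GL_{n-1}$. The second component should be a Weyl-type function, e.g.\ $\lambda\mapsto((\lambda-\tilde X)^{-1}e_1,e_1)$, built from the complementary data.

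\emph{Step 2: the Poisson property.} I would verify $\Psi_*\Poi=\Poi^\dag\oplus\Poi^T$ using~\eqref{sklyabragen}, \eqref{dualbra} and~\eqref{Rplusgamma}. The plan is to compute the left and right gradients of the coordinate functions of each component and to show that the cross brackets vanish.

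\emph{Step 3: birationality.} I would exhibit an explicit inverse on an open set, reconstructing $X$ from $U$ and the rational function by a Cremmer--Gervais type recursion.

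\emph{Step 4: the initial seed.} I would take the initial seed of the generalized structure compatible with $\Poi^\dag$ from~\cite{GV}, and the Coxeter--Toda type seed of~\cite{GSV_Acta} suitably modified for $\bar\RR_n$ (this modification is relegated to the Appendix). Their disjoint union gives a seed on the target. Its regular pullback $\Psi^*\Sigma$, in the sense of Section~\ref{seedpullback}, supplies the polynomial cluster variables and the extra frozen variables $w_p$ on $GL_n$.

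\emph{Step 5: compatibility.} The pulled-back cluster is log-canonical for $\Poi$ because $\Psi$ is Poisson and the factors $w_p$ are Casimir-like, in the sense that they are log-canonical with everything. Proposition~\ref{regpullback} identifies the $y$-variables, and together with compatibility upstairs this gives compatibility, including the exchange-coefficient strings at special vertices.

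\emph{Step 6: regularity and completeness.} I would first establish coherence of the pullback, so that mutations commute with it, using the sufficient conditions of~\cite{GSVpullback}. Then I would verify regularity of the cluster variables adjacent to the initial seed and apply a standard criterion: $\O(GL_n)$ equals the upper algebra if the initial and adjacent clusters consist of regular functions that are coprime to their neighbours, and the codimension-two argument holds. If the pullback initially only gives an almost-cluster structure, I would lift it by~\cite[Theorem 6.2]{GSVpullback}.

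The main obstacle I expect is Step 6 at the special vertex. There the generalized exchange relation, whose coefficients come from the characteristic polynomial of the periodic staircase matrix, must have a regular polynomial right-hand side with the correct frozen powers, which requires the functions $\chi_{pk}$ and slopes $\tau_{pk}$ to match. Alongside this, the explicit construction and Poisson verification of the first component of $\Psi$ in Steps 1--2 is the most computationally heavy part.
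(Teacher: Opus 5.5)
There are two genuine gaps, located in Steps~4--5 (how the seed on $GL_n$ is assembled) and in Step~6 (completeness).

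\textbf{Assembling the seed.} In the actual construction, $\Psi'(X)=X_LX_R^{-1}$ is built from the Schur complements of $x_{1n}$ and $x_{n1}$, read off from $\det(\lambda\NA+\NB)=x_{1n}\det X\det(\lambda U+\one_{n-1})$. It is Poisson into $(GL_{n-1},-\Poi^\dag)$. The map $\Psi''$ sends $X$ to the rational function whose numerator and denominator coefficients are the last and first rows of $X$ divided by $x_{1n}$. The second factor is $\bar\RR_{n-1}$, not $\bar\RR_n$, and the target is the fibre product over $\det U=\bar q(0)$. Since $\Psi$ is constant along $X\mapsto cX$, your Step~3 cannot hold without adjoining a coordinate such as $x_{1n}$.

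More importantly, the two pulled-back seeds are not disjoint. They share $x_{1n}$ and $x_{n1}$. Crucially, they also share $\eg_{n-1,n-1}(X)=\ef_{2n-2}(X)$, which is the common numerator of the pullbacks of the frozen variables $g_{n-1,n-1}(U)$ and $\bar t^+_{n-1}$. So the seeds must be glued along a vertex $D$, and $D$ cannot remain frozen. By Proposition~\ref{frozenchar}, a frozen variable $f$ of a regular complete structure compatible with $\Poi$ satisfies $\{f,g\}/f\in\O(GL_n)$ for every regular $g$. This fails for $f=\eg_{n-1,n-1}$ (Remark~\ref{whyunfreeze}). Hence any structure your recipe produces, by pullback, union and coherence, is provably not complete.

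The missing idea is to unfreeze $D$ and find its exchange relation. This is the Pl\"ucker identity of Proposition~\ref{Dexchange}, which dictates a new arrow between $D$ and the vertex $C$ carrying $x_{n1}$. One must then verify compatibility at $D$ by hand. Since $y_D$ mixes both factors, this does not follow from the Poisson property of $\Psi$ or from Proposition~\ref{regpullback}; the paper needs Lemmas~\ref{omega1}--\ref{baromega0} for it. By contrast, the special vertex $(1,1)$ that you single out as the main obstacle is handled routinely by the pullback machinery.

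\textbf{Completeness.} A starfish-type criterion (regular, pairwise coprime initial and adjacent variables) only yields $\UC\subseteq\O(GL_n)$, i.e.\ regularity. The reverse inclusion requires showing that every matrix entry of $X$ lies in $\UC$. Coherence cannot transport this from the target, because the final structure, with $D$ mutable, is not a pullback of anything upstairs.

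Your codimension-two argument would need $X$ to be Laurent in every seed of the star, and that is exactly what is unavailable. Inverting $\Psi$ (Proposition~\ref{prop:theta}) gives Laurent expressions only in the initial seed and its neighbours in directions other than $\la 2,2\ra,\dots,\la n-1,n-1\ra$. Their denominators involve $\eg_{22},\dots,\eg_{n-1,n-1}$ (Theorem~\ref{thm:InitLaurent}).

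The paper therefore builds a second Laurent representation in a distant seed:
\begin{itemize}
\item A long mutation sequence $\Ws_n$, whose exchange relations are Pl\"ucker and Desnanot--Jacobi identities, reaches a cluster made of the $\phi_{kl}$ and the $\ek_{ij}$.
\item In that cluster, $X$ is shown to be Laurent with no frozen variables in denominators, using the Gauss factorization of $X$ and the normal forms of~\cite{GSVdouble} (Theorem~\ref{fish_to_cuttlefish}).
\item Irreducibility of the $\eg_{ii}$ (Proposition~\ref{giiirr}) supplies the coprimality required by Proposition~\ref{twolaurent}.
\end{itemize}
This second representation is the bulk of the proof and has no counterpart in your plan. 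The lifting result \cite[Theorem 6.2]{GSVpullback} that you invoke is needed only for the extension to $\Mat_n$.
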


\begin{remark}\label{threestruct}
The ground ring for the generalized cluster structure mentioned in the Theorem is $\C[x_{1n},x_{n1},\det X^{\pm1}]$. As usual, the same construction (with evident minor modifications) provides a regular complete
generalized structure on $SL_n$ over the ground ring $\C[x_{1n},x_{n_1}]$. Extending this structure to
$\Mat_n$ (over the ground ring $\C[x_{1n},x_{n1},\det X]$) is somewhat more subtle and will be adressed below in 
Section~\ref{outline}.6.
\end{remark}

\begin{remark} Theorem~\ref{thm:main} remains valid for $n=3$ as well. 
The case $n=3$ is special, and certain constructions that work for $n\ge4$ should be modified. It is discussed in detail in a companion paper~\cite{GSVpullback}.
\end{remark}

In fact, for $n\ge4$, we construct $\G\CC_n$ as an amalgamation of the two cluster structures closely related to the two previously known ones: the generalized cluster
structure on $GL_{n-1}^\dag$ compatible with the bracket~\eqref{dualbra} for $r=r^{\bfG^\CG}$ 
studied in \cite{GV} and 
the cluster structure on rational functions studied in \cite{GSV_Acta}. To make the exposition self-contained, we provide below a concise description of these structures.

\subsection{Generalized cluster structure on $GL_n^\dag$ compatible with the Cremmer--Gervais bracket}
\label{sec:dualcs}
A regular complete generalized cluster structure  on $GL_n^\dag$ compatible with the the bracket~\eqref{dualbra} for arbitrary BD data was built in~\cite{GV}. In this paper, we need a specialization of this construction for the Cremmer--Gervais BD data. Denote by $\Poi^\dag$ the bracket~\eqref{dualbra} for $r=r^{\bfG^\CG}$.  The generalized structure $\G\CC^\dag_n$ 
on $GL_n^\dag$ built in~\cite{GV} is defined by the quiver $Q_n^\dag$ with one special vertex of multiplicity $n$, initial extended cluster $\DD_n$ and a unique coefficient string $P_n$. 

The quiver $Q_n^\dag$  for $n\ge 3$ consists of two equilateral
triangles each comprised of $n(n-1)/2$ vertices. The triangles are placed base to base making a lozenge, see Fig.~\ref{Q5dag}  for the case $n=5$; for the sake of visualization we call them the left triangle and the right triangle. The leftmost vertex of the lozenge is the special vertex, the rightmost one is the frozen vertex, all other vertices are mutable. The vertices in the left triangle are labeled by pairs $(k,l)$ with $1\le k,l\le n-1$, $k+l\le n$, the leftmost vertex being $(1,1)$, $k$ increasing along the upper side of
the triangle and $l$ increasing along its lower side, so that the vertices of the base traversed bottom up are $(m,n-m)$ for 
$1\le m\le n-1$. The vertices in the right triangle are labeled by pairs $\la i,j\ra=\la n+1-k,n+2-k-l\ra$ with 
$1\le k,l\le n-1$, $k+l\le n$,  the rightmost vertex
being $\la n,n\ra$ and corresponding to $k=l=1$, $k$ increasing along the lower side of the triangle and $l$ increasing along its upper side, so that the vertices of the base traversed bottom up are $\la m,2\ra$ for $2\le m\le n$. This numbering is inherited from the numbering of the vertices of the quiver for the generalized cluster structure on the Drinfeld double of
$GL_n$ derived in~\cite{GSVdouble}, which accounts for the difference in the numberings in the left and right triangles.

The arrows in both triangles form a complete triangular mesh with arrows directed south, northeast, and northwest, 
with the omission of arrows along the upper side of the left triangle and the lower side of the right triangle. The two
bases are connected with the arrows $\la m,2\ra\to(m-1,n-m+1)$ for $2\le m\le n$ directed west and $(m-1,n-m+1)\to\la m+1,2\ra$ 
for $2\le m\le n-1$ directed northeast. Additionally, there is a path of length $2(n-2)$ formed by arrows
$(1,l)\to(l+1,1)\to(1,l+1)$ for $1\le l\le n-2$ in the left triangle and a path of the same length formed by arrows
$\la n,j-1\ra\to\la j-1,j-1\ra\to \la n,j\ra$ for $1\le j\le n-2$ in the right triangle. For the sake of better visualization, these two paths are shown with dashed lines 
in~Fig.~\ref{Q5dag}. Finally, there are an additional frozen vertex $(0,0)$, $n-1$ isolated frozen vertices that are not shown 
in Fig.~\ref{Q5dag}, and two additional arrows: from $(1,1)$ in the left triangle to the additional frozen vertex $(0,0)$ and from 
this vertex to $\la 2,2\ra$ in the right triangle.

\begin{figure}[ht]
\begin{center}
\includegraphics[width=12cm]{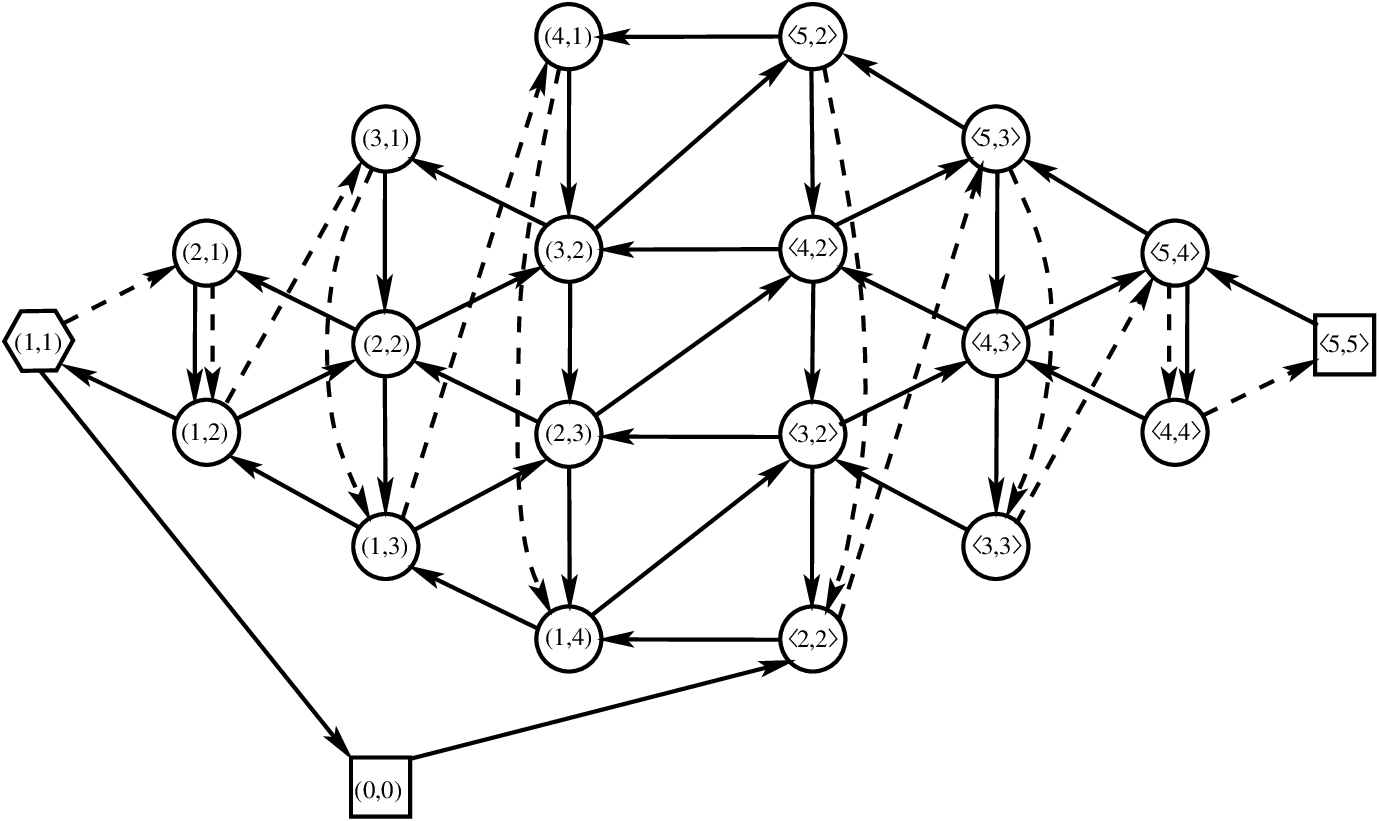}
\caption{Quiver $Q_5^\dag$}
\label{Q5dag}
\end{center}
\end{figure}

Following~\cite[Section~7.2]{GV}, the initial extended cluster $\DD_n$ consists of three subfamilies of functions. 
Let $U$ be an element of $GL_n^\dag$. 
Define $n\times n$ matrices $\Phi_{kl}(U)$, $k,l\in[1,n-1]$, $k+l\le n$, by taking the first $k$ columns of $U^0$, appending the first $l$ columns of $U^1=U$, and then appending the first column of the matrices $U^p$ for $2\le p\le n-k-l+1$. The first subfamily consists of $n(n-1)/2$ functions $\fy_{kl}(U)$ attached to the vertices $(k,l)$ of the left triangle and defined
via
\begin{equation}\label{phidef}
\fy_{kl}(U)=s_{kl}^n\det\Phi_{kl}(U),
\end{equation}
 where the sign $s_{kl}^n$ is given by
\begin{equation}\label{signdef}
s_{kl}^n=\begin{cases}
(-1)^{k(l+1)} \ & \ \text{for $n$ even},\\
(-1)^{(n-1)/2 + k(k-1)/2 + l(l-1)/2} \ & \ \text{for $n$ odd}.
\end{cases}
\end{equation}   
The second subfamily consists of $n(n-1)/2$ functions $g_{ij}(U)$ attached to the vertices $\la i,j\ra $ of the right triangle 
and defined via $g_{i2}(U)=\det U_{[i,n]}^{[2,n-i+2]}$ and 
\begin{equation}\label{gdef}
g_{ij} (U) =
 \sum_{I_1,I_2,\dots, I_{j-2}}
\det U^{I_1}_{[i,n]} \det U_{\gamma^*(I_1)\cup [n-i+j,n]}^{I_2} 
\prod_{m=2}^{j-2}\det U_{\gamma^*(I_m)\cup n}^{I_{m+1}}
\end{equation} 
for $j>2$, where the sum is taken over all subsets $I_1,\dots,I_{j-2}$ of $I_{j-1}=[2,n]$ of sizes $|I_1|=n-i+1$ and $|I_m|=n-j+m$ for
$2\le m\le j-2$ (see Section~\ref{badgproof} below for the derivation of the above expression from the general procedure
for constructing functions $g_{ij}(U)$ for an arbitrary BD data presented in~\cite{GV}).
The third subfamily consists
of $n$ functions $c_r(U)$, $1\le r \le n$, that are just signed coefficients of the characteristic polynomial of $U$:
\begin{equation}\label{cdef}
\det(\one_n+\lambda U)=1+\sum_{r=1}^n (-1)^{r(n-1)}\lambda^r c_r(U).
\end{equation}
Functions $c_r(U)$ for $1\le r\le n-1$ are attached to the isolated vertices, $c_n(U)=\det U$ is attached to the
additional frozen vertex $(0,0)$. The only string in $\PP_n$ is $(1, c_1(U),\allowbreak \dots,c_{n-1}(U),1)$, so that the exchange
relation at the special vertex for the seed $(\DD_n, Q_n^\dag, \PP_n)$ reads
$$
\fy_{11}(U)\fy_{11}'(U)=\sum_{r=0}^n \fy_{21}^r(U)\fy_{12}^{n-r}(U)c_r(U)
$$
with $c_0(U)=1$.

The following theorem is proved in~\cite{GV}.
\begin{theorem} \label{dualgroupcs}
For any $n\ge3$, the initial seed $(\DD_n,Q_n^\dag, \PP_n)$ defines a regular complete generalized cluster structure
$\GCC_n^\dag$ on $GL_n$ compatible with the bracket $\Poi^\dag$.
\end{theorem}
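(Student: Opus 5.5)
Since Theorem~\ref{dualgroupcs} is stated as a specialization of the general construction of~\cite{GV}, my plan is to check that the objects above are exactly what that construction produces for $\bfG=\bfG^\CG$. I would then verify the hypotheses of the standard criteria for regularity, completeness and compatibility of generalized cluster structures from~\cite{GSVdouble, GSVnewdouble}.

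The first step is to make the specialization explicit.
\begin{itemize}
\item For $\Gamma_1=\{\alpha_1,\dots,\alpha_{n-2}\}$ with $\gamma\colon\alpha_i\mapsto\alpha_{i+1}$, the BD graph defining the quiver in~\cite{GV} collapses to the lozenge $Q_n^\dag$.
\item The dashed paths in both triangles record the shift $\gamma$ and its adjoint $\gamma^*$.
\item The functions $\fy_{kl}$ arise from the left-triangle recipe of~\cite{GV}. They are built from the Krylov-type matrices $\Phi_{kl}(U)$; the signs $s^n_{kl}$ are fixed by requiring the exchange relations to hold with positive coefficients.
\item The functions $g_{ij}$ come from the general formula of~\cite{GV}, which glues minors of $U$ along the chain $\gamma^*$. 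Iterating the $\gamma^*$-gluing $j-2$ times and expanding the resulting block determinants by the Cauchy--Binet formula should give exactly the sum~\eqref{gdef} over the nested subsets $I_1,\dots,I_{j-2}$.
\end{itemize}

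Compatibility and regularity are the next steps.
\begin{itemize}
\item \emph{Log-canonicity.} I would show that the initial extended cluster $\DD_n$ is log-canonical for $\Poi^\dag$. This uses formula~\eqref{dualbra} together with~\eqref{Rplusgamma}. Each $\fy_{kl}$ and $g_{ij}$ is semi-invariant under the relevant action of the dual group $\G^*$ (left and right multiplication by suitable Borel-type elements twisted by $\bgamma$). Hence $[\nabla f,U]$ and $\nabla f\cdot U$ have controlled triangular parts, and the brackets reduce to pairings of diagonal parts, which are constants times $f_1f_2$.
\item \emph{Casimirs.} The coefficients $c_r$ of the characteristic polynomial are Casimirs of $\Poi^\dag$, since their gradients commute with $U$. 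This explains why they sit at isolated vertices and enter only the string.
\item \emph{Compatibility.} With log-canonicity established, compatibility with $Q_n^\dag$ amounts to the identity $\Omega B=[\Lambda\ 0]$ for a diagonal $\Lambda$. I would check this vertex by vertex using the $y$-variables~\eqref{yvar}: $\{\log y_k,\log x_j\}$ should equal a fixed constant times $\delta_{kj}$.
\item \emph{Regularity.} I would check that each adjacent cluster variable is a polynomial and is coprime to the old one. For ordinary vertices this follows from Desnanot--Jacobi/Plücker-type identities among the minors. At the special vertex $(1,1)$, $\fy_{11}$ is a Krylov determinant built from $U^p e_1$. The relation
$$\fy_{11}\fy_{11}'=\sum_{r=0}^n\fy_{21}^r\fy_{12}^{n-r}c_r$$
should follow by substituting the Cayley--Hamilton identity $\sum_r(-1)^{r(n-1)}c_rU^{n-r}=0$ into the column relations between $\fy_{12}$ and $\fy_{21}$.
\end{itemize}

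Completeness is the last step, via the criterion of~\cite[Section~3]{GSVdouble}. It suffices that the initial seed and all its neighbors are regular, that the variables within each neighboring seed are pairwise coprime, and that the relevant exchange polynomials are coprime. One also needs that the subvariety where two initial cluster variables vanish simultaneously has codimension at least two. Equivalently, I would exhibit a sequence of mutations reaching a seed containing enough matrix entries (or standard minors) of $U$ to generate $\O(GL_n)$.

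I expect this completeness step to be the main obstacle. The $g_{ij}$ are sums over chains of subsets rather than single minors, and the special vertex requires handling the generalized relation and its coefficient string. My first attempt would be to induct on $n$: freeze the special vertex and find a mutation sequence in the right triangle that turns the $g_{ij}$ into ordinary minors. This would reduce the problem to the known Cremmer--Gervais-type structure, following the route used in~\cite{GSVdouble, GV}.
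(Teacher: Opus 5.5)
The paper does not prove Theorem~\ref{dualgroupcs}; it quotes it from~\cite{GV}. Its only own contribution is the specialization step: deriving~\eqref{gdef} from the flag-minor definition~\eqref{eq:fopflags} via the maps $H_k$ and Lemma~\ref{lem:Fminors}. Your first step is exactly this, and together with the citation it would finish the proof. You instead go on to re-verify compatibility, regularity and completeness from scratch, and that plan has a genuine gap already at log-canonicity of the $g_{ij}$.

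The invariance the $g_{ij}$ actually have is under the twisted conjugation $U\mapsto NU\bgamma(N)^{-1}$, $N\in\N_+$ (it corresponds to $H\mapsto NH$). This gives $(U\nabla g)_<=\gamma^*((\nabla g\,U)_<)$. Substituting into~\eqref{dualbra} with~\eqref{Rplusgamma} cancels several terms, but besides diagonal pairings it leaves $-\langle(\nabla g_1\,U)_<,(\nabla g_2\,U-\gamma(U\nabla g_2))_>\rangle$. Showing that this is a constant multiple of $g_1g_2$ is the real content. For the sums~\eqref{gdef}, and for the cross-brackets with the $\fy_{kl}$ (which have a different invariance), your sketch gives no way to do it.

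The missing idea is the one~\cite{GV} is built on, and which the paper itself uses in the proof of Lemma~\ref{omega1}. It is the birational map $U\mapsto V=\widehat H(U)=\bgamma(H(U)_+)^{-1}H(U)$, whose inverse $(GL_n,\Poi^\dag_\varnothing)\to(GL_n,\Poi^\dag)$ is Poisson. Since $V=\bgamma(H(U)_+)^{-1}U\,\bgamma(H(U)_+)$, the $\fy_{kl}$ and $c_r$ are unchanged. By~\eqref{eq:fopflags}, each $g_{ij}(U)$ becomes a monomial in the trailing minors $\det V_{[i,n]}^{[j,n-i+j]}$. Log-canonicity is then inherited from the standard-class situation. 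The exchange relations for the $g_{ij}$ become identities among minors of $V$ (equivalently of $H(U)$), not Desnanot--Jacobi identities among minors of $U$. Your treatment of the $c_r$ as Casimirs and of the special vertex is unproblematic, since $\fy_{kl}$ and $c_r$ do not depend on the BD data.

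The completeness step is also misdirected. Coprimality of adjacent cluster variables and the codimension-two condition are Starfish-type hypotheses. They yield $\UC(\GCC_n^\dag)\subseteq\O(GL_n)$, not the reverse inclusion that completeness means. They are also not ``equivalent'' to exhibiting a seed that contains matrix entries. What is required is that every entry $u_{ij}$ lie in $\UC(\GCC_n^\dag)$. In this circle of papers that is done by producing Laurent expansions in two seeds with coprime denominators: see Proposition~\ref{twolaurent}, and how Theorems~\ref{thm:InitLaurent} and~\ref{fish_to_cuttlefish} are used for $\G\CC_n$.

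Your proposed induction has nothing behind it: mutate the right triangle until the $g_{ij}$ become ordinary minors of $U$, then appeal to a ``known Cremmer--Gervais-type structure''. There is no reason minors of $U$ occur as cluster variables. Moreover, the Cremmer--Gervais structure of~\cite{GSVMem} is compatible with the Poisson--Lie bracket, not with $\Poi^\dag$. Here again the usable tool is $H(U)$. As in the proof of Theorem~\ref{thm:InitLaurent}, its entries are polynomials in $U$ divided by the $g_{jj}(U)$, and $U=H(U)\bgamma(H(U)_+)^{-1}$. That is the kind of control needed to produce Laurent expansions of the $u_{ij}$ in terms of $\DD_n$.
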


\begin{remark} \label{grfordual}
Note that the ground ring for the corresponding generalized upper cluster algebra $\UC(\GCC_n^\dag)$ is
$\C[g_{nn},\det U^{\pm1}]$.
\end{remark}

Clearly, a (generalized) cluster structure compatible with a Poisson bracket is also compatible with its negative, therefore $\GCC_n^\dag$ is compatible with the bracket $-\Poi^\dag$ as well.

\subsection{Cluster structure on rational functions}\label{sec:todacs} 
Denote by $\RR_n$ the space of  complex rational functions of one 
variable of the form 
$M(\lambda)=q(\lambda)/p(\lambda)$, where $p(\lambda)$ is a monic polynomial of degree  $n\ge2$ and $q(\lambda)$ is a polynomial
of degree at most $n-1$ coprime with $p(\lambda)$. The cluster structure $\CC_n^T$ on $\RR_n$ compatible with the bracket
\begin{equation}\label{todabra}
\{M(\lambda),M(\mu)\}^T=-(\lambda M(\lambda)-\mu M(\mu))\frac{M(\lambda)-M(\mu)}{\lambda-\mu}
\end{equation}
is built in~\cite{GSV_Acta}. Below we describe a slight modification of this construction related to the initial one via
a quasi-isomorphism. The quiver $Q^T_n$ consists of two rows of $n$ vertices each. The vertices in the upper and lower rows are
labeled  from right to left by $(m,+)$ and $(m,-)$, $1\le m\le n$, respectively. The leftmost vertices in each row, $(n,+)$
and $(n,-)$, are frozen, all other vertices are mutable. There are horizontal arrows $(m,+)\to (m-1,+)$ 
and $(m,-)\to (m-1,-)$ 
for $2\le m\le n$, double vertical arrows $(m,-)\Rightarrow(m,+)$, $1\le m\le n-1$, directed upwards,  double inclined arrows $(m,+)\Rightarrow(m+1,-)$, $1\le m\le n-2$, directed downwards, and an ordinary 
inclined arrow $(n-1,+)\to(n,-)$ directed downwards, see Fig.~\ref{Q5T}.

\begin{figure}[ht]
\begin{center}
\includegraphics[width=9cm]{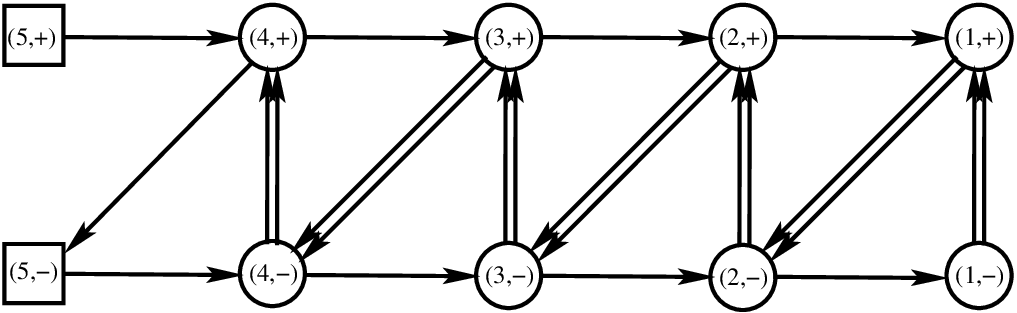}
\caption{Quiver $Q_5^T$}
\label{Q5T}
\end{center}
\end{figure}

To build the initial extended cluster $\TE_n$, 
write the Laurent expansion
$$
M(\lambda)=\sum_{i=0}^\infty \frac{h_i}{\lambda^{i+1}}
$$
and define for each $m\in[1,n]$ two $m\times m$ Hankel matrices $H_m^-=(h_{\alpha+\beta-2})_{\alpha,\beta=1}^m$ and
$H_m^+=(h_{\alpha+\beta-1})_{\alpha,\beta=1}^m$. The family $\TE_n$ consists of $2n-1$ functions $t_m^-=\det H_m^-$, $1\le m\le n$, and 
$t_m^+=\det H_m^+$, $1\le m\le n-1$, assigned to vertices $(m,-)$ and $(m,+)$, respectively, and the
function $t_n^+/t_n^-$ assigned to vertex $(n,+)$. Note that the bracket~\eqref{todabra} in terms of 
the coefficients $h_i$ reads
\begin{equation}\label{todabramoment}
\{h_i,h_j\}^T=\sum_{k=i}^{j-1}h_{k+1}h_{i+j-k-1}
\end{equation}
for $i<j$.

\begin{remark} 1. The quiver described above should be compared to the quiver (5.28) in~\cite{GSV_Acta} 
reflected about its center. The frozen variable attached to the frozen vertex $(n,+)$ differs from the one attached to the corresponding frozen vertex in the quiver~(5.28), which is
equal to $t_n^-/t_n^+$. This difference accounts for the reversal of the horizontal arrow 
$(n,+)\to(n-1,+)$.

2. The definition of the space $\RR_n$ in~\cite{GSV_Acta} contains an additional technical requirement $p(0)\ne0$, which is not needed in our situation. In~\cite{GSV_Acta}, where the cluster structure on $\RR_n$ was used to give a cluster theory interpretation to B\"acklund--Darboux transformations between Coxeter--Toda flows on $GL_n$, it was used to distinguish
rational functions that are scalar multiples of Weyl functions of elements of Coxeter double Bruhat cells.

3. Expression~\eqref{todabramoment} for the bracket is borrowed from~\cite{FaGe}; 
%with the following two modifications: the negative sign is added in the right hand side, which corresponds to the negative sign in~\eqref{todabra}, and 
a misprint in the upper summation limit is corrected.
\end{remark}

The construction presented above can be easily extended to the space $\bar\RR_n$ of  complex rational functions of one 
variable of the form  $\bar M(\lambda)=\bar q(\lambda)/p(\lambda)$, where $p(\lambda)$ is the same as before and $\bar q(\lambda)$ is a polynomial of degree at most $n$ coprime with $p(\lambda)$. The Laurent expansion for $\bar M(\lambda)$ reads
\begin{equation}\label{moments}
\bar M(\lambda)=\sum_{i=0}^\infty\frac{\bar h_i}{\lambda^i}.
\end{equation}
Clearly, for $i>0$ coefficients $\bar h_i$ coincide with $h_{i-1}$ computed for the function $M(\lambda)=\bar M(\lambda)-\bar h_0\in\RR_n$. We extend the bracket $\Poi^T$ to $\bar h_i$ via
\begin{equation}\label{bartodabramoment}
\{\bar h_i,\bar h_j\}^T=\sum_{k=i}^{j-1}\bar h_{k+1}\bar h_{i+j-k-1}
\end{equation}
for $i<j$, which immediately yields
\begin{equation}\label{bartodabra}
\{\bar M(\lambda),\bar M(\mu)\}^T=-(\lambda \bar M(\mu)-\mu \bar M(\lambda))\frac{\bar M(\lambda)-\bar M(\mu)}{\lambda-\mu}.
\end{equation}

The quiver $\bar Q_n^T$ contains $n$ vertices $(m,+)$, $1\le m\le n$, and $n+1$ vertices $(m,-)$, $1\le m\le n+1$. The two frozen vertices are $(n,+)$ and $(n+1,-)$. All arrows of the quiver $Q_n^T$ are present in
$\bar Q_n^T$, except for the inclined arrow $(n-1,+)\to(n,-)$ which is replaced by a double inclined arrow
$(n-1,+)\Rightarrow(n,-)$. Additionally, there is a vertical arrow $(n,-)\to(n,+)$ and a horizontal arrow
$(n+1,-)\to(n,-)$. We draw $\bar Q_n^T$ in a lozenge-like way, see Fig.~\ref{barQ5T}.

\begin{figure}[ht]
\begin{center}
\includegraphics[width=11cm]{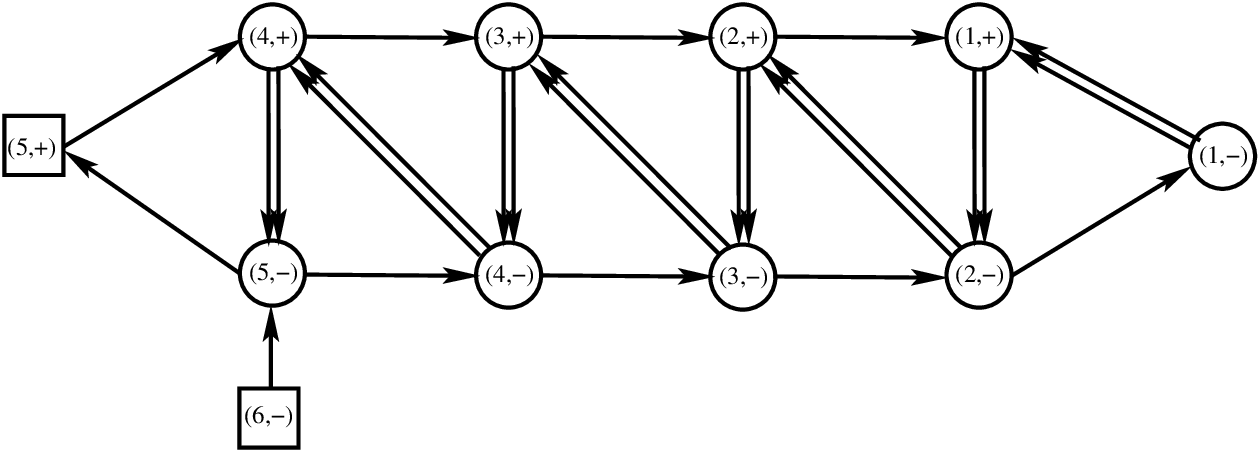}
\caption{Quiver $\bar Q_5^T$}
\label{barQ5T}
\end{center}
\end{figure}

Similarly to above, we define Hankel matrices $\bar H_m^-=(\bar h_{\alpha+\beta-2})_{\alpha,\beta=1}^m$ and 
$\bar H_m^+=(\bar h_{\alpha+\beta-1})_{\alpha,\beta=1}^m$ and the corresponding determinants $\bar t_m^-=
\det \bar H_m^-$, $m\in [1,n+1]$, and $\bar t_m^+=(-1)^{m}\det\bar H_m^+$, $m\in[1,n]$. The initial cluster $\bar\TE_n$ consists of $2n$ functions $\bar t_m^\pm$, 
$1\le m\le n$, assigned to vertices $(m,+)$ and $(m,-)$, respectively, and the function 
$\bar t_{n+1}^-/\bar t_n^+$ assigned to vertex $(n+1,-)$.

 \begin{theorem}\label{todacs} 
For any $n\ge 2$, the initial seed $(\bar\TE_n, \bar Q_n^T)$ defines a regular complete cluster structure $\bar\CC_n^T$ on 
$\bar \RR_n$ compatible with the bracket $\Poi^T$.
\end{theorem}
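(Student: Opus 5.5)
Since Theorem~\ref{todacs} is a slight modification of the construction of~\cite{GSV_Acta}, I would deduce it from the known result for $\RR_n$ (Theorem 5.? in~\cite{GSV_Acta}, in the reflected version of quiver (5.28) described above) rather than start from scratch. The natural route is to realize $\bar\RR_n$ as $\RR_n\times\C$ via $\bar M=M+\bar h_0$, with $\bar h_i=h_{i-1}$ for $i\ge1$. Under this identification the Hankel determinants match: $\bar H_m^+=H_m^-$, so $\bar t_m^+=(-1)^m t_m^-$ for $m\in[1,n]$. The matrices $\bar H_m^-$ involve $\bar h_0$ only in the $(1,1)$ entry, so $\bar t_m^-=\bar h_0 t_{m-1}^+ + \det(\text{the same matrix with }h_{-1}:=0)$. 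Thus the new family consists of the old variables up to signs, together with functions that are affine in $\bar h_0$.

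\textbf{Step 1: compatibility.} I would check log-canonicity of $\bar\TE_n$ directly from \eqref{bartodabra}. The generating-function form gives the brackets $\{\bar M(\lambda),\bar M(\mu)\}^T$, hence of all $\bar h_i$, and the standard approach of~\cite{GSV_Acta} applies: compute brackets of Hankel minors through the moment functional and its action on orthogonal polynomials. Equivalently, I would relate $\bar M$ to a Jacobi (tridiagonal) or Coxeter--Toda matrix of size $n+1$ whose Weyl function is $\bar M$. In those coordinates the $\bar t_m^\pm$ become monomials in the Toda variables $(b_i,a_i)$, and \eqref{bartodabra} becomes a log-canonical bracket on those variables. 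Log-canonicity of all variables, including the frozen ratio $\bar t_{n+1}^-/\bar t_n^+$, is then immediate. The exponent matrix should be checked to satisfy the compatibility condition with $\bar Q_n^T$, i.e.\ $\{y_k,x_j\}$ proportional to $\delta_{kj}$. For this I would compute the $y$-variables of $\bar Q_n^T$ as explicit monomials in the Toda variables.

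\textbf{Step 2: regularity and completeness.} Regularity follows from the exchange relations being Desnanot--Jacobi (Sylvester) identities for Hankel determinants. Mutating along the two rows in the standard order produces shifted Hankel determinants $\det(\bar h_{\alpha+\beta+s})$, all polynomial in the $\bar h_i$. These are regular on $\bar\RR_n$, whose coordinate ring is generated by coefficients of $\bar q,p$ with the resultant inverted; the resultant equals $\bar t_{n+1}^-$ up to a sign and a power of the leading term. For completeness I would use the standard criterion: exhibit the initial cluster and its neighbors as a covering where all initial variables are regular and coprime to their mutations. I would then show that $\O(\bar\RR_n)$ is generated within the upper cluster algebra, for instance by noting that the $\bar h_i$ and $p$'s coefficients lie in the cluster algebra localized at frozens. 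Alternatively, completeness can be reduced to the $\RR_n$ case by treating $\bar h_0$ as a coordinate obtained from one extra exchange relation at vertex $(n,-)$.

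\textbf{Main obstacle.} The delicate point is the new boundary region: the double arrow $(n-1,+)\Rightarrow(n,-)$, the arrow $(n,-)\to(n,+)$, and the frozen $(n+1,-)$ carrying $\bar t_{n+1}^-/\bar t_n^+$. Here the exchange relation at $(n,-)$ and the $y$-variable there must be verified by hand. The identity needed is a Hankel Desnanot--Jacobi relation in which the term involving $\bar h_{2n}$ is eliminated using that $\bar t_{n+2}^-=0$ (rank $n+1$ of the infinite Hankel matrix). I would first verify this identity for $n=2$ and then prove it in general via the orthogonal polynomial recurrence.
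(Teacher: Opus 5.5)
Your starting point, $\bar M=M+\bar h_0$, is the paper's map $\rho$. However, you never supply the step that carries the known results on $\RR_n$ over to $\bar\RR_n$, and without it Step~2 is not a proof.

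\textbf{Regularity and completeness.} Obtaining shifted Hankel determinants along one mutation path does not show that every cluster variable is regular. More importantly, your assertion that the $\bar h_i$ lie in the (upper) cluster algebra, which completeness rests on, is given no justification. The alternative you mention also does not help: $\bar h_0$ is not produced by an extra exchange at $(n,-)$, since it is already the initial variable $\bar t_1^-$.

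The missing idea is to mutate the whole lower row $(1,-),(2,-),\dots,(n,-)$. By the Hankel Desnanot--Jacobi identity $\bar t_m^-t_m^+=\bar t_{m+1}^-t_{m-1}^++(\bar t_m^+)^2$, this replaces $\bar t_m^-$ by $t_m^+$ for $m<n$ and $\bar t_n^-$ by $t_n^+/t_n^-$. The mutated seed, restricted to the vertices $(m,\pm)$ with $m\le n$, is then the seed $(\TE_n,Q_n^T)$ pulled back by $\rho$. From this:
\begin{itemize}
\item regularity follows from that of $\CC_n^T$, via a quasi-homomorphism argument;
\item the $\bar h_i$, $i\in[0,2n]$, become cluster variables, because the $h_i$ are cluster variables of $\CC_n^T$ by \cite[Lemma~5.3]{GSV_Acta};
\item solving \eqref{pTodacomplete} and \eqref{qTodacomplete} then gives the $p_i$ and $\bar q_i$.
\end{itemize}

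You have also misidentified the unit. In fact $\operatorname{Res}(p,\bar q)=\operatorname{Res}(p,q)=\pm\bar t_n^+$, whereas $\bar t_{n+1}^-=\pm\bar q(0)\,\bar t_n^+$ vanishes on the divisor $\bar q(0)=0$, which lies inside $\bar\RR_n$. The frozen variable that must be inverted is $\bar t_n^+$, giving the ground ring $\C[\bar t_{n+1}^-/\bar t_n^+,(\bar t_n^+)^{\pm1}]$. It is exactly what is needed to solve \eqref{pTodacomplete} for the $p_i$. Inverting $\bar t_{n+1}^-$ instead would put $1/\bar q_0$ into the upper cluster algebra, so completeness on $\bar\RR_n$ would fail.

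\textbf{Compatibility.} Step~1 does not work as you describe it.
\begin{itemize}
\item $\bar M(\infty)=\bar h_0\neq0$, so $\bar M$ is not the Weyl function of any matrix.
\item Using $\lambda^{-1}\bar M=\bar q/(\lambda p)$ instead would require a singular $(n+1)\times(n+1)$ matrix. That lies outside the Coxeter double Bruhat cells of $GL_{n+1}$, which carry the log-canonical factorization coordinates of \cite{GSV_Acta}.
\item For a Jacobi matrix $J$ only the $t_m^-$ are monomials in $(b_i,a_i)$, since $t_m^+=t_m^-\det J_{[1,m]}$. So the claimed monomial form of the $\bar t_m^\pm$ is false.
\end{itemize}
The \emph{moment functional} alternative is only a promise to redo \cite{GSV_Acta}.

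What you are missing is that \eqref{bartodabramoment} has exactly the same form as \eqref{todabramoment}. Hence the $\bar t_m^\pm$ have the same mutual brackets as the Hankel determinants $t_m^\pm$ for $\RR_{n+1}$, and compatibility can be read off from \cite{GSV_Acta} for $Q^T_{n+1}$:
\begin{itemize}
\item freeze $(n,+)$, which detaches $(n+1,+)$; that vertex carries $\bar t^+_{n+1}/\bar t^-_{n+1}$, which is identically zero on $\bar\RR_n$;
\item replace the frozen variable $\bar t_{n+1}^-$ by $\bar t_{n+1}^-/\bar t_n^+$, adjusting the arrows at frozen vertices so that all $y$-variables are unchanged.
\end{itemize}
This recovers exactly $\bar Q_n^T$, with no new bracket computation.
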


\begin{remark} \label{grfortoda}
Note that the ground ring for the corresponding upper cluster algebra $\UC(\bar\CC_n^T)$ is
$\C[\bar t_{n+1}^-/\bar t_n^+,(\bar t_n^+)^{\pm1}]$.
\end{remark}

The proof of this theorem is given in the Appendix.

\subsection{Outline of the proof}\label{outline} In what follows in this section we assume that $n\ge4$, unless explicitly stated otherwise. To prove Theorem~\ref{thm:main} we perform the following steps.

1. We build two maps: $\Psi' : GL_n\to GL_{n-1}$ and $\Psi'' : GL_n\to\bar\RR_{n-1}$ and prove the following theorem.

\begin{theorem}\label{twomaps}
{\rm (i)}  The map $\Psi': (GL_n,\Poi)\to \widetilde{GL}_{n-1}^{\dag}=(GL_{n-1},-\Poi^\dag)$ is Poisson.

{\rm (ii)} The map $\Psi'': (GL_n,\Poi)\to \bar\RR_{n-1}^T=(\bar\RR_{n-1},\Poi^T)$ is Poisson.

{\rm (iii)} The combined map $\Psi : (GL_n,\Poi)\to \widetilde{GL}_{n-1}^{\dag}\times_\C \bar\RR_{n-1}^T$ with the fibre product defined by functions $\det U$ for $U\in GL_{n-1}$ and $\bar q(0)$ for $\bar q(\lambda)/p(\lambda)\in
\bar\RR_{n-1}$ is Poisson.
\end{theorem}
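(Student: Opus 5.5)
The plan is to define both maps through a single auxiliary construction and to compute every bracket with the gradient formula \eqref{sklyabragen}. Everything is driven by the operators $R_+=R^{\bfG}_+$ and $\bar R_+=R^{\bfG^{w_0}}_+$ given in \eqref{Rplusgamma}.

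\textbf{Choice of maps.} Guided by the shift $\gamma\colon\alpha_i\mapsto\alpha_{i+1}$, I take $\Psi'$ and $\Psi''$ to be built from the two $(n-1)\times(n-1)$ blocks $A=X^{[2,n]}_{[1,n-1]}$ and $B=X^{[1,n-1]}_{[2,n]}$ of $X$. I will fix the exact normalization (which block is inverted, and any transposition or conjugation by $w_0$) so that the Cartan parts work out. Concretely:
\begin{itemize}
\item $\Psi'(X)=U=A^{-1}B$;
\item $\Psi''(X)=\bar M(\lambda)$ is a normalized ratio of the pencil determinants $\det(\lambda A-B)$ and a bordered $n\times n$ analogue, so that $p(\lambda)$ is monic of degree $n-1$.
\end{itemize}
These choices must make $\bar q(0)$ agree with $\det U$ up to the sign conventions of \eqref{cdef}; this is exactly what the fibre product in (iii) requires.

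\textbf{Part (i).} I factor $\Psi'$ as $X\mapsto(A,B)\mapsto A^{-1}B$. The first step is to show that $X\mapsto(A,B)$ carries $\Poi$ to the Drinfeld double bracket on $GL_{n-1}\times GL_{n-1}$ for the Cremmer--Gervais $r$-matrix, up to sign.
\begin{itemize}
\item For $f\circ\Psi'$, the gradients $\nabla^L$ and $\nabla^R$ are supported on the off-corner blocks. The $\gamma$-parts $\frac1{1-\gamma}\pi_>$ and $\frac{\gamma^*}{1-\gamma^*}\pi_<$ of $R_+$ and $\bar R_+$ then act as shifts that move gradient data between $A$ and $B$.
\item The two nilpotent parts from $\bfG$ and $\bfG^{w_0}=\bfG^*$ should telescope into the $R_+$ of $\sl_{n-1}$ with the Cremmer--Gervais triple.
\item Once the double bracket is in hand, the identity $U=A^{-1}B$ together with \cite[Lemma~5.5]{GSVdouble} yields \eqref{dualbra}. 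The overall sign flip accounts for $-\Poi^\dag$.
\end{itemize}

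\textbf{Part (ii).} I will compute $\{\bar M(\lambda),\bar M(\mu)\}$ by the resolvent method. The gradient of $\log\det(\lambda A-B)$ is expressed through $(\lambda A-B)^{-1}$. Substituting into \eqref{sklyabragen} produces the typical $(\lambda-\mu)^{-1}$ difference quotients, and I will match the result to \eqref{bartodabra}, or equivalently check \eqref{bartodabramoment} on the Laurent coefficients. A useful sanity check is that $\bar M$ should depend only on the $GL_{n-1}$-conjugacy type of the pencil plus one cyclic vector. So I expect that only the Cartan part, and the part of $\gamma$ that survives on the cyclic vectors, contributes.

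\textbf{Part (iii).} The fibre product bracket requires that $\det U$ and $\bar q(0)$ be Casimirs on their respective factors. This I will check directly: $\det U$ is central for $\Poi^\dag$, and $\bar q(0)$ is handled via \eqref{bartodabramoment}. It also requires that the cross brackets $\{f\circ\Psi',g\circ\Psi''\}$ vanish for all $f$ and $g$. I plan to prove the vanishing by showing that the Hamiltonian flows of the pulled-back Hankel coefficients $\bar h_i\circ\Psi''$ preserve $\Psi'$ modulo $\Psi'$-equivalence. Equivalently, the relevant gradient combinations lie in the kernel of the pairing produced by $R_+$ and $\bar R_+$.

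\textbf{Expected obstacle.} This cross-term cancellation is the step I expect to be hardest, since it requires a delicate bookkeeping of the infinite sums $\sum_k\gamma^k$ in the non-aperiodic situation, where $\delta$ is periodic. My first attempt will be to reduce to the case where $X$ is a generic element of a convenient normal form, for instance with $A$ upper triangular after a suitable left and right action. Then I would use the homogeneity of $\Poi$ under the Poisson--Lie actions to propagate the result to all of $GL_n$.
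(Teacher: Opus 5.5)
Your plan has a genuine gap. The gap lies in the choice of the maps, not in the bookkeeping you anticipate.

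\textbf{The choice of $\Psi''$ is inconsistent with the theorem.} You take $p(\lambda)=\det(\lambda A-B)/\det A$, i.e.\ the characteristic polynomial of $\Psi'(X)$. With that choice, (i)--(iii) cannot all hold:
\begin{itemize}
\item By (i), the coefficients of $p\circ\Psi''$ are spectral invariants of $U$, hence Casimirs of $\Poi^\dag$, so they Poisson-commute with each other.
\item By (iii), they Poisson-commute with the coefficients of $\bar q\circ\Psi''$, because the fibre product carries the restriction of the product bracket.
\item Then the pulled-back bracket $\{\bar M(\lambda),\bar M(\mu)\}$ reduces to $\{\bar q(\lambda),\bar q(\mu)\}/\bigl(p(\lambda)p(\mu)\bigr)$. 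This has at most a simple pole in $\lambda$ at a simple zero $\lambda_i$ of $p$.
\item But the right-hand side of \eqref{bartodabra} contains $\mu\bar M(\lambda)^2/(\lambda-\mu)$. Its double pole at $\lambda_i$ has coefficient $\mu r_i^2/(\lambda_i-\mu)\neq0$, where $r_i\neq0$ is the residue of $\bar M$ (nonzero by coprimality).
\end{itemize}
Indeed, $\Poi^T$ gives $\{p(\lambda),\bar q(\mu)\}^T=\frac{\mu}{\lambda-\mu}\bigl(p(\lambda)\bar q(\mu)-p(\mu)\bar q(\lambda)\bigr)$, which is never identically zero on $\bar\RR_{n-1}$. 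So the denominator of $\bar M$ must be Poisson-independent of $U$.

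In the paper it is. $\Psi''$ is read off from the first and last rows of $X$: $p_i\circ\Psi''=x_{1,i+1}/x_{1n}$ and $\bar q_i\circ\Psi''=x_{n,i+1}/x_{1n}$. The only link to $\Psi'$ is $\bar q(0)=x_{n1}/x_{1n}=\det U$, which is itself a Casimir of $\Poi$. The characteristic polynomial of $U$ supplies the Casimirs $c_r$, not the poles of $\bar M$.

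\textbf{The choice of $\Psi'$ misses the key mechanism.} The paper takes $U=X_LX_R^{-1}$, where $X_L,X_R$ are the Schur complements of $x_{1n}$ and $x_{n1}$, not the raw blocks. This choice is suggested by the staircase identity \eqref{ABvsU}. The Schur complements are exactly what make the $\gamma$-parts collapse, so no sums $\sum_k\gamma^k$ and no normal-form reduction are ever needed. By \eqref{XEX}, the gradients take the form \eqref{leftrightgrad}:
\begin{itemize}
\item In $\nabla(f\circ\Psi')X$, the rank-one corrections $\zeta,\eta$ sit in the first and last rows, which $\gamma^*$ and $\gamma$ respectively annihilate. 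The block part is $-(1-\gamma^*)$ applied to a single block.
\item In $X\nabla(f\circ\Psi')$, the corrections $\nu,\xi$ sit in the last and first columns, again annihilated by $\gamma$ and $\gamma^*$. The block part is $(1-\gamma)$-exact up to $[\nabla f,U]$.
\end{itemize}
With your raw blocks $A,B$ (in either order), the corrections land instead in the first and last columns of the left gradient and in the first and last rows of the right gradient. There the shifts do not vanish, and you get spread-out terms with no reason to recombine into functions of $U$.

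The detour through the Drinfeld double is also unsupported. $A$ and $B$ share the block $X_{[2,n-1]}^{[2,n-1]}$, so $(A,B)$ only ranges over a proper linear subvariety of $GL_{n-1}\times GL_{n-1}$.

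\textbf{What the paper does instead.} With the correct maps, the whole proof is a finite direct computation.
\begin{itemize}
\item Part (i) is the gradient calculation ending in $\langle[\nabla f_1,U],\nabla f_2U\rangle-\langle R_+[\nabla f_1,U],[\nabla f_2,U]\rangle$.
\item Part (ii) computes the brackets among the entries of the first and last rows of $X$.
\item Part (iii) shows $\{u_{ij}\circ\Psi',x_{1k}\}=\frac{j-i}n u_{ij}x_{1k}$ and $\{u_{ij}\circ\Psi',x_{nk}\}=\frac{j-i}n u_{ij}x_{nk}$. Hence $U$ Poisson-commutes with the ratios $x_{1,k+1}/x_{1n}$ and $x_{n,k+1}/x_{1n}$.
\end{itemize}
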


2. We consider the cluster structures $\GCC_{n-1}^\dag$ and $\bar\CC_{n-1}^T$ described in Sections~\ref{sec:dualcs} and~\ref{sec:todacs}, respectively. The corresponding quivers are shown schematically 
in Fig.~\ref{twoquivers}.

\begin{figure}[ht]
\begin{center}
\includegraphics[width=12cm]{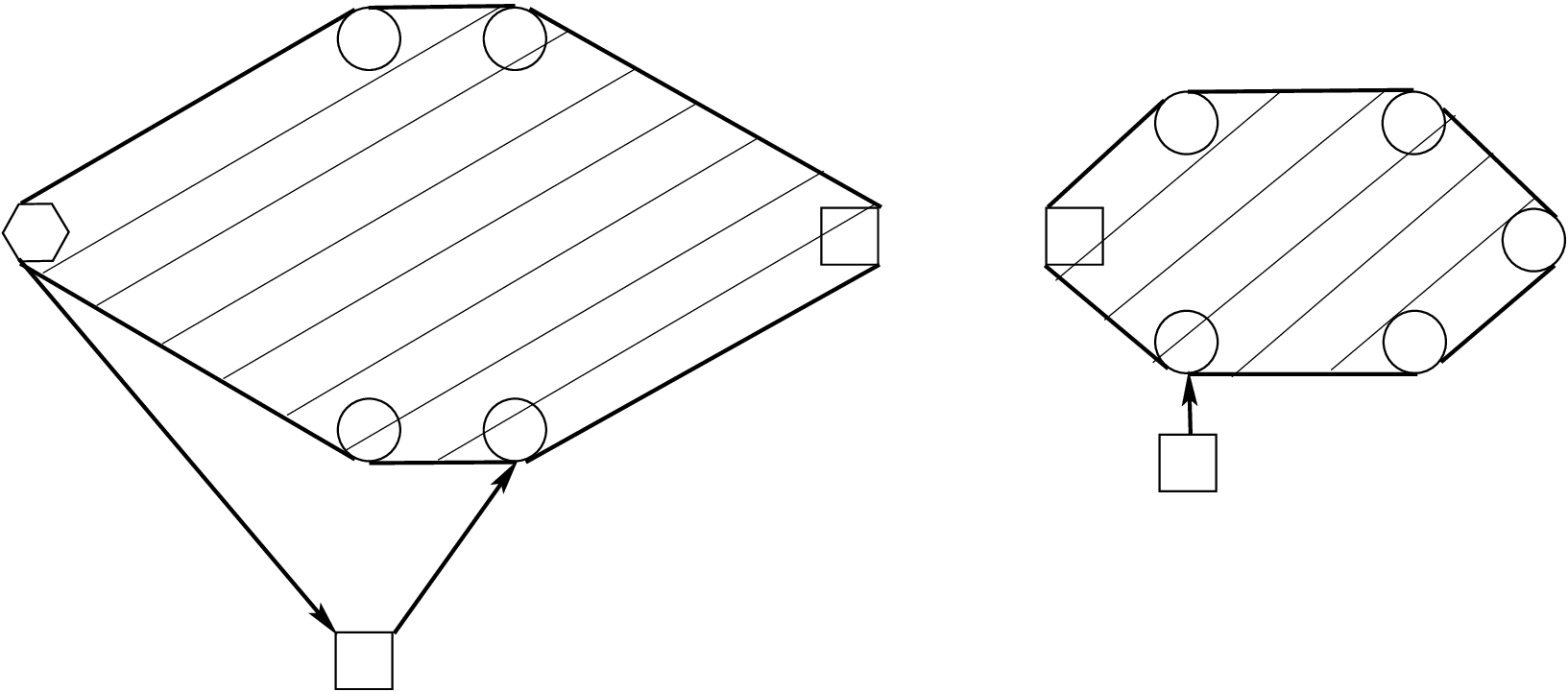}
\caption{Quivers $Q_{n-1}^\dag$ (on the left) and $\bar Q_{n-1}^T$ (on the right)}  
\label{twoquivers}
\end{center}
\end{figure}

To find the functions $\fy_{kl}(\Psi'(X))$, $g_{ij}(\Psi'(X))$, $c_p(\Psi'(X))$, 
and $\bar t_m^\pm(\Psi''(X)$ for $X\in GL_n$ we need the following objects.

First, we define $(n-1)(n-2)/2$ matrices $\Nphi_{kl}(X)$ for $k,l\in[1,n-2]$ and $k+l\le n-1$ 
of sizes $(n-k-l)(n+1)\times (n-k-l)(n+1)$. Next, we define $(n-1)(n-2)/2$ matrices $\NG_{ij}(X)$ 
for $i\in[2,n-1]$ and $j\in [2,i]$ of sizes $(n+j-1)\times (n+j-1)$. Finally, we define three more matrices:
$\NA(X)$ and $\NB(X)$ of size $(n+1)\times(n+1)$ and $\NF(X)$ of size $(2n-1)\times(2n-1)$. The entries of all the
matrices are either zeros or entries of $X$. Explicit constructions of the matrices are given in Section~\ref{explicit} below.

Put $\phi_{kl}(X)=\det\Nphi_{kl}(X)$, $\eg_{ij}(X)=\det \NG_{ij}(X)$, and define $\ec_p(X)$ as the 
coefficient at $\lambda^p$ of $\det(\NA(X)+\lambda \NB(X))$ with the sign $(-1)^{pn}$ and $\ef_m(X)$ as the $m$-th main trailing minor of $\NF(X)$ starting from the lower right corner. It follows from above that all these functions are polynomials in the entries of $X$.

\begin{theorem}\label{expressions}
{\rm (i)} Functions $\fy_{kl}(\Psi'(X))$ for $k,l\in[1,n-2]$ and $k+l\le n-1$ can be written as
\begin{equation}\label{badphi}
 \fy_{kl}(\Psi'(X))=\es_{kl}\frac{\phi_{kl}(X)}{(x_{1n}\det X)^{n-k-l}}
\end{equation}
with
\[
\es_{kl}=\begin{cases} (-1)^{(n-2)/2 +k(k-1)/2+l(l-1)/2}\ & \ \text{for $n$ even},\\
(-1)^{(n-1)/2+k(k-1)/2+l(l-1)/2}    \ & \ \text{for $n$ odd}.
\end{cases}		
\]

{\rm (ii)} Functions $g_{ij}(\Psi'(X))$ for $i\in[2,n-1]$ and $j\in [2,i]$  can be written as
\begin{equation}\label{badg}
g_{ij}(\Psi'(X))=\hat\es_{ij}\frac{\eg_{ij}(X)}{x_{1n}^{j-1}\det X}
\end{equation}
with $\hat\es_{ij}=(-1)^{(n+j+1)(i+j)}$.
%In particular, $\esigma_{n-1,n-1}=(-1)^{n-1}$.

{\rm (iii)}  Functions $c_r(\Psi'(X))$  can be written as
\begin{equation}\label{badc}
c_r(\Psi'(X))=\frac{\ec_r(X)}{x_{1n}\det X}
\end{equation}
for $r\in[1,n-1]$. In particular, $c_{n-1}(\Psi'(X))=\det \Psi'(X)=x_{n1}/x_{1n}$.

{\rm (iv)} Functions $\bar t_m^\pm(\Psi''(X))$  can be written as
\begin{equation}\label{badbart}
\bar t_m^-(\Psi''(X))=\frac{\ef_{2m-1}(X)}{x_{1n}^{2m-1}},\qquad
\bar t_m^+(\Psi''(X))=\frac{\ef_{2m}(X)}{x_{1n}^{2m}},
\end{equation}
where $m\in [1,n]$ in the first case and $m\in [1,n-1]$ in the second case. 
In particular, $\bar t_n^-(\Psi''(X))/\bar t_{n-1}^+(\Psi''(X))=x_{n1}/x_{1n}$.
\end{theorem}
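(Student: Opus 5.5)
The plan is to work directly from the explicit constructions of $\Psi'$, $\Psi''$ and of the matrices $\Nphi_{kl}$, $\NG_{ij}$, $\NA$, $\NB$, $\NF$ in Section~\ref{explicit}. Each formula in Theorem~\ref{expressions} should come from a block-matrix elimination (Schur complement) identity. Such an identity turns a determinant built from entries of $U=\Psi'(X)$ into the determinant of a larger matrix with entries of $X$ only. I expect $\Psi'(X)$ to have the shape $U=\mathcal X_1^{-1}\mathcal X_2$ up to a normalization by $x_{1n}$. Here $\mathcal X_1$ and $\mathcal X_2$ are $(n-1)\times(n-1)$ submatrices of $X$, or matrices obtained from $X$ by Cremmer--Gervais shifts, and $\det\mathcal X_1$ is proportional to $x_{1n}\det X$. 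This is the kind of map the Poisson-dual construction of~\cite{GV} produces. Then every power $U^p$ is a product of alternating factors $\mathcal X_1^{-1}$ and $\mathcal X_2$.

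For (i), I will linearize the columns of $\Phi_{kl}(U)$ that involve $U^p$, $p\le n-k-l+1$. The standard trick is that $\det[\,A\mid \mathcal X_1^{-1}\mathcal X_2 B\,]$ becomes the determinant of a bordered matrix $\begin{pmatrix}\mathcal X_1 & \mathcal X_2 B\\ \ast & \ast\end{pmatrix}$ divided by $\det\mathcal X_1$. Iterating this once for each power of $U$ that occurs gives a block-bidiagonal matrix with $n-k-l$ block rows of width $n+1$, which should be exactly $\Nphi_{kl}(X)$. The denominator is then $(x_{1n}\det X)^{n-k-l}$, and the sign $\es_{kl}$ is the product of $s^{n-1}_{kl}$ from \eqref{signdef} and the sign of the column permutation that puts the blocks in bidiagonal form. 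I will compute that permutation sign separately for $n$ even and $n$ odd.

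For (iii), I will write $\det(\one+\lambda U)=\det(\mathcal X_1+\lambda\mathcal X_2)/\det\mathcal X_1$ and then border it to size $(n+1)\times(n+1)$ to obtain $\det(\NA+\lambda\NB)/(x_{1n}\det X)$. The special case $r=n-1$ is just $\det\mathcal X_2/\det\mathcal X_1=x_{n1}/x_{1n}$.

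For (iv), I will express the Laurent coefficients of $\bar M=\Psi''(X)$ as $\bar h_i=\xi^{T}\,\mathcal Y^{i}\,\eta/x_{1n}$ or something of similar form. The Hankel determinants $\bar H^\pm_m$ are then Gram-type determinants of Krylov vectors. A Krylov matrix $[\eta,\mathcal Y\eta,\dots]$ can be written as a trailing minor of one fixed banded matrix $\NF(X)$ of size $2n-1$ built from shifted copies of rows and columns of $X$. The powers of $x_{1n}$ in \eqref{badbart} come from the normalization of each $\bar h_i$. The nested trailing minors correspond to $m\mapsto 2m-1,2m$.

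Part (ii) is the main obstacle, because $g_{ij}$ is the sum \eqref{gdef} over chains of subsets rather than a single determinant. I will first interpret \eqref{gdef} as a Cauchy--Binet expansion of one determinant of size $(n-1)+(j-2)$. This is a staircase product of consecutive row-shifted minors of $U$ coupled through $\gamma^*$, and it is what is derived in Section~\ref{badgproof}. Once I have this single determinant, I will apply the same $\mathcal X_1^{-1}\mathcal X_2$ bordering. The obstacle is showing that the factors $\det\mathcal X_1$ contributed by the different shifted copies almost cancel against the structure of $\NG_{ij}$, leaving only $x_{1n}^{j-1}\det X$ rather than a $(j-1)$-st power of $\det X$. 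I expect the cancellation to come from the shift $\gamma^*$ matching consecutive copies of $\mathcal X_1$ except in one row and one column, so each gluing costs only an $x_{1n}$. I would verify this first for $j=3$ and then induct on $j$. All the signs will be tracked by induction on $j$ and $i$, and checked against small $n$.
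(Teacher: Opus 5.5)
For (i) and (iii), your bordering idea is what the paper does, once your guessed shape of $\Psi'$ is replaced by the actual one. The paper has $U=X_LX_R^{-1}$, where $X_L,X_R$ are Schur complements of $X$, not submatrices; an $(n-1)\times(n-1)$ submatrix cannot have determinant of degree $n+1$. Moreover $\det X_R=(-1)^{n+1}x_{n1}^{-1}\det X$. The factor $x_{1n}\det X$ then appears as $\pm x_{1n}x_{n1}\det X_R$ after clearing the pivots $x_{n1},x_{1n}$ in each block column of $\Nphi_{kl}(X)$, resp.\ in $\lambda\NA+\NB$.

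\textbf{The genuine gap is in (ii).} Section~\ref{badgproof} does not produce a single determinant in $U$. Formula \eqref{gdef} is a nested sum obtained from $H(U)=U\bgamma(H(U)_+)$ via Lemma~\ref{lem:Fminors}, in which the row set of each factor is determined by the column set of the previous one. More importantly, bordering each of the $j-1$ minors of $U$ in a term of \eqref{gdef} gives a sum of products of $X$-determinants over $(x_{1n}\det X)^{j-1}$. The whole content of \eqref{badg} is that this sum collapses to $\pm(\det X)^{j-2}\eg_{ij}(X)$. You only state an expectation about why the surplus factors of $\det X$ cancel, so the decisive step is missing. Your plan to induct on $j$ matches the paper.

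The paper supplies the missing step in Lemma~\ref{lem:gfunXgen}, by induction on $\alpha$. At each step:
\begin{itemize}
\item Lemma~\ref{lemma:U(X)minors} rewrites $\det U_I^K$ as $\pm(-1)^K\det\NG(\hat K,1\cup\gamma(I))/(x_{1n}\det X)$.
\item The inductive hypothesis rewrites the flag-minor factor as $\pm\det\NG(\{1,n\}^{\alpha-1},1\cup K\cup[\beta+1,n])/(x_{1n}^{\alpha-1}\det X)$.
\item The Pl\"ucker-type identity of Lemma~\ref{auxPlucker} collapses the sum over $K$ of products of these two determinants into $\det L_B\det L_{A\cup C}$, with $\det L_B=x_{n1}^{\alpha-1}x_{1n}\det X$.
\end{itemize}
The $\det X$ inside $\det L_B$ cancels one of the two copies in the denominator, so each step costs only one more power of $x_{1n}$. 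Without an identity of this kind, your argument for (ii) does not go through.

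Part (iv) is also only sketched. The relation you need is \eqref{barqviaph}, $\bar q_j=\sum_i p_{i+j}\bar h_i$. It lets you subtract combinations of the $p$-rows from the $\bar q$-rows of a trailing block of the Sylvester-type matrix $F(\bar M)$. This factors its determinant as a Hankel determinant in the $\bar h_i$ times an anti-triangular determinant in the $p_i$ equal to $\pm1$. ``Writing a Krylov matrix as a trailing minor of $\NF$'' does not by itself yield this.
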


\begin{remark} Note that the map $\Psi$ is defined for $n=3$ as well, and Theorem~\ref{expressions} remains valid 
in this case.
\end{remark}

The following fact is checked via a straightforward calculation at each vertex.

\begin{corollary}\label{trivialsigns}
For any mutable vertex $v$ of $Q_{n-1}^\dag$, the product of the signs $\es_{kl}$ and $\hat\es_{ij}$ over all
vertices $(k,l)$ and $\langle i,j\rangle$ adjacent to $v$ equals one.
\end{corollary}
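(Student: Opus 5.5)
The plan is a direct, vertex-by-vertex parity computation. The signs depend only on the labels: $\es_{kl}$ is given in Theorem~\ref{expressions}(i) and $\hat\es_{ij}=(-1)^{(n+j+1)(i+j)}$. The quiver here is $Q_{n-1}^\dag$, so it has left-triangle vertices $(k,l)$ with $k+l\le n-1$, right-triangle vertices $\la i,j\ra$ with $2\le j\le i\le n-1$, and one special vertex $(1,1)$. First I will rewrite both signs as $(-1)^{\epsilon}$ with $\epsilon$ a quadratic form mod $2$ in the labels. For the left triangle the constant term $(n-2)/2$ or $(n-1)/2$ is the same at every vertex, so it contributes only through the parity of the number of left-triangle neighbours of $v$. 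What remains to track is $k(k-1)/2+l(l-1)/2$, whose parity is periodic mod $4$ in $k$ and in $l$.

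Next I will classify the mutable vertices of $Q_{n-1}^\dag$ by local type:
\begin{itemize}
\item interior vertices of the left triangle;
\item interior vertices of the right triangle;
\item vertices on the upper side of the left triangle and on the lower side of the right triangle, where mesh arrows are omitted but the dashed paths $(1,l)\to(l+1,1)\to(1,l+1)$ and $\la n-1,j-1\ra\to\la j-1,j-1\ra\to\la n-1,j\ra$ add neighbours;
\item base vertices joined across the two triangles by the arrows $\la m,2\ra\to(m-1,n-m)$ and $(m-1,n-m)\to\la m+1,2\ra$;
\item the special vertex $(1,1)$, which is also adjacent to the frozen vertex $(0,0)$.
\end{itemize}
For each type I will list the neighbours explicitly from the mesh description. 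An interior left vertex $(k,l)$ has the six neighbours $(k\pm1,l)$, $(k,l\pm1)$, $(k+1,l-1)$, $(k-1,l+1)$. Its product is $(-1)^{\epsilon}$ with
$\epsilon=\sum[k'(k'-1)/2+l'(l'-1)/2]$. Each of the pairs $(k+1,k-1)$ and $(l+1,l-1)$ appears twice, and the constant appears six times. Using $(k+1)k/2+(k-1)(k-2)/2=k^2-k+1$, these contributions cancel mod $2$. A similar telescoping should work for the right triangle, using $(n+j+1)(i+j)$ mod $2$: the pairs $\la i\pm1,j\ra$ and $\la i,j\pm1\ra$ change $i+j$ and $n+j+1$ in complementary ways.

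The boundary and base types are finite in number and I will check them case by case. At the base I will combine left signs at $(m-1,n-m)$ and $(m-2,n-m+1)$ with right signs at $\la m,2\ra,\la m+1,2\ra$, splitting by the parity of $n$ because $\es_{kl}$ has different constants in the two cases. Two further points need care. Whether frozen neighbours are included: I read the statement as ranging over all adjacent vertices labelled $(k,l)$ or $\la i,j\ra$, so the frozen $\la n-1,n-1\ra$ counts, while $(0,0)$ does not, since it carries no sign. And how multiplicity at the special vertex enters: the neighbours of $(1,1)$ are $(2,1)$ and $(1,2)$, and I will check the product both with and without weighting by multiplicity $n-1$ to see which matches the intended use.

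The main obstacle I expect is bookkeeping at the seams. These are the dashed paths, whose endpoints $(1,l)$ and $(l+1,1)$ are not mesh-adjacent, and the cross-base arrows, where the two different sign formulas interact and the parity of $n$ matters. There I will simply tabulate residues of $k,l$ mod $4$ and of $i,j,n$ mod $2$ and verify each case.
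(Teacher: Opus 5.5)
Your route is the paper's own: the paper simply asserts that the corollary follows from a direct check at each vertex. Your interior computations are correct. For $(k,l)$, every neighbouring value of $k$ and of $l$ occurs twice. For $\la i,j\ra$, the exponent $(n+j+1)(i+j)$ is $2$-periodic in $i$ and in $j$, so any centrally symmetric pair of neighbours cancels.

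\textbf{The gap.} You fix at the outset a reading of the statement as a product over the \emph{set} of adjacent vertices, and you claim the dashed paths never run along mesh edges. That claim fails at the first step of each path:
\begin{itemize}
\item the left-path arrow $(2,1)\to(1,2)$ coincides with the south-directed mesh arrow;
\item the right-path step $j=n-1$, namely $\la n-1,n-2\ra\to\la n-2,n-2\ra$, coincides with a mesh arrow.
\end{itemize}
So $Q_{n-1}^\dag$ has double arrows $(2,1)\Rightarrow(1,2)$ and $\la n-1,n-2\ra\Rightarrow\la n-2,n-2\ra$. Homogeneity confirms this: for $n\ge5$ the exchange relation at $(2,1)$ must read $\fy_{21}\fy_{21}'=\fy_{11}\fy_{22}+\fy_{12}^2$.

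Under your set reading the statement is false. Take $v=\la n-2,n-2\ra$ with $n\ge5$. Its neighbours are $\la n-2,n-3\ra$, $\la n-1,n-1\ra$ and $\la n-1,n-2\ra$, with signs $+1$, $+1$ and $-1$ (the last because $(2n-1)(2n-3)$ is odd), so the set product is $-1$. Similarly, at $(2,1)$ the set product is $(-1)^{c+1}$, where $c$ is the constant term in $\es_{kl}$; this is $-1$ for $n=5,6$. Executed as written, your tabulation would therefore fail.

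\textbf{The fix.} The quantity to check is $\prod_w s_w^{|b_{vw}|}$, with arrows counted with multiplicity. This is exactly what the corollary is used for: after the rescaling $x_w\mapsto s_wx_w$, the exchange relation at $v$ keeps its form if and only if $\prod_{v\to w}s_w=\prod_{w\to v}s_w$. With this reading each double arrow contributes $s_w^2=1$, and the check goes through.

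At the special vertex, the terms $\fy_{21}^r\fy_{12}^{n-1-r}$ differ by powers of $\es_{21}\es_{12}$. So the condition needed there is $\es_{21}\es_{12}=1$, not weighted by the multiplicity, and it holds trivially.

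Two further points for the boundary cases. First, treat the lower side $(1,l)$ of the left triangle and the upper side $\la n-1,j\ra$ of the right triangle as separate families. The dashed paths visit these vertices too, and your list of types omits them. Second, at the seam the identity $g(x)+g(x+1)\equiv x \pmod 2$, with $g(x)=x(x-1)/2$, handles the bookkeeping. For a base vertex $(k,n-1-k)$, the four left neighbours contribute $\equiv n-1$, and the cross-base neighbours $\la k+1,2\ra,\la k+2,2\ra$ also contribute $\equiv n-1$.
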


Consequently, we can modify $\DD_{n-1}$ via replacing each $\fy_{kl}$ by $\es_{kl}\fy_{kl}$ and each $g_{ij}$ by
$\hat\es_{ij}g_{ij}$, and Theorem~\ref{dualgroupcs} remains valid for 
the modified cluster $\widetilde{\DD}_{n-1}$.

3. We build regular pullbacks of the seeds $(\widetilde{\DD}_{n-1},Q_{n-1}^\dag,\PP_{n-1})$ and $(\bar\TE_{n-1},\bar Q_{n-1}^T)$ under $\Psi'$ and $\Psi''$, respectively, based on 
explicit expressions given in Theorem~\ref{expressions} and the constructions described in Section~\ref{seedpullback}.

\begin{proposition}\label{readytoglue}
{\rm (i)} The quiver $\Psi'^*Q_{n-1}^\dag$ is obtained from $Q_{n-1}^\dag$ by adding two frozen vertices $A$ and $B$ and four arrows: $(n-2,1)\to A\to (1,1)$ and $(n-2,1)\to B\to\la n-1,2\ra$.

{\rm(ii)} The quiver $\Psi''^*\bar Q_{n-1}^T$ is obtained from $\bar Q_{n-1}^T$ by adding an additional frozen vertex $A$ and an arrow $A\to (1,-)$.
\end{proposition}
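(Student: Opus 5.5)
The plan is to apply the regular pullback construction of Section~\ref{seedpullback} directly, using the explicit denominators from Theorem~\ref{expressions}.

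\emph{Step 1: distinguished polynomials.} By Theorem~\ref{expressions}, every pulled-back variable has the form (polynomial)$/x_{1n}^a(\det X)^b$. I would first check that the numerators $\phi_{kl}$, $\eg_{ij}$, $\ec_r$, $\ef_m$ are not divisible by $x_{1n}$ or $\det X$. Then the only distinguished polynomials are $w_A=x_{1n}$ and $w_B=\det X$, giving the new frozen vertices $A$ and $B$ for $\Psi'$. For $\Psi''$, only $x_{1n}$ occurs, giving a single vertex $A$. Non-divisibility can be checked by specializing $X$ to a convenient point where $x_{1n}=0$ (or $\det X=0$) but the determinant in question is nonzero. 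Irreducibility of $x_{1n}$ and $\det X$ is standard.

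\emph{Step 2: exponents.} Next I would tabulate the exponents:
\begin{itemize}
\item $\lambda_{A,(k,l)}=n-k-l$ and $\lambda_{B,(k,l)}=n-k-l$ at $\varphi_{kl}$;
\item $\lambda_{A,\langle i,j\rangle}=j-1$ and $\lambda_{B,\langle i,j\rangle}=1$ at $g_{ij}$;
\item $\lambda_A=\lambda_B=1$ at each $c_r$, $r\le n-2$;
\item for the frozen $c_{n-1}=x_{n1}/x_{1n}$, exponent $1$ for $A$ and $0$ for $B$;
\item on the Toda side, $\lambda_A=2m-1$ at $\bar t_m^-$ and $\lambda_A=2m$ at $\bar t_m^+$, with exponent $1$ at the frozen ratio $\bar t_n^-/\bar t_{n-1}^+$.
\end{itemize}
The $c_r$ with $r\le n-2$ sit at isolated vertices, so they affect only the generalized string. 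They do not enter ordinary discrepancies, except through the modified rule at the special vertex.

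\emph{Step 3: discrepancies.} I would then compute the discrepancy \eqref{discrep} at every mutable vertex. Away from the boundary, the exponents are affine functions on the triangular mesh: $n-k-l$ on the left triangle, and $j-1$ and $1$ on the right triangle. In an interior vertex of a complete mesh, the sum of incoming minus outgoing exponents vanishes, as for a discrete linear function on a triangulated lattice. So I would verify vanishing vertex type by vertex type:
\begin{itemize}
\item the interior mesh vertices;
\item the vertices on the omitted upper/lower sides;
\item the base vertices joined by the connecting arrows $\langle m,2\rangle\to(m-1,n-m+1)$;
\item the vertices on the dashed paths;
\item the vertices adjacent to the frozen $(0,0)$ and $\langle n-1,n-1\rangle$.
\end{itemize}
The expected outcome is that only $(n-2,1)$, $(1,1)$ and $\langle n-1,2\rangle$ have nonzero discrepancy, namely $\delta=\mp1$. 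This yields the arrows $(n-2,1)\to A\to(1,1)$ and $(n-2,1)\to B\to\langle n-1,2\rangle$.

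For the Toda quiver, the exponents $2m-1, 2m$ are linear along the zigzag $(1,-),(1,+),(2,-),\dots$. The double arrows have weights balancing the horizontal ones, so everything should cancel except at $(1,-)$. There the missing ``$m=0$'' neighbour produces discrepancy $+1$, giving $A\to(1,-)$. I must also check the boundary vertices $(n-1,+)$ and $(n-1,-)$ next to the frozen vertices with their modified arrows.

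\emph{Step 4: the special vertex $(1,1)$ (main obstacle).} Here the pullback rule for generalized structures applies. Mutable neighbours are weighted by $d=n-1$, while frozen contributions are retained. Moreover, the discrepancy fixes only the difference of arrow counts, and $\chi_{pk}(r)$ and $\tau_{pk}$ must be computed from the $w_p$-degrees of the terms $\varphi_{21}^r\varphi_{12}^{n-1-r}c_r$ in the exchange relation. Concretely, I will evaluate the $x_{1n}$- and $\det X$-adic valuations of each term after clearing denominators:
\[
(n-3)r+(n-3)(n-1-r)+\lambda(c_r),
\]
which is constant in $r$ for $1\le r\le n-2$, with special values at $r=0$ and $r=n-1$. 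The term $r=n-1$ involves $c_{n-1}$ with $B$-exponent $0$. I would check that this profile gives exactly one net arrow $A\to(1,1)$ and no arrow between $B$ and $(1,1)$, with no extra 2-cycle arrows. Verifying this via the framework of \cite[Section 4]{GSVpullback} is the delicate step.
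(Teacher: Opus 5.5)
Your route is the paper's: take $x_{1n}$, $\det X$ and their exponents from Theorem~\ref{expressions}, compute \eqref{discrep} at the ordinary vertices, and treat $(1,1)$ with \cite[Section 4]{GSVpullback}. Steps 1--3 come out as you predict. Apart from the special vertex:
\begin{itemize}
\item the $x_{1n}$-discrepancy on $Q_{n-1}^\dag$ is nonzero only at $(n-2,1)$, where it equals $1$;
\item the $\det X$-discrepancy is nonzero only at $(n-2,1)$ (value $1$) and at $\langle n-1,2\rangle$ (value $-1$): there the incoming neighbours are $\langle n-1,3\rangle$ and $(n-3,2)$, the outgoing ones are $\langle n-2,2\rangle$, $(n-2,1)$ and $\langle 2,2\rangle$, all with $\det X$-exponent $1$;
\item on the Toda side only $(1,-)$ is unbalanced.
\end{itemize}
At $(1,-)$ the discrepancy is $\lambda_{(2,-)}-2\lambda_{(1,+)}=3-4=-1$, not $+1$, although the arrow $A\to(1,-)$ is correct. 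The safe way to orient new arrows is Proposition~\ref{regpullback}. The pulled-back $y_{(1,-)}=\ef_2^2/(\ef_3x_{1n})$ has $x_{1n}$ in the denominator. A positive discrepancy, as at $(n-2,1)$, means an arrow from the vertex into the new frozen vertex.

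The one step that would not go through as planned is your Step 4 for $B$. By your own formula, the $\det X$-exponent in the denominator of the $r$-th term at $(1,1)$ is $(n-3)(n-1)$ for $r=0$ and $r=n-1$, but $(n-3)(n-1)+1$ for $1\le r\le n-2$. Here $d=n-1$, so a single arrow between a frozen vertex and $(1,1)$ enters only through $v^{[n-1]}_{k;<}$ or $v^{[n-1]}_{k;>}$. It therefore changes only the $r=0$ term or only the $r=n-1$ term. To make \emph{both} extreme terms carry an extra factor $\det X$ relative to the middle ones, you need arrows in both directions.

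This is exactly the profile the paper meets in the proof of Theorem~\ref{initialalmostseed}: $\chi=1$ on $[1,n-2]$, $\chi(0)=\chi(n-1)=0$, $\tau=0$. There \cite[Remark 4.6]{GSVpullback} produces the pair of opposite arrows between $B$ and $(1,1)$ in $\widehat Q_n$. So applying the pullback machinery literally to $\det X$ does not give ``no arrow and no 2-cycle''.

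The missing observation is that on $GL_n$ the polynomial $\det X$ is a unit of the ground ring $\C[x_{1n},x_{n1},\det X^{\pm1}]$. The factor $(\det X)^{-1}$ in the middle terms is absorbed into the string, which becomes $(1,\ec_1/\det X,\dots,\ec_{n-2}/\det X,1)$ as in \eqref{ger}. No arrow between $B$ and $(1,1)$ arises. Only $x_{1n}$ then has to be analysed at $(1,1)$. Its profile, an extra factor $x_{1n}$ in the $r=0$ term only, gives in the paper's computation $\chi(r)=1-r/(n-1)$ for $r\in[1,n-1]$, $\chi(0)=0$ and $\tau=1/(n-1)$. This is \cite[Example 4.7(i)]{GSVpullback}, which yields the single arrow $A\to(1,1)$.
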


The corresponding quivers are shown schematically in Fig.~\ref{twopullbacks}. The vertices of the quivers 
$\Psi'^*Q_{n-1}^\dag$ and $\Psi''^*\bar Q_{n-1}^T$ retain the same labeling as in $Q_{n-1}^\dag$ and $\bar Q_{n-1}^T$, respectively. Cluster variables assigned to the vertices of $\Psi'^*Q_{n-1}^\dag$ are $\phi_{kl}$ and $\eg_{ij}$, those assigned to the vertices of $\Psi''^*\bar Q_{n-1}^T$ are $\ef_m$. Note that since we consider the pullback to $GL_n$, the coefficients of the generalized exchange relation at $(1,1)$ are $\ec_r(X)/\det X$.

\begin{figure}[ht]
\begin{center}
\includegraphics[width=12cm]{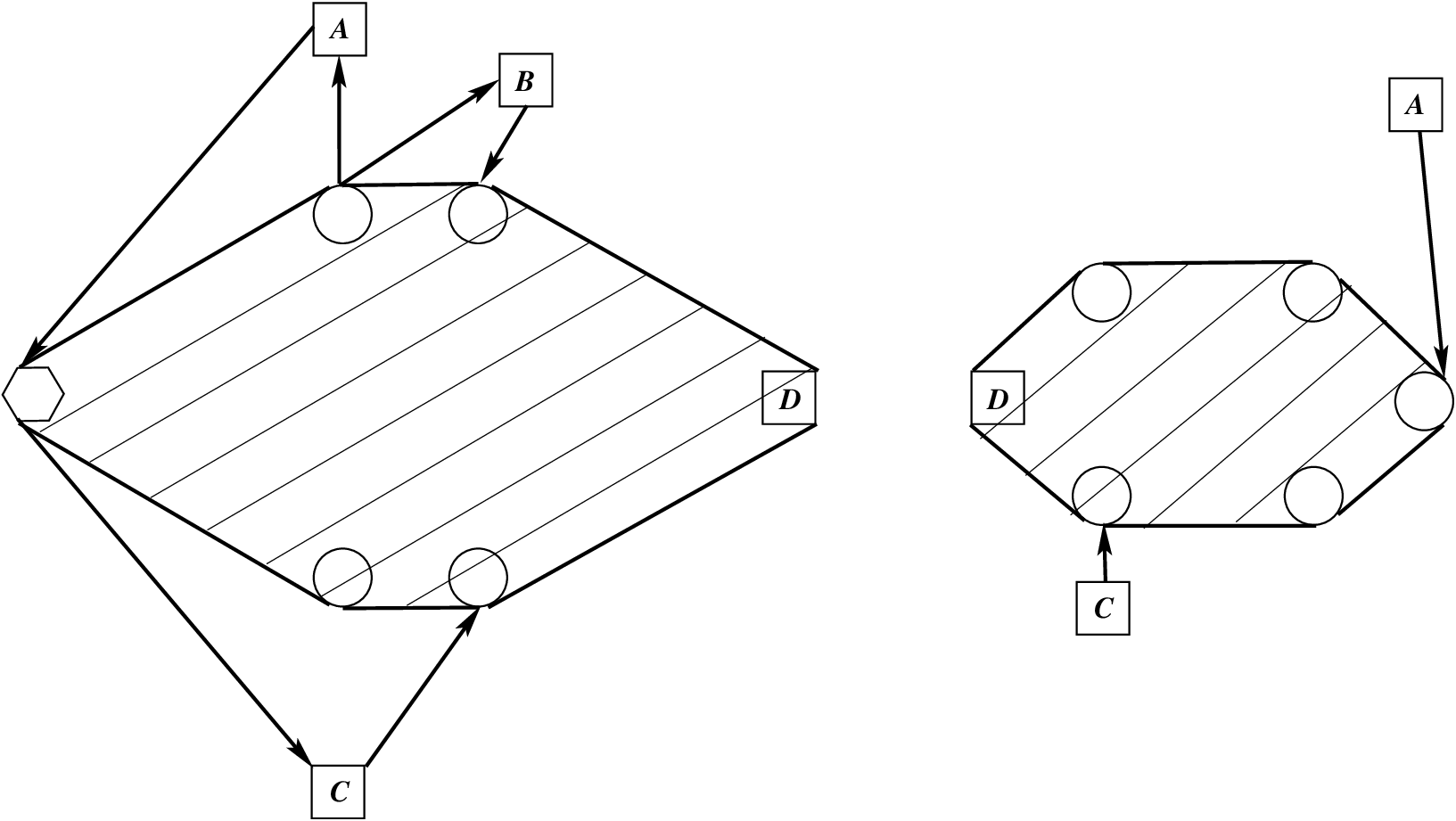}
\caption{Quivers $\Psi'^*Q_{n-1}^\dag$ (on the left) and $\Psi''^*\bar Q_{n-1}^T$ (on the right)}  
\label{twopullbacks}
\end{center}
\end{figure}

4. Observe that the same functions are assigned to vertices in both quivers marked by the same letters: 
$x_{1n}$ to vertex $A$, $x_{n1}$ to vertex $C$ and $\eg_{n-1,n-1}(X)=\ef_{2n-2}(X)$ to vertex $D$;
to justify the latter claim we note that $\NG_{n-1,n-1}(X)=\NF(X)_{[2,2n-1]}^{[2,2n-1]}$ (see the definitions of these matrices in Section~\ref{explicit} below). 
Therefore, we can glue the two quivers in Fig.~\ref{twopullbacks}
 into one quiver via identifying these three pairs of frozen vertices. 

Further, frozen vertices in a regular complete generalized cluster structure $\G\CC$ compatible with a Poisson bracket
$\Poi_{\G\CC}$ are characterized by the following property (see Section~\ref{prooffrozenchar} for the proof).

\begin{proposition}\label{frozenchar}
Let $\G\CC$ be a regular complete generalized cluster structure on a variety $V$ compatible with a Poisson bracket
$\Poi_{\G\CC}$. A cluster variable $f$ in $\G\CC$ is frozen if and only if $\tilde g=\{f,g\}_{\G\CC}/f$ is regular
for any regular function $g$.
\end{proposition}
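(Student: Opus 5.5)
I would prove the two directions separately, working throughout in one fixed extended cluster $\tilde{\x}=(x_1,\dots,x_{N+M})$ that contains $f$. Such a cluster exists because every cluster variable belongs to some seed. By compatibility, $\{f,x_i\}_{\G\CC}=\omega_{fi}fx_i$ for all $i$.

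\emph{Frozen implies regular.} Assume $f=x_j$ with $j>N$. Since $\G\CC$ is complete, every regular $g$ lies in $\UC(\G\CC)$. In the chosen cluster, $g$ is therefore a Laurent polynomial in $x_1,\dots,x_N$ with coefficients in the ground ring $\widehat\AA$, which is polynomial in the frozen variables up to inverting some of them. For a Laurent monomial $x^a=\prod x_i^{a_i}$, the Leibniz rule and log-canonicity give
\[
\{f,x^a\}/f=\Bigl(\sum_i\omega_{ji}a_i\Bigr)x^a .
\]
Consequently the operator $g\mapsto\{f,g\}/f$ acts diagonally on monomials. It sends Laurent polynomials in this cluster to Laurent polynomials in the same cluster, and it never introduces new negative powers of frozen variables. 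Every cluster of $\G\CC$ contains $f$ as a frozen variable and is log-canonical, so the same argument applies in every cluster. Hence $\tilde g\in\UC(\G\CC)$, and by completeness $\tilde g$ is regular.

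The generalized exchange relations require one check: the coefficients $p_{kr}$ are polynomials in frozen variables, and the argument above treats them like any other frozen monomials. The one subtlety is that the laurent phenomenon must hold over the ground ring $\widehat\AA$, which may invert some frozen variables. This is harmless, because the derivation is diagonal and scales each monomial, so the denominators that appear are the same ones already present in $g$.

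\emph{Regular implies frozen.} Assume $f=x_k$ is mutable, with $1\le k\le N$, and take $g=x'_k$, the adjacent cluster variable. It is regular because the structure is regular. I will show that $\{f,x'_k\}/f$ is not regular. Write
\[
x'_k=\frac{E_k}{x_k},
\]
where $E_k$ is the right-hand side of \eqref{genger}. Then
\[
\{x_k,x'_k\}=\frac{\{x_k,E_k\}}{x_k},
\]
and
\[
\frac{\{x_k,x'_k\}}{x_k}=\frac{\{x_k,E_k\}}{x_k^2}.
\]
Each term $T_r$ of $E_k$ is a monomial in the other variables of the cluster, so $\{x_k,T_r\}=c_rx_kT_r$ for some constant $c_r$. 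This gives
\[
\frac{\{x_k,x'_k\}}{x_k}=\frac{\sum_rc_rT_r}{x_k}.
\]

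The remaining task is to show that this expression does not lie in $\UC$, that is, it is not regular. It suffices to show that $\sum_rc_rT_r$ is not divisible by $x_k$ in the Laurent ring (equivalently, that it is not a multiple of $E_k$ plus something divisible by $x_k$). The key input is that compatibility forces the $y$-variable $y_k$ to satisfy $\{x_k,y_k\}=d_k\cdot\text{const}\cdot x_ky_k$ with a nonzero constant. This holds by the standard nondegeneracy argument from compatibility (the exchange matrix row $k$ pairs nontrivially with $\omega$, as in \cite[Ch.~4]{GSVb}). Consequently the $c_r$ form an arithmetic progression in $r$ with nonzero step, so $\sum c_rT_r$ is not a constant multiple of $E_k$.

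Then
\[
\frac{\sum_r c_rT_r}{x_k}=c_0x'_k+\frac{\sum_r(c_r-c_0)T_r}{x_k}.
\]
The numerator $\sum_r(c_r-c_0)T_r$ is a polynomial in the remaining variables that is not divisible by $E_k$. I expect the main obstacle to be this last step: showing that such a function, divided by $x_k$, lies outside $\UC$. I would first try to mutate at $k$ and observe that in the adjacent cluster the expression equals $\bigl(\sum_r(c_r-c_0)T_r\bigr)x'_k/E_k$. Here $E_k$ is coprime to the numerator (using coprimality of the exchange polynomial with irreducible cluster variables in a regular complete structure, via the standard valuation argument on the divisor $\{x_k=0\}$). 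If that cannot be made to work directly, I would instead restrict to a generic point of the divisor $x_k=0$ of $V$, on which $E_k\ne0$ but the numerator is nonzero, to exhibit a pole there.
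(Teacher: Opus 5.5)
Your first direction is the same as the paper's. Log-canonicity makes $g\mapsto\{f,g\}/f$ act diagonally on the Laurent monomials of every cluster, so $\tilde g$ is Laurent in every cluster, lies in $\UC$, and is therefore regular.

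The converse has a genuine gap, exactly at the step you call the main obstacle. You never show that $\{x_k,x_k'\}/x_k=\partial E_k/x_k$, where $\partial E_k=\sum_r c_rT_r$, lies outside $\UC$.

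\begin{itemize}
\item The reformulation ``$\sum_r c_rT_r$ is not divisible by $x_k$ in the Laurent ring'' is vacuous, because $x_k$ is a unit there.
\item ``Not a constant multiple of $E_k$'' is much weaker than what you must rule out. Laurentness in the adjacent cluster would mean $E_k\mid\tilde M\,\partial E_k$ for some monomial $\tilde M$, and that is what has to be excluded.
\item The valuation/coprimality argument you invoke is about $E_k$ versus cluster variables, not $E_k$ versus this numerator.
\item The fallback cannot work. Since $E_k=x_kx_k'$ with $x_k'$ regular, $E_k$ vanishes identically on $\{x_k=0\}$.
\end{itemize}

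Separately, $c_0\ne c_d$ and the arithmetic progression of the $c_r$ do not follow from log-canonicity alone; the zero bracket is compatible with every cluster structure. The paper imports both from \cite[Proposition~2.5]{GSVdouble}.

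The paper avoids non-monomial numerators entirely. It first shows that a Laurent monomial with a cluster variable $f$ in its denominator is never in $\UC$. In the adjacent cluster such a monomial becomes $M'(f')^d/(P^d\bar M'')$, and a monomial cannot equal $P^d$ times a polynomial when $P$ is not a monomial. In the ordinary case, if both $f'=(M_1+M_2)/f$ and $(c_1M_1+c_2M_2)/f$ were regular with $c_1\ne c_2$, then $M_1/f$ would be regular, a contradiction.

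In the generalized case the paper does not insist on the witness $g=f'$. It sets $f_j=\frac1f\sum_{i=j}^d\frac{i!}{(i-j)!}M_i$, which satisfies $\{f,f_j\}/f=c_jf_j+\frac{c_d-c_0}{d}f_{j+1}$. If all $\{f,f_j\}/f$ with $j<d$ were regular, then $f_d=d!\,M_d/f$ would be regular. So the first $f_j$ whose bracket is not regular is a regular witness.

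Your single-witness route can be completed, but only with an argument you do not give; a grading argument works.
\begin{itemize}
\item On the variables common to both clusters, $\partial=\{x_k,\cdot\}/x_k$ is the Euler derivation for the grading $\deg x_i=\omega_{ki}$.
\item If $x_k'\,\partial E_k/E_k$ were Laurent in the adjacent cluster, extracting the appropriate power of $x_k'$ would give $\tilde M\,\partial E_k=QE_k$ in the polynomial ring of the common variables.
\item Comparing highest and lowest homogeneous components forces $Q$ to be homogeneous, with $Q=c\tilde M$ for every degree $c$ occurring in $E_k$.
\item This contradicts $c_0\ne c_d$.
\end{itemize}
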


Guided by Proposition~\ref{frozenchar}, we check that its condition holds for the variables $x_{1n}$ and $x_{n1}$, see Section~\ref{prooffrozenchar} below. However, we observe (by using computer-assisted computations) that for $n=4$, $\{\ef_6(X),x_{23}\}/\ef_6(X)$ is not a regular function.
This suggests that while $x_{1n}$, $x_{n1}$, and $\det X$ should remain frozen (recall that by construction $\det X$ is a Casimir of the bracket $\Poi$ on $GL_n$),
one cannot get a regular complete generalized cluster structure without unfreezing vertex $D$.
Indeed, this fact is proved later for arbitrary $n$ in Section~\ref{regcomplet}, see Remark~\ref{whyunfreeze}. 
In order to get such a structure, we unfreeze vertex $D$. But then the degrees of the monomials in the would be
exchange relation are out of balance. To fix the imbalance, we have to add arrows to and/or from $D$. Clearly, an arrow between $D$ and a mutable vertex would break the balance at this vertex, so the only possible new arrows are between $D$ and other frozen vertices, that is, $A$, $B$, and $C$. It is easy to see that there exists at most one choice of additional arrows that guarantees that after the mutation at $D$ the new function is regular. The following statement proven in Section~\ref{dexchangeproof} dictates this choice. 

\begin{proposition}\label{Dexchange}
Let $\eg(X)=\det\NG_{n-1,n-2}(X)_{[2,2n-3]}^{[2,2n-3]}$, then
\begin{equation}\label{dexchange}
\eg_{n-1,n-1}(X)\eg(X)=\ef_{2n-3}(X)\eg_{n-2,n-2}(X)+x_{n1}\ef_{2n-4}(X)\eg_{n-1,n-2}(X).
\end{equation}
\end{proposition}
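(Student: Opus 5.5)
The identity \eqref{dexchange} has the shape of a three-term Desnanot--Jacobi (Dodgson condensation) relation, so I would prove it by exhibiting all six determinants as minors of one matrix and then applying the Jacobi identity for complementary minors.

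A degree count supports this. By the definitions in Section~\ref{outline}, $\NG_{n-1,n-1}(X)$ has size $2n-2$, $\NG_{n-1,n-2}(X)$ and $\NG_{n-2,n-2}(X)$ have size $2n-3$, and $\eg(X)$ has size $2n-4$. Since $\NG_{n-1,n-1}(X)=\NF(X)_{[2,2n-1]}^{[2,2n-1]}$, the trailing minors $\ef_{2n-3}$ and $\ef_{2n-4}$ are obtained from $M:=\NG_{n-1,n-1}(X)$ by deleting its first one, respectively first two, rows and columns. Both sides of \eqref{dexchange} therefore have total degree $4n-6$ in the entries of $X$.

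\emph{Step 1: identify the submatrices of $M$.} Using the explicit construction of the matrices $\NG_{ij}$ and $\NF$ in Section~\ref{explicit}, I would show the following, after permuting rows and columns and tracking signs:
\begin{itemize}
\item $\NG_{n-1,n-2}(X)$ coincides with $M$ with one row and one column deleted, namely a row $a$ and a column $b$ adjacent to the block that produces $\ef$;
\item $\eg(X)$, which is $\NG_{n-1,n-2}(X)$ with its first row and column removed, is then $M$ with two rows $\{1,a\}$ and two columns $\{1,b\}$ deleted.
\end{itemize}
These matrices are staircase block matrices built from shifted copies of $X$ and zero blocks, so the identifications should amount to matching block patterns. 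Desnanot--Jacobi then gives
\[
\det M\cdot\det M^{\hat 1\hat a}_{\hat 1\hat b}=\det M^{\hat 1}_{\hat 1}\det M^{\hat a}_{\hat b}-\det M^{\hat 1}_{\hat b}\det M^{\hat a}_{\hat 1}.
\]
Here $\det M^{\hat 1}_{\hat 1}=\ef_{2n-3}(X)$. If $a,b$ are chosen correctly, I expect $\det M^{\hat a}_{\hat b}=\pm\eg_{n-2,n-2}(X)$: the choice of $(a,b)$ should be dictated by $\NG_{n-2,n-2}$ being obtained from $\NG_{n-1,n-1}$ by removing one row and one column.

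\emph{Step 2: the mixed product.} I would treat the term $\det M^{\hat 1}_{\hat b}\det M^{\hat a}_{\hat 1}$ by expansion along a sparse row or column. The first row or column of $M$, coming from the border row and column of $\NF$, should contain essentially a single nonzero entry $x_{n1}$ in the relevant position. Expanding one of the two minors there should give $\pm x_{n1}\,\ef_{2n-4}(X)$, and the other minor should be identified with $\pm\eg_{n-1,n-2}(X)$. If one minor does not factor this cleanly, I would use a generalized Plücker (Sylvester) relation for a bordered matrix instead: adjoin a column that is zero except for $x_{n1}$, then condense.

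\emph{Step 3: signs.} Finally I would fix the signs so that all three terms enter with $+$, as in \eqref{dexchange}. This uses the parity of the permutations used in Step~1 together with the sign of the expansion in Step~2. As a sanity check, I would compare with a direct computation for $n=4$, which the paper already treats by computer.

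I expect the main obstacle to be Step~1, namely finding the single ambient matrix and the indices $a,b$ so that the complementary minors are literally $\eg$, $\eg_{n-2,n-2}$ and $\eg_{n-1,n-2}$ rather than only up to block-triangular factors. If $M$ does not work as the ambient matrix, I would try $\NF(X)$ itself, using row and column $1$ and the rows and columns $a,b$, since $\ef_{2n-3}$ and $\ef_{2n-4}$ are naturally trailing minors of $\NF$.
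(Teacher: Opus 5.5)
Your Step~1 cannot be carried out, and the problem is not the choice of $a,b$ or the signs: the ambient matrix is wrong. Both $M=\NG_{n-1,n-1}(X)=\NG(\{1,n\}^{n-1})$ and $\NF(X)$ are built only from the first and last rows of $X$; they are Sylvester-type matrices of these two rows. So every minor of either matrix is independent of the entries $x_{n-1,l}$. By contrast, $\NG_{n-1,n-2}(X)=\NG(\{1,n\}^{n-4},\{1,n-1,n\},\{1,n\})$ and $\NG_{n-2,n-2}(X)=\NG(\{1,n\}^{n-3},\{1,n-1,n\})$ each contain one copy of the $(n-1)$st row of $X$. The functions $\eg_{n-1,n-2}$, $\eg_{n-2,n-2}$ and $\eg$ genuinely depend on that row; for instance, $\eg_{n-2,n-2}$ is linear in the $x_{n-1,l}$ with nonzero coefficients, as used in the proof of Proposition~\ref{giiirr}. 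Hence none of them is a minor of $M$ or of $\NF(X)$, and your fallback to $\NF(X)$ fails for the same reason. Step~1 is also internally inconsistent: if $\NG_{n-1,n-2}(X)$ were $M$ with row $a$ and column $b$ deleted, the complementary minor in your Desnanot--Jacobi identity would be $\pm\eg_{n-1,n-2}$. The products would then pair as $\ef_{2n-3}\eg_{n-1,n-2}$, not as in \eqref{dexchange}.

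The missing idea is to enlarge $M$ by the row $x_{n-1}$. Since $\eg_{n-1,n-1}$ must then be the maximal minor obtained by deleting that row, you are led to a rectangular matrix and a Pl\"ucker relation rather than a square condensation. The paper takes $L=\NG(\{1,n\}^{n-3},\{1,n-1,n\},\{1,n\})$, of size $(2n-1)\times(2n-2)$; this is $M$ with the row $x_{n-1}$ inserted in the penultimate block. It prepends the unit rows $[1,0,\dots,0]$ and $[0,\dots,0,1]$, so that maximal minors can also drop the first and/or last column. It then applies the four-term Pl\"ucker relation with $\alpha,\beta$ the unit rows, $\gamma$ the first row of $L$, $\delta$ the $x_{n-1}$ row, and $\xi,\zeta$ the last block. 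This gives:
$\widehat{\alpha,\beta,\delta}=\eg_{n-1,n-1}$;
$\widehat{\alpha,\beta,\gamma}=x_{n1}\eg_{n-1,n-2}$, because once $\gamma$ is removed column~1 contains only $x_{n1}$ (this is the legitimate source of your factor $x_{n1}$);
$\widehat{\beta,\gamma,\delta}=\ef_{2n-3}$, $\widehat{\alpha,\xi,\zeta}=-\eg_{n-2,n-2}$, and $\widehat{\delta,\xi,\zeta}=\ef_{2n-4}$;
$\widehat{\beta,\xi,\zeta}=0$, since column $2n-2$ becomes zero;
and the new function is $\eg=\widehat{\gamma,\xi,\zeta}$.

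Be aware that this last minor is $\NG_{n-2,n-2}(X)$ with its first row and column removed, i.e.\ the function $[\bar1\,2^{n-4}\,3]$ in \eqref{0exchange}. It is not $\NG_{n-1,n-2}(X)$ with its first row and column removed, as printed in the statement. With the printed version the identity already fails for $n=4$. Take $x_{11}=x_{12}=x_{13}=0$ and $x_{14}=x_{41}=1$. Then $\eg_{3,3}=1$ and the right-hand side is free of $x_{34}$, while $\det\NG_{3,2}(X)_{[2,5]}^{[2,5]}$ depends on $x_{34}$. So that is not the minor to aim for.
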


Consequently, we connect $D$ with an additional arrow to vertex $C$.
The corresponding quiver $Q_n$ is shown schematically in
Fig.~\ref{onepullback}.

The initial cluster $\FF_n$ is defined as follows: functions $\phi_{kl}(X)$ are assigned 
to vertices $(k,l)$, functions $\eg_{ij}(X)$ are assigned to vertices $\la i,j\ra$, functions $\ef_{2i}(X)$
are assigned to vertices $(i,+)$, functions $\ef_{2i-1}(X)$ are assigned to vertices $(i,-1)$. Additionally, $\FF_n$ contains frozen variables $x_{1n}$ assigned to $A$, $\det X$ assigned to $B$ and $x_{n1}$ assigned to $C$. 
Finally, there are isolated frozen variables $\ec_r(X)$ that enter the only string $\PP_n=(1, \ec_1(X)/\det X, \dots,\ec_{n-2}(X)/\det X,1)$,  so that the generalized exchange relation reads
\begin{equation}\label{ger}
\phi_{11}(X)\phi'_{11}(X)=x_{1n}\phi_{12}^{n-1}(X)+\sum_{r=1}^{n-2}\frac{\ec_r(X)}{\det X}\phi_{21}^r(X)\phi_{12}^{n-1-r}(X)
+x_{n1}\phi_{21}^{n-1}(X).
\end{equation}

\begin{figure}[ht]
\begin{center}
\includegraphics[width=10cm]{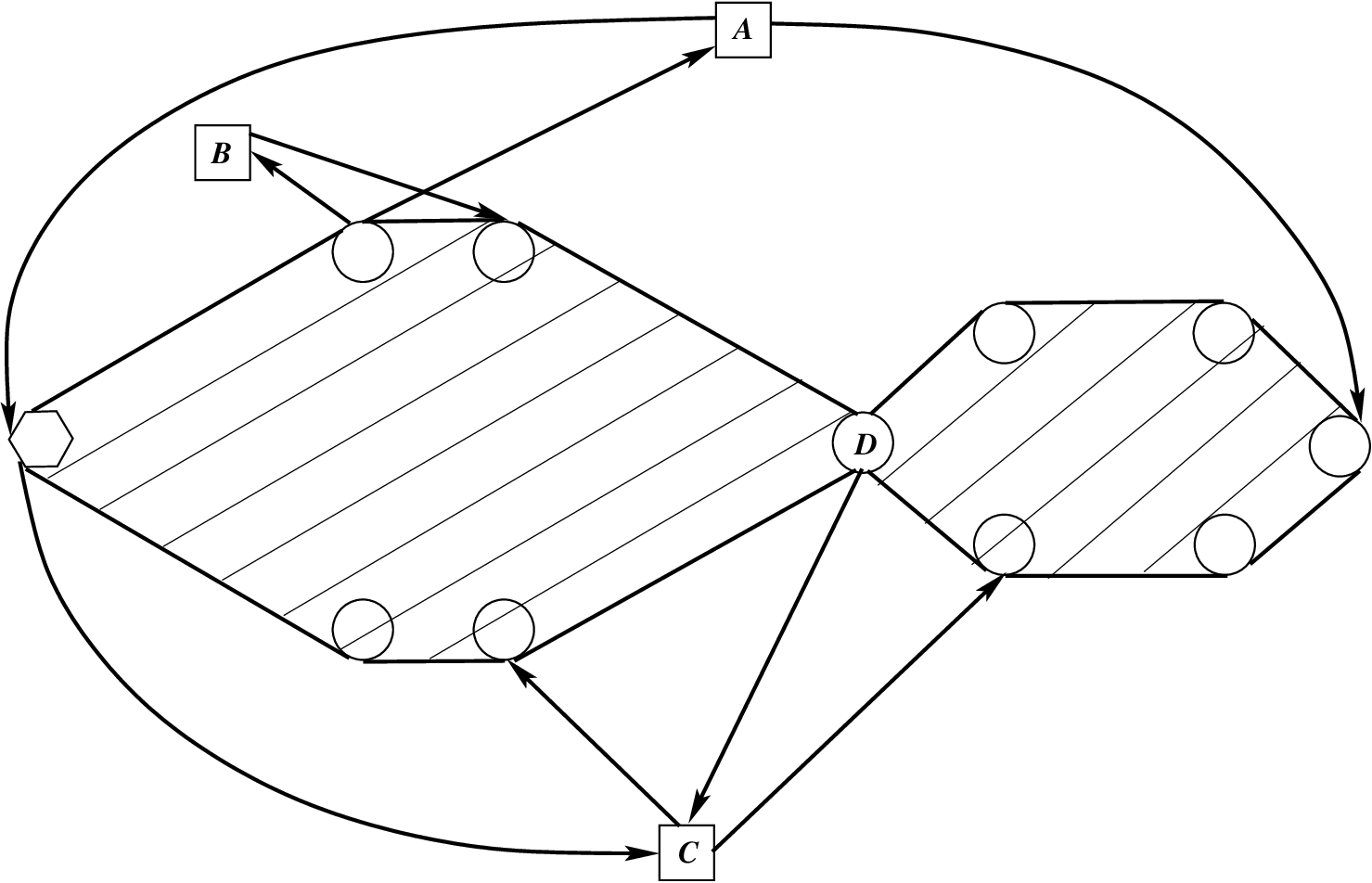}
\caption{Quiver $Q_n$: 
$x_{1n}$ assigned to $A$, $\det X$ assigned to $B$, $x_{n1}$ assigned to $C$, $\eg_{n-1,n-1}(X)=\ef_{2n-2}(X)$ assigned to $D$}
\label{onepullback} 
\end{center}
\end{figure}

We can now reformulate Theorem~\ref{thm:main} as follows.

\begin{theorem}\label{initialseed}
The generalized cluster structure $\G\CC_n=\G\CC(\FF_n,Q_n,\PP_n)$ is a regular complete generalized cluster structure on $GL_n$ compatible with $\Poi$.
\end{theorem}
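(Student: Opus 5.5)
The proof splits into three parts, established in turn: compatibility, regularity, and completeness. A fourth point, that $\FF_n$ is a transcendence basis of $\C(GL_n)$, is a by-product of the birationality argument.

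\emph{Compatibility.} By Theorem~\ref{twomaps}(iii) the map $\Psi$ is Poisson. By Theorems~\ref{dualgroupcs} and~\ref{todacs}, together with Corollary~\ref{trivialsigns}, the clusters $\widetilde{\DD}_{n-1}$ and $\bar\TE_{n-1}$ are log-canonical for $-\Poi^\dag$ and $\Poi^T$, respectively. Hence their pullbacks are log-canonical for $\Poi$. Since the pullbacks differ from $\phi_{kl},\eg_{ij},\ef_m$ only by monomials in $x_{1n}$ and $\det X$ (Theorem~\ref{expressions}), it remains to check two things:
\begin{itemize}
\item[(a)] $x_{1n}$, $x_{n1}$ and $\det X$ are log-canonical with every element of $\FF_n$. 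Here $\det X$ is a Casimir, and for $x_{1n}$, $x_{n1}$ a direct computation with \eqref{sklyabragen} suffices, since these are matrix entries in the corners of $X$.
\item[(b)] The two halves of $\FF_n$ are log-canonical with each other. This follows from the fibre product structure: the functions pulled back via $\Psi'$ and via $\Psi''$ Poisson commute up to terms involving the shared functions only.
\end{itemize}
For the $y$-variables I will use Proposition~\ref{regpullback}: the $y$-variables of the glued quiver at mutable vertices other than $D$ are pullbacks of $y$-variables of $Q^\dag_{n-1}$ or $\bar Q^T_{n-1}$. Compatibility therefore reduces to the known compatibility of the two factor structures. At the new mutable vertex $D$ I check directly that $\{\log y_D,\log f\}$ is a constant multiple of the Kronecker delta for all $f\in\FF_n$, using the exchange relation \eqref{dexchange}. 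Full-rankness of the exchange matrix is inherited from the two factors plus the extra row at $D$.

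\emph{Regularity.} Every variable in $\FF_n$ is a polynomial. For the adjacent clusters I use \eqref{ger}, \eqref{dexchange}, and the pullbacks of the known exchange relations. The regular pullback construction ensures that the mutated variables are pullbacks of regular functions multiplied by monomials in the distinguished polynomials, with nonnegative exponents forced by the discrepancy arrows to $A$ and $B$. The regular pullback must then be coherent with mutations, so I will invoke the sufficient conditions from \cite{GSVpullback}. Via the standard criterion, regularity of the initial cluster and of all its neighbors, together with coprimality of each mutated variable with its predecessor, gives regularity of the whole structure.

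\emph{Completeness.} This is the main obstacle. I would follow the strategy of \cite{GSVuni,GV}. First, birationality of $\Psi$ onto its image (a fibre product of dimensions $(n-1)^2+2(n-1)+1-1=n^2$, which matches) identifies $\C(GL_n)$ with the ambient field. Next, I show $\O(GL_n)\subseteq\UC(\G\CC_n)$ by exhibiting, for each matrix entry $x_{ij}$, Laurent expressions in the initial cluster and in all adjacent clusters. The natural tool is the inverse of $\Psi$: entries of $X$ are rational in the coordinates of $\widetilde{GL}_{n-1}^\dag\times\bar\RR_{n-1}$ with denominators only in $x_{1n}$ and $\det X$. The converse inclusion follows from regularity, with the ground ring $\C[x_{1n},x_{n1},\det X^{\pm1}]$. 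The delicate point is the unfrozen vertex $D$ and the vertices $A$ and $B$ where denominators appear. There I use Proposition~\ref{frozenchar} to confirm that exactly $x_{1n}$, $x_{n1}$ and $\det X$ must stay frozen, and I use the almost-cluster lifting result \cite[Theorem 6.2]{GSVpullback} to pass from the quasi-affine locus $x_{1n}\ne0$ to all of $GL_n$.
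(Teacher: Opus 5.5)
\textbf{Main gap: completeness.} Your completeness argument fails at its key step. You assert that the entries of $X$ are rational in the coordinates of the target of $\Psi$ with denominators only in $x_{1n}$ and $\det X$. (Incidentally, $\Psi$ is invariant under $X\mapsto cX$, and its target has dimension $(n-1)^2+2(n-1)=n^2-1$, not $n^2$. So $\Psi$ is not birational; you must adjoin $x_{1n}$ to get a birational map.)

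The explicit inversion does not give frozen-only denominators. Proposition~\ref{prop:theta} recovers $X_{[2,n-1]}$ through $U=V\gamma(V_+)^{-1}$ and the Gauss factorization of $V$. The trailing principal minors of $V$ are $g_{jj}(U)/g_{j-1,j-1}(U)$, i.e.\ $\eg_{jj}(X)/\eg_{j-1,j-1}(X)$ up to frozen factors, and these are mutable cluster variables. On the other half, the Laurent property of the coefficients of $\bar M$ comes from completeness of $\bar\CC^T_{n-1}$ over a ground ring in which $\bar t^+_{n-1}$ is inverted. But its pullback $\ef_{2n-2}=\eg_{n-1,n-1}$ sits at the unfrozen vertex $D$.

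What this route actually yields is Theorem~\ref{thm:InitLaurent}: Laurent expressions in the initial seed and in its neighbours, except in the directions $\la 2,2\ra,\dots,\la n-1,n-1\ra=D$. The Laurent property after mutating at these vertices is exactly what is missing, so you do not get $\O(GL_n)\subseteq\UC(\G\CC_n)$.

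The paper closes this by producing a second Laurent representation in a distant seed with coprime denominators:
\begin{itemize}
\item The mutation sequence $\Ws_n$, built from four-term Pl\"ucker and Desnanot--Jacobi relations, reaches a seed with mutable variables $\phi_{kl}$ and $\ek_{ij}$.
\item In that seed $X$ is Laurent with no frozen variables in the denominators (Theorem~\ref{fish_to_cuttlefish}). The proof uses the Gauss factorization $X=X_+X_{0,-}$ together with the Coxeter normal form from \cite{GSVdouble}.
\item Irreducibility of the $\eg_{ii}$ (Proposition~\ref{giiirr}) gives coprimality of the two denominators.
\item Proposition~\ref{twolaurent} then gives membership in $\UC(\G\CC_n)$, and Proposition~\ref{nofrozen} gives the ground ring $\C[x_{1n},x_{n1},\det X^{\pm1}]$.
\end{itemize}
Neither of your substitutes replaces this. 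Proposition~\ref{frozenchar} only shows that $D$ cannot stay frozen. \cite[Theorem 6.2]{GSVpullback} would need Laurent expressions without $x_{1n}$ in the denominators, which the $\Psi$-inversion (dividing by both $x_{1n}$ and $x_{n1}$) does not supply.

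\textbf{Compatibility and regularity.} Both parts are incomplete exactly where the glued structure differs from its two factors.
\begin{itemize}
\item At $D$, the exchange relation only tells you what $y_D$ is. Since $y_D$ involves $g_{n-1,n-2}/g_{n-2,n-2}$ and $\bar t^+_{n-2}/\bar t^-_{n-1}$, which are not $y$-variables of the factor structures, compatibility of the factors fixes their brackets with the other cluster variables only up to one boundary constant. The paper computes these constants separately in Lemmas~\ref{omega1}, \ref{baromega1}, \ref{omega0}, \ref{baromega0}, combined with the homogeneity formula $\{x_{1n},g\}=\frac1n(\bar\xi_g-\xi_g)x_{1n}g$.
\item You omit the condition of Proposition~\ref{compatchar} that the monomials $\hat p_{1r}$ built from the string $\ec_r/\det X$ be Casimirs. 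This requires showing that $c_r(\Psi'(X))$ and $x_{1n}/x_{n1}$ are Casimirs of $\Poi$.
\item For regularity, the glued seed is not a regular pullback, since $D$ is unfrozen and joined to $C$. So the coherence results of \cite{GSVpullback} say nothing beyond the initial star.
\item The starfish-type criterion you invoke needs coprimality statements that you do not prove. In the paper, regularity instead comes from Proposition~\ref{regcomplete}, whose condition (iii) is precisely the completeness statement discussed above.
\end{itemize}
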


\begin{remark} An analog of Theorem~\ref{initialseed} is valid for $n=3$. In fact, the components $\FF_3$ and $\PP_3$ of the initial seed are defined by the same formulas as above; the quiver $Q_3$ 
is different, see~\cite{GSVpullback} for details.
\end{remark}

5. For a diagonal matrix $D=\diag(d_1,\dots,d_n)$ denote by $t^D$ the diagonal matrix $\diag(t^{d_1},\dots,t^{d_n})$.
Let $D_n=\diag(n,\dots,2,1)$ and $\bar D_n=\diag(1,2,\dots,n)$.
We say that a function $g$ is homogeneous with respect to the action $X \mapsto t^{ D_n} X s^{\bar D_n}$ if
\begin{equation}
\label{eq:T-hom}
g(t^{D_n} X s^{\bar D_n}) = t^{\xi_g} s^{\bar\xi_g} g(X),
\end{equation}
where numbers $\xi_g$ and $\bar\xi_g$ depend only on $g$. 

The proof of the compatibility statement in Theorem~\ref{initialseed} is based 
on~\cite[Proposition~2.1]{GSVnewdouble} and employs homogeneity of the cluster variables in the initial cluster 
$\FF_n$, see Section~\ref{compatibility} below for details. As an immediate corollary we get that the extended exchange matrix has a full rank (see~\cite[Proposition~2.5]{GSVdouble}). 

Another corollary is the following statement.

\begin{proposition}
\label{prop:torus}
Every cluster variable in $\G\CC_{n}$ is a homogeneous function in matrix entries of $X$ and is also homogeneous with respect to the action $X \mapsto t^{D_n} X s^{\bar D_n}$.
\end{proposition}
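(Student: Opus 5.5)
The plan is an induction on the length of mutation sequences starting from the initial seed $(\FF_n,Q_n,\PP_n)$. The homogeneity of each individual variable in $\FF_n$ is proved in Section~\ref{compatibility} via the explicit form of the matrices $\Nphi_{kl}$, $\NG_{ij}$, $\NA$, $\NB$, $\NF$. Each of these is a determinant (or a coefficient of a determinant) of a matrix built from entries of $X$ and zeros. So each is homogeneous in the entries of $X$. Each is also homogeneous under $X\mapsto t^{D_n}Xs^{\bar D_n}$, because rescaling rows and columns of $X$ can be absorbed into row and column rescalings of the block matrix. The frozen variables $x_{1n}$, $x_{n1}$, $\det X$ and the coefficients $\ec_r(X)$ are homogeneous for the same reason, since $\det(\NA+\lambda\NB)$ has bihomogeneous coefficients.

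The inductive step is the key point. Suppose all variables of a seed are homogeneous of degrees $\deg x_i$, $\xi_{x_i}$, $\bar\xi_{x_i}$. A mutated variable $x_k'$ is then homogeneous exactly when all terms on the right-hand side of \eqref{genger} have the same multidegree. For an ordinary vertex this says the $y$-variable $y_k$ in \eqref{yvar} has degree zero. For the special vertex it says $y_k$ has degree zero and each $p_{kr}u_{k;>}^r v_{k;>}^{[r]}u_{k;<}^{d_k-r}v_{k;<}^{[d_k-r]}$ has the same degree. I would deduce the degree-zero property of all $y$-variables from the compatibility statement in Theorem~\ref{initialseed}, following~\cite[Proposition~2.1]{GSVnewdouble}. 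The torus action $X\mapsto t^{D_n}Xs^{\bar D_n}$ (and scalar multiplication) is generated by Hamiltonian-type flows: $\{\log y_k,\cdot\}$ commutes with the corresponding Euler vector fields. Alternatively, and more concretely, the argument of~\cite[Proposition~2.1]{GSVnewdouble} is set up so that each $y_k$ is invariant under this torus action. Once every $y$-variable in a seed has degree zero, the exchange relation gives a homogeneous $x_k'$ with degree $\deg u_{k;>}+\deg v_{k;>}-\deg x_k$.

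It remains to propagate the property to the new seed. Homogeneity is preserved under mutation, and $y$-variables transform by the standard rule $y_j'=y_jy_k^{[b_{jk}]_+}(1+y_k)^{-b_{jk}}$. In the generalized case, $(1+y_k)$ is replaced by the polynomial $\sum_r p_{kr}y_k^r$ in $y_k$ with homogeneous coefficients of degree zero. So degree zero of all $y$-variables is a mutation-invariant property. The coefficient string at the special vertex $(1,1)$ consists of $\ec_r/\det X$. For these I would check directly from \eqref{ger} that all $n$ terms have equal multidegree. This uses the expressions \eqref{badc} and the fact that $c_r(\Psi'(X))$ pulls back from the homogeneous characteristic polynomial coefficients. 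Reversing the string does not affect this.

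The main obstacle is the base case for the special vertex and for vertex $D$, whose arrows were added by hand. For $D$, Proposition~\ref{Dexchange} supplies a relation \eqref{dexchange} between polynomials, so both terms on the right must have the same multidegree as the left-hand side. That gives degree zero of $y_D$ directly. For the $(1,1)$ vertex I would compute the three torus weights of $x_{1n}$, $x_{n1}$, $\phi_{12}$, $\phi_{21}$, $\ec_r$ explicitly from the shapes of $\Nphi_{kl}$ and $\NA,\NB$. I expect this bookkeeping to be the only genuinely computational part.
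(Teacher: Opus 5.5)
\textbf{There is a genuine gap: the base case for the $y$-variables at the generic mutable vertices of $Q_n$.}

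Your overall scheme is sound. Propagating weight-zero $y$-variables through mutations is a hand-made proof of the extendability criterion \cite[Proposition~2.6]{GSVdouble} that the paper invokes. Your base-case checks at $D$ (via the polynomial identity \eqref{dexchange}) and at $(1,1)$ (via the weights of the terms of \eqref{ger}) are also fine; the latter is what the paper does, together with the weight-zero check of $\hat p_{1r}$.

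The problem is at all the remaining mutable vertices of $Q_n$. There you propose to obtain $\xi_{y_\alpha}=\bar\xi_{y_\alpha}=0$, and ordinary degree zero, from compatibility. That does not work.

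\begin{itemize}
\item Compatibility gives $\{x_\beta,y_\alpha\}=0$ for frozen $\beta$.
\item By Lemma~\ref{x1nbra}, $\{x_{1n},y_\alpha\}=\frac1n(\bar\xi_{y_\alpha}-\xi_{y_\alpha})x_{1n}y_\alpha$, and the same holds for $x_{n1}$.
\item $\det X$ is a Casimir.
\item The isolated $\ec_r$ add nothing, since $\ec_r/x_{1n}=c_r(\Psi'(X))\det X$ is a Casimir.
\end{itemize}

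So the Hamiltonian flows of the frozen variables generate only the one-parameter subgroup $X\mapsto t^{-D_n}Xt^{\bar D_n}$. Compatibility therefore yields the single relation $\xi_{y_\alpha}=\bar\xi_{y_\alpha}$. It says nothing about $\xi_{y_\alpha}+\bar\xi_{y_\alpha}$, and nothing about the ordinary degree, since scalar multiplication is not in this torus.

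Nor does \cite[Proposition~2.1]{GSVnewdouble} (Proposition~\ref{compatchar}) contain any torus-invariance statement; it is only a sufficient criterion for compatibility. Moreover, relative to the paper your argument is circular. Compatibility is proved there using Lemma~\ref{yweights}: Cases~1 and~3 need $\{x_{1n},y_\alpha\}=0$, which is deduced from $\xi_{y_\alpha}=\bar\xi_{y_\alpha}=0$.

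What is missing is exactly Lemma~\ref{yweights}. You need a direct vertex-by-vertex verification that every $y_\alpha$ has weight zero, using the explicit weights \eqref{explweights} of Lemma~\ref{weights} and the arrows of $Q_n$. You also need the separate check that all exchange relations are homogeneous in the ordinary sense.

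A less computational way to fill this for vertices other than $D$ and $(1,1)$ is via pullbacks.
\begin{itemize}
\item By Proposition~\ref{regpullback}, $y_\alpha(X)$ is the pullback of the corresponding $y$-variable via $\Psi'$ or $\Psi''$.
\item One has $\Psi'(t^{D_n}Xs^{\bar D_n})=(ts)^{-1}\Psi'(X)$ and $\Psi'(cX)=\Psi'(X)$.
\item $\Psi''$ is invariant under scalars and intertwines the torus with $\bar M(\lambda)\mapsto t^{1-n}\bar M(s\lambda)$.
\end{itemize}
It would then suffice to know two facts, both of which must be checked rather than inferred from compatibility:
\begin{itemize}
\item the $y$-variables of $\GCC_{n-1}^\dag$ are invariant under $U\mapsto cU$;
\item those of $\bar\CC_{n-1}^T$ are invariant under $\bar h_i\mapsto c\,s^{-i}\bar h_i$.
\end{itemize}
With Lemma~\ref{yweights} in hand, the rest of your induction goes through.
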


\begin{remark} In fact, Proposition~\ref{prop:torus} describes the global toric action of $(\C^*)^2$ on $\G\CC_n$, 
see~\cite[Section 2.1]{GSVdouble} for relevant definitions. In line with the general conjecture, the rank of this action equals $|\Pi\setminus\Gamma_1|+ |\Pi\setminus\bar\Gamma_1|=2$.
\end{remark}

To proceed with the proof we invoke the following modification of~\cite[Proposition 2.1]{GSVMem}.

\begin{proposition}\label{regcomplete}
Let $V$ be a Zariski open subset in $\C^{n+m}$ and $\G\CC$ be a generalized cluster structure in $\C(V)$ with
$n$ mutable and $m$ frozen variables such that

{\rm (i)} the initial extended cluster consists of regular functions;

{\rm (ii)} every mutation of the initial seed produces a regular function;

{\rm (iii)} every regular function on $V$ belongs to the upper cluster algebra $\UC_\C(\G\CC)$

Then $\G\CC$ is a regular complete generalized cluster structure. 
\end{proposition}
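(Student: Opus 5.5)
The plan is to follow the proof of~\cite[Proposition 2.1]{GSVMem} and adapt it to generalized exchange relations. By (i) and (ii), it is enough to show two things: that every cluster variable in every seed is regular on $V$, and that $\UC_\C(\G\CC)=\O(V)$. The inclusion $\O(V)\subseteq\UC_\C(\G\CC)$ is exactly hypothesis (iii). So the remaining work is to prove regularity of all cluster variables and the reverse inclusion $\UC_\C(\G\CC)\subseteq\O(V)$.

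For regularity, we use the Laurent phenomenon for generalized cluster structures~\cite{GSVdouble}. Every cluster variable is a Laurent polynomial in the variables of the initial cluster and also in those of each adjacent cluster. Let $x_k'$ be the variable obtained by mutating the initial seed in direction $k$. Distinct initial cluster variables are coprime regular functions; this follows from the Laurent phenomenon combined with algebraic independence, as in the standard argument. The same holds for $x_k$ and $x_k'$, because the exchange polynomial~\eqref{genger} is not divisible by $x_k$.

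Now take an arbitrary cluster variable $z$. We write it as a Laurent polynomial in the initial cluster, $z=P/\prod x_i^{a_i}$, and also as a Laurent polynomial in the cluster adjacent in direction $k$. Comparing the two expressions shows that the only possible pole of $z$ along the divisor $\{x_k=0\}$ would have to appear in the second expression as well. There $x_k$ is replaced by $x_k'$, which is regular by (ii) and coprime to $x_k$. Hence $z$ has no pole along $\{x_k=0\}$. Running over all $k$, we conclude that $z$ is regular on $V$. Frozen variables stay in the ground ring and are regular by (i).

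The reverse inclusion goes the same way. An element of $\UC_\C(\G\CC)$ is a Laurent polynomial in every cluster, in particular in the initial cluster and in all $N$ adjacent clusters. By the coprimality argument above, its denominator can only involve the frozen variables allowed in the ground ring, and these are invertible on $V$ or enter polynomially. Therefore the element is regular.

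The main obstacle is the coprimality claim in the generalized setting. The first and last terms of a generalized exchange relation may share powers of frozen variables, and in the almost-cluster setting they need not be coprime. So we have to check that irreducible factors of $x_k'$ are not factors of $x_k$ and do not divide other initial variables. For this I would use the strings $p_{kr}$ together with the irreducibility of the initial cluster variables as elements of $\O(V)$; the latter uses that $V$ is open in affine space, so $\O(V)$ is a UFD. This is where the modification relative to~\cite{GSVMem} is needed.
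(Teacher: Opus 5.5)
Your skeleton is the natural one: compare the Laurent expansions of an element in the initial cluster and in each adjacent cluster, and rule out poles along $\{x_k=0\}$. It is presumably what lies behind \cite[Proposition 2.1]{GSVMem}, which is all the paper cites for this statement. The gap is the coprimality input, and the justifications you give for it do not work.

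Specifically:
\begin{itemize}
\item Algebraic independence plus the Laurent phenomenon does not make the initial variables coprime in $\O(V)$.
\item The fact that the exchange polynomial is not divisible by $x_k$ does not make $x_k$ and $x_k'$ coprime.
\item The UFD property of $\O(V)$ says nothing about the irreducibility of the particular elements $x_i$.
\end{itemize}
For instance, on $V=\C^3$ with coordinates $a,b,c$, take one mutable variable $x_1=ab$, frozen variables $x_2=a$ and $x_3=a^2(bc-1)$, and the exchange relation $x_1x_1'=x_2^2+x_3$. The extended cluster is a transcendence basis and (i), (ii) hold. Yet $x_1'=ac$ shares the factor $a$ with $x_1$. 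The strings $p_{kr}$ are beside the point. The only property of the exchange polynomial $P_k$ that matters is that it is not a monomial. The non-coprime first and last terms you worry about belong to almost-cluster structures, not to this statement.

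The missing idea is that coprimality must be extracted from hypothesis (iii). You use (iii) only for $\O(V)\subseteq\UC_\C(\G\CC)$. In the example above (iii) fails, since $b\notin\UC_\C(\G\CC)$. Here is how (iii) gives what you need:
\begin{itemize}
\item Suppose $x_k=qr$ in $\O(V)$. Then $q,r\in\UC_\C(\G\CC)$ by (iii).
\item Their product is a unit of $\widehat\AA[x_1^{\pm1},\dots,x_n^{\pm1}]$. So $q$ is a unit of the ground ring times a Laurent monomial $\prod x_l^{a_l}$.
\item Laurentness of $q$ and $r$ in the cluster adjacent in direction $l$, where $x_l=P_l/x_l'$ with $P_l$ not a monomial, forces $0\le a_l\le\delta_{kl}$. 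Hence $x_k$ is prime in $\O(V)$ (units of the ground ring being units of $\O(V)$).
\item Similarly, $x_i/x_k$ for $i\ne k$ and $x_k'/x_k$ are not regular. In the cluster adjacent in direction $k$ they become $x_ix_k'/P_k$ and $(x_k')^2/P_k$, which are not Laurent.
\end{itemize}
With these facts your comparison of the two expansions goes through. It shows that every element of the upper bound of the initial seed is regular. That upper bound always contains $\UC_\C(\G\CC)$, so no rank condition is needed. This gives $\UC_\C(\G\CC)\subseteq\O(V)$ and hence regularity of all cluster variables.
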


Condition (i) follows from the construction above. Condition (ii) for all mutable vertices of $Q_n$ distinct from vertex $D$ follows immediately from Theorems~\ref{dualgroupcs} and~\ref{todacs}. Condition (ii) at vertex $D$ is
verified by Proposition~\ref{Dexchange}. 

To check condition (iii) in Proposition~\ref{regcomplete}, we use the following easy modification
of~\cite[Lemma 8.3]{GSVMem}.

\begin{proposition}\label{twolaurent} Let $R\subset \C(V)$ be a unique factorization domain and $F\in\C(V)$.
If there exist two Laurent representations $F=F_1/f_1=F_2/f_2$ such that $F_1$ and $F_2$ belong to the upper cluster algebra $\UC_\C \subseteq \C(V)$ over $R$ and $f_1$ and $f_2$ are monomials in cluster variables that are coprime in $\C[V]$, 
then $F$ itself belongs to $\UC_\C$.
\end{proposition}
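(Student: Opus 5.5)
The plan is to follow the proof of \cite[Lemma 8.3]{GSVMem}, using unique factorization in $R$ to clear the two denominators. Write $F=F_1/f_1=F_2/f_2$, so that
\[
F_1f_2=F_2f_1
\]
holds in $\C(V)$. Both sides lie in $\UC_\C$, because $f_1,f_2$ are monomials in cluster variables of some seeds and hence Laurent monomials in every cluster. The aim is to show that for an arbitrary seed $\Sigma$ of $\G\CC$, the function $F$ is a Laurent polynomial in the cluster variables of $\Sigma$ over $R$; taking the intersection over all seeds then gives $F\in\UC_\C$.

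First I fix a seed $\Sigma$ with extended cluster $\widetilde{\x}$. Then $F_1$ and $F_2$ are Laurent polynomials $P_1/m_1$ and $P_2/m_2$ in $\widetilde{\x}$, where $P_i$ are polynomials in the cluster variables over $R$ and $m_i$ are monomials in the cluster variables of $\Sigma$. Therefore
\[
F=\frac{P_1}{m_1f_1}=\frac{P_2}{m_2f_2}.
\]
The key point is that the polynomial ring over the UFD $R$ is again a UFD, and the ambient field is identified with $\C(V)$. In this setting the only possible obstruction to $F$ being Laurent in $\widetilde{\x}$ is an irreducible factor $w$ of the denominator that is not a cluster variable of $\Sigma$.

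I will argue with divisibility in $\C[V]$, the natural ring here. Suppose $w$ is an irreducible element of $\C[V]$ dividing the reduced denominator of $F$. Then $w$ divides both $m_1f_1$ and $m_2f_2$. Since $f_1$ and $f_2$ are coprime in $\C[V]$, $w$ cannot divide both of them, so it must divide $m_1$ or $m_2$. Hence every irreducible factor of the reduced denominator of $F$ divides a monomial in the cluster variables of $\Sigma$. By the exchange relations and the Laurent phenomenon, these cluster variables are the only allowed denominators, so $F$ can be written with a monomial in $\widetilde{\x}$ as denominator.

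It then remains to check that the numerator is polynomial over $R$. This follows from comparing the two representations: the numerator is $P_1\cdot(m_2f_2/\gcd)/\dots$, which is a polynomial once the denominators cancel.

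The main obstacle is justifying that divisibility in $\C[V]$ transfers correctly to divisibility in the Laurent rings of each cluster. The gap is that cluster variables of $\Sigma$ need not be irreducible in $\C[V]$, and $f_1,f_2$ come from clusters different from $\Sigma$. I would first try to work entirely inside the ring $\C[V]$, which is assumed to be a UFD here via $R$. Under that assumption, the set $L_\Sigma$ of functions of the form $G/m$, with $G$ regular and $m$ a monomial in $\widetilde{\x}$, is the localization of $\C[V]$ at the cluster variables of $\Sigma$. Localization preserves the UFD property and gcds, and $F$ lies in $L_\Sigma$ by the coprimality argument above. Finally, since all seeds are regular, $F\in\bigcap_\Sigma L_\Sigma$, and this intersection coincides with the generalized upper cluster algebra tensored with $\C$.
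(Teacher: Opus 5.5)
There is a genuine gap. Your divisibility argument takes place entirely in $\C[V]$, and all it proves is $F\in\C[V][\x_\Sigma^{-1}]$ (your $L_\Sigma$): every irreducible factor of the reduced denominator of $F$ divides a monomial in $\widetilde{\x}$. That is not membership in $\UC_\C$. The upper cluster algebra is the intersection of the rings $\mathcal L_\Sigma$ of Laurent polynomials in $\widetilde{\x}$ over the ground ring $R$, and $\mathcal L_\Sigma\subseteq\C[V][\x_\Sigma^{-1}]$ is in general a proper inclusion.

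Your closing claim that $\bigcap_\Sigma\C[V][\x_\Sigma^{-1}]=\UC_\C$ cannot hold. The left side contains $\C[V]$, so the claim would make every regular structure complete and the proposition empty. In the paper's application $F=x_{ij}$ is already regular on $GL_n$, so what you establish there is vacuous. The sentence ``the numerator is polynomial over $R$ once the denominators cancel'' is exactly the content of the proposition, and it is left unjustified.

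Two related misreadings also feed into this. First, the UFD hypothesis is on the ground ring $R$, not on $\C[V]$. Second, a monomial in cluster variables of another seed is a Laurent \emph{polynomial}, not a Laurent monomial, in $\widetilde{\x}$; if it were a monomial there would be nothing to prove.

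The argument has to be run in $\mathcal L_\Sigma$, which is a UFD precisely because $R$ is. There $Ff_1=F_1$ and $Ff_2=F_2$ both lie in $\mathcal L_\Sigma$. Hence $F\in\mathcal L_\Sigma$ as soon as $f_1$ and $f_2$, expanded as Laurent polynomials in $\widetilde{\x}$, have no common prime factor in $\mathcal L_\Sigma$. So the real step is transferring coprimality from $\C[V]$ to every $\mathcal L_\Sigma$. You correctly flagged this transfer as the main obstacle, but then bypassed it by changing rings.

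Coprimality in $\C[V]$ (a UFD for $GL_n$) does exclude a common prime factor $p\in\mathcal L_\Sigma$ of $f_1,f_2$ that remains a non-unit in $\C[V][\x_\Sigma^{-1}]$. It says nothing about a prime of $\mathcal L_\Sigma$ that becomes a unit there, that is, a hypersurface of the cluster torus of $\Sigma$ not met by $V$. Some further argument is needed to rule that case out, and your proposal does not supply one.
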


We start with the following statement. 
%Let $U=\Psi'(X)$ and $\bar M=\Psi''(X)$. 

\begin{theorem}
\label{thm:InitLaurent}
%Each matrix entry $x_{ij}$, $i\in[2,\ldots, n-1]$, $j\in[1,\ldots, n]$, is, up to a multiplication by a monomial in 
%$x_{n1}$, $x_{1n}$, and $\ef_{2n-2}(X)$, a polynomial in matrix entries of $U$ and coefficients of $\bar M$.
Every matrix entry of $X$ can be expressed as a Laurent polynomial in terms of cluster variables in the initial seed 
$(Q_n,\FF_n,\PP_n)$ of $\G\CC_n=\G\CC(Q_n,\FF_n,\PP_n)$ as well as in seeds obtained from the initial one by a single mutation at any vertex other than vertices $\langle 2, 2\rangle,\ldots, \langle n-1,n-1\rangle = (n-1, +)$.
\end{theorem}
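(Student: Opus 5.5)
We plan to exploit the factorization of $\Psi=(\Psi',\Psi'')$ and the known Laurent phenomena on the two target structures. The first step is to invert $\Psi$ explicitly, up to the frozen variables. Concretely, we aim to show the following: after multiplication by a suitable monomial in $x_{1n}$, $x_{n1}$, $\det X$ and $\ef_{2n-2}(X)$, every entry $x_{ij}$ is a polynomial in two sets of data. The first set is the matrix entries of $U=\Psi'(X)\in GL_{n-1}$. The second is the Laurent coefficients $\bar h_i$ of $\bar M=\Psi''(X)\in\bar\RR_{n-1}$.

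We expect the first row and last column of $X$ to be recovered essentially from $\bar M$, that is, from $p(\lambda)$ and $\bar q(\lambda)$. The remaining block of $X$ should then come from $U$ after conjugation and rescaling by $x_{1n}$. The denominators are expected to be exactly those appearing in \eqref{badphi}--\eqref{badbart}. The one additional denominator is $\ef_{2n-2}=\eg_{n-1,n-1}$, which plays the role of a Hankel determinant needed to reconstruct $p$ and $\bar q$ from the moments $\bar h_i$.

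In the second step we combine this inversion with Theorems~\ref{dualgroupcs} and~\ref{todacs}. Since these structures are complete, the entries of $U$ lie in $\UC(\GCC_{n-1}^\dag)$, and the coefficients $\bar h_i$ (equivalently, the coefficients of $p$ and $\bar q$) lie in $\UC(\bar\CC_{n-1}^T)$. In particular, they are Laurent polynomials in the modified initial clusters $\widetilde\DD_{n-1}$ and $\bar\TE_{n-1}$. By Theorem~\ref{expressions}, pulling back each initial variable gives an element of $\FF_n$ divided by a monomial in $x_{1n}$ and $\det X$. Substituting, we find that each $x_{ij}$ is a Laurent polynomial in $\FF_n$; the only extra denominators are frozen ($A$, $B$, $C$) or the variable at $D$. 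This settles the initial seed.

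For a single mutation at a vertex $v\ne D$ not on the excluded list, we proceed as follows. Mutation at $v$ in $Q_n$ coincides with mutation in the corresponding component quiver, apart from the frozen arrows added by the regular pullback. By Proposition~\ref{regpullback}, the mutated variable is the numerator of the pulled back mutated variable. Laurentness of $U$ and $\bar M$ in the mutated seeds of the target structures then transfers exactly as before.

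Vertex $D$ needs separate treatment, since $\ef_{2n-2}$ itself appears as a denominator in the inversion. Here we use \eqref{dexchange} to rewrite $\ef_{2n-2}^{-1}$ in terms of $\eg$. We must then show that the inversion formulas can be reorganized to carry only powers of the new variable $\eg(X)$ in the denominator. Our first attempt would be an alternative inversion: recover the coefficients of $p$ through the trailing minors of $\NF$ of size $2n-3$ together with $\eg$, rather than through the full size-$(2n-2)$ minor.

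The vertices $\langle i,i\rangle$ are excluded because mutation there changes a variable that is glued between the two components, or that is needed in both inversions. This is presumably why the statement avoids them, and they will be handled later by Proposition~\ref{twolaurent}.

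We expect the main obstacle to be the first step: producing the explicit inverse of $\Psi$ and verifying that its denominators are exactly the stated frozen monomials times powers of $\ef_{2n-2}$, with no spurious factors. This requires the explicit matrices of Section~\ref{explicit}. The $D$-mutation is the second delicate point.
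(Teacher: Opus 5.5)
The gap is your first step. You flag the explicit inversion of $\Psi$ as the main obstacle but never carry it out, and it is essentially the whole content of the paper's proof. What is missing is a way to solve $X_L=UX_R$ for the middle rows of $X$.

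The paper does it as follows:
\begin{itemize}
\item It rewrites the equation as a linear system for $\widetilde X_{[2,n-1]}$.
\item It substitutes $U=V\bgamma(V_+)^{-1}$, the map $H$ of Corollary~\ref{cor:Fprincipal}.
\item It refactors $V_{0,-}=\gamma^*(N)^{-1}N$, which reduces the system to \eqref{eq:greekYZ}; this is solved via Cayley--Hamilton (Lemmas~\ref{eq:vtomu} and~\ref{greekYZsols}).
\end{itemize}
The outcome is Proposition~\ref{prop:theta}: $X=\Theta K$, where $K$ is the second factor in \eqref{eq:ThetaL} and depends only on the first and last rows of $X$, while $\Theta=\diag(1,V_+)\diag(N^{-1},1)$ depends only on $U$. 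The denominators of $\Theta$ are trailing principal minors of $V$, i.e.\ $g_{jj}(U)/g_{j-1,j-1}(U)$, and the entries of $V$ carry $g_{j-1,j-1}(U)$. By \eqref{badg} these are $\eg_{jj}(X)$ up to frozen factors. Combined with the localization at $\bar t^+_{n-1}$ on the $\bar\RR_{n-1}$ side (your Hankel observation), all denominators are bounded by monomials in $\eg_{22},\dots,\eg_{n-1,n-1}$ and $x_{1n},x_{n1},\det X$. Your transfer argument, which is essentially Lemma~\ref{lem:upperalgebras}, then finishes the proof for every single mutation that keeps these $\eg_{jj}$ as cluster variables.

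This also shows that you misread the statement and misdiagnosed the exclusions. The vertex $\la n-1,n-1\ra=(n-1,+)$ \emph{is} $D$, so mutation at $D$ is excluded. Your planned treatment of $D$ via \eqref{dexchange} is therefore unnecessary. It is also unsupported: the paper obtains a Laurent representation free of $\ef_{2n-2}$ only at the end of the long sequence $\Ws_n$ in Theorem~\ref{fish_to_cuttlefish}. The vertices $\la j,j\ra$ with $j<n-1$ are not glued to anything. They are excluded because $\eg_{jj}$ may occur in the denominators coming from $\Theta$. Your predicted inversion, with only $\ef_{2n-2}$ and frozen variables in the denominators, would give a stronger statement than the one asked, but it is not what the paper's argument yields and you offer no mechanism for it. For $n=4$ one can check that the $g_{22}(U)$-poles of $\Theta$ cancel, so the sharper claim may be true. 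It is not needed here, and as written your plan rests on it.
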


Concequently, we get a Laurent representation for all matrix entries of $X$.   
To finish the proof of completeness, we need to get another Laurent representation in some seed that does not contain 
$\ef_{2n-2}(X)$ as a  cluster variable. We achieve this goal via a long sequence of mutations that starts with a mutation at $D$ and results in a seed which evokes parallels to our construction in \cite{GSVuni} and in which  Laurentness can be established by combining properties of rational Poisson maps  in \cite{GSVuni} with
those of normal forms under the conjugation by triangular elements that we studied  in \cite{GSVdouble}.

To this end, we define yet another family of $n(n-1)/2-1$ matrices $\NK_{ij}(X)$ for $1\le j<i\le n$, 
$(i,j)\ne(n,1)$, of size $(n-j+2)\times(n-j+2)$ whose entries are either zeros or entries of $X$, and put
$\ek_{ij}(X)=\det \NK_{ij}(X)$. Additionally, we put $\ek_{ii}(X)=\det X_{[i,n]}^{[i,n]}$ for $i\in [1,n]$, so that
$\ek_{11}(X)=\det X$.

\begin{theorem}
\label{fish_to_cuttlefish}
{\rm (i)} There exists a sequence of mutations $\Ws_n$ that starts with the initial seed in $\G\CC_{n}$ and results in a seed with the set of mutable cluster variables given by 
\[
\{\phi_{kl}(X): k,l \geq 1, \ k+l \leq n-1 \}\cup \{\ek_{ij}(X) : 1\le j \leq i \leq n, (i,j)\ne (1,1), (n,1) \}.
\]

{\rm (ii)} All matrix entries of $X$ are Laurent polynomials in terms of this seed that do not contain any of the frozen variables in the denominator.
\end{theorem}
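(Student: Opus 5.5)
Part (i) will be proved by constructing $\Ws_n$ explicitly and tracking the exchange relations, using determinantal identities to identify each new cluster variable. The sequence starts with the mutation at $D$. By Proposition~\ref{Dexchange}, this replaces $\eg_{n-1,n-1}=\ef_{2n-2}$ with $\eg=\det\NG_{n-1,n-2}(X)_{[2,2n-3]}^{[2,2n-3]}$. We then sweep through the part of $Q_n$ coming from the right triangle of $\Psi'^*Q_{n-1}^\dag$ and from the Toda part $\Psi''^*\bar Q_{n-1}^T$, proceeding diagonal by diagonal along the lozenge. The vertices $(k,l)$ of the left triangle are never mutated, so the $\phi_{kl}$ survive. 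The $n(n-1)/2-1$ vertices of the right triangle and the Toda rows, excluding the frozen ones, are exactly $(n-1)(n-2)/2+2(n-1)-1$ in number. This count is consistent with the $\ek_{ij}$, $(i,j)\ne(1,1),(n,1)$.

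The inductive claim is that after the $s$-th stage every vertex carries either an $\eg_{ij}$, an $\ef_m$, or a $\ek_{ij}$. At each mutation the exchange relation must be a Desnanot--Jacobi (or Sylvester) identity for the block matrix $\NK_{ij}$, whose rows and columns are deletions of previously appearing matrices. The matrices $\NG_{ij}$, $\NF$, $\NK_{ij}$ all share the same staircase pattern of $X$-blocks, so each step amounts to deleting two rows and two columns of a common ambient matrix and applying the three-term identity. We also track the quiver through each mutation, to confirm that the neighbors appearing in the relation are precisely those produced by the identity. The final $\ek_{ii}=\det X_{[i,n]}^{[i,n]}$ arise when the staircase $\NK$ matrices degenerate to block-triangular form and their determinants factor as trailing principal minors times powers of $x_{1n}$ or $x_{n1}$. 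These frozen factors are absorbed by the frozen arrows at $A$, $B$, $C$.

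For (ii), we identify the resulting seed with the seeds in \cite{GSVuni} and \cite{GSVdouble}. Recall that $\ek_{ii}$ together with the $\ek_{ij}$, $j<i$, are the trailing minors used in \cite{GSVdouble} for the normal form $X=N_-\,\tilde X\,N_+$ under conjugation by unipotent triangular elements. Likewise, the $\phi_{kl}$ are, up to frozen factors, pullbacks of the $\fy_{kl}$ from the right-triangle-free part of the $GL_{n-1}^\dag$ structure. The plan is to factor $X$ as $\bgamma$-twisted products as in \cite{GSVuni}, recovering the lower-triangular factor from ratios of the $\ek_{ij}$. Its entries are Laurent in the $\ek$ with denominators only among the mutable $\ek_{ii}$. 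The remaining factor, essentially $\Psi'(X)$ twisted, is recovered from the $\phi_{kl}$ via Theorem~\ref{dualgroupcs}, that is, from Laurentness in the $GL_{n-1}^\dag$ cluster. Multiplying back gives every $x_{ij}$ as a Laurent polynomial. Finally, we check that no denominator involves $x_{1n}$, $x_{n1}$, or $\det X$. Here homogeneity, Proposition~\ref{prop:torus}, controls the weights, and polynomiality in $x_{1n},x_{n1}$ follows because the normal-form reconstruction divides only by principal minors.

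The main obstacle is (i): finding the exact order of mutations and verifying that each exchange relation is the right determinantal identity. The matrices $\NK_{ij}$ have irregular block shapes near the boundary vertices $\langle i,2\rangle$ and $(1,\pm)$. I would first carry out $n=4,5$ by computer to fix the pattern, and then prove the general identity by a uniform Sylvester-type argument on a single large staircase matrix.
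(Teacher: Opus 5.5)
Your proposal does not yet prove (i). You never specify $\Ws_n$, and the invariant you plan to carry through the induction is false at the first step. The claim is that after each stage every vertex carries an $\eg_{ij}$, an $\ef_m$ or a $\ek_{ij}$. But the function replacing $\ef_{2n-2}$ at $D$ is $[\bar1\,2^{n-4}\,3]$ in the paper's notation, the maximal trailing minor of $\NG(\bar1,2^{n-4},3)$, and it is none of these. The paper's sequence in fact passes through a much larger family: the maximal trailing minors $[k_1\ldots k_p]$ and $[\bar k_1\,k_2\ldots k_p]$ of the staircase matrices $\NG(I_1,\dots,I_p)$.

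The sequence is also not a single sweep. Vertices are mutated repeatedly; for example $\Hs_5=0\,{-}1\,1\,2\,{-}2\,0\,1\,{-}3\,3\,4\,5\,{-}1\,0\,{-}4$ hits $D=0$ three times.

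The mechanism that makes the argument work for all $n$ is the recursive head $\Hs_n=\Hs_{n-1}\Is_n\Rs_n\S_n$, with $\Is_n=\Rs_{n-1}^-\S_{n-1}^-$ and $\S_n=\Is_n^-$, combined with promotion and demotion of the matrices $\NG$. Under these operations the exchange relations for $n$ are promotions, or promotion--demotions, of those for $n-1$. The relations themselves are four-term Pl\"ucker relations (two prepended unit rows, with one term vanishing) and Desnanot--Jacobi identities for $\NG(k_1,\dots,k_p)$, not identities for $\NK_{ij}$. Only the tail $\Ts_n$ produces the $\ek_{ij}$ with $j<i$, via Desnanot--Jacobi for $\NG(\bar k_1,k_2)$. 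The $\ek_{ii}$ appear directly as $[\overline{n-i+1}]$, with no frozen factors to absorb. Deferring all of this to computer runs for $n=4,5$ leaves exactly this inductive mechanism missing.

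In (ii), your treatment of the lower triangular factor via the $\ek_{ij}$ is in line with the paper. The paper uses $N=X_-\gamma(X_-^{-1})$ and completeness of the standard structure on lower unipotent matrices; note that the denominators then involve all the $\ek_{ij}$, not just the $\ek_{ii}$. The upper factor is where your argument breaks.

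Theorem~\ref{dualgroupcs} gives Laurent expressions for $U=\Psi'(X)$ only in full clusters of $\GCC_{n-1}^\dag$, for instance $\DD_{n-1}$, which contains the $g_{ij}(U)$. These are the $\eg_{ij}$ up to frozen factors, which are exactly the variables you have mutated away. Keeping them out of the denominators is the whole point of this second representation, since it is what gives coprimality with the $\eg_{ii}$ in Proposition~\ref{twolaurent}.

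The $\phi_{kl}$ are invariant under $\N_+$-conjugation of $U$. So together with the $c_r(U)$ they determine only the normal form $\hat\nu$ in $\widehat\UU=\hat\beta(\hat\nu w_c)\hat\beta^{-1}$ from \cite{GSVdouble}. The missing idea is how to recover the conjugating factor. The paper does this in four steps:
\begin{itemize}
\item it passes to $\widehat\UU=\widehat U(VX_0,X_0N)$ with $V=\gamma(X_+^{-1})X_+$;
\item it obtains the first row of $X_+$ from a triangular system in the $\phi_{k,n-1-k}$, with pivots $\ek_{n-k,1}$;
\item it writes $\widehat\UU=B_{[2,n]}^{[2,n]}M$ with $M$ Laurent;
\item it compares the Gauss factorizations of $Mw_0$ and $\hat\nu w_cw_0$ to solve for $B_{[2,n]}^{[2,n]}$, and hence for $X_+=\cdots\gamma^2(V)\gamma(V)V$.
\end{itemize}

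Finally, homogeneity cannot exclude $x_{1n}$, $x_{n1}$ or $\det X$ from denominators, because weights do not see the signs of individual exponents. Nor does the reconstruction divide only by harmless principal minors. Both $\hat\nu_{n-1,n-1}=\pm\phi_{n-2,1}/(x_{1n}\det X)$ and the trailing principal minors $\pm\phi_{i-1,n-i}/(x_{1n}\det X)$ of $Mw_0$ carry $x_{1n}\det X$ in the denominator. The paper needs the rescalings $D_1=\diag(\one_{n-2},x_{1n})$ and $D_2=\diag(\one_{n-2},\det X)$, plus a specific regrouping of the factors of $B_{[2,n]}^{[2,n]}$, to see that these cancel.
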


Theorem \ref{fish_to_cuttlefish} allows us to finish the proof of regularity and completeness for $\G\CC_{n}$.

6. To extend the presented construction of a generalized cluster structure to $\Mat_n$ as mentioned in Remark~\ref{threestruct}, we use once again the regular pullback construction described in~\cite{GSVpullback}. The resulting  almost-cluster structure is defined by 
%a quiver that allows 2-cycles of a special type, namely,  opposite arrows between a vertex of multiplicity greater than one and a frozen vertex. In the corresponding exchange relation the first and the last terms do not need to be coprime, since they might involve certain powers of the same frozen variable. Mutations of such quivers are studied in~\cite{GSVpullback}. 
the quiver $\widehat Q_n$ obtained from $Q_n$ by adding two opposite arrows: one from $(1,1)$ to $B$, and the other from $B$ to $(1,1)$, see Fig.~\ref{almostq}. 
Mutations of $\widehat Q_n$ are performed as explained 
in~\cite[Remark 4.6]{GSVpullback}, namely, if usual mutation rules produce $\Delta\ne0$ arrows between $(1,1)$ and $B$, then the quiver after the mutation has
%pair of the opposite arrows is replaced by 
$\Delta$ arrows in the corresponding direction and no arrows in the opposite direction, otherwise the quiver has a pair of opposite arrows between these vertices. For example, mutation at $(1,1)$ retains the pair of the opposite arrows and adds arrows from $B$ to $(2,1)$ and from $(1,2)$ to $B$. A subsequent mutation at $(2,1)$ replaces the pair of opposite arrows by $n-1$ arrows from $B$ to $(1,1)$. 
The functions attached to the vertices of $\widehat Q_n$ are exactly the same as for $Q_n$. The only string $\widehat\P$ is given by $\widehat\P=(1,\ec_1(X),\dots,\ec_{n-2}(X),1)$, so that~\eqref{ger} is replaced by
\begin{multline*}
\phi_{11}(X)\hat\phi'_{11}(X)\\
=\det Xx_{1n}\phi_{12}^{n-1}(X)+\sum_{r=1}^{n-2}\ec_r(X)\phi_{21}^r(X)\phi_{12}^{n-1-r}(X)
+\det Xx_{n1}\phi_{21}^{n-1}(X).
\end{multline*}
%The upper cluster algebra for an almost-cluster structure is defined, similarly to the case of cluster structures, as the intersection of the rings of Laurent polynomials over the ring of polynomials in frozen variables taken over all seeds.

\begin{figure}[ht]
\begin{center}
\includegraphics[width=10cm]{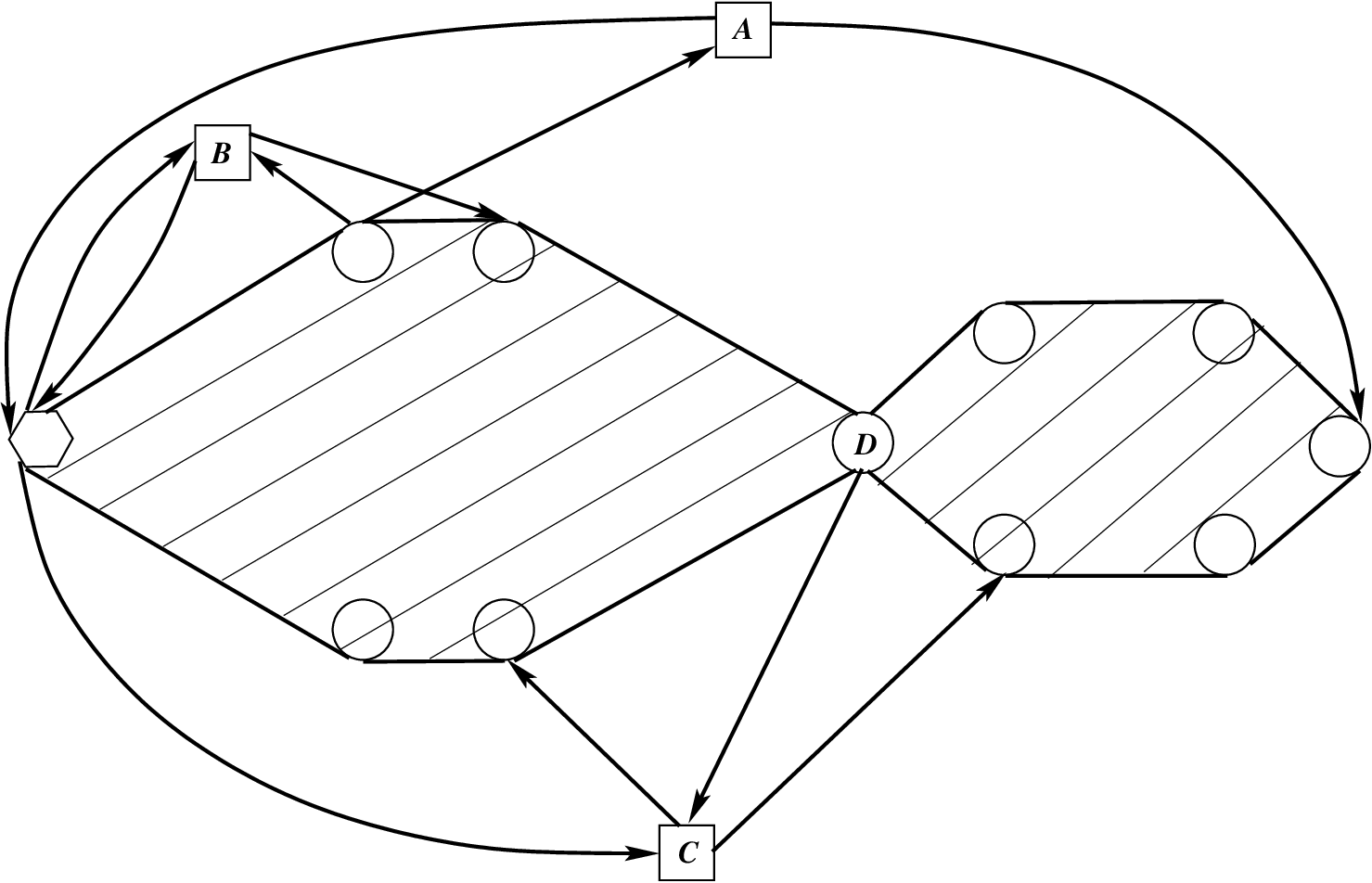}
\caption{Quiver $\widehat Q_n$}
\label{almostq} 
\end{center}
\end{figure}

\begin{theorem}\label{initialalmostseed}
The almost-cluster structure $\widehat{\G\CC}_n=\widehat{\G\CC}(\FF_n,\widehat Q_n,\widehat\PP_n)$ is a regular complete  almost-cluster structure on $Mat_n$ compatible with $\Poi$.
\end{theorem}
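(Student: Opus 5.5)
We deduce Theorem~\ref{initialalmostseed} from Theorem~\ref{initialseed} by lifting the structure $\G\CC_n$ from $GL_n$ to $\Mat_n$ via the regular pullback construction of Section~\ref{seedpullback}. The lift is taken along the complement of $GL_n$ in $\Mat_n$, which is the single divisor $\{\det X=0\}$. So the only distinguished polynomial is $w=\det X$, which is irreducible and is already the frozen variable at $B$. The intended tool is \cite[Theorem~6.2]{GSVpullback}, which gives sufficient conditions for a regular complete generalized cluster structure on $V$ to have a coherent complete regular pullback to the ambient affine space. The plan is to check its hypotheses for $V=GL_n$, $\AA^L=\Mat_n$, $w=\det X$, and to identify the resulting almost-cluster seed with $(\FF_n,\widehat Q_n,\widehat\PP_n)$.

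\textbf{Computing the pulled-back seed.} All cluster variables in $\FF_n$ are polynomials in the entries of $X$, and none of them is divisible by $\det X$. The only place $\det X$ enters a denominator is the string $\PP_n$, whose coefficients are $\ec_r(X)/\det X$. Clearing denominators in \eqref{ger} means multiplying by $\det X$. This gives the displayed exchange relation with coefficients $\ec_r(X)$ and extreme terms $\det X\,x_{1n}\phi_{12}^{n-1}$ and $\det X\,x_{n1}\phi_{21}^{n-1}$.

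In the language of \cite[Section~4]{GSVpullback}, the discrepancy at $(1,1)$ relative to $w$ is zero. The function $\chi$ records the $w$-degrees of the terms: $1$ at $r=0$ and $r=n-1$, and $0$ in between. Together these force the pair of opposite arrows between $(1,1)$ and $B$. At every other mutable vertex the exchange relation is already polynomial and coprime, so nothing changes there. This recovers $\widehat Q_n$ and $\widehat\PP_n$. Compatibility with $\Poi$ is immediate, because the extended clusters are literally the same functions as before and Theorem~\ref{initialseed} applies; $\det X$ is a Casimir, so it is log-canonical with everything.

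\textbf{Coherence, regularity, completeness.} The next step is to verify the coherence hypotheses: mutations of $\widehat Q_n$ under the rule of \cite[Remark~4.6]{GSVpullback} must track mutations of $Q_n$ together with the $w$-degree data. Since only one special vertex is involved and $w$ is a Casimir, this should reduce to checking that in every seed the string coefficients at the special vertex, reversed or not, become polynomial after multiplying by the appropriate power of $\det X$. That power is dictated by $\chi$ and the slope $\tau$.

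Regularity on $\Mat_n$ then amounts to showing that every cluster variable of $\widehat{\G\CC}_n$, which is a polynomial times a power of $\det X^{\pm1}$ by Theorem~\ref{initialseed}, has no $\det X$ in its denominator. For completeness, let $F$ be a polynomial on $\Mat_n$. It lies in the upper algebra over $\C[x_{1n},x_{n1},\det X^{\pm1}]$, and we need its Laurent expansions to involve only nonnegative powers of $\det X$. For this I would reuse the seed from Theorem~\ref{fish_to_cuttlefish}(ii), in which the matrix entries are Laurent polynomials with no frozen variables in the denominator. Combined with Theorem~\ref{thm:InitLaurent} and Proposition~\ref{twolaurent} over $R=\C[x_{1n},x_{n1},\det X]$, this should give $\C[\Mat_n]\subseteq\UC$.

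\textbf{Main obstacle.} I expect the main difficulty to be coherence, specifically controlling the power of $\det X$ in the exchange relations at $(1,1)$ in arbitrary seeds. Equivalently, one must verify the degree conditions of \cite[Theorem~6.2]{GSVpullback}. My first attempt would use the toric homogeneity of Proposition~\ref{prop:torus} together with the observation that $\det X$ divides $\phi_{11}\hat\phi'_{11}-\sum_r\ec_r\phi_{21}^r\phi_{12}^{n-1-r}$. That observation is checked on $\{\det X=0\}$ by restricting $\Psi'$ there.
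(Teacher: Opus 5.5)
There is a genuine gap at the step you yourself call the main obstacle. Your setup, the identification of $\widehat Q_n$ and $\widehat\PP_n$, and the compatibility argument are fine, but coherence is never established, and neither is regularity, which you reduce to it. The route you sketch for coherence cannot work:
\begin{itemize}
\item The map $\Psi'$ does not extend to $\{\det X=0\}$. Since $\det X_R=(-1)^{n+1}x_{n1}^{-1}\det X$, the matrix $U=X_LX_R^{-1}$ blows up there.
\item The divisibility of $\phi_{11}\hat\phi'_{11}-\sum_r\ec_r\phi_{21}^r\phi_{12}^{n-1-r}$ by $\det X$ is a tautology of the modified exchange relation. The difference is $\det X\,(x_{1n}\phi_{12}^{n-1}+x_{n1}\phi_{21}^{n-1})$ by definition, so it says nothing about other seeds.
\end{itemize}
What you are missing is that no seed-by-seed check is needed. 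In the normalization of \cite[Eq.~(4.5)]{GSVpullback}, the function at $(1,1)$ equals $1$ for $r\in[1,n-2]$ and vanishes for $r=0,n-1$ (the reverse of what you wrote), and the slope $\tau$ is $0$. This is exactly the situation of \cite[Remark~4.6]{GSVpullback}, which supplies the 2-cycle between $(1,1)$ and $B$ together with its mutation rule. Regularity is then inherited from $\G\CC_n$, because the variables of the pullback are the numerators of those of $\G\CC_n$.

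Your completeness argument also has a gap: Proposition~\ref{twolaurent} does not apply the way you intend. To use it for $\widehat{\G\CC}_n$ over $\C[x_{1n},x_{n1},\det X]$, you need $F_1,F_2$ to lie in the upper algebra of the almost-cluster structure with $\det X$ not inverted. That is essentially what has to be proved. Cluster variables of $\widehat{\G\CC}_n$ differ from those of $\G\CC_n$ by powers of $\det X$; already $\hat\phi'_{11}=\det X\,\phi'_{11}$. So the absence of frozen denominators in seeds of $\G\CC_n$, given by Proposition~\ref{nofrozen}, does not transfer to seeds of $\widehat{\G\CC}_n$. Moreover, the paper states Proposition~\ref{twolaurent} only for generalized cluster structures.

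What \cite[Theorem~6.2]{GSVpullback} requires is not a degree condition. It requires Laurentness of the $x_{ij}$ at a generic point of the hypersurface $\{\det X=0\}$. Theorem~\ref{fish_to_cuttlefish}(ii) gives exactly this, since the $x_{ij}$ are Laurent in that seed with no frozen variables, in particular no $\det X$, in the denominators. Theorem~6.2 then yields a coherent complete regular pullback directly, with no need for Theorem~\ref{thm:InitLaurent} or a second Laurent representation. You had the key input but fed it into the wrong tool.
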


\section{Map $\Psi$ and the proof of Theorem~\ref{twomaps}}\label{sec:twomaps}
\subsection{Map $\Psi'$ and the proof of Theorem~\ref{twomaps}(i)}\label{twomapsiproof}
As it was explained in detail in~\cite{GSVple}, given a pair of BD-triples, pairs $(\Gamma_1^\er,\Gamma_1^\ec)$ and $(\Gamma_2^\er,\Gamma_2^\ec)$ define a partition of a matrix representing an element of $GL_n$ into blocks called $X$-blocks and $Y$-blocks, 
respectively, and maps $\gammar$ and $\gammac$ define gluing of the blocks into compound matrices having a staircase shape. When the pair of
triples is aperiodic, the corresponding compound matrices are finite, and their trailing minors form the initial cluster for the cluster structure compatible with the Poisson bracket defined by the pair of triples. If the aperiodicity condition is violated, the corresponding compound matrices are infinite, and arising cluster structures are generalized. An approach to the treatment
of this case is suggested in~\cite{GSVpest} based on utilizing a periodic staircase structure of the obtained infinite compound matrices. In~\cite[Section 6]{GSVpest} we studied the simplest example of such a generalized cluster structure for $GL_6$. While the validity of this approach for the general case remains conjectural, it provides a necessary intuition.

In our case, there is only one $X$-block and only one $Y$-block, and they both coincide with the initial matrix. The
blocks are glued into an infinite compound matrix as shown in Fig.~\ref{pbs}. The periodic staircase structure is indicated in the same figure by dashed lines. The blocks of the periodic staircase structure are $(n+1)\times (n+1)$ matrices $\NA=\NA(X)$ and $\NB=\NB(X)$ so that $\NA_{[1,n]}^{[3,n+1]}=X^{[1,n-1]}$, $\NB_{[1,n]}^{[1,1]}=X^{[n,n]}$, 
and $\NB_{[2,n+1]}^{[2,n+1]}=X$; all other entries of $\NA$ and $\NB$ are zeros.

\begin{figure}[ht]
\begin{center}
\includegraphics[width=5cm]{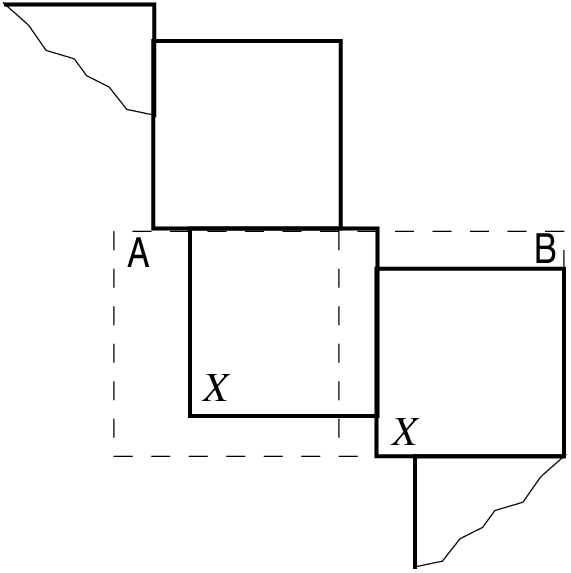}
\caption{Periodic staircase structure for the infinite compound matrix}
\label{pbs}
\end{center}
\end{figure}

By~\cite[Theorem 3.2]{GSVpest}, we expect that the coefficients of the generalized exchange relation are
the coefficients of the characteristic polynomial $\det (\lambda\NA+\NB)$. Assuming $x_{1n}x_{n1}\ne0$, one can use elementary row operations to rewrire it as
\begin{equation}\label{ABvsU}
\begin{aligned}
\det(\lambda\NA+\NB)&=\det\left(\lambda
\nar
\begin{bmatrix}
0 & 0      &  x_{11}  & \dots  & x_{1,n-1} \\
0 & 0      &         &        &          \\
\vdots  & \vdots &          &   X_L  &     \\
0 & 0      &         &        &          \\
0 & 0      & 0       & \dots  & 0       
\end{bmatrix}
+\begin{bmatrix}
x_{1n} & 0      &  0  & \dots  & 0 \\
0 & 0      &         &        &          \\
\vdots  & \vdots &          &   X_R  &     \\
0 & 0      &         &        &          \\
0 & x_{n1} & x_{n2} & \dots  & x_{nn}      
\end{bmatrix}
\right)\\
&=x_{1n}\det X\det(\lambda U+\one_{n-1}),
\end{aligned}
\end{equation}
 where $X_L$ and $X_R$ are the Schur complements of $x_{n1}$ and $x_{1n}$, respectively, that is, 
$(n-1)\times (n-1)$ matrices given by
\begin{equation*}
\begin{aligned}
X_L&=X_{[2,n]}^{[1,n-1]}-X_{[2,n]}^{[n,n]}x_{1n}^{-1}X_{[1,1]}^{[1,n-1]},\\
X_R&=X_{[1,n-1]}^{[2,n]}-X_{[1,n-1]}^{[1,1]}x_{n1}^{-1}X_{[n,n]}^{[2,n]},
\end{aligned}
\end{equation*}
and $U=X_LX_R^{-1}$. Note that 
\begin{equation}\label{detU}
\det U=\frac{x_{n1}}{x_{1n}},
\end{equation}
so that $U\in GL_{n-1}$. This suggests that a map $\Psi':GL_n\to GL_{n-1}$ defined 
via $\Psi':X\mapsto U$ can be utilized in constructing a generalized cluster structure on $GL_n$ compatible with 
$\Poi$.

We start the proof of Theorem~\ref{twomaps}(i) with a number of preliminary technical computations. First, we rewrite the definitions of
matrices $X_L$ and $X_R$ as
\begin{equation}\label{xlxr}
\begin{aligned}
X_L&=\nar\begin{bmatrix}0&\one_{n-1}\end{bmatrix}(X-XE_LX)\begin{bmatrix} \one_{n-1}\cr 0\end{bmatrix},\\
X_R&=\nar\begin{bmatrix}\one_{n-1}& 0\end{bmatrix}(X-XE_RX)\begin{bmatrix} 0\cr \one_{n-1}\end{bmatrix},
\end{aligned}
\end{equation}
where $E_L=e_nx_{1n}^{-1}e_1^T$, $E_R=e_1x_{n1}^{-1}e_n^T$,
$e_1$ and $e_n$ are the first and the last columns of $\one_n$. 
 By~\eqref{xlxr} we get
\[
\begin{aligned}
\delta X_L&=\nar\begin{bmatrix} 0 &\one_{n-1}\end{bmatrix}(\one_n-XE_L)\delta X(\one_n-E_LX)
\begin{bmatrix} \one_{n-1}\cr0\end{bmatrix},\\
\delta X_R&=\nar\begin{bmatrix} \one_{n-1} &0\end{bmatrix}(\one_n-XE_R)\delta X(\one_n-E_RX)
\begin{bmatrix}0\cr \one_{n-1}\end{bmatrix},
\end{aligned}
\]
so that
\[
\begin{aligned}
\delta U=\delta X_L X_R^{-1}-U\delta X_R X_R^{-1}&
=\nar\begin{bmatrix} 0 &\one_{n-1}\end{bmatrix}(\one_n-XE_L)\delta X(\one_n-E_LX)
\begin{bmatrix} X_R^{-1}\cr0\end{bmatrix}\\
&-\nar\begin{bmatrix} U &0\end{bmatrix}(\one_n-XE_R)\delta X(\one_n-E_RX)
\begin{bmatrix}0\cr X_R^{-1}\end{bmatrix}.
\end{aligned}
\]
 It follows that the gradients $\nabla (f\circ\Psi')$ at a point $X\in GL_n$ and 
$\nablac f$ at the point $U=\Psi'(X)\in GL_{n-1}$ are related via 
\begin{multline*}
\nabla(f\circ\Psi')=(\one_n-E_LX)\nar\begin{bmatrix} X_R^{-1}\cr0\end{bmatrix}\nablac f\begin{bmatrix} 0 &\one_{n-1}\end{bmatrix}
(\one_n-XE_L)\\
-(\one_n-E_RX)\nar\begin{bmatrix}0\cr X_R^{-1}\end{bmatrix}\nablac f\begin{bmatrix} U &0\end{bmatrix}(\one_n-XE_R)\\
=(\one_n-E_LX)\nar\begin{bmatrix} 0 & X_R^{-1}\nablac f\cr 0&0\end{bmatrix}(\one_n-XE_L)
-(\one_n-E_RX)\begin{bmatrix}0&0\cr X_R^{-1}\nablac fU&0\end{bmatrix}(\one_n-XE_R).
\end{multline*}
Note that~\eqref{xlxr} implies
\begin{equation}\label{XEX}
\begin{aligned}
X(\one_n-E_LX)&=(\one_n-XE_L)X=\nar\begin{bmatrix} 0 & 0\cr X_L &0\end{bmatrix},\\
X(\one_n-E_RX)&=(\one_n-XE_R)X=\nar\begin{bmatrix} 0 & X_R\cr 0 &0\end{bmatrix},
\end{aligned}
\end{equation}
so that the left and right gradients of $f\circ\Psi'$ can be written as
\[
\begin{aligned}
\nabla(f\circ\Psi') X&=(\one_n-E_LX)\nar\begin{bmatrix} X_R^{-1}\nablac f X_L &0\cr 0&0\end{bmatrix}
-(\one_n-E_RX)\begin{bmatrix}0&0\cr 0 &X_R^{-1}\nablac f X_L\end{bmatrix},\\
X\nabla(f\circ\Psi')&=\nar\begin{bmatrix}0& 0 &\cr 0& U\nablac f\end{bmatrix}(\one_n-XE_L)
-\begin{bmatrix}\nablac fU&0\cr 0&0\end{bmatrix}(\one_n-XE_R).
\end{aligned}
\]
Introducing
\begin{alignat*}{2}
\eta&=-x_{1n}^{-1}X_{[1,1]}^{[1,n-1]}X_R^{-1}\nablac f X_L, \qquad 
&&\zeta=-x_{n1}^{-1}X_{[n,n]}^{[2,n]}X_R^{-1}\nablac f X_L,\\
\xi&=-U\nablac f X_{[2,n]}^{[n,n]}x_{1n}^{-1}, \qquad 
&&\nu=-\nablac f U X^{[1,1]}_{[1,n-1]}x_{n1}^{-1},
\end{alignat*}
we rewrite the expressions for the left and right gradients of $f\circ\Psi'$ as
\begin{equation}\label{leftrightgrad}
\begin{aligned}
\nabla(f\circ\Psi') X&=\begin{bmatrix} X_R^{-1}\nablac f X_L &0\cr \eta&0\end{bmatrix}
-\begin{bmatrix}0&\zeta\cr 0 &X_R^{-1}\nablac f X_L\end{bmatrix},\\
X\nabla(f\circ\Psi')&=\begin{bmatrix}0& 0 &\cr \xi& U\nablac f\end{bmatrix}
-\begin{bmatrix}\nablac fU&\nu\cr 0&0\end{bmatrix}.
\end{aligned}
\end{equation}

We now move on to the computation of the bracket $\Poi$.  It is convenient to re-write $\Poi$ in a way where the Lie-theoretic definition of $\gamma$ is replaced by a more concrete transformation on the standard matrix
representation of 
$\gl_n$. Let $E_{kj}$ denote the $n\times n$ matrix having one at position $(k,j)$ and zeros at all other positions. Put $\bar S=E_{12}+\dots+E_{n-1,n}$ and $S=\bar S^T$
and define the action of $\gamma$ and $\gamma^*$ on $\gl_n$ via
\[
\gamma(A)=SA\bar S,\qquad \gamma^*(A)=\bar SAS.
\]
Clearly, for $n\times n$ matrices with an $(n-1)\times(n-1)$ block $M$ one has
\[ 
\gamma\begin{bmatrix} M&\star\cr \star &\star\end{bmatrix}=\begin{bmatrix} 0&0\cr 0 &M\end{bmatrix},\qquad
\gamma^*\begin{bmatrix} \star&\star\cr \star &M\end{bmatrix}=\begin{bmatrix} M&0\cr 0 &0\end{bmatrix},
\]
where the exact values of $\star$ are irrelevant.

By~\eqref{Rplusgamma}, $\bar R_+$  on $\gl_n$ can be written as
\[
\bar R_+=\bar R_0+\frac1{1-\gamma^*}\pi_>-\frac{\gamma}{1-\gamma}\pi_<
\]
with $\bar R_0$ given by
\begin{equation}\label{barR0}
\bar R_0(\eta)=\frac{n-1}{2n}\Tr\eta{\cdot}\one_n+\frac1n\left(\Tr\eta{\cdot}\bar D_n-\Tr(\bar D_n\eta){\cdot}\one_n\right)
-\frac{\gamma}{1-\gamma}\eta,
\end{equation}
where $\bar D_n=\diag(1,2,\dots,n)$, see~\cite[Lemma 4.1]{GSVMem}.

Consequently, the components of $\bar R_+(\nabla(f\circ\Psi') X)$ are
\begin{gather*}
\bar R_0\nar\left(\begin{bmatrix} X_R^{-1}\nablac f X_L& 0\cr \eta&0\end{bmatrix}
-\begin{bmatrix}0&\zeta\cr 0 &X_R^{-1}\nablac f X_L\end{bmatrix}\right)=
-\bar R_0(1-\gamma^*)\begin{bmatrix} 0 & 0\cr 0 & (X_R^{-1}\nablac f X_L)_0\end{bmatrix},\\
\frac1{1-\gamma^*}\nar\left(\begin{bmatrix} X_R^{-1}\nablac f X_L& 0\cr \eta&0\end{bmatrix}
-\begin{bmatrix}0&\zeta\cr 0 &X_R^{-1}\nablac f X_L\end{bmatrix}\right)_>=
-\begin{bmatrix} 0&\zeta\cr 0& (X_R^{-1}\nablac f X_L)_>\end{bmatrix},\\
\frac{\gamma}{1-\gamma}\nar\left(\begin{bmatrix} X_R^{-1}\nablac f X_L& 0\cr \eta&0\end{bmatrix}
-\begin{bmatrix}0&\zeta\cr 0 &X_R^{-1}\nablac f X_L\end{bmatrix}\right)_<=
-\begin{bmatrix} 0&0\cr 0& (X_R^{-1}\nablac f X_L)_<\end{bmatrix};
\end{gather*}
here and in what follows $M_>=\pi_>(M)$, $M_0=\pi_0(M)$, and $M_<=\pi_<(M)$. 
%are the upper triangular, diagonal, and lower triangular parts of $M$, respectively.

It follows from~\eqref{barR0} that for a diagonal
$(n-1)\times(n-1)$ matrix $D$ one has
\[
\bar R_0(1-\gamma^*)\begin{bmatrix} 0 & 0\cr 0& D\end{bmatrix}=
%\left(\frac12(1-\gamma^*)+\frac12(1+\gamma^*)\right)\begin{bmatrix} 0 & 0\cr 0& D\end{bmatrix}= 
\begin{bmatrix} 0 & 0\cr 0& D\end{bmatrix}-\frac1n\Tr D{\cdot}\one_n.
\]

Summing up the components of $\bar R_+(\nabla (f\circ\Psi')X)$ we arrive at
\begin{multline}\label{barRleft}
\bar R_+(\nabla (f\circ\Psi')X)=
-\nar\begin{bmatrix}0&\zeta\cr 0 &X_R^{-1}\nablac f X_L\end{bmatrix}
 +\frac1n\Tr (X_R^{-1}\nablac f X_L) {\cdot}\one_n
\\=-(\one_n-E_RX)\begin{bmatrix}0&0\cr 0 &X_R^{-1}\nablac f X_L\end{bmatrix}
 +\frac1n\Tr (X_R^{-1}\nablac f X_L) {\cdot}\one_n.
\end{multline}
Therefore, the first term in the right hand side of~\eqref{sklyabragen} is 
\begin{multline*}
\left\langle \bar R_+(\nabla(f_1\circ\Psi') X),\nabla(f_2\circ\Psi') X\right\rangle\\
=-\left\langle (\one_n-E_RX)\nar\begin{bmatrix}0&0\cr 0 &X_R^{-1}\nablac f_1 X_L\end{bmatrix},
(\one_n-E_LX)\begin{bmatrix} X_R^{-1}\nablac f_2 X_L &0\cr 0&0\end{bmatrix}\right\rangle\\
+\left\langle\nar\begin{bmatrix}0&\zeta(f_1)\cr 0 &X_R^{-1}\nablac f_1 X_L\end{bmatrix},
\begin{bmatrix}0&\zeta(f_2)\cr 0 &X_R^{-1}\nablac f_2 X_L\end{bmatrix}\right\rangle
+\left\langle\frac1n\Tr (X_R^{-1}\nablac f X_L) {\cdot}\one_n,\nabla(f_2\circ\Psi') X \right\rangle.
\end{multline*}
 It follows immediately from~\eqref{leftrightgrad} that the left gradient belongs to $\sl_n$, and hence the third term
in the right hand above vanishes.
Further, the second term is just $\langle X_R^{-1}\nablac f_1 X_L,X_R^{-1}\nablac f_2 X_L\rangle$.
Using~\eqref{XEX} we rewrite the first term as
\[
-\left\langle X^{-1}\nar\begin{bmatrix} 0&  X_R\cr 0&0\end{bmatrix}\begin{bmatrix} 0 & 0\cr 0& X_R^{-1}\nablac f_1 X_L\end{bmatrix},
X^{-1}\begin{bmatrix} 0& 0\cr X_L& 0\end{bmatrix}\begin{bmatrix} X_R^{-1}\nablac f_2 X_L&0\cr 0&0\end{bmatrix}\right\rangle,
\]
so that finally
\begin{multline}\label{ACGpart}
\left\langle \bar R_+(\nabla(f_1\circ\Psi') X),\nabla(f_2\circ\Psi') X\right\rangle\\
=-\left\langle X^{-1}\nar\begin{bmatrix} 0& \nablac f_1 X_L\cr 0&0\end{bmatrix},
X^{-1}\begin{bmatrix} 0& 0\cr U\nablac f_2 X_L&0\end{bmatrix}\right\rangle+\langle X_R^{-1}\nablac f_1 X_L,X_R^{-1}\nablac f_2 X_L\rangle.
\end{multline}

We proceed by computing $R_+(X\nabla (f\circ\Psi'))$ with $R_+$ given  on $\gl_n$ by
\[
R_+=R_0+\frac1{1-\gamma}\pi_>-\frac{\gamma^*}{1-\gamma^*}\pi_<
\]
and $R_0$ defined via
\begin{equation}\label{R0}
R_0(\eta)=\frac{n-1}{2n}\Tr\eta{\cdot}\one_n+\frac1n\left(\Tr\eta{\cdot} D_n-\Tr(D_n\eta){\cdot}\one_n\right)
-\frac{\gamma^*}{1-\gamma^*}\eta
\end{equation}
with $D_n=\diag(n,\dots,2,1)$ similarly to $\bar R_0$ above.

The components of $R_+(X\nabla(f\circ\Psi'))$ are
\begin{gather*}
 R_0\left(\nar\begin{bmatrix} 0 & 0\cr \xi& U\nablac f\end{bmatrix}
-\begin{bmatrix}\nablac f U&\nu\cr 0 & 0\end{bmatrix}\right)=
R_0\left(\nar\begin{bmatrix} 0 & 0\cr 0& (U\nablac f)_0\end{bmatrix}
-\begin{bmatrix}(\nablac f U)_0&0\cr 0 & 0\end{bmatrix}\right),\\
\frac1{1-\gamma}\left(\nar\begin{bmatrix} 0 & 0\cr \xi& U\nablac f\end{bmatrix}
-\begin{bmatrix}\nablac f U&\nu\cr 0 & 0\end{bmatrix}\right)_>=
-\frac1{1-\gamma}\nar\begin{bmatrix} 0&0\cr 0& [\nablac f, U]_>\end{bmatrix}-\begin{bmatrix}(\nablac fU)_>& \nu\cr 0&0\end{bmatrix},\\
\frac{\gamma^*}{1-\gamma^*}\left(\nar\begin{bmatrix} 0 & 0\cr \xi& U\nablac f\end{bmatrix}
-\begin{bmatrix}\nablac f U&\nu\cr 0 & 0\end{bmatrix}\right)_<=
-\frac1{1-\gamma^*}\nar\begin{bmatrix} [\nablac f,U]_<&0\cr 0& 0\end{bmatrix}+ \begin{bmatrix}(\nablac fU)_<& 0\cr 0&0\end{bmatrix}.
\end{gather*}
Note that
\[
\nar\begin{bmatrix} 0 & 0\cr \xi& U\nablac f\end{bmatrix}-\begin{bmatrix}\nablac f U&\nu\cr 0 & 0\end{bmatrix}=
-\begin{bmatrix} 0 & 0\cr \xi& [\nablac f,U]\end{bmatrix}-(1-\gamma)\begin{bmatrix}\nablac f U&\nu\cr 0 & 0\end{bmatrix},
\]
and both matrices on the right hand side belong to $\sl_n$. 
It follows from~\eqref{R0} that for a diagonal $(n-1)\times(n-1)$ matrix $D$ one has
\[
R_0(1-\gamma)\begin{bmatrix} D & 0\cr 0& 0\end{bmatrix}=
\begin{bmatrix} D & 0\cr 0& 0\end{bmatrix}-\frac1n\Tr D{\cdot}\one_n,
\]
and  hence
\begin{multline*}
R_0 \left(\nar\begin{bmatrix} 0 & 0\cr 0& (U\nablac f)_0\end{bmatrix}-\begin{bmatrix}(\nablac f U)_0&0\cr 0 & 0\end{bmatrix}\right)\\=
-R_0\begin{bmatrix} 0 & 0\cr 0& [\nablac f,U]_0\end{bmatrix}-\begin{bmatrix}(\nablac f U)_0&0\cr 0 & 0\end{bmatrix}
+\frac1n\Tr (\nabla f U){\cdot}\one_n.
\end{multline*}
 
Summing up the components of $R_+(X\nabla (f\circ\Psi'))$ we arrive at
\begin{multline}\label{almostthere2}
R_+(X\nabla (f\circ\Psi'))= -R_0\nar\begin{bmatrix} 0 & 0\cr 0 & [\nablac f, U]_0\end{bmatrix}
+\frac1n\Tr (\nabla f U){\cdot}\one_n\\
-\frac1{1-\gamma}\nar\begin{bmatrix} 0 & 0\cr 0 & [\nablac f, U]_>\end{bmatrix}
+\frac{\gamma^*}{1-\gamma^*}\begin{bmatrix} 0 & 0\cr 0 & [\nablac f, U]_< \end{bmatrix}
-\begin{bmatrix}\nablac f U&\nu\cr 0 & 0\end{bmatrix}.
\end{multline}
To compute the second term in the right hand side of~\eqref{sklyabragen} we treat separately the contributions of
the terms in~\eqref{almostthere2} to $\left\langle  R_+(X\nabla(f_1\circ\Psi') ),X\nabla(f_2\circ\Psi') \right\rangle$.
The contribution of the first term is the sum of
\[
\left\langle -R_0\nar\begin{bmatrix} 0 & 0\cr 0 & [\nablac f_1, U]_0\end{bmatrix},
-\begin{bmatrix} 0 & 0\cr 0 & [\nablac f_2, U]_0\end{bmatrix}\right\rangle=
\left\langle R_0[\nablac f_1, U]_0,[\nablac f_2, U]_0\right\rangle
\]
(note that $R_0$ in the right hand side above  is the operator on $\gl_{n-1}$ associated with the corresponding Cremmer--Gervais data)
and
\begin{multline*}
\left\langle -R_0\nar\begin{bmatrix} 0 & 0\cr 0 & [\nablac f_1, U]_0\end{bmatrix},
-(1-\gamma)\begin{bmatrix} (\nablac f_2 U)_0 & 0\cr 0 & 0\end{bmatrix}\right\rangle\\=
\left\langle \frac1{1-\gamma}\nar\begin{bmatrix} 0 & 0\cr 0 & [\nablac f_1, U]_0\end{bmatrix},
(1-\gamma)\gamma^*\begin{bmatrix} 0 & 0\cr 0& (\nablac f_2 U)_0 \end{bmatrix}\right\rangle\\=
\left\langle \nar\begin{bmatrix} 0 & 0\cr 0 & [\nablac f_1, U]_0\end{bmatrix},
\frac{(1-\gamma)\gamma^*}{1-\gamma^*}\begin{bmatrix} 0 & 0\cr 0& (\nablac f_2 U)_0 \end{bmatrix}\right\rangle\\=
-\left\langle [\nablac f_1, U]_0,(\nablac f_2 U)_0\right\rangle;
\end{multline*}
here the last equality follows from the fact that
$\gamma\gamma^*$ acts by multiplication by 
$\left[\begin{smallmatrix} 0&0\cr 0& \one_{n-1}\end{smallmatrix}\right]$ on the left.
To justify the first equality note that by~\cite[Lemma 4.2]{GSVple}, $R_0$ on the Cartan subalgebra of $sl_n$ is given by 
$R_0=\frac12+\frac12\left(\frac1{1-\gamma}-\frac1{1-\gamma^*}\right)$; further, one can readily check that on this subalgebra
$\frac{\gamma}{1-\gamma}+\frac1{1-\gamma^*}=0$, so that finally $R_0=\frac1{1-\gamma}$.  The contribution
of the second term vanishes since by~\eqref{leftrightgrad} the right gradient belongs to $\sl_n$.

The contribution of the third and the fourth terms is the sum of
\begin{multline*}
\left\langle -\frac1{1-\gamma}\nar\begin{bmatrix} 0 & 0\cr 0 & [\nablac f_1, U]_>\end{bmatrix}
+\frac{\gamma^*}{1-\gamma^*}\begin{bmatrix} 0 & 0\cr 0 & [\nablac f_1, U]_<\end{bmatrix},
-\begin{bmatrix} 0 & 0\cr 0 & [\nablac f_2, U]\end{bmatrix}\right\rangle\\=
\left\langle \frac1{1-\gamma}[\nablac f_1, U]_>-\frac{\gamma^*}{1-\gamma^*}[\nablac f_1, U]_<,[\nablac f_2, U]\right\rangle
\end{multline*}
and
\begin{multline*}
\left\langle -\frac1{1-\gamma}\nar\begin{bmatrix} 0 & 0\cr 0 & [\nablac f_1, U]_>\end{bmatrix}
+\frac{\gamma^*}{1-\gamma^*}\begin{bmatrix} 0 & 0\cr 0 & [\nablac f_1, U]_<\end{bmatrix},
-(1-\gamma)\begin{bmatrix} \nablac f_2 U & 0\cr 0 & 0\end{bmatrix}\right\rangle\\=
\left\langle \frac{(1-\gamma^*)\gamma}{1-\gamma}\nar\begin{bmatrix} [\nablac f_1, U]_>& 0\cr 0&0\end{bmatrix}
-\begin{bmatrix} [\nablac f_1, U]_<& 0\cr 0&0\end{bmatrix},\begin{bmatrix} \nablac f_2 U & 0\cr 0 & 0\end{bmatrix}\right\rangle\\=
\left\langle -\nar\begin{bmatrix} [\nablac f_1, U]& 0\cr 0&0\end{bmatrix}+\begin{bmatrix} [\nablac f_1, U]_0& 0\cr 0&0\end{bmatrix},
\begin{bmatrix} \nablac f_2 U & 0\cr 0 & 0\end{bmatrix}\right\rangle\\=
-\left\langle  [\nablac f_1, U],\nablac f_2 U\right\rangle+\left\langle  [\nablac f_1, U]_0,(\nablac f_2 U)_0\right\rangle;
\end{multline*}
here the second equality follows from the fact that
$\gamma^*\gamma$ acts by multiplication by 
$\left[\begin{smallmatrix} \one_{n-1}&0\cr 0&0\end{smallmatrix}\right]$ on the left. 

Finally, the contribution of the last term in the right hand side of~\eqref{almostthere2} are
\[
\left\langle -\nar\begin{bmatrix}\nablac f_1 U&\nu(f_1)\cr 0 & 0\end{bmatrix}, 
-\nar\begin{bmatrix}\nablac f_2 U&\nu(f_2)\cr 0 & 0\end{bmatrix}\right\rangle=
\left\langle \nablac f_1 U, \nablac f_2 U\right\rangle
\]
and
\begin{multline*}
\left\langle -\nar \begin{bmatrix}\nablac f_1 U&0\cr 0 & 0\end{bmatrix}(\one_n-XE_R),
 \begin{bmatrix}0&0\cr U\nablac f_2\end{bmatrix}(\one_n-XE_l)\right\rangle\\=
-\left\langle \nar \begin{bmatrix}\nablac f_1 U&0\cr 0 & 0\end{bmatrix}\begin{bmatrix}0& X_R \cr 0& 0\end{bmatrix}X^{-1},
\begin{bmatrix}0&0\cr U\nablac f_2 \end{bmatrix}\begin{bmatrix}0& 0\cr X_L& 0 \end{bmatrix}X^{-1}\right\rangle\\=
-\left\langle \nar \begin{bmatrix}\nablac f_1 X_L&0\cr 0 & 0\end{bmatrix}X^{-1},
\begin{bmatrix}0&0\cr U\nablac f_2X_L \end{bmatrix}X^{-1}\right\rangle
\end{multline*}
via~\eqref{XEX}. Summing up all the contributions we arrive at
\begin{multline}\label{CGpart}
\left\langle  R_+(X\nabla(f_1\circ\Psi') ),X\nabla(f_2\circ\Psi') \right\rangle\\=
\left\langle R_0[\nablac f_1, U]_0,[\nablac f_2, U]_0\right\rangle+
\left\langle \frac1{1-\gamma}[\nablac f_1, U]_>-\frac{\gamma^*}{1-\gamma^*}[\nablac f_1, U]_<,[\nablac f_2, U]\right\rangle\\-
\left\langle  [\nablac f_1, U],\nablac f_2 U\right\rangle+\left\langle \nablac f_1 U, \nablac f_2 U\right\rangle-
\left\langle \nar \begin{bmatrix}\nablac f_1 X_L&0\cr 0 & 0\end{bmatrix}X^{-1},
\begin{bmatrix}0&0\cr U\nablac f_2X_L \end{bmatrix}X^{-1}\right\rangle.
\end{multline}
Note that the first three terms in~\eqref{CGpart} form $\left\langle R_+[\nablac f_1,U], [\nablac f_2U]\right\rangle$, 
while the last two terms in~\eqref{CGpart} coincide with the right hand side of~\eqref{ACGpart}. Consequently, we get
\[
\{f_1\circ \Psi',f_1\circ \Psi'\}=\left\langle  [\nablac f_1, U],\nablac f_2 U\right\rangle-\left\langle R_+[\nablac f_1,U], [\nablac f_2,U]\right\rangle,
\]
which coincides with the expression for the bracket $-\Poi^\dag$, see~\cite[Eq.~(5.6)]{GSVdouble}. \qed

\subsection{Map $\Psi''$ and the proof of Theorem~\ref{twomaps}(ii)} Given $X\in GL_n$ and a rational function
$\bar M(\lambda)=\bar q(\lambda)/p(\lambda)\in\bar \RR_{n-1}$ with
\begin{equation}\label{qandp}
\bar q(\lambda)=\sum_{i=0}^{n-1}\bar q_i\lambda^i,\qquad p(\lambda)=\sum_{i=0}^{n-1}p_i\lambda^i,
\end{equation}
define a map $\Psi'': GL_n\to\bar \RR_{n-1}$ via $\Psi'': X\mapsto \bar M(\lambda)$ given by
\[
\bar q_i\circ\Psi''=\frac{x_{n,i+1}}{x_{1n}},\qquad p_i\circ\Psi''=\frac{x_{1,i+1}}{x_{1n}}, \qquad i\in[0,n-1]. 
\]

We start the proof of Theorem~\ref{twomaps}(ii) with computing the left and right gradients of the entries $x_{1k}$ and 
$x_{nj}$. Clearly,
$\nabla x_{1k}=E_{k1}$ and $\nabla x_{nj}=E_{jn}$, and hence
\begin{equation}\label{gradientsforx}
\begin{alignedat}{2}
\nabla  x_{1k}X&=e_kX_{[1,1]},\qquad\qquad &\nabla x_{nj}X=e_jX_{[n,n]},\\
X\nabla x_{1k}&=X^{[k,k]}e_1^T,\qquad\qquad &X\nabla x_{nj}=X^{[j,j]}e_n^T.
\end{alignedat}
\end{equation}
It follows immediately that $\gamma^*(X\nabla x_{1k})=0$ and $(X\nabla x_{1k})_>=0$, and similarly
$\gamma(X\nabla x_{nj})=0$ and $(X\nabla x_{nj})_<=0$. Consequently, we get
\begin{multline*}
 \{x_{1k},x_{1l}\}=\left\langle \bar R_+(\nabla x_{1k}X),\nabla x_{1l}X\right\rangle-
\left\langle R_+(X\nabla x_{1k}),X\nabla x_{1l}\right\rangle\\=
\left\langle\bar R_0 E_{kk},E_{ll}\right\rangle x_{1k}x_{1l}
-\left\langle R_0 E_{11},E_{ll}\right\rangle x_{1k}x_{1l}\\+
\left\langle \frac1{1-\gamma^*} (e_kX_{[1,1]})_>-\frac{\gamma}{1-\gamma}(e_kX_{[1,1]})_<,e_lX_{[1,1]}\right\rangle.
\end{multline*}
Assuming that $l<k$, we can rewrite the last term above as
\[
\sum_{s=1}^{l-1} x_{1s}x_{1,k+l-s}-\sum_{t=1}^{l-1} x_{1,k+t}x_{1,l-t}=0;
\]
here and in what follows we assume that $x_{1i}$ and $x_{ni}$ vanish whenever $i>n$.
Therefore, for $l<k$ we get
\[
\{x_{1k},x_{1l}\}=\left(\left\langle\bar R_0 E_{kk},E_{ll}\right\rangle-
\left\langle R_0 E_{11},E_{ll}\right\rangle\right)x_{1k}x_{1l}.
\]
Note that $E_{kk}\in gl_n$, so we use expressions~\eqref{barR0} and~\eqref{R0} for $\bar R_0$ and $R_0$,
respectively. 

Consequently, $\left\langle\bar R_0 E_{kk},E_{ll}\right\rangle=\frac{n-1}{2n}+\frac1n(l-k)$ and 
$\left\langle R_0 E_{11},E_{11}\right\rangle=\frac{n-1}{2n}$, so that $\{x_{1k},x_{1l}\}=\frac{l-k}nx_{1k}x_{1l}$ for $l<k$. For $l>k$
one has $\{x_{1k},x_{1l}\}=-\{x_{1l},x_{1k}\}=-\frac{k-l}nx_{1k}x_{1l}=\frac{l-k}nx_{1k}x_{1l}$, so that finally
\begin{equation}\label{firstfirst}
 \{x_{1k},x_{1l}\}=\frac{l-k}nx_{1k}x_{1l}.
\end{equation}

Similarly to above,
\begin{multline*}
 \{x_{nk},x_{nl}\}=\left\langle \bar R_+(\nabla x_{nk}X),\nabla x_{nl}X\right\rangle-
\left\langle R_+(X\nabla x_{nk}),X\nabla x_{nl}\right\rangle\\=
\left(\left\langle\bar R_0 E_{kk},E_{ll}\right\rangle
-\left\langle R_0 E_{nn},E_{nn}\right\rangle\right) x_{nk}x_{nl}\\+
\left\langle \frac1{1-\gamma^*} (e_kX_{[n,n]})_>-\frac{\gamma}{1-\gamma}(e_kX_{[n,n]})_<,e_lX_{[n,n]}\right\rangle,
\end{multline*}
and for $l<k$ the last term vanishes. Further, $\left\langle R_0 E_{nn},E_{nn}\right\rangle=
\frac{n-1}{2n}$, so that 
\begin{equation}\label{lastlast}
 \{x_{nk},x_{nl}\}=\frac{l-k}nx_{nk}x_{nl}.
\end{equation}

We proceed with
\begin{multline*}
 \{x_{1k},x_{nl}\}=\left\langle \bar R_+(\nabla x_{1k}X),\nabla x_{nl}X\right\rangle-
\left\langle R_+(X\nabla x_{1k}),X\nabla x_{nl}\right\rangle\\=
\left(\left\langle\bar R_0 E_{kk},E_{ll}\right\rangle
-\left\langle R_0 E_{11},E_{nn}\right\rangle\right) x_{1k}x_{nl}\\+
\left\langle \frac1{1-\gamma^*} (e_kX_{[1,1]})_>-\frac{\gamma}{1-\gamma}(e_kX_{[1,1]})_<,e_lX_{[n,n]}\right\rangle.
\end{multline*}
Assuming that $l\ls k$ we can rewrite the last term above as
\[
\sum_{s=1}^{l-1} x_{ns}x_{1,k+l-s}-\sum_{t=1}^{l-1} x_{n,k+t}x_{1,l-t}=\sum_{j=1}^{l-1}
(x_{1,k+l-j}x_{nj}-x_{1j}x_{n,k+l-j}).
\]
Further,  $\left\langle R_0 E_{11},E_{nn}\right\rangle=\frac{n-1}{2n}-\frac{n-1}n$, so that 
\begin{equation}\label{first>last}
 \{x_{1k},x_{nl}\}=\frac{n+l-k-1}nx_{1k}x_{nl}+\sum_{j=1}^{l-1}(x_{1,k+l-j}x_{nj}-x_{1j}x_{n,k+l-j})
\end{equation}
for $l\le k$. 

Finally, consider
\begin{multline*}
 \{x_{nk},x_{1l}\}=\left\langle \bar R_+(\nabla x_{nk}X),\nabla x_{1l}X\right\rangle-
\left\langle R_+(X\nabla x_{nk}),X\nabla x_{1l}\right\rangle\\=
\left(\left\langle\bar R_0 E_{kk},E_{ll}\right\rangle
-\left\langle R_0 E_{nn},E_{11}\right\rangle\right) x_{nk}x_{1l}\\+
\left\langle \frac1{1-\gamma^*} (e_kX_{[n,n]})_>-\frac{\gamma}{1-\gamma}(e_kX_{[n,n]})_<,e_lX_{[1,1]}\right\rangle\\-
\left\langle \frac1{1-\gamma} (X^{[k,k]}e_n^T)_>-\frac{\gamma*}{1-\gamma*}(X^{[k,k]}e_n^T)_<,X^{[l,l]}e_l^T\right\rangle.
\end{multline*}
Here the third term for $l<k$ equals
\[
\sum_{j=1}^{l-1}(x_{n,k+l-j}x_{1j}-x_{nj}x_{1,k+l-j})
\]
similarly to above, the fourth term equals $x_{nl}x_{1k}$, and 
 $\left\langle R_0 E_{nn},E_{11}\right\rangle=-\frac{n+1}{2n}+\frac{n-1}n$, so that
\[
\{x_{nk},x_{1l}\}=\frac{l-k+1}nx_{nk}x_{1l}+\sum_{j=1}^{l-1}(x_{n,k+l-j}x_{1j}-x_{nj}x_{1,k+l-j})-x_{nl}x_{1k}
\]
for $l<k$, which finally gives
\begin{equation}\label{first<last}
 \{x_{1k},x_{nl}\}=\frac{n+l-k-1}nx_{1k}x_{nl}+\sum_{j=1}^{k}(x_{1,k+l-j}x_{nj}-x_{1j}x_{n,k+l-j})
\end{equation}
for $l> k$.

From~\eqref{firstfirst}--\eqref{first<last} we immediately deduce
\begin{gather*}
\{p_k\circ\Psi'',p_l\circ\Psi''\}=0,\qquad \{\bar q_k\circ\Psi'',\bar q_l\circ\Psi''\}=0,\\
\{p_k\circ\Psi'',\bar q_l\circ\Psi''\}=
\begin{cases}
\sum\limits_{j=0}^{l-1}(p_{k+l-j}\bar q_j-p_j\bar q_{k+l-j})\circ\Psi'',\quad l\le k,\\
\sum\limits_{j=0}^k(p_{k+l-j}\bar q_j-p_j\bar q_{k+l-j})\circ\Psi'',\quad l>k,
\end{cases}
\end{gather*}
which can be summarized as
\begin{gather*}
\{p\circ\Psi''(\lambda),p\circ\Psi''(\mu)\}=0,\qquad \{\bar q\circ\Psi''(\lambda),\bar q\circ\Psi''(\mu)\}=0,\\
\{p\circ\Psi''(\lambda),\bar q\circ\Psi''(\mu)\}=\frac\mu{\lambda-\mu}
\left(p(\lambda)\bar q(\mu)-p(\mu)\bar q(\lambda)\right)\circ\Psi''.
\end{gather*}
Consequently,
\[
\{\bar M\circ\Psi''(\lambda),\bar M\circ\Psi''(\mu)\}=-\left(\lambda\bar M(\mu)-\mu\bar M(\lambda)\right)
\frac{\bar M(\lambda)-\bar M(\mu)}{\lambda-\mu}\circ\Psi'',
\]
which by~\eqref{bartodabra} means that $\{\bar M\circ\Psi''(\lambda),\bar M\circ\Psi''(\mu)\}=\{\bar M(\lambda),\bar M(\mu)\}^T\circ\Psi''$. \qed

\subsection{Proof of Theorem~\ref{twomaps}(iii)} To prove the remaining part of Theorem~\ref{twomaps} it suffices
to show that $\{u_{ij}\circ\Psi',p_k\circ\Psi''\}=\{u_{ij}\circ\Psi',\bar q_k\circ\Psi''\}=0$ for any entry $u_{ij}$ 
of $U$ and any coefficient $p_k$ and $\bar q_k$ of $\bar M(\lambda)=\Psi''(X)$.

We start with computing $\{u_{ij}\circ\Psi',x_{1k}\}$. We use the second equation in~\eqref{XEX}, similarly to how it was done in the derivation of~\eqref{ACGpart}, to rewrite~\eqref{barRleft} as 
\[
\bar R_+(\nabla (f\circ\Psi')X)=
-\nar X^{-1}\begin{bmatrix} 0 &\nablac f X_L\cr 0&0\end{bmatrix}
 +\frac1n\Tr (X_R^{-1}\nablac f X_L) {\cdot}\one_n.
\]
Consequently,
\begin{multline*}
\left\langle \bar R_+(\nabla (u_{ij}\circ\Psi')X),\nabla x_{1k}X\right\rangle\\=
-\left\langle X^{-1}\nar\begin{bmatrix} 0 &\nablac u_{ij} X_L\cr 0&0\end{bmatrix},e_kX_{[1,1]}\right\rangle
+\left\langle\frac1n\Tr (X_R^{-1}\nablac u_{ij} X_L) {\cdot}\one_n,e_kX_{[1,1]}\right\rangle\\=
-\left\langle \nar\begin{bmatrix} 0 &\nablac u_{ij} X_L\cr 0&0\end{bmatrix},E_{k1}\right\rangle+\frac1n u_{ij}x_{1k}.
\end{multline*}

Further, we use the same equation in~\eqref{XEX} to rewrite~\eqref{almostthere2} as
\begin{multline}\label{R0right}
R_+(X\nabla (f\circ\Psi'))= -R_0\nar\begin{bmatrix} 0 & 0\cr 0 & [\nablac f, U]_0\end{bmatrix}
 +\frac1n\Tr (\nabla f U){\cdot}\one_n\\
-\frac1{1-\gamma}\nar\begin{bmatrix} 0 & 0\cr 0 & [\nablac f, U]_>\end{bmatrix}
+\frac{\gamma^*}{1-\gamma^*}\begin{bmatrix} 0 & 0\cr 0 & [\nablac f, U]_< \end{bmatrix}
-\begin{bmatrix}0&\nablac f U\cr 0 & 0\end{bmatrix}X^{-1}.
\end{multline}
Consequently, the expression for $\left\langle R_+(X\nabla (u_{ij}\circ\Psi')),X\nabla x_{1k}\right\rangle$ has five terms. The contribution of the second term in~\eqref{R0right} is $\frac1n u_{ij}x_{1k}$. The contributions of the third and the fourth terms vanish since both matrices have a zero first row, while the only nonzero column of 
$X\nabla x_{1k}=X^{[k,k]}e_1^T$ is the first one. The contribution of the fifth term is
\[
-\left\langle \nar\begin{bmatrix} 0 &\nablac u_{ij} X_L\cr 0&0\end{bmatrix}X^{-1},X^{[k,k]}e_1^T\right\rangle=
-\left\langle \nar\begin{bmatrix} 0 &\nablac u_{ij} X_L\cr 0&0\end{bmatrix},E_{k1}\right\rangle.
\]
Finally, the contribution of the first term by~\eqref{R0} equals
\[
-\left\langle \frac1n \Tr\left(D_n\nar\begin{bmatrix} 0 & 0\cr 0 & [\nablac u_{ij}, U]_0\end{bmatrix}\right)\one_n,
X^{[k,k]}e_1^T\right\rangle-\left\langle \frac{\gamma^*}{1-\gamma^*}\nar\begin{bmatrix} 0 & 0\cr 0 & 
[\nablac u_{ij}, U]_0\end{bmatrix},X^{[k,k]}e_1^T\right\rangle,
\]
since $\Tr [\nablac u_{ij}, U]=0$. Note that $\left[\begin{smallmatrix} 0&0\cr0&[\nablac u_{ij}, U]\end{smallmatrix}
\right]=E_{j+1,j+1}-E_{i+1,i+1}$, so that 
\[
\Tr\left(D_n\nar\begin{bmatrix} 0 & 0\cr 0 & [\nablac u_{ij}, U]_0\end{bmatrix}\right)=i-j
\]
and
\begin{equation}\label{middle}
\frac{\gamma^*}{1-\gamma^*}\begin{bmatrix} 0 & 0\cr 0 & 
[\nablac u_{ij}, U]_0\end{bmatrix}=\begin{cases} \phantom{-}E_{i+1,i+1}+\dots+E_{jj}\quad\text{for $i<j$},\\
                                                 \phantom{-}0\qquad\qquad\qquad\qquad\quad\;\text{for $i=j$},\\
																								 -E_{j+1,j+1}-\dots-E_{ii}\quad\text{for $i<j$}.
																								\end{cases}
\end{equation}
Clearly, the upper left entry in the latter expression vanishes for any $i$ and $j$, so that its contribution
to $\left\langle R_+(X\nabla (u_{ij}\circ\Psi')),X\nabla x_{1k}\right\rangle$ vanishes, and the contribution of
the former expression is $\frac{i-j}n u_{ij}x_{1k}$. Summing up the contributions of all terms we arrive at 
$\{u_{ij}\circ\Psi',x_{1k}\}=\frac{j-i}n u_{ij}x_{1k}$, so that
\[
\{u_{ij}\circ\Psi',p_k\circ\Psi''\}=\left\{u_{ij}\circ\Psi',\frac{x_{1,k+1}}{x_{1n}}\right\}=0.
\]

In a similar way one can compute the bracket $\{u_{ij}\circ\Psi',x_{nk}\}$. Namely,
\[
\left\langle \bar R_+(\nabla (u_{ij}\circ\Psi')X),\nabla x_{nk}X\right\rangle=
-\left\langle \nar\begin{bmatrix} 0 &\nablac u_{ij} X_L\cr 0&0\end{bmatrix},E_{kn}\right\rangle+\frac1n u_{ij}x_{nk}.
\]
Further, the contribution of the second term in~\eqref{R0right} is $\frac1n u_{ij}x_{nk}$. The contributions of the third and the fourth terms vanish since both matrices have a zero last row, while the only nonzero column of 
$X\nabla x_{nk}=X^{[k,k]}e_n^T$ is the last one. The contribution of the fifth term is
$-\left\langle \left[\begin{smallmatrix} 0 &\nablac u_{ij} X_L\cr 0&0\end{smallmatrix}\right],E_{kn}\right\rangle$.
Finally, the contribution of the first term in~\eqref{R0right} is $\frac{i-j}n u_{ij}x_{1k}$, since the lowe right entry in~\eqref{middle} vanishes (because both $i$ and $j$ are at most $n-1$). Summing up the contributions of all terms we arrive at 
$\{u_{ij}\circ\Psi',x_{nk}\}=\frac{j-i}n u_{ij}x_{nk}$, so that
\[
\{u_{ij}\circ\Psi',q_k\circ\Psi''\}=\left\{u_{ij}\circ\Psi',\frac{x_{n,k+1}}{x_{1n}}\right\}=0.
\]
\qed

\section{Explicit expressions: the proof of Theorem~\ref{expressions}}\label{explicit}

\subsection{Proof of Theorem~\ref{expressions}(i)}\label{explicitphi} 
Define $(n-1)(n-2)/2$ matrices $\Nphi_{kl}(X)$ for $k,l\in[1,n-2]$ and $k+l\le n-1$ of sizes $(n-k-l)(n+1)\times (n-k-l)(n+1)$. The matrix $\Nphi_{kl}(X)$ consists of $n-k-l$ block
columns of width $n+1$. Each column is vertically subdivided into four tiers. The upper tier of the $i$-th column
is a zero block $\zero_{(i-1)n\times(n+1)}$. The second tier is $\begin{bmatrix} X_{I\cup n} & 0\end{bmatrix}$ with
$I=[2,n-1]$ for all columns except for the last one, for which $I=[l+1,n-1]$.  
Note that the leftmost element in the last row in each second tier is $x_{n1}$.
The third tier is
$\begin{bmatrix}0 & X_{1\cup\gamma(J)}\end{bmatrix}$ with $J=[2,n-1]$
for all columns except for the last two; for the second column from the right 
$J=[l+1,n-1]$, and for the last column $J=[k+1,n-1]$. Note that the rightmost element in the first row in each third tier is $x_{1n}$. 
Finally, the lower tier is once again a zero block of the corresponding size. Note that the upper tier is void for the first column, and the lower tier is void for the last column. For example, the matrix $\Nphi_{21}(X)$ for $n=6$ is given by
\begin{small}
\[
\nar
\begin{bmatrix}
x_{21} & {\cdot} & {\cdot} & {\cdot} & {\cdot} & x_{26} & 0 & 0 & 0 & 0 & 0 & 0 & 0 & 0 & 0 & 0 & 0 & 0 & 0 & 0 & 0\cr
x_{31} & {\cdot} & {\cdot} & {\cdot} & {\cdot} & x_{36} & 0 & 0 & 0 & 0 & 0 & 0 & 0 & 0 & 0 & 0 & 0 & 0 & 0 & 0 & 0\cr
x_{41} & {\cdot} & {\cdot} & {\cdot}& {\cdot} & x_{46}  & 0 & 0 & 0 & 0 & 0 & 0 & 0 & 0 & 0 & 0 & 0 & 0 & 0 & 0 & 0\cr
x_{51} &  {\cdot} & {\cdot} & {\cdot}& {\cdot} & x_{56} & 0 & 0 & 0 & 0 & 0 & 0 & 0 & 0 & 0 & 0 & 0 & 0 & 0 & 0 & 0\cr
x_{61} &  {\cdot} & {\cdot} & {\cdot}& {\cdot} & x_{66} & 0 & 0 & 0 & 0 & 0 & 0 & 0 & 0 & 0 & 0 & 0 & 0 & 0 & 0 & 0\cr
0 & x_{11} &  {\cdot} & {\cdot} & {\cdot}& {\cdot} & x_{16} & 0 & 0 & 0 & 0 & 0 & 0 & 0 & 0 & 0 & 0 & 0 & 0 & 0 & 0\cr
0 & x_{31} &  {\cdot} & {\cdot} & {\cdot}& {\cdot} & x_{36} &x_{21} & {\cdot} & {\cdot} & {\cdot}& {\cdot} &x_{26} & 0 & 0 & 0 & 0 & 0 & 0 & 0 & 0\cr
0 & x_{41} &  {\cdot} & {\cdot} & {\cdot}& {\cdot} &x_{46} & x_{31} &  {\cdot} & {\cdot} & {\cdot}& {\cdot} & x_{36} & 0 & 0 & 0 & 0 & 0 & 0 & 0 & 0\\
0 & x_{51} &  {\cdot} & {\cdot} & {\cdot}& {\cdot} & x_{56} & x_{41} &  {\cdot} & {\cdot} & {\cdot}& {\cdot} & x_{46} & 0 & 0 & 0 & 0 & 0 & 0 & 0 & 0\\
0 & x_{61} &  {\cdot} & {\cdot} & {\cdot}& {\cdot} &x_{66}& x_{51} &  {\cdot} & {\cdot} & {\cdot}& {\cdot} &x_{56}& 0 & 0 & 0 & 0 & 0 & 0 & 0 & 0\\
0 &0 & 0 & 0 & 0 & 0 & 0  & x_{61} &  {\cdot} & {\cdot} & {\cdot}& {\cdot} &x_{66} & 0 & 0 & 0 & 0 & 0 & 0 & 0 & 0\\
0 &0 & 0 & 0 & 0 & 0 & 0  & 0 & x_{11} & {\cdot} & {\cdot} & {\cdot}& {\cdot} & x_{16} & 0 & 0 & 0 & 0 & 0 & 0 & 0\\
0 &0 & 0 & 0 & 0 & 0 & 0  & 0 &x_{31} &  {\cdot} & {\cdot} & {\cdot}& {\cdot} &x_{36} & x_{21} & {\cdot} & {\cdot} & {\cdot}& {\cdot} & x_{26} & 0\\
0 &0 & 0 & 0 & 0 & 0 & 0  & 0 & x_{41} &  {\cdot} & {\cdot} & {\cdot}& {\cdot} & x_{46} & x_{31} &  {\cdot} & {\cdot} & {\cdot}& {\cdot} & x_{36} & 0\\
0 &0 & 0 & 0 & 0 & 0 & 0  & 0 & x_{51} & {\cdot} & {\cdot} & {\cdot}& {\cdot} & x_{56}  & x_{41} &  {\cdot} & {\cdot} & {\cdot}& {\cdot} & x_{46}& 0\\
0 &0 & 0 & 0 & 0 & 0 & 0  & 0 & x_{61} &  {\cdot} & {\cdot} & {\cdot}& {\cdot} &x_{66} & x_{51} & {\cdot} & {\cdot} & {\cdot}& {\cdot} & x_{56}& 0\\
0 &0 & 0 & 0 & 0 & 0 & 0  & 0 & 0 &0 & 0 & 0 & 0 & 0  &  x_{61} &  {\cdot} & {\cdot} & {\cdot}& {\cdot} & x_{66} & 0\\
0& 0 &0 & 0 & 0 & 0 & 0 & 0  & 0 & 0 &0 & 0 & 0 & 0 & 0  &   x_{11} &  {\cdot} & {\cdot} & {\cdot}& {\cdot} &x_{16}\\
0& 0 &0 & 0 & 0 & 0 & 0 & 0  & 0 & 0 &0 & 0 & 0 & 0 & 0  &   x_{41} &  {\cdot} & {\cdot} & {\cdot}& {\cdot} & x_{46}   \\
0& 0 &0 & 0 & 0 & 0 & 0 & 0  & 0 & 0 &0 & 0 & 0 & 0 & 0  &   x_{51} & {\cdot} & {\cdot} & {\cdot}& {\cdot} & x_{56}   \\
0& 0 &0 & 0 & 0 & 0 & 0 & 0  & 0 & 0 &0 & 0 & 0 & 0 & 0  &    x_{61} & {\cdot} & {\cdot} & {\cdot}& {\cdot} & x_{66}
\end{bmatrix}.
\]
\end{small}

Using $x_{n1}$ and $x_{1n}$ placed as explained above and applying column operations within each block column, we transform it into
\[
\nar\begin{bmatrix}
\star & & 0\cr
\vdots & (X_R)_I &\vdots\cr
\star & & 0\cr
x_{n1}& 0\dots0& 0\cr
0& 0\dots0& x_{1n}\cr
0 && \star\cr
\vdots & (X_L)_J &\vdots\cr
0 && \star
\end{bmatrix}=
\begin{bmatrix}
\star & & 0\cr
\vdots & \one_I &\vdots\cr
\star & & 0\cr
x_{n1}& 0\dots0& 0\cr
0& 0\dots0& x_{1n}\cr
0 && \star\cr
\vdots & U_J &\vdots\cr
0 && \star
\end{bmatrix}\diag (1, X_R, 1),
\]
where $U=X_LX_R^{-1}=\Psi'(X)$ and $\star$, as usual, stands for the entries that are inessential for further computations. Note that the entries
$x_{1n}$ and $x_{n1}$ are the only nonzero entries in the corresponding rows of the whole matrix $\Nphi_{kl}(X)$. 
Taking into account that $\det X_R=(-1)^{n+1}x_{n1}^{-1}\det X$, we get
\begin{multline*}
\phi_{kl}(X)=\det\Nphi_{kl}(X)\\=(x_{1n}\det X)^{n-k-l}\det
\begin{bmatrix} \one_{[2,n-1]} & \dots& 0& 0 \\ U_{[2,n-1]} &\ddots & 0& 0\\
  \qquad  \ddots & \one_{[2,n-1]} &0 & 0\\
 0&U_{[2,n-1]}& \one_{[2,n-1]}& 0\\
 0&\dots & U_{[l+1,n-1]} & \one_{[l+1,n-1]}\\
 0&\dots & 0&  U_{[k+1,n-1]}&
 \end{bmatrix},
\end{multline*}
where $\one$ is the unit matrix of size $(n-1)\times(n-1)$. 
Applying further block column operations, the matrix above can be reduced to
\[
\begin{bmatrix} 
\one_{[2,n-1]} &\dots & 0& 0 \\ 
\vdots  & \ddots& 0& 0\\
 0 & \one_{[2,n-1]} & 0& 0\\
0 &0 & \one_{[2,n-1]}& 0\\
0 &{\cdot}s & 0 & \one_{[l+1,n-1]}\\
(-1)^{j-1}U^{j}_{[k+1,n-1]} & {\cdot}s & - U^2_{[k+1,n-1]}&  U_{[k+1,n-1]}
 \end{bmatrix}
\]
with $j=n-k-l$, and hence
\begin{align*}
\phi_{kl}(X)&=(-1)^{\frac{j(j-1)}{2}}\left (x_{1n}\det X \right )^{j}(-1)^{n\frac{j(j-1)}2+(n-l-1)(n-k-1)}\\
&\times\det \nar\begin{bmatrix} \one^{[1,k]}& (U^{j})^{[1,1]} & {\cdot}s &(U^2)^{[1,1]} &  U^{[1,l]}  \end{bmatrix}\\
&= \left (x_{1n}\det X \right )^j (-1)^{n\frac{j(j-1)}2+kn} s_{kl}^{n-1} \fy_{kl}(U).
\end{align*}
Taking into account the definition of the signs $s_{kl}$ in~\eqref{signdef}, we get~\eqref{badphi}. \qed

\subsection{Proof of Theorem~\ref{expressions}(ii)}\label{badgproof}
 We start the proof with deriving expressions~\eqref{gdef} for functions $g_{ij}(U)$. 

We decompose a generic element $X \in GL_n$ as $X = X_+ X_{0,-}$, where $X_{0,-} \in \B_-$ and  $X_+ \in \N_+$. Following \cite[Section 7.1]{GV}, we define a sequence of rational maps $H_{k}: GL_n \to GL_n$ via
\begin{equation}
\label{Fmaps}
    H_0(X) = X, \quad  H_{k}(X) = X\bgamma(H_{k-1}(X)_+), \ k \geq 1, \quad  X \in GL_n.
\end{equation}
The rational map $H : GL_n \to GL_n$ is defined as the limit 
\begin{equation}\label{eq:fdefg}
H(X) = \lim_{k\rightarrow \infty}H_k(X), \quad X \in GL_n.
\end{equation}

The following lemma is a particular case of~\cite[Lemma 4.6]{GV}. Here we provide a proof geared towards needs of the current paper.

\begin{lemma}
\label{lemma:Fstable}
The sequence $Y_k=H_k(X)$ stabilizes at $k = n-2$. 
\end{lemma}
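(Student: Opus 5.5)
The plan is to track how much of the unipotent factor $N_k:=(Y_k)_+$ has already stabilized, and to show that the stabilized part grows by one superdiagonal at each step. It suffices to show $N_{n-2}=N_{n-3}$. Indeed, $Y_k=X\bgamma(N_{k-1})$ depends only on $N_{k-1}$, so once $N_{k}=N_{k-1}$ the whole sequence is constant from $Y_k$ on.

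First I would make $\bgamma$ explicit on $\N_+$. From $\gamma(A)=SA\bar S$, exponentiating gives $\bgamma(N)=\left[\begin{smallmatrix}1&0\cr 0&N_{[1,n-1]}^{[1,n-1]}\end{smallmatrix}\right]$. So $\bgamma$ shifts every entry one step down the diagonal, keeps its superdiagonal index $j-i$, and discards the last row and column of $N$. In particular, if $N,N'\in\N_+$ agree on all superdiagonals of index $\le d$, then so do $\bgamma(N)$ and $\bgamma(N')$.

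Second, I would write the entries of the unipotent upper factor of $M=M_+M_{0,-}$ through minors. Since $M_{0,-}$ is lower triangular, for $i<j$ one has $(M_+)_{ij}=\det M_{i\cup[j+1,n]}^{[j,n]}/\det M_{[j,n]}^{[j,n]}$, up to a sign fixed by row ordering. Apply this to $M=Y_k=X\bgamma(N_{k-1})$. The columns $[j,n]$ of $X\bgamma(N_{k-1})$ are $X\,\bgamma(N_{k-1})^{[j,n]}$, and $\bgamma(N_{k-1})^{[j,n]}$ has nonzero entries only in rows $\le n$ and within the shifted block. By Cauchy--Binet the relevant minors become sums of products of minors of $X$ and minors of $\bgamma(N_{k-1})^{[j,n]}$. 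I would then check which superdiagonals of $N_{k-1}$ actually enter the entries $(N_k)_{ij}$ with $j-i\le d$. The claim to prove by induction on $k$ is:
\[
(N_k)_{ij}=(N_{k-1})_{ij}\quad\text{whenever } j-i\le k .
\]
For the base case $k=1$, note $N_0=X_+$ and $N_1=(X\bgamma(X_+))_+$; on the first superdiagonal only the ``diagonal part'' of $\bgamma(X_+)$ matters, and that part is trivial. The induction step then combines the shift property of $\bgamma$ with a triangularity statement: $(Y_k)_+$ restricted to superdiagonals $\le d+1$ depends only on superdiagonals $\le d$ of $N_{k-1}$, together with $X$. This triangularity statement is the heart of the matter.

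Once the claim holds with $k=n-2$, the matrices $N_{n-2}$ and $N_{n-3}$ agree on all superdiagonals of index $\le n-2$. The only remaining entry is $(1,n)$. For it I would argue directly that $\bgamma$ never sees the last column of $N_{n-3}$, so $(N_{n-2})_{1n}$ depends on $N_{n-3}$ only through entries already shown to agree with those of $N_{n-4}$; hence $(N_{n-2})_{1n}=(N_{n-3})_{1n}$ as well. This gives $N_{n-2}=N_{n-3}$ and therefore $Y_{n-1}=Y_{n-2}$.

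The main obstacle is the triangularity statement in the induction step: that the superdiagonal $d+1$ entries of $(X\bgamma(N))_+$ involve $N$ only through superdiagonals $\le d$. The gain of one comes from the fact that $\bgamma(N)$ has a trivial first row and column, so every entry of $N$ is pushed one step further from the top-left corner. I would first try to prove this using the minor formula and a degree count in the entries of each superdiagonal, assigning weight $j-i$ to each entry. If that becomes messy, I would fall back on writing $X\bgamma(N)=X_+\,X_{0,-}\bgamma(N)$ and using the Bruhat-type ordering on the Gauss decomposition of $X_{0,-}\bgamma(N)$.
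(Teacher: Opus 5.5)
There is a genuine gap: both the invariant you want to carry through the induction and your final target are false.

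Take $X=\one+E_{12}\in\N_+$. Then every $Y_k$ is upper unipotent, so your $N_k=(Y_k)_+$ equals $Y_k$. A one-line induction with $\bgamma(N)=\diag(1,N_{[1,n-1]}^{[1,n-1]})$ gives $Y_k=\one+\sum_{1\le i<j\le \min(k+2,n)}E_{ij}$. Consequences:

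\begin{itemize}
\item At each step $k\le n-2$ the first-superdiagonal entry in position $(k+1,k+2)$ changes from $0$ to $1$. So the claim $(N_k)_{ij}=(N_{k-1})_{ij}$ for $j-i\le k$ already fails for $k=1$ at position $(2,3)$. Your base-case remark about the ``diagonal part'' of $\bgamma(X_+)$ is wrong.
\item The triangularity statement you call the heart of the matter fails as well.
\item $N_{n-2}$ and $N_{n-3}$ differ in the entire last column, including $(1,n)$, even though $Y_{n-1}=Y_{n-2}$.
\end{itemize}

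So $N_{n-2}=N_{n-3}$ would be sufficient, but it is not true. What is true, and all that is needed, is that $N_{n-2}$ and $N_{n-3}$ agree in their first $n-1$ columns. That suffices because $\bgamma$ never sees the last column, which you observed but did not exploit.

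The right bookkeeping is by leading columns (nested subgroups), not by superdiagonals. Writing $P_k$ for what the paper calls $N_k$, the paper shows inductively that
\begin{itemize}
\item $Y_k=Y_{k-1}\bgamma(P_{k-1})$, and
\item $(Y_k)_+=(Y_{k-1})_+P_k$ with $P_k\in\bgamma^k(\N_+)=\{\diag(\one_k,A)\}$ and $P_0=X_+$.
\end{itemize}
The first relation holds because $\bgamma$ is a homomorphism on $\N_+$: $Y_k=X\bgamma((Y_{k-2})_+P_{k-1})=Y_{k-1}\bgamma(P_{k-1})$.

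The second comes from Gauss-refactoring $(Y_{k-1})_{0,-}\bgamma(P_{k-1})=P_kB_k$. Here $\bgamma(P_{k-1})=\diag(\one_k,A')$. A lower triangular matrix times $\diag(\one_k,A')$ has the block form $\left[\begin{smallmatrix}L_{11}&0\\ L_{21}&L_{22}A'\end{smallmatrix}\right]$, and its unipotent upper Gauss factor is $\diag(\one_k,(L_{22}A')_+)$.

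Since $\bgamma^{n-1}(\N_+)=\{\one\}$, we get $\bgamma(P_{n-2})=\one$ and hence $Y_{n-1}=Y_{n-2}$. Your fallback idea of refactoring $X_{0,-}\bgamma(N)$ is the correct starting point. It has to be combined with this subgroup invariant, not with a weight count on superdiagonals.
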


\begin{proof} We will show by induction that for $k\ge0$ there exists $N_k \in \gamma^{k}(\N_+)$ such that
\begin{equation}\label{Finduction}
(Y_k)_+= (Y_{k-1})_+ N_k,\quad Y_k= Y_{k-1}\bgamma(N_{k-1}).
\end{equation}
As a result, since $\bgamma^{n-1}(\N_+)=\{ \one_n\}$,
\[
Y=Y_{n-2}=Y_{n-1} = Y_{n}= {\cdot}s
\]
and the claim follows.

Indeed, for $k=1$, the second relation in~\eqref{Finduction} folows from the definition of $H_1$ with $N_0=X_+$. To get the
first one, we refactor $Y_1= X_+ X_{0,-} \bgamma(X_+)=X_+ N_1 B_1$
with $N_1 \in \bgamma(\N_+)$, $B_1\in\B_-$.  Then induction yields
\[
Y_{k} = X\bgamma((Y_{k-1})_+)= 
X\bgamma((Y_{k-2})_+)\bgamma(N_{k-1})=Y_{k-1} \bgamma(N_{k-1}),
\]
and $N_k\in \bgamma^{k}(\N_+)$ is then defined via the refactorization $(Y_{k-1})_{0,-} \bgamma(N_{k-1}) = N_{k} B_k$. This proves~\eqref{Finduction}.
\end{proof}

\begin{corollary}\label{cor:Fprincipal}
{\rm (i)}  $Y=H(X)$ satisfies the relation
\begin{equation*}%\label{eq:fgrel}
Y = X\bgamma(Y_+),
\end{equation*}
and  so, $H$ is a birational map.

{\rm(ii)} For $1\leq \alpha\leq \beta\leq n$ and any $I\subset [2,n]$ of size $\beta - \alpha + 1$,
\begin{equation}
\label{eq:fstability}
\det Y_{I}^{[\alpha,\beta]} = \det (Y_k)_{I}^{[\alpha,\beta]}\quad\text{for $k\geq \alpha-2$}.
\end{equation}
\end{corollary}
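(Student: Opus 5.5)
Part (i) follows directly from Lemma~\ref{lemma:Fstable} and the recursion~\eqref{Fmaps}. Since $Y_k$ stabilizes for $k\ge n-2$, set $Y=Y_{n-2}=Y_{n-1}$. Then
\[
Y=Y_{n-1}=X\bgamma((Y_{n-2})_+)=X\bgamma(Y_+).
\]
For birationality we exhibit the inverse. Given $Y$ in the image, take its Gauss factor $Y_+$, defined on the open set where the leading principal minors of $Y$ do not vanish. Then
\[
X=Y\bgamma(Y_+)^{-1}=Y\bgamma(Y_+^{-1}),
\]
using that $\bgamma$ is a group homomorphism on $\N_+$. This is a rational map. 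Conversely, for any $Y$ with a Gauss factorization, put $X=Y\bgamma(Y_+)^{-1}$. We check that $H(X)=Y$: the equation $Y=X\bgamma(Y_+)$ forces $H_k(X)$ to reach $Y$ by the same stabilization argument. Equivalently, the composition $X\mapsto H(X)\mapsto H(X)\bgamma(H(X)_+)^{-1}$ is the identity, and $H$ is dominant because its rational inverse is defined on a dense open set. So $H$ is birational.

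For part (ii) I will use the refined recursion~\eqref{Finduction}, namely $Y_k=Y_{k-1}\bgamma(N_{k-1})$ with $N_{k-1}\in\bgamma^{k-1}(\N_+)$. Then $\bgamma(N_{k-1})\in\bgamma^{k}(\N_+)$. For Cremmer--Gervais, $\bgamma^{k}(\N_+)$ consists of unipotent upper triangular matrices whose only nontrivial part lies in the lower right block with rows and columns $[k+1,n]$. In other words, they are the identity on the first $k+1$ columns, and more precisely each column $c\le k+1$ is $e_c$.

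Right multiplication by such a matrix $M$ replaces column $c$ of $Y_{k-1}$ by column $c$ plus combinations of columns $c'<c$ with $c'\ge k+1$. Hence for $\beta\ge\alpha\ge k+1$... I must be careful about the direction here. Since $M$ is upper triangular unipotent, the new column $c$ equals $\sum_{c'\le c} m_{c'c}\,(\text{old column } c')$, with $m_{c'c}\ne0$ (for $c'\ne c$) only if $c',c\ge k+1$. So if $\alpha\le k+1$, then for each $c\in[\alpha,\beta]$ the added columns have index $c'\in[k+1,c)\subset[\alpha,\beta]$. Right multiplication by the unipotent block $M^{[\alpha,\beta]}_{[\alpha,\beta]}$ then leaves $\det(\cdot)_I^{[\alpha,\beta]}$ unchanged for any row set $I$. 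The rows are irrelevant here, so the restriction $I\subset[2,n]$ is not needed for this step; presumably it matters elsewhere. Thus
\[
\det(Y_k)_I^{[\alpha,\beta]}=\det(Y_{k-1})_I^{[\alpha,\beta]}\quad\text{whenever } \alpha\le k+1,\ \text{i.e.}\ k\ge\alpha-1.
\]
Iterating this up to $k=n-2$ gives the claim for $k\ge\alpha-1$.

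The stated bound is $k\ge\alpha-2$, so one more step is needed: the passage from $Y_{\alpha-2}$ to $Y_{\alpha-1}$. This is the main obstacle. It requires showing that $\bgamma(N_{\alpha-2})$ actually lies in a smaller subgroup than $\bgamma^{\alpha-1}(\N_+)$ on the relevant columns. Alternatively, one uses the more precise fact that $N_k$ comes from a refactorization $(Y_{k-1})_{0,-}\bgamma(N_{k-1})=N_kB_k$, in which $N_k$ is trivial in row/column $k+1$ beyond what $\bgamma^k$ predicts.

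My first attempt would be an index-shift check. The nontrivial entries of $\bgamma(N)$ for $N\in\bgamma^{k-1}(\N_+)$ have row and column in $[k+1,n]$ and row strictly less than column. For column $\alpha$ with $\alpha=k+1$ there is then nothing added: the entries $m_{c'\alpha}$ with $c'<\alpha$ require $c'\ge k+1=\alpha$, so none exist. Columns $c>\alpha$ only receive contributions from columns $c'\ge\alpha$. This gives the invariance already at $k=\alpha-1$, i.e.\ the step from $Y_{\alpha-2}$ to $Y_{\alpha-1}$. Tracking the indices this way should yield exactly the threshold $k\ge\alpha-2$, so the crux is careful index bookkeeping against the explicit shift $\gamma(A)=SA\bar S$.
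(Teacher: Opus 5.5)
Your proposal is correct and follows the paper's route. Part (i) is the stabilization $Y=Y_{n-2}=Y_{n-1}$ of Lemma~\ref{lemma:Fstable} plugged into~\eqref{Fmaps}, and part (ii) is the second relation in~\eqref{Finduction} combined with the fact that right multiplication by an element of $\bgamma^{k}(\N_+)$ preserves column-solid minors whose first column is at most $k+1$; such an element is block-diagonal, with $\one_k$ in the upper left corner and a unipotent upper triangular lower right block.

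The ``main obstacle'' you raise is not a real obstacle. Your step identity at $k=\alpha-1$ is exactly the passage $Y_{\alpha-2}\to Y_{\alpha-1}$, so chaining it over all $k\ge\alpha-1$ already makes $\det(Y_k)_I^{[\alpha,\beta]}$ constant for every $k\ge\alpha-2$, as your last paragraph eventually notices. No finer subgroup or refactorization is needed.
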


\begin{proof} (i) Follows immediately from Lemma~\ref{lemma:Fstable}.

(ii) Follows from the second relation in~\eqref{Finduction} since multiplying on the right by an element in 
$\bgamma^k(\N_+)$ does not change column-solid minors with the first column $k+1$ onwards.
\end{proof}

\begin{lemma}
\label{lem:Fminors}
For any $1< \alpha \le \beta \le n$ and $ I\subset [1,n]$, $|I|=\beta -\alpha +1$,
\begin{equation}\label{Fminors}
\begin{aligned}
&\det(Y_{\alpha-2})_I^{[\alpha,\beta]} \prod_{s=1}^{\alpha-2}\det(Y_{\alpha-2-s})_{[\alpha-s,n]}^{[\alpha-s,n]} \\
&\quad =  \sum_{I_1,\ldots, I_{\alpha-2}}
\det X^{I_1}_{I} \det X_{\gamma^*(I_1)\cup [\beta,n]}^{I_2}  \det X_{\gamma^*(I_2)\cup n}^{I_3} {\cdot}s  
\det X_{\gamma^*(I_{\alpha-2})\cup n}^{[2,n]},
\end{aligned}
\end{equation}
where $I_1$ and  $I_2, \ldots, I_{\alpha-2}$ are subsets of $[2,n]$ of sizes $\beta - \alpha +1$ and $n- \alpha +2, \ldots, n- 2$, respectively.
\end{lemma}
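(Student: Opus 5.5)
We argue by induction on $\alpha$. The tool is the relation $Y_k=X\bgamma((Y_{k-1})_+)$ from~\eqref{Fmaps}, together with the refactorization bookkeeping of Lemma~\ref{lemma:Fstable}. For $\alpha=2$ the product on the left of~\eqref{Fminors} is empty, and so is the chain $I_2,\dots,I_{\alpha-2}$ on the right; we read the right-hand side as $\det X_I^{[2,\beta]}$. The claim is then $\det(Y_0)_I^{[2,\beta]}=\det X_I^{[2,\beta]}$, which is trivial since $Y_0=X$. In our convention $I_1=[2,\beta]$ is forced by the last factor.

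For the inductive step, write $Y_{\alpha-2}=X\bgamma(N)$ with $N=(Y_{\alpha-3})_+$. The matrix $\bgamma(N)$ is unipotent upper triangular, and its entries in rows and columns $[2,n]$ are the entries of $N$ shifted by one, while its first row is $e_1^T$. By Cauchy--Binet,
\[
\det(Y_{\alpha-2})_I^{[\alpha,\beta]}=\sum_{I_1\subset[2,n]}\det X_I^{I_1}\det \bgamma(N)_{I_1}^{[\alpha,\beta]}
=\sum_{I_1}\det X_I^{I_1}\det N_{\gamma^*(I_1)}^{[\alpha-1,\beta-1]},
\]
where $\gamma^*$ shifts indices down by one. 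Row $1$ cannot contribute, because column $\alpha\ge2$ of $\bgamma(N)$ vanishes in row $1$.

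Next we express minors of the unipotent factor $N=(Y_{\alpha-3})_+$ through minors of $Y_{\alpha-3}$ itself. For $Y=Y_+Y_{0,-}$, a row set $J$ and columns $[a,b]$, the standard identity is
\[
\det (Y_+)_J^{[a,b]}=\det Y_{J\cup[b+1,n]}^{[a,n]}\big/\det Y_{[a,n]}^{[a,n]}\cdot(\pm1).
\]
It holds because right multiplication by $Y_{0,-}^{-1}$ preserves the span of the last columns, and the trailing principal minor gives the normalization. Applied with $a=\alpha-1$, $b=\beta-1$, $J=\gamma^*(I_1)$, this produces
\[
\det (Y_{\alpha-3})_{\gamma^*(I_1)\cup[\beta,n]}^{[\alpha-1,n]}\big/\det(Y_{\alpha-3})_{[\alpha-1,n]}^{[\alpha-1,n]}.
\]
The denominator is exactly the $s=1$ factor of the product on the left of~\eqref{Fminors}, so multiplying through clears it.

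The numerator is a column-solid minor with first column $\alpha-1$ and row set of size $n-\alpha+2$. We apply the induction hypothesis to it with $\alpha\mapsto\alpha-1$, $\beta\mapsto n$ and row set $\gamma^*(I_1)\cup[\beta,n]$. This gives a sum over a new chain whose first subset $I_2$ has size $n-\alpha+2$ and whose first factor is $\det X^{I_2}_{\gamma^*(I_1)\cup[\beta,n]}$. Each later factor has the form $\det X^{I_{m+1}}_{\gamma^*(I_m)\cup n}$, because in the hypothesis $\beta=n$ forces $[\beta,n]=\{n\}$. The chain ends with $[2,n]$. The remaining factors $\prod_{s\ge2}$ on the left match the hypothesis's own product, since $\alpha-1-s'=\alpha-2-s$. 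The hypothesis involves $Y_{\alpha-3}$, consistent with $(\alpha-1)-2$.

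The main obstacle is the unipotent-factor minor identity and its sign. The row set $\gamma^*(I_1)\cup[\beta,n]$ may overlap, in which case both sides vanish consistently, and the sign must come out $+1$. I would check that the sign is $+1$ because the appended rows $[\beta,n]$ occupy the trailing positions in both orderings. I would also verify Corollary~\ref{cor:Fprincipal}(ii), which allows replacing $Y_{\alpha-3}$ by $Y$ in the stable minors; this is needed only for the later use of the lemma, not for the induction itself.
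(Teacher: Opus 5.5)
Your proposal is correct and follows the paper's proof. The steps are the same: Cauchy--Binet applied to $Y_{\alpha-2}=X\bgamma((Y_{\alpha-3})_+)$, the index shift $\gamma^*$, rewriting the minor of the unipotent factor as $\det(Y_{\alpha-3})_{\gamma^*(I_1)\cup[\beta,n]}^{[\alpha-1,n]}/\det(Y_{\alpha-3})_{[\alpha-1,n]}^{[\alpha-1,n]}$, and induction on $\alpha$ with $\beta=n$. Your block-triangular sign check and your remark that both sides vanish when $\gamma^*(I_1)\not\subset[1,\beta-1]$ fill in details the paper leaves implicit, and you are right that the indices $Y_{\alpha-3}$ match exactly in the induction, so Corollary~\ref{cor:Fprincipal}(ii) is only needed later to pass to the stable $Y$.
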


\begin{proof} From~\eqref{Fmaps} and the Binet--Cauchy formula,
\begin{align*}
&\det(Y_{\alpha-2})_I^{[\alpha,\beta]}  = \sum_{I_1\subset [2,n], |I_1|=|I|} \det X^{I_1}_{I} 
\det\bgamma((Y_{\alpha-3})_+)_{I_1}^{[\alpha,\beta]}\\
&\ \ = \sum_{I_1\subset [2,n], |I_1|=|I|} \det X^{I_1}_{I} \det((Y_{\alpha-3})_+)_{\gamma^*(I_1)}^{[\alpha-1,\beta-1]} \\ 
& \ \ = \sum_{I_1\subset [2,n], |I_1|=|I|} \det X^{I_1}_{I} 
\frac{ \det (Y_{\alpha-3})_{\gamma^*(I_1)\cup [\beta,n]}^{[\alpha-1,n]}}{ \det(Y_{\alpha-3})_{[\alpha-1,n]}^{[\alpha-1,n]}}.
\end{align*}
Now~\eqref{Fminors} follows by induction if one takes into an account Corollary~\ref{cor:Fprincipal}(ii).
\end{proof}

We are now ready to derive expression~\eqref{gdef} for the functions $g_{ij}(U)$. 
In~\cite[Section 7.2]{GV}, the functions  $g_{ij}(U)$, $1< j \leq i \leq n$, were defined using certain 
{\em flag minors\/} of $H(U)$ via the reciprocal relations
\begin{equation}\label{eq:fopflags}
\begin{aligned}
&g_{ij}(U)=\det [H(U)]_{[i,n]}^{[j,n-i+j]}\prod_{s=2}^{j-1}\det [H(U)]_{[s,n]}^{[s,n]}, \\ 
&\det (H(U))_{[i,n]}^{[j,n-i+j]} = \begin{cases}
    \dfrac{g_{ij}(U)}{g_{j-1,j-1}(U)} \quad &\text{if $j\geq 3$},\\
    g_{ij}(U) \quad &\text{otherwise}.
\end{cases}
\end{aligned}
\end{equation}

\begin{proposition}\label{gproof}
Functions $g_{ij}(U)$ are given by expression~\eqref{gdef}
\begin{equation}\label{gfuncorrect}
\begin{aligned}
g_{ij}(U)&=\det (H(U))_{[i,n]}^{[j,n-i+j]}\prod_{s=2}^{j-1}\det [H(U)]_{[s,n]}^{[s,n]}\\ 
&= \sum_{I_1,\ldots, I_{j-2}}
\det U^{I_1}_{[i,n]} \det U_{\gamma^*(I_1)\cup [n-i+j,n]}^{I_2}  \det U_{\gamma^*(I_2)\cup n}^{I_3} {\cdot}s  \det U_{\gamma^*(I_{j-2})\cup n}^{[2,n]} ,
\end{aligned}
\end{equation}
where $I_1$ and  $I_2, \ldots, I_{j-2}$ are subsets of $[2,n]$ of sizes $n-i +1$ and $n- j +2, \ldots, n- 2$, respectively.
\end{proposition}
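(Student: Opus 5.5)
The first equality in \eqref{gfuncorrect} is just the definition \eqref{eq:fopflags} of $g_{ij}(U)$ from \cite[Section 7.2]{GV}, so all the work is in the second equality. The plan is to apply Lemma~\ref{lem:Fminors} with $X$ replaced by $U$ (an element of $GL_n$ in the notation of the lemma) and with
\[
I=[i,n],\qquad \alpha=j,\qquad \beta=n-i+j .
\]
Then $|I|=n-i+1=\beta-\alpha+1$, as the lemma requires, and $1<\alpha\le\beta\le n$ holds because $2\le j\le i$. With these choices the right-hand side of \eqref{Fminors} becomes exactly the sum in \eqref{gfuncorrect}: $|I_1|=n-i+1$, the second factor has rows $\gamma^*(I_1)\cup[n-i+j,n]$, and the remaining sizes are $n-j+2,\dots,n-2$.

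Next I identify the left-hand side of \eqref{Fminors} with the product of flag minors of $H(U)$.

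\begin{itemize}
\item The first factor $\det(Y_{j-2})_{[i,n]}^{[j,n-i+j]}$ equals $\det H(U)_{[i,n]}^{[j,n-i+j]}$ by Corollary~\ref{cor:Fprincipal}(ii), since $k=j-2\ge\alpha-2$.
\item For each remaining factor I substitute $t=j-s$, which runs over $2,\dots,j-1$. The factor is $\det(Y_{t-2})_{[t,n]}^{[t,n]}$, and Corollary~\ref{cor:Fprincipal}(ii) (column interval $[t,n]$, index $k=t-2$) gives $\det(Y_{t-2})_{[t,n]}^{[t,n]}=\det H(U)_{[t,n]}^{[t,n]}$.
\end{itemize}

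Hence the left-hand side equals $\det H(U)_{[i,n]}^{[j,n-i+j]}\prod_{s=2}^{j-1}\det H(U)_{[s,n]}^{[s,n]}$, which is $g_{ij}(U)$.

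The main obstacle is a row-set mismatch: Corollary~\ref{cor:Fprincipal}(ii) is stated for row sets $I\subset[2,n]$. This is satisfied when $i\ge2$, and for the principal minors when $t\ge2$. The case $j=2$ needs separate handling, since then the product is empty and Lemma~\ref{lem:Fminors} has no sums; there I verify directly that $g_{i2}=\det U_{[i,n]}^{[2,n-i+2]}$, using $H(U)=U\bgamma(H(U)_+)$. Right multiplication by $\bgamma(N)$ does not change column-solid minors that start at column $2$, because $\bgamma(N)$ has first row $e_1^T$ and the remaining block is unipotent upper triangular, so in particular it fixes $e_1$.

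I also need to check the boundary conventions of \eqref{gdef}: the last set is $I_{j-1}=[2,n]$, and for $j=3$ the single factor $\det U_{\gamma^*(I_1)\cup[n-i+3,n]}^{[2,n]}$ is consistent with the lemma's final factor.
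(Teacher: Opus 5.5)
Your proposal is correct and follows the paper's proof, which consists of citing \eqref{eq:fopflags} together with Lemma~\ref{lem:Fminors} for $U$, $I=[i,n]$, $\alpha=j$, $\beta=n-i+j$. You additionally spell out two things the paper leaves implicit: the use of Corollary~\ref{cor:Fprincipal}(ii) to replace the $Y_k$-minors by minors of $H(U)$, and the degenerate cases $j=2$ and $j=3$.
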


\begin{proof} 
Follows immediately from~\eqref{eq:fopflags} and Lemma~\ref{lem:Fminors}.
\end{proof}

To prove Theorem~\ref{expressions}(ii) we need two auxiliary technical statements. 
For an index set $I$ of size $k$, denote $(-1)^I=(-1)^{k(k-1)/2}\prod_{i\in I} (-1)^i$, and write $A_{\hat I}$ for the submatrix of $A$ in rows not belonging to $I$.

\begin{lemma}
\label{lemma:U(X)minors}
For any two index sets $I,J\subset [1,n-1]$ of equal size $k$,
\begin{equation}
\label{U(X)minors}
\det \Psi'(X)_{I}^J = (-1)^{(n-1)k}\frac{(-1)^J}{x_{1n}\det X}
\det
\begin{bmatrix*}
X_{\hat J}\quad 0 \\
0\ X_{1\cup \gamma(I)}
\end{bmatrix*}.
\end{equation}
\end{lemma}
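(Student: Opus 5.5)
Throughout, $U=\Psi'(X)=X_LX_R^{-1}$. The plan is to compute $\det U_I^J$ through a single block matrix whose Schur complement produces this minor, and then to express that block matrix through $X$ using the row and column reductions of Section~\ref{explicitphi}.

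\textbf{Step 1: Schur complement.} The matrix
\[
\begin{bmatrix} (X_R)_{\hat J} & 0\\ -(X_R)_{J}\ \ & \one^{J}\ \ \cdots \\ 0 & \end{bmatrix}
\]
is awkward, so instead we use the cleaner identity
\[
\det\begin{bmatrix} (X_R)_{\hat J} & 0\\ (X_L)_I & \one_I^{J}\end{bmatrix},
\]
where $\one_I^J$ denotes the $k\times k$ block of the identity. It is cleaner to write
\[
\det U_I^J=\det\big((X_LX_R^{-1})_I^J\big)=\pm\frac{1}{\det X_R}\det\begin{bmatrix}(X_R)_{\hat J}\\ (X_L)_I\end{bmatrix},
\]
an $(n-1)\times(n-1)$ determinant. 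This is the standard Jacobi/Cauchy--Binet fact. Replace the rows $J$ of $X_R$ by the rows $I$ of $X_L$ and factor out $X_R$ on the right. The resulting matrix is $\one$ with rows $J$ replaced by $U_I$; its determinant is $\pm\det U_I^J$, with sign $(-1)^J$ up to a $k$-dependent correction coming from moving the rows $J$ to the bottom. Combined with $\det X_R=(-1)^{n+1}x_{n1}^{-1}\det X$, this reduces the lemma to the following identity:
\[
x_{n1}\det\begin{bmatrix}(X_R)_{\hat J}\\ (X_L)_I\end{bmatrix}=\pm\, x_{1n}^{-1}\det\begin{bmatrix} X_{\hat J}&0\\0&X_{1\cup\gamma(I)}\end{bmatrix}.
\]

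\textbf{Step 2: Un-Schur the blocks.} Reverse the column operations used for $\Nphi_{kl}$ in Section~\ref{explicitphi}. The rows $\hat J$ of $X$ (indices in $[1,n]$, which include row $n$ because $J\subset[1,n-1]$) form an $(n-k)\times n$ block. Since row $n$ is present, eliminating with the pivot $x_{n1}$ turns this block into $(X_R)_{\hat J}$ bordered by the column $x_{n1}$, at a cost of a factor $x_{n1}$. Likewise, the rows $1\cup\gamma(I)$, i.e.\ row $1$ together with the rows $I+1$, form a $(k+1)\times n$ block. Eliminating with the pivot $x_{1n}$ via~\eqref{xlxr} gives $(X_L)_I$ bordered by $x_{1n}$.

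In the block matrix of size $(n+1)\times(2n)$... the sizes must be checked. The left block has $n-k$ rows and the right block has $k+1$ rows, for $n+1$ rows in total, while the column count is $2n$. So the displayed matrix is presumably placed in the shifted position used for $\Nphi$, sharing $n-1$ columns, with total width $n+1$. This is exactly the block-column structure of $\Nphi_{kl}$: the left block sits in columns $[1,n]$ and the right block in columns $[2,n+1]$.

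After the column operations the matrix becomes block triangular. The pivot $x_{n1}$ sits alone in column $1$ and the pivot $x_{1n}$ alone in column $n+1$. The middle $(n-1)$ columns carry $\begin{bmatrix}(X_R)_{\hat J}\\(X_L)_I\end{bmatrix}$, precisely as in the display of Section~\ref{explicitphi}. Expanding along the two pivots gives the factor $x_{n1}x_{1n}$.

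\textbf{Step 3: Signs.} Assemble the signs from the following sources: the Laplace expansions along the two pivot columns, the permutation sending the rows $J$ to the bottom (this produces $(-1)^J$ with the $k(k-1)/2$ convention), and the sign $(-1)^{n+1}$ in $\det X_R$. The claimed total is $(-1)^{(n-1)k}(-1)^J$.

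This sign bookkeeping is the main obstacle. I would fix it by checking the case $I=J=\{1\}$ with small $n$ and then tracking how each sign changes when one index of $I$ or $J$ is incremented. The case $k=0$ gives a consistency check, since then the identity reads $1=\det\Nphi$-type$/(x_{1n}\det X)$, which matches~\eqref{ABvsU}.
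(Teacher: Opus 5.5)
Your reduction is the same as the paper's. You place $X_{\hat J}$ in columns $[1,n]$ and $X_{1\cup\gamma(I)}$ in columns $[2,n+1]$, clear the rows of $x_{n1}$ and $x_{1n}$ by column operations using these pivots, and factor $\begin{bmatrix}(X_R)_{\hat J}\\ (X_L)_I\end{bmatrix}=\begin{bmatrix}\one_{\hat J}\\ U_I\end{bmatrix}X_R$. Up to sign this is a complete proof. One small correction: after the column operations, $x_{n1}$ and $x_{1n}$ are alone in their \emph{rows}, not their columns, and the Laplace expansion goes along those rows.

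The gap is the sign. It is part of the statement, and it is not cosmetic here: it feeds into $\sigma(n,\alpha,\beta)$ in Lemma~\ref{lem:gfunXgen}, hence into the signs $\hat\es_{ij}$ of Theorem~\ref{expressions}(ii), on which Corollary~\ref{trivialsigns} relies. Checking $I=J=\{1\}$ for small $n$ and ``tracking increments'' does not prove it. Your Step~3 tally is also inconsistent. If the row permutation contributed only $(-1)^J$, nothing would produce $(-1)^{(n-1)k}$, because your other two sources cancel each other.

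The bookkeeping is short, and you should carry it out directly.
\begin{itemize}
\item In the $(n+1)\times(n+1)$ matrix, the row containing $x_{n1}$ is row $n-k$, so expanding along it gives $(-1)^{n-k+1}$.
\item In the remaining matrix, $x_{1n}$ sits at position $(n-k,n)$, which gives $(-1)^{k}$.
\item Hence the block determinant equals $(-1)^{n-1}x_{n1}x_{1n}\det\begin{bmatrix}(X_R)_{\hat J}\\ (X_L)_I\end{bmatrix}$. This $(-1)^{n-1}$ cancels against the sign in $\det X_R=(-1)^{n-1}x_{n1}^{-1}\det X$.
\end{itemize}
The entire factor $(-1)^{(n-1)k}(-1)^J$ comes from $\det\begin{bmatrix}\one_{\hat J}\\ U_I\end{bmatrix}$. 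Let $j_1<\dots<j_k$ be the elements of $J$. Moving the $r$-th row of $U_I$ from position $n-1-k+r$ up to position $j_r$, for $r=1,\dots,k$ in turn, takes $n-1-k+r-j_r$ adjacent transpositions. The resulting matrix has $e_j^T$ in row $j$ for $j\in\hat J$, so its determinant is exactly $\det U_I^J$. The parity of the number of transpositions is
$\sum_r(n-1-k+r-j_r)\equiv (n-1)k+\tfrac{k(k-1)}2+\sum_{j\in J}j \pmod 2$,
which gives precisely $(-1)^{(n-1)k}(-1)^J$. Combining the three pieces yields the lemma with the stated sign.
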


\begin{proof} Similarly to the proof in the previous Section, we use $x_{n1}$ in the lower left corner of the top block and $x_{1n}$ in the upper right  corner of the bottom block
to reduce the determinant in the right hand side of~\eqref{U(X)minors} via column operations to
\[
(-1)^{n-1} x_{n1}x_{1n} \det\begin{bmatrix} (X_R)_{\hat J} \\ (X_L)_I\end{bmatrix} = (-1)^{n-1} x_{n1}x_{1n}\det X_R \det\begin{bmatrix} \one_{\hat J} \\ \Psi'(X)_I\end{bmatrix},
\]
where $\one=\one_{n-1}$. Taking into account that the determinant of the matrix above equals 
$(-1)^{(n-1)k}(-1)^J\det \Psi'(X)_I^J$ and that $\det X_R=(-1)^{n-1}x_{n1}^{-1}\det X$, we finally get~\eqref{U(X)minors}.
\end{proof}

\begin{lemma}
\label{auxPlucker}
Let $m$, $q$, $p$ be natural numbers such that $m\geq q+p$ and $q>p$, $A$, $B$, $C$ be three subsets of $[1,m]$ of sizes $p$, $q$, and $q-p$, respectively, and $L$ be an $m\times q$ matrix.  Then
\begin{equation}
\label{eq:auxPlucker}
\sum_{B'\subset B, |B'|=p} (-1)^{B'} \det L_{A\cup (B\setminus B')} \det L_{B'\cup C} = \det L_B  \det L_{A\cup C} .
\end{equation}
\end{lemma}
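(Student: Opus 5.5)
We prove \eqref{eq:auxPlucker} by viewing both sides as functions of the columns of $L$ and reducing to a Laplace-type expansion. Both sides are multilinear and alternating in the $q$ columns of $L$, so we may regard them as functions of the point $\Lambda=\operatorname{span}$ of the columns in the Grassmannian $Gr(q,m)$. Each side is then a quadratic expression in Pl\"ucker coordinates of $\Lambda$. Since $|A\cup C|=q$ and $|B|=q$, all minors in the identity are maximal minors of $L$ (with rows taken in increasing order and the sign convention $(-1)^{B'}$ fixed accordingly). This means it suffices to prove the identity for $\Lambda$ in a Zariski dense subset, or equivalently to verify it as a Pl\"ucker relation.

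\textbf{Step 1: normalize.} If $\det L_B=0$ and $L$ is generic otherwise, continuity settles it, so we assume $\det L_B\ne0$. Right multiplication of $L$ by an invertible $q\times q$ matrix multiplies every term by the square of its determinant, so we replace $L$ by $L(L_B)^{-1}$. Then $L_B=\one_q$, with rows indexed by $B$ in order. If $A\cap B\ne\varnothing$ or $C\cap B\ne\varnothing$, the corresponding terms are handled by the same computation, since rows of $L$ in $B$ are unit vectors. It is cleaner, however, first to reduce to disjoint sets, which we assume is implicit in the intended use: $A$, $B$, $C$ pairwise disjoint, as $m\ge q+p$ suggests together with $|A\cup C|=q$.

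\textbf{Step 2: compute both sides.} With $L_B=\one$, the minor $\det L_{B'\cup C}$ is, up to the sign of interleaving rows, the $(q-p)$-minor of $L_C$ in the columns $B\setminus B'$. Likewise, $\det L_{A\cup(B\setminus B')}$ is, up to sign, the $p$-minor of $L_A$ in the columns $B'$. The left side then becomes
\[
\sum_{B'}\pm\det (L_A)^{B'}\det (L_C)^{B\setminus B'},
\]
which is exactly the Laplace expansion of $\det L_{A\cup C}=\det\begin{bmatrix}L_A\\ L_C\end{bmatrix}$ along the first $p$ rows.

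\textbf{Step 3: track signs.} The main obstacle is checking that the sign produced by sorting $A\cup(B\setminus B')$ and $B'\cup C$ into increasing order, combined with $(-1)^{B'}$, reproduces the Laplace sign $(-1)^{\sum B'+p(p+1)/2}$ (positions of $B'$ measured within $B$). We would try first to show that the interleaving signs contributed by $A$ and by $C$ are independent of $B'$, so that they cancel against the corresponding sorting sign in $\det L_{A\cup C}$. The $B'$-dependent part then reduces to $(-1)^{B'}=(-1)^{p(p-1)/2}\prod_{b\in B'}(-1)^b$. If the indices in the paper's convention refer to absolute positions rather than positions within $B$, the discrepancy is a global sign depending only on $B$, which would again be absorbed. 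We would verify this on the smallest case $p=1$, $q=2$ before doing the general bookkeeping.
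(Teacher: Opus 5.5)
Your Steps~1--2 rest on a sound idea that differs from the paper's argument. The paper never normalizes. Instead it evaluates $\det\tilde L$ for $\tilde L=\begin{bmatrix}L_A&0\\ L_B&L_B\\ 0&L_C\end{bmatrix}$ in two ways: block Laplace expansion along the first block column gives the left-hand side, and subtracting the first block column from the second gives $\pm\det L_B\det L_{A\cup C}$.

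The gap is Step~3, which you only outline, and the outline relies on a reading of the statement under which the statement is false. You take the rows of $L_{A\cup(B\setminus B')}$ and $L_{B'\cup C}$ in increasing order. You then hope that the interleaving signs do not depend on $B'$, and that switching between absolute indices and positions in $B$ costs only a global sign. Neither holds in general.

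Take $m=5$, $q=2$, $p=1$, $A=\{2\}$, $B=\{1,4\}$, $C=\{5\}$, which are pairwise disjoint, and write $[ij]$ for the minor in rows $i<j$.
\begin{itemize}
\item The two terms are $[24][15]$ and $[12][45]$.
\item The interleaving sign of $A$ with $B\setminus B'$ is $+1$ for $B'=\{1\}$ and $-1$ for $B'=\{4\}$.
\item With absolute indices, with positions in $B$, or with your sign $(-1)^{p(p+1)/2+\sum\mathrm{pos}}$, the two terms enter with opposite signs.
\item But $\det L_B\det L_{A\cup C}=[14][25]=[12][45]+[15][24]$, so no choice of sign convention makes the sorted reading work.
\end{itemize}
Separately, the reduction to pairwise disjoint $A,B,C$ does not match how the lemma is used. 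In the proof of Lemma~\ref{lem:gfunXgen}, $B$ and $C$ share, for instance, the $n$th rows of the first $\alpha-1$ blocks.

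The reading that works is the one the paper's $\tilde L$ builds in. Here $L_{A\cup(B\setminus B')}$ is $L_A$ stacked on $L_{B\setminus B'}$, $L_{B'\cup C}$ is $L_{B'}$ stacked on $L_C$, and $L_{A\cup C}$ is $L_A$ stacked on $L_C$. A repeated row just kills a term, so overlaps are harmless. The sign of $B'$ is read off from $P'=\mathrm{pos}_B(B')\subset[1,q]$.

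With this reading your normalization finishes the proof at once. When $L_B=\one$, expanding along the unit rows gives $\det\begin{bmatrix}L_A\\ L_{B\setminus B'}\end{bmatrix}\det\begin{bmatrix}L_{B'}\\ L_C\end{bmatrix}=\det(L_A)^{P'}\det(L_C)^{[1,q]\setminus P'}$ with no sign at all, because the two Laplace signs multiply to $(-1)^{q(q+1)}=1$. Weighting by $(-1)^{p(p+1)/2+\sum P'}$ and summing is then exactly the Laplace expansion of $\det\begin{bmatrix}L_A\\ L_C\end{bmatrix}$ along its first $p$ rows.

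In the application, the nonzero terms have $B'$ inside a contiguous stretch of $B$ that lies after all rows of $A$ and after a fixed set of rows of $C$. So there the sorted and stacked readings, and the various normalizations of $(-1)^{B'}$, agree up to a global sign, which is the precision to which the paper itself tracks signs. Once the convention is fixed, your route ($GL_q$-normalization plus density instead of the auxiliary $2q\times 2q$ matrix) is a legitimate alternative proof.

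A minor point: the two sides are quadratic, not multilinear, in each column of $L$. What you actually use, correctly, is that $L\mapsto Lg$ multiplies every term by $(\det g)^2$.
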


\begin{proof} Consider a $2q\times 2q$ matrix 
\[
\tilde{L} = \begin{bmatrix} L_A & 0\\ L_B & L_B\\ 0 & L_C\end{bmatrix}.
\]
By the block Laplace expansion formula, $\det \tilde{L}$ is equal, up to a sign $(-1)^{p(q+p)}$, to the left hand side of~\eqref{eq:auxPlucker}. On the other hand, subtracting the first block column of $\tilde{L}$ from the second and permuting the rows of the result, one sees that $\det \tilde{L}$ is equal to
\[
(-1)^{pq+p}\det \begin{bmatrix} L_B & 0\\ 0 & L_{A\cup C}\end{bmatrix}= (-1)^{pq+p}\det L_B  \det L_{A\cup C}.
\]
\end{proof}

Further, we introduce some additional notation. Let $I_1,\ldots, I_k$ be subsets of $[1,n]$. We denote by $\NG(I_1,\ldots, I_k)$ a rectangular matrix of size $\sum_l|I_l|\times(n+k-1)$ built out of submatrices 
$X_{I_1},\ldots, X_{I_k}$ that are stacked one below the other as follows: $X_{I_1}$ occupies the upper left corner of $\NG(I_1,\ldots, I_k)$, and each block $X_{I_{l+1}}$ is shifted by one column to the right with respect to the block 
$X_{I_{l}}$ above it. Thus, 
$\NG(I_1,\ldots, I_k)$ has $k$ block rows of sizes $|I_1|\times (n+k-1), \ldots, |I_k|\times (n+k-1)$, and the $l$-th block has  $l-1$ initial zero columns, followed by $X_{I_{l}}$, followed by $k-l$ zero columns. 
For row index sets of $\NG(I_1,\ldots, I_k)$, we will use notation $(J_1,\ldots , J_k)$, where $J_1 \subset I_1, \ldots, J_k \subset I_k$.
Of a special interest is the situation when $I_1=I_2={\cdots}=I_{k-2}=I$; the corresponding matrix is denoted $\NG(I^{k-2},I_{k-1},I_k)$. In 
particular, we define 
$\NG_{ij}(X)$ for $i\in[2,n-1]$ and $j\in [2,i]$ via 
\[
\NG_{ij}(X)=\NG(\{1,n\}^{j-2},1\cup[n+j-i,n],1\cup[i+1,n]).
\] 
Note that in this case $\sum_l|I_l|=n+j-1$, so that $\NG_{ij}(X)$ are
square matrices of size $(n+j-1)\times(n+j-1)$. For example, the matrix $\NG_{53}(X)$ for 
$n=6$ is given by
\[
%G_{53}(X) = 
\nar\begin{bmatrix}
x_{11} & x_{12} & x_{13} & x_{14} & x_{15} & x_{16} & 0 & 0\\
x_{61} & x_{62} & x_{63} & x_{64} & x_{65} & x_{66} & 0 & 0\\
0 & x_{11} & x_{12} & x_{13} & x_{14} & x_{15} & x_{16} & 0 \\
0 & x_{41} & x_{42} & x_{43} & x_{44} & x_{45} & x_{46} & 0 \\
0 & x_{51} & x_{52} & x_{53} & x_{54} & x_{55} & x_{56} & 0 \\
0 & x_{61} & x_{62} & x_{63} & x_{64} & x_{65} & x_{66} & 0 \\
0 & 0 & x_{11} & x_{12} & x_{13} & x_{14} & x_{15} & x_{16}  \\
0 & 0 & x_{61} & x_{62} & x_{63} & x_{64} & x_{65} & x_{66} \\
\end{bmatrix}.
\]

\begin{lemma}\label{lem:gfunXgen}
Let $U=\Psi'(X)$, $2\leq \alpha \leq \beta \leq n-1$, and $I$ be a subset of $[2,n-1]$ of size $\beta -\alpha +1$. Then
\begin{multline}
\label{eq:gfunXgen}
\det [H(U)]_{I}^{[\alpha,\beta]}\prod_{s=2}^{\alpha-1}\det [H(U)]_{[s,n-1]}^{[s,n-1]} \\= 
\frac{\sigma(n,\alpha,\beta)}{x_{1n}^{\alpha-1}\det X}\det\NG(\{1,n\}^{\alpha-2},1\cup[\beta +1, n],1\cup\gamma(I)), 
%\langle \underbrace{1,n |{\cdot}s |1,n}_{(\alpha-2)\ \mbox{times}} | 1, [\beta +1, n] | 1, \gamma(I)   \rangle\ ,
\end{multline}
where the sign $\sigma(n,\alpha,\beta)$ does not depend on $I$ and is given by
\begin{equation}\label{gsigngen}
\sigma(n,\alpha,\beta)=(-1)^{(n-1)(\beta-\alpha-1)+\alpha\beta}.
\end{equation}
\end{lemma}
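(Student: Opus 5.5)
We argue by induction on $\alpha$, combining Lemma~\ref{lem:Fminors} (applied to $U=\Psi'(X)\in GL_{n-1}$, so $n$ is replaced by $n-1$ there), Lemma~\ref{lemma:U(X)minors} and the Pl\"ucker-type identity of Lemma~\ref{auxPlucker}.

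\emph{Base case $\alpha=2$.} Here $\det[H(U)]_I^{[2,\beta]}=\det U_I^{[2,\beta]}$: by Corollary~\ref{cor:Fprincipal}(ii) with $k=0$, and since $Y_0=U$, column-solid minors starting at column $2$ are unchanged. Lemma~\ref{lemma:U(X)minors} with $J=[2,\beta]$ turns this into
\[
\frac{\pm1}{x_{1n}\det X}\det\begin{bmatrix} X_{1\cup[\beta+1,n]} & 0\\ 0 & X_{1\cup\gamma(I)}\end{bmatrix}.
\]
The complement of $[2,\beta]$ in $[1,n]$ is $1\cup[\beta+1,n]$, so the rows are $X_{\hat J}$. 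This $2\times2$ block matrix is precisely $\NG(1\cup[\beta+1,n],1\cup\gamma(I))$, since the shift by one column is exactly the block-diagonal placement of two $(n\times n)$-wide blocks in $n+1$ columns. The sign $(-1)^{(n-1)k}(-1)^{[2,\beta]}$ with $k=\beta-1$ is then compared with $\sigma(n,2,\beta)$.

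\emph{Inductive step.} Write the left-hand side of \eqref{eq:gfunXgen} via the Binet--Cauchy expansion used in the proof of Lemma~\ref{lem:Fminors}: it is
\[
\sum_{I_1}\det U^{I_1}_I\,\Bigl(\det[H(U)]^{[\alpha-1,n-1]}_{\gamma^*(I_1)\cup[\beta,n-1]}\prod_{s=2}^{\alpha-2}\det[H(U)]_{[s,n-1]}^{[s,n-1]}\Bigr),
\]
where the factor $\det[H(U)]_{[\alpha-1,n-1]}^{[\alpha-1,n-1]}$ in the product cancels against the denominator. The bracketed factor is covered by the induction hypothesis with $(\alpha-1,\,n-1)$ in place of $(\alpha,\beta)$ and row set $\gamma^*(I_1)\cup[\beta,n-1]$. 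By that hypothesis it equals $\frac{\sigma}{x_{1n}^{\alpha-2}\det X}\det\NG(\{1,n\}^{\alpha-3},\{1,n\},1\cup\gamma(\gamma^*(I_1)\cup[\beta,n-1]))$, whose last block is $1\cup I_1\cup[\beta+1,n]$. (This needs the hypothesis for row sets containing the index $n-1$, which is harmless.) The factor $\det U_I^{I_1}$ is handled by Lemma~\ref{lemma:U(X)minors}, giving $\pm\frac{1}{x_{1n}\det X}\det\NG(\widehat{I_1},1\cup\gamma(I))$ with $\widehat{I_1}=[1,n]\setminus I_1$.

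\emph{Recombination.} We must show
\[
\sum_{I_1}(-1)^{I_1}\det\NG(\{1,n\}^{\alpha-2},1\cup I_1\cup[\beta+1,n])\,\det\NG([1,n]\setminus I_1,1\cup\gamma(I))=\pm\det X\cdot\det\NG(\{1,n\}^{\alpha-2},1\cup[\beta+1,n],1\cup\gamma(I)),
\]
which absorbs one factor of $\det X$ and leaves $x_{1n}^{\alpha-1}\det X$ in the denominator. We apply Lemma~\ref{auxPlucker} to the tall matrix $L$ obtained by gluing the columns of $\NG(\{1,n\}^{\alpha-2},\ast)$ along the last block, with $B=[1,n]$ the rows of $X$, $A=1\cup[\beta+1,n]$, and $C$ the rows of the final block. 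Equivalently, we Laplace-expand the target determinant along the last block row. The right-hand side $\det L_B\det L_{A\cup C}$ produces $\det X$ times the target.

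\emph{Expected main obstacle.} The hardest part is this Laplace/Pl\"ucker matching. It must correctly account for the shifted columns, for the fact that the sum runs only over $I_1\subset[2,n]$ (the terms with $1\in I_1$ vanish because the row $1$ is repeated), and for the sign bookkeeping leading to \eqref{gsigngen}, where the sign is checked to be independent of $I$. We would treat the signs last, collecting $(-1)^{I_1}$, $(-1)^J$ and the block-permutation signs, and verifying the recursion $\sigma(n,\alpha,\beta)=\pm\sigma(n,\alpha-1,n-1)$.
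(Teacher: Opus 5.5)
Your skeleton is the paper's, and so is the recombination identity you display, which is true. That skeleton consists of:
\begin{itemize}
\item induction on $\alpha$, with the base case from Corollary~\ref{cor:Fprincipal}(ii) and Lemma~\ref{lemma:U(X)minors};
\item the Binet--Cauchy step feeding the hypothesis at $(\alpha-1,n-1)$;
\item Lemma~\ref{lemma:U(X)minors} applied to $\det U_I^{I_1}$.
\end{itemize}
The gap is the proof of that identity. In each summand the two determinants have different sizes, $n+\alpha-2$ and $n+1$. So they are not maximal minors of a single $m\times q$ matrix, which is what Lemma~\ref{auxPlucker} requires. Your data also violate the lemma's hypotheses. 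First, $|B|=n$ cannot be the number of columns of any admissible $L$, since that number is at least $n+1$. Second, $|A|=n-\beta+1$ is neither $|I_1|=\beta-\alpha+1$ nor $|[2,\beta]\setminus I_1|=\alpha-2$. Third, $A\subset B$ would kill every term except $B'=A$. Moreover, the rows $1,n$ of the first $\alpha-2$ blocks are assigned nowhere, although they occur in one factor and not in the other. Finally, a Laplace expansion of the target along its last block row is not an equivalent formulation. It produces a sum over column subsets of the last block, not over row subsets $I_1$ of $X$.

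The missing idea is padding.
\begin{itemize}
\item Sum over $J=[2,\beta]\setminus I_1$ and multiply the identity by $x_{n1}^{\alpha-2}x_{1n}$.
\item Use $x_{1n}\det\NG(\{1,n\}^{\alpha-2},[1,n]\setminus J)=\pm\det\NG(\{1,n\}^{\alpha-2},[1,n]\setminus J,1)$.
\item Use $x_{n1}^{\alpha-2}\det\NG(1\cup J\cup[\beta+1,n],1\cup\gamma(I))=\pm\det\NG(n^{\alpha-2},1\cup J\cup[\beta+1,n],1\cup\gamma(I))$.
\end{itemize}
Now both factors are maximal minors of $L=\NG(\{1,n\}^{\alpha-2},[1,n],1\cup\gamma(I))$, and Lemma~\ref{auxPlucker} applies with:
\begin{itemize}
\item $A$ the rows $1$ of the first $\alpha-2$ blocks;
\item $B$ the rows $n$ of those blocks, the whole $X$-block, and the row $1$ of the last block;
\item $C$ the rows $n$ of the first $\alpha-2$ blocks, the rows $1\cup[\beta+1,n]$ of the $X$-block, and $1\cup\gamma(I)$.
\end{itemize}
The surviving terms are $B'=J\subset[2,\beta]$ in the $X$-block. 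Then $\det L_B=\pm x_{n1}^{\alpha-2}x_{1n}\det X$ cancels the padding and supplies your $\det X$, and $\det L_{A\cup C}$ is the target. This is the paper's argument with $\alpha$ shifted by one.

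There is also a shortcut that avoids Lemmas~\ref{lemma:U(X)minors} and~\ref{auxPlucker} in the inductive step. The factor $\det\NG(\{1,n\}^{\alpha-2},1\cup I_1\cup[\beta+1,n])$ is alternating multilinear in the rows $I_1$. Hence $\sum_{I_1}\det U_I^{I_1}(\cdot)$ substitutes the rows of $UX_{[1,n-1]}$. Then $UX_R=X_L$ turns these, modulo rows $1$ and $n$, into the shifted rows $\gamma(I)$, which gives $\pm x_{1n}^{-1}$ times the target.

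Finally, the real domain issue in the induction concerns the index $1$, not $n-1$. If $2\in I_1$, the row set $\gamma^*(I_1)\cup[\beta,n-1]$ contains $1$. So you should run the induction for $I\subset[1,n-1]$, which the base case and Lemma~\ref{lemma:U(X)minors} allow.
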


\begin{proof} We proceed by induction on $\alpha$. For $\alpha=2$,~\eqref{eq:gfunXgen} follows from~\eqref{eq:fstability} and Lemma~\ref{lemma:U(X)minors}, taking into account that the matrix featuring in the right hand side of equation~\eqref{U(X)minors} is $\NG(\hat J,1\cup\gamma(I))$. In this case, 
\begin{equation}\label{sigmainit}
\sigma(n,2,\beta)=(-1)^{(n-1)(\beta -1)} (-1)^{[2,\beta]} =(-1)^{(n-1)(\beta-1)}. 
\end{equation}

Next, by~\eqref{eq:fstability} and~\eqref{Fmaps},
\[ 
\begin{aligned}
\det [H(U)]_{I}^{[\alpha+1,\beta]}&=\det [H_{\alpha-1}(U)]_{I}^{[\alpha+1,\beta]}
=\det \left [ U\bgamma(H_{\alpha-2}(U)_+)\right ]_{I}^{[\alpha+1,\beta]}\\ 
&= \sum_{\substack{K \subset [2,\beta]\\ |K| = \beta -\alpha}}\det U_I^K  \det \left [H_{\alpha-2}(U)_+\right ]_{\gamma^*(K)}^{[\alpha,\beta-1]}\\
&= \sum_{\substack{K \subset [2,\beta]\\ |K| = \beta -\alpha}}\det U_I^K  \frac{\det \left [H_{\alpha-2}(U)\right ]_{\gamma^*(K)\cup [\beta,n-1]}^{[\alpha,n-1]}}
{\det \left [H_{\alpha-2}(U)\right ]_{[\alpha,n-1]}^{[\alpha,n-1]}}\\
&= \sum_{\substack{K \subset [2,\beta]\\ |K| = \beta -\alpha}}\det U_I^K  \frac{\det \left [H(U)\right ]_{\gamma^*(K)\cup [\beta,n-1]}^{[\alpha,n-1]}}
{\det \left [H(U)\right ]_{[\alpha,n-1]}^{[\alpha,n-1]}}.
\end{aligned}
\]
Using the inductive assumption and Lemma~\ref{lemma:U(X)minors}, we obtain for a subset $I$ of $[2,n-1]$ of size $\beta-\alpha$
\begin{multline}\label{eq:gfunXderive}
\det [H(U)]_{I}^{[\alpha+1,\beta]}\prod_{s=2}^{\alpha}\det [H(U)]_{[s,n-1]}^{[s,n-1]} \\ 
=\sum_{\substack{K \subset [2,\beta]\\ |K| = \beta -\alpha}}\det U_I^K \det \left [H(U)\right ]_{\gamma^*(K)\cup [\beta,n-1]}^{[\alpha,n-1]}\prod_{s=2}^{\alpha-1}\det [H(U)]_{[s,n-1]}^{[s,n-1]}\\
= \sum_{\substack{K \subset [2,\beta]\\ |K| = \beta -\alpha}}\det U_I^K \frac{\sigma(n,\alpha,n-1)}{x_{1n}^{\alpha-1}\det X}
\det\NG(\{1,n\}^{\alpha-2},\{1,n\},1\cup K\cup[\beta +1, n])\\
= \frac{(-1)^{(n-1)(\beta-\alpha)}\sigma(n,\alpha,n-1)}{x_{1n}^{\alpha}(\det X)^2}  
\sum_{\substack{K \subset [2,\beta]\\ |K| = \beta -\alpha}} (-1)^K
\det\NG( \hat K,1\cup \gamma(I))\times\\
\det\NG(\{1,n\}^{\alpha-2},\{1,n\},1\cup K\cup[\beta +1, n]).
\end{multline}
Note that for $J=[2,\beta]\setminus K$ one has $(-1)^J=(-1)^K(-1)^{\beta(\alpha+1)}$, so
after multiplication by $(-1)^{\beta(\alpha+1)}x_{n1}^{\alpha-1} x_{1n}$ and passing to the summation over
$J$, the sum above can be rewritten as
\begin{equation}
\label{aux1}
\sum_{J \subset [2,\beta], |J| = \alpha-1} (-1)^J
\det\NG(n^{\alpha-1}, 1\cup J\cup [\beta+1,n],  1\cup \gamma(I))\det\NG(\{1,n\}^{\alpha-1},\hat J, 1), 
\end{equation}
and we can apply Lemma~\ref{auxPlucker} with $L= \NG(\{1,n\}^{\alpha-1}, [1, n], 1\cup \gamma(I) )$,
$m = n+\alpha +\beta - 2$, $q = n+\alpha$, $p=\alpha -1$, $A=(1,\dots,1,\pu,\dots\pu)$ with $\alpha-1$ nonempty subsets, 
$B=(n,\dots,n, [1, n], 1)$, $C= (n,\dots,n, 1\cup [\beta+1,n], 1\cup \gamma(I) )$. Then the sum in~\eqref{aux1} is equal to
\[
 \det L_B  \det L_{A\cup C}  =  \left (x_{n1}^{\alpha-1} x_{1n} \det X\right ) 
\det\NG(\{1,n\}^{\alpha-1}, 1\cup [\beta+1,n], 1\cup  \gamma(I)).
\]
Thus, we obtain~\eqref{eq:gfunXgen}  from~\eqref{eq:gfunXderive} with $\sigma(n,\alpha,\beta)$ satisfying the recurrent
relation
\begin{equation*}
\sigma(n, \alpha +1,\beta)= (-1)^{(n-1)(\beta-\alpha)+\beta(\alpha+1)}\sigma(n,\alpha,n-1).
\end{equation*}
It is easy to check that the solution of this recurrent relation with the initial conditions~\eqref{sigmainit} is given 
by~\eqref{gsigngen}
\end{proof}

The statement of Theorem~\ref{expressions}(ii) follows immediately from~\eqref{eq:fopflags} and Lemma~\ref{lem:gfunXgen}
for $I=[i,n-1]$, $\alpha=j$, $\beta=n-1-i+j$. \qed

\subsection{Proof of Theorem~\ref{expressions}(iii)} Recall that functions $\ec_r(X)$ are defined via 
$\det(\lambda\NA(X)+\NB(X))=\sum_{r=0}^{n-1}(-1)^{rn}\lambda^r\ec_r(X)$, where $\NA(X)$ and $\NB(X)$ are the blocks of the periodic staircase structure defined at the beginning of Section~\ref{twomapsiproof}. Relations~\eqref{badc}
follow immediately from~\eqref{ABvsU} and the definition~\eqref{cdef} of the functions $c_r(U)$ for $GL_{n-1}$. Note that $\ec_0(X)=x_{1n}\det X$ is not included in the initial cluster $\FF_n$. \qed

\subsection{Proof of Theorem~\ref{expressions}(iv)} Let $\bar q_i$ and $p_i$, $i\in[0,n-1]$, be the coefficients of
a rational function $\bar M(\lambda)\in\RR_{n-1}$ as defined in~\eqref{qandp}; recall that $p_{n-1}=1$. Additionally, define $\bar q_i=p_i=0$ for $i<0$ and $i\ge n$. The Laurent expansion~\eqref{moments} yields
\begin{equation}\label{barqviaph}
\bar q_j=\sum_{i=0}^\infty p_{i+j}\bar h_i\qquad\text{for $j\in\Z$}.
\end{equation}
%note that the sum above is finite since  
Define a $(2n-1)\times(2n-1)$ matrix $F=F(\bar M)$ via 
\[
F_{2i,j}=p_{j-i-1},\qquad F_{2i+1,j}=\bar q_{j-i-1}
\]
and denote $F_k= F_{[2n-k,2n-1]}^{[2n-k,2n-1]}$ for $k\in[1,2n-1]$. To compute $\det F_k$ we subtract from each $\bar q$-row in $F_k$ the linear combination of all $p$-rows in $F_k$ above it with the coefficients $\bar h_0$ for the $p$-row immediately above it, $\bar h_1$ for the previous $p$-row, and so on. As a result, the last $\lfloor k/2\rfloor$ entries in each $\bar q$-row in $F_k$ vanish by~\eqref{barqviaph}. For $k=2m-1$, 
the $t$-th entry of the $s$-th $\bar q$-row equals  
\[
\sum_{i=s-1}^\infty p_{n-m+t-1+i}\bar h_i.
\]
Consequently, 
\[
\det F_{2m-1}=(-1)^{m(m-1)/2}\det (h_{\alpha+\beta-2})_{\alpha,\beta=1}^{m} 
\det(p_{\alpha+\beta+n-m-2})_{\alpha,\beta=1}^{m}= \bar t_{m}^-,
\]
since the second determinant above equals $(-1)^{m(m-1)/2}$. 
For $k=2m$, 
the $t$-th entry of the $s$-th $\bar q$-row equals  
\[
\sum_{i=s}^\infty p_{n-m+t-2+i}\bar h_i.
\]
Consequently, 
\[
\det F_{2m}=(-1)^{m(m+1)/2}\det (h_{\alpha+\beta-1})_{\alpha,\beta=1}^{m} 
\det(p_{\alpha+\beta+n-m-2})_{\alpha,\beta=1}^{m}= \bar t_{m}^+
\]
for the same reason as above.

To complete the proof, define a $(2n-1)\times(2n-1)$ matrix $\NF(X)=x_{1n}F(\Psi''(X))$. The first row of $\NF(X)$ coinsides with the last row of $X$ appended with $n-1$ zeros, the second and the third rows are the first and the last rows of $X$ shifted right by one and apended with $n-2$ zeros, and so on. For example, the matrix $\NF(X)$ for $n=4$ equals
\[
\begin{bmatrix}
x_{41} & x_{42} & x_{43} & x_{44} & 0 & 0 & 0\\
0 & x_{11} & x_{12} & x_{13} & x_{14} & 0 & 0\\
0 & x_{41} & x_{42} & x_{43} & x_{44} & 0 & 0\\
0 & 0 & x_{11} & x_{12} & x_{13} & x_{14}  & 0 \\
0 & 0 & x_{41} & x_{42} & x_{43} & x_{44} & 0 \\
0 & 0 & 0 & x_{11} & x_{12} & x_{13} & x_{14}   \\
0 & 0 & 0 & x_{41} & x_{42} & x_{43} & x_{44} 
\end{bmatrix}.
\]
Finally, set $\ef_k(x)=
\det\NF(X)_{[2n-k,2n-1]}^{[2n-k,2n-1]}=x_{1n}^k\det F_k(\Psi''(X))$. Relations~\eqref{badbart} follow immediately from the above discussion. Relation $\bar t_n^-(\Psi''(X))/\bar t_{n-1}^+(\Psi''(X))=x_{n1}/x_{1n}$ is now evident. \qed

\section{Proof of Propositions~\ref{readytoglue}--\ref{Dexchange}}
\subsection{Proof of Proposition~\ref{readytoglue}}  (i) By Theorem~\ref{expressions}, the set of irreducible factors of denominators consists of
$z_1=x_{1n}$ and $z_2=\det X$. 
The situation at all vertices of $Q_{n-1}^\dag$ except for $(1,1)$  is covered by constructions 
of~\cite[Section~4.1 and Remark~4.8]{GSVpullback}. A straightforward computation with expressions~\eqref{badphi},~\eqref{badg}, 
and~\eqref{badbart} shows that the discrepancy $\delta_{1v}$ computed via~\eqref{discrep} vanishes at all vertices $v\ne(1,1)$ of $Q_{n-1}^\dag$ except for $(n-2,1)$, at which $\delta_{1v}=1$.
Consequently, a new vertex $A$ corresponding to $x_{1n}$ and an arrow $(n-2,1)\to A$ are added in $\Psi'^*Q_{n-1}^\dag$. 
Similarly, the discrepancy $\delta_{2v}$ vanishes at all vertices except for $(n-2,1)$ and $\langle n-1,2\rangle$, for which
$\delta_{2v}=1$ and $\delta_{2v}=-1$, respectively. Consequently, a new vertex $B$ corresponding to $\det X$ is added in 
$\Psi'^*Q_{n-1}^\dag$ together with arrows $(n-2,1)\to B$ and 
$B\to \langle n-1,2\rangle$. 
Vertex $(1,1)$ is handled as explained in~\cite[Section~4.4]{GSVpullback}: the function $\fy_1(r)$ computed  
via the first relation in~\cite[Eq.~(4.5)]{GSVpullback} equals $1-r/(n-1)$ for $r\in[1,n-1]$ and vanishes for $r=0$, and 
$\tau_1$ computed via the second relation in~\cite[Eq.~(4.5)]{GSVpullback} equals $1/(n-1)$. Therefore, we are exactly within
conditions of~\cite[Example~4.7(i)]{GSVpullback}, so an arrow from $A$ to $(1,1)$ is added in $\Psi'^*Q_{n-1}^\dag$ and it is changed under mutations in a usual way.

(ii) By Theorem~\ref{expressions}, the only irreducible factor in the denominators is $z_1=x_{1n}$. 
Since $\bar Q_{n-1}^T$ does not have generalized vertices, constructions of~\cite[Section~4.1]{GSVpullback} apply. A straightforward computation similar to the one in part (i) shows that the discrepancy $\delta_{1v}$ vanishes at all vertices 
$v$ of $\bar Q_{n-1}^T$ except for $(1,-)$, at which $\delta_{1v}=-1$. Consequently, a new vertex $A$ corresponding to $x_{1n}$ and an arrow $A\to (1,-)$ are added in 
$\bar \Psi''^*Q_{n-1}^T$. \qed

\subsection{Proof of Proposition~\ref{frozenchar} and applications}\label{prooffrozenchar} (i) Assume that $f$ is frozen, then by compatibility, $\{f,x_i(t)\}_{\G\CC}=c_ifx_i(t)$ for any cluster variable in any cluster $t$, and moreover, $\{f,L(t)\}_{\G\CC}=c_LfL(t)$ for any Laurent monomial $L(t)$ in cluster variables in $t$. Let $g\in\C[V]$. Since $\G\CC$ is complete, for any $t$ one can write $g$ as a sum of Laurent monomials
in cluster variables in $t$: $g=\sum_k L_k(t)$. Consequently, $\{f,g\}_{\G\CC}=\sum_k c_{L_k}fL_k(t)=f\tilde g$
for $\tilde g=\sum_k c_{L_k}L_k(t)$. So, for any fixed $t$, $\tilde g$ has a Laurent representation in cluster
variables in $t$, and hence belongs to the upper cluster algebra $\UC(\G\CC)$. By regularity of $\G\CC$, this means
that $\tilde g$ is regular.  

(ii) Note first that for any abstract generalized cluster structure, a Laurent monomial that contains a cluster variable in the denominator does not belong to the upper cluster algebra. Indeed, let $L=M'/M''$ be a Laurent monomial and let $M''=f^d\bar M''$ for a monomial 
$\bar M''$, a cluster variable $f$ in some cluster $t$ and $d>0$. In the adjacent cluster $t'$ obtained via mutation at $f$, we can write 
$L=M'(f')^d/P^d\bar M''$, where $P$ is the right hand side of the exchange relation $ff'=P$. Assuming that $L$ belongs to the ring of Laurent polynomials in the cluster $t'$ we get $M'(f')^d/P^d\bar M''=Q/\tilde M$ for some polynomial $Q$ and monomial $\tilde M$, which
leads to $P^dQ\bar M''=M'\tilde M(f')^d$, a contradiction, since the right hand side is a monomial, and the left hand side is not.  
Consequently, for a regular complete generalized structure on $V$ such a monomial is not a regular function. 

Assume now that $f$ is ordinary mutable, and that $ff'=M_1+M_2$ is the exchange relation for $f$. We claim that
$\{f,f'\}_{\G\CC}/f$ is not regular. Indeed, since all cluster variables in $M_1$ and $M_2$ lie in the same cluster 
as $f$ does, the compatibility yields $\{f,M_1\}_{\G\CC}=c_1fM_1$, $\{f,M_2\}_{\G\CC}=c_2fM_2$. On the other 
hand,~\cite[Theorem 1.4]{GSVb} yields  $\{f,M_1/M_2\}_{\G\CC}=(c_1-c_2)fM_1/M_2$ with $c_1-c_2\ne 0$. Consequently,
$\{f,f'\}_{\G\CC}/f=(c_1M_1+c_2M_2)/f$ for $c_1\ne c_2$. Note that the regularity of the latter expression together
with regularity of $f'=(M_1+M_2)/f$ would imply that both $M_1/f$ and $M_2/f$ are regular, a contradiction. 

Let now $f$ be generalized mutable of order $d$, and let the exchange relation for $f$ be 
$ff'=\sum_{i=0}^d M_i$. Similarly to above, we get $\{f,M_i\}_{\G\CC}=c_ifM_i$. Assuming that the right hand side of the
exchange relation for $f$ is written in the form prescribed by~\cite[Eq.~(2.4)]{GSVdouble}, we infer 
from~\cite[Proposition 2.5]{GSVdouble} that $c_0\ne c_d$ and that $c_i=c_0+i(c_d-c_0)/d$ for $i\in [0,d]$. Define 
a sequence of functions 
$$
f_j=\dfrac1f\sum_{i=j}^d\dfrac{i!}{(i-j)!}M_i
$$ 
for $j\in[0,d]$, so that $f_0=f'$, then $\{f,f_j\}_{\G\CC}/f=c_jf_j+\dfrac{c_d-c_0}{d}f_{j+1}$ for $j\in[0,d-1]$.
Consequently, regularity of $f_j$ and of the left hand side implies regularity of $f_{j+1}$. Note that $f_0$ is regular, so assuming that $\{f,f_j\}_{\G\CC}/f$ are regular for $j\in [0,d-1]$ we get that $f_d=d!M_d/f$ is regular,  a contradiction. This contradicition shows that the set $\{f_0,\dots,f_{d-1}\}$ contains at least one regular function $f_j$ for which  $\{f,f_j\}_{\G\CC}/f$ is not regular.
\qed

As an application, we prove that $x_{1n}$ and $x_{n1}$ should be frozen in a regular complete generalized cluster structure compatible with $\Poi$. Indeed, it follows immediately from~\eqref{gradientsforx} that
\[
(\nabla x_{1n}X)_>=(X\nabla x_{1n})_>=0,\quad \gamma(\nabla x_{1n}X)_<=0,
\quad \gamma^*(X\nabla x_{1n})_<=0,
\]
so the second and the third terms in expressions~\eqref{Rplusgamma} for $\bar R_+$ and $R_+$ vanish.
Further, by~\eqref{barR0} and~\eqref{R0}, 
\[
\bar R_0(\nabla x_{1n}X)=-\frac{n+1}{2n}x_{1n}\one_n+\frac1n x_{1n}\bar D_n,\quad
R_0(X\nabla x_{1n})=-\frac{n+1}{2n}x_{1n}\one_n+\frac1n x_{1n}D_n,
\]
and hence for any function $g$ one has 
\begin{multline}\label{x1nsigizmund}
\{x_{1n},g\}=\frac{x_{1n}}n\left(\left\langle \bar D_n-\frac{n+1}2\one_n, (\nabla g X)_0\right\rangle-
\left\langle D_n-\frac{n+1}2\one_n, (X\nabla g)_0 \right\rangle\right)\\
=\frac{x_{1n}}n\left(\left\langle \bar D_n, (\nabla g X)_0\right\rangle-\left\langle D_n,(X\nabla g)_0 \right\rangle\right)
\end{multline}
since $\langle \one_n, (\nabla g X)_0\rangle=\langle \one_n, (X\nabla g)_0\rangle$.
Clearly, the expression in brackets above is a regular function if $g$ is regular, so Proposition~\ref{frozenchar} applies.

Similarly, it follows from~\eqref{gradientsforx} that
\[
(\nabla x_{n1}X)_<=(X\nabla x_{n1})_<=0,\quad \gamma(\nabla x_{n1}X)_>=0,
\quad \gamma^*(X\nabla x_{n1})_>=0,
\]
so the third terms in expressions~\eqref{Rplusgamma} for $\bar R_+$ and $R_+$ vanish, and the second terms
become $(e_1X_{[n,n]})_>$ and $(X^{[1,1]}e_n^T)_>$, respectively. Further, by~\eqref{barR0} and~\eqref{R0}, 
\[
\bar R_0(\nabla x_{n1}X)=\frac{n-3}{2n}x_{n1}\one_n+\frac1n x_{n1}\bar D_n,\quad
R_0(X\nabla x_{n1})=\frac{n-3}{2n}x_{n1}\one_n+\frac1n x_{n1}D_n,
\]
and hence for any function $g$ one has 
\begin{multline*}
\{x_{n1},g\}=\frac{x_{n1}}n\left(\left\langle \bar D_n-\frac{n+3}2\one_n, (\nabla g X)_0\right\rangle-
\left\langle D_n-\frac{n+3}2\one_n, (X\nabla g)_0 \right\rangle\right)\\+
\left\langle e_1X_{[n,n]},\nabla g X\right\rangle-\left\langle X^{[1,1]}e_n^T,X\nabla g\right\rangle.
\end{multline*}
Note that $Xe_1X_{[n,n]}=X^{[1,1]}e_n^TX$, so that the last two terms cancel each other, which yields
\begin{equation}\label{xn1sigizmund}
\{x_{n1},g\}=\frac{x_{n1}}n\left(\left\langle \bar D_n, (\nabla g X)_0\right\rangle-\left\langle D_n,(X\nabla g)_0 \right\rangle\right),
\end{equation}
 and hence Proposition~\ref{frozenchar} applies for the same reason as above. 

As an immediate corollary of this computation we get the following fact.

\begin{lemma}\label{xn1/x1n}
 $x_{1n}/x_{n1}$ is a Casimir of the bracket $\Poi$.
\end{lemma}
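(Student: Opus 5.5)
The plan is to compare the two formulas just derived, \eqref{x1nsigizmund} and \eqref{xn1sigizmund}. Both hold for an arbitrary function $g$, and their right hand sides share the same bracketed factor
\[
\Lambda(g)=\frac1n\left(\left\langle \bar D_n, (\nabla g X)_0\right\rangle-\left\langle D_n,(X\nabla g)_0 \right\rangle\right).
\]
Hence $\{x_{1n},g\}=x_{1n}\Lambda(g)$ and $\{x_{n1},g\}=x_{n1}\Lambda(g)$. In other words, $\log x_{1n}$ and $\log x_{n1}$ have the same Hamiltonian derivation $g\mapsto\Lambda(g)$.

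Next, apply the Leibniz rule to the quotient:
\[
\{x_{1n}/x_{n1},g\}=\frac{\{x_{1n},g\}}{x_{n1}}-\frac{x_{1n}\{x_{n1},g\}}{x_{n1}^2}
=\frac{x_{1n}}{x_{n1}}\Lambda(g)-\frac{x_{1n}}{x_{n1}}\Lambda(g)=0 .
\]
This vanishes for every $g$, so $x_{1n}/x_{n1}$ is a Casimir of $\Poi$, wherever it is defined, i.e. on the open set $x_{n1}\ne0$.

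The only step needing care is confirming that the two bracketed expressions really are identical. The constant shifts $-\frac{n+1}{2}\one_n$ and $-\frac{n+3}{2}\one_n$ both drop out, because $\langle\one_n,(\nabla gX)_0\rangle=\Tr(\nabla g X)=\Tr(X\nabla g)=\langle\one_n,(X\nabla g)_0\rangle$. For $x_{n1}$ one must also use the cancellation of the non-diagonal terms, which follows from $X^{[1,1]}e_n^TX=Xe_1X_{[n,n]}$ as noted above. Once both checks are done, the computation above completes the argument.

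As a consistency check, \eqref{detU} gives $\det\Psi'(X)=x_{n1}/x_{1n}$. By the Poisson property of $\Psi'$ in Theorem~\ref{twomaps}(i), this function is the pullback of $\det U$, which is a Casimir of $\Poi^\dag$ (this is what makes $c_n=\det U$ an isolated or frozen-type coefficient). So the pullback of a Casimir is again a Casimir, and this gives an alternative route to the same conclusion.
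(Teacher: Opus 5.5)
Your main argument is correct and is exactly the paper's. The lemma is stated there as an immediate corollary of \eqref{x1nsigizmund} and \eqref{xn1sigizmund}: $x_{1n}$ and $x_{n1}$ share the same logarithmic Hamiltonian derivation, so the Leibniz rule gives $\{x_{1n}/x_{n1},g\}=0$ for every $g$.

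Your ``alternative route'' in the last paragraph does not work as stated. When a Poisson map has positive-dimensional fibres, the pullback of a Casimir need not be a Casimir of the source. For example, $(x,y)\mapsto x$ from $\C^2$ with $\{x,y\}=1$ to $\C$ with the zero bracket is Poisson, yet $x$ is not a Casimir. Since $\Psi'\colon GL_n\to GL_{n-1}$ has such fibres, Theorem~\ref{twomaps}(i) only gives $\{\det U\circ\Psi',f\circ\Psi'\}=0$ for functions $f$ on $GL_{n-1}$.

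To complete that route you would also need three further inputs:
\begin{enumerate}
\item Theorem~\ref{twomaps}(iii), to handle the coefficients of $\Psi''(X)$.
\item A direct check against one extra function such as $x_{1n}$. This follows from \eqref{first>last} with $k=n$, $l=1$, which gives $\{x_{1n},x_{n1}\}=0$.
\item The fact, from Proposition~\ref{prop:theta}, that these functions together generate the field of rational functions on $GL_n$.
\end{enumerate}
This is precisely how the paper proves the analogous Casimir property of $c_r(\Psi'(X))$ in Section~\ref{compatibility}. Your proof does not need that paragraph, so either drop it or recast it in this form.
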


\subsection{Proof of Proposition~\ref{Dexchange}} \label{dexchangeproof}
Recall that the four-term Pl\"ucker relation for a $(k+3)\times k$ matrix $A$ and six distinguished row indices
$\alpha< \beta< \gamma< \delta< \xi< \zeta$ reads
\[
\widehat{\alpha,\beta,\gamma}\cdot \widehat{\delta,\xi,\zeta}-\widehat{\alpha,\beta,\delta}\cdot \widehat{\gamma,\xi,\zeta}+
\widehat{\alpha,\gamma,\delta}\cdot \widehat{\beta,\xi,\zeta}-\widehat{\beta,\gamma,\delta}\cdot \widehat{\alpha,\xi,\zeta}=0
\]
where $\widehat{i_1,i_2,i_3}$ is the $k\times k$ minor obtained by deleting rows $i_1,i_2,i_3$ from $A$.
Define a $(2n+1)\times (2n-2)$ matrix $\NG(X)$ 
obtained from $\NG(\{1,n\}^{n-3},\{1,n-1,n\},\{1,n\})$ by prepending $[1,0,\ldots,0]$ as the topmost row followed by 
$[0,\ldots,0, 1]$, and consider the four-term Pl\"ucker relation for $\NG(X)$ and $\alpha=1$, $\beta=2$, $\gamma=3$, 
$\delta=2n-2$, $\xi=2n$, $\zeta=2n+1$.   It is easy to see that
\begin{align*}
& \widehat{\alpha, \beta,\gamma }=x_{n1} \eg_{n-1,n-2}(X), \quad \widehat{\delta,\xi,\zeta}=\ef_{2n-4}(X),\quad 
\widehat{\alpha,\beta, \delta}=\eg_{n-1,n-1}(X), \\  
&  \widehat{\beta,\xi, \zeta} =0, \quad\widehat{\beta,\gamma,\delta}=\ef_{2n-3}(X),
\quad \widehat{\alpha, \xi,\zeta}=-\eg_{n-2,n-2}(X),
\end{align*}
which verifies~\eqref{dexchange} with 
${\eg}(X) = \widehat{\gamma,\xi,\zeta}$. Clearly, $\eg(X)$ defined by this relation equals 
$\det\NG_{n-1,n-2}(X)_{[2,2n-3]}^{[2,2n-3]}$. \qed

%\begin{remark}\label{4termpluck}
%In what follows we refer to relations of the type used above as four-term Pl\"ucker relations for the matrix 
%$\NG(I_1,\ldots,I_p)$; 
%\end{remark} 

\section{Compatibility and toric action}\label{compatibility}

\subsection{Preliminary results}
The main technical tool in this Section is the following statement.

\begin{lemma}\label{x1nbra}
 If $g$ satisfies~\eqref{eq:T-hom} then $\{x_{1n},g\}=\frac1n(\bar\xi_g-\xi_g)x_{1n}g$ and 
$\{x_{n1},g\}=\frac1n(\bar\xi_g-\xi_g)x_{n1}g$.
\end{lemma}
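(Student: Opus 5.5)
The plan is to start from the formulas \eqref{x1nsigizmund} and \eqref{xn1sigizmund} obtained in Section~\ref{prooffrozenchar}. For any function $g$ they give
\[
\{x_{1n},g\}=\frac{x_{1n}}n\Bigl(\bigl\langle \bar D_n,(\nabla g\,X)_0\bigr\rangle-\bigl\langle D_n,(X\nabla g)_0\bigr\rangle\Bigr),
\]
and the same expression with $x_{n1}$ in front for $\{x_{n1},g\}$. Both statements of the lemma therefore reduce to a single identity:
\[
\bigl\langle \bar D_n,(\nabla g\,X)_0\bigr\rangle-\bigl\langle D_n,(X\nabla g)_0\bigr\rangle=(\bar\xi_g-\xi_g)\,g
\]
whenever $g$ satisfies \eqref{eq:T-hom}.

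To prove this identity, I would read the pairings as derivatives of $g$ along the one-parameter torus actions. Since $D_n$ and $\bar D_n$ are diagonal, $\langle D,(M)_0\rangle=\langle D,M\rangle=\Tr(DM)$ for any diagonal $D$, so the projections $\pi_0$ can be dropped. By the definitions of the left and right gradients (with $\langle A,B\rangle=\Tr AB$):

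\begin{itemize}
\item $\langle \nabla g\,X,\bar D_n\rangle=\Tr(\nabla g\,X\bar D_n)=\frac{d}{ds}\big|_{s=1}\,g(X s^{\bar D_n})$, which is the derivative of $g(Xe^{\tau\bar D_n})$ at $\tau=0$, and this equals $\bar\xi_g\,g$ by \eqref{eq:T-hom}.
\item In the same way, $\langle X\nabla g,D_n\rangle=\Tr(D_nX\nabla g)$ is the derivative of $g(e^{\tau D_n}X)$ at $\tau=0$, which equals $\xi_g\,g$.
\end{itemize}

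Subtracting the second from the first gives the identity, and hence both formulas of the lemma.

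The only real obstacle is bookkeeping of conventions. One must check that the gradient conventions used in \eqref{gradientsforx} (for instance $\nabla x_{1k}=E_{k1}$, so $\nabla g$ is the transpose of the matrix of partial derivatives) make $\Tr(\nabla g\,\delta X)$ the directional derivative. One must also check which of $X\bar D_n$ and $\bar D_nX$ is paired with $\nabla g X$ versus $X\nabla g$. With $\nabla g=(\partial g/\partial x_{ji})$ we have $\Tr(\nabla g\,X\bar D_n)=\frac{d}{d\tau}g(X+\tau X\bar D_n)$, which is right multiplication by $s^{\bar D_n}$, and $\Tr(\nabla g\,D_nX)$ corresponds to left multiplication by $t^{D_n}$. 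This matches the action $X\mapsto t^{D_n}Xs^{\bar D_n}$ and the signs in the lemma.

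As a sanity check, take $g=x_{1n}$: then $\xi_g=n$ and $\bar\xi_g=n$, which gives $\{x_{1n},x_{1n}\}=0$ as it must. Similarly, $g=x_{n1}$ has $\xi_g=\bar\xi_g=1$, so $\{x_{1n},x_{n1}\}=0$, consistent with Lemma~\ref{xn1/x1n}.
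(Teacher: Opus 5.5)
Your proposal is correct and takes the same route as the paper, which derives the lemma directly from \eqref{x1nsigizmund} and \eqref{xn1sigizmund}. Your identification of $\langle \bar D_n,(\nabla g\,X)_0\rangle$ and $\langle D_n,(X\nabla g)_0\rangle$ with the derivatives of $g$ along the right and left torus actions, equal to $\bar\xi_g\,g$ and $\xi_g\,g$ respectively, is exactly the step the paper leaves implicit when it says the lemma ``follows immediately.''
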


\begin{proof}
Follows immediately from~\eqref{x1nsigizmund} and~\eqref{xn1sigizmund}. 
\end{proof}

 In view of the above lemma, we start with proving that all cluster variables in $\FF_n$ are homogeneous with respect to the action 
$X\mapsto t^{D_n}Xs^{\bar D_n}$ and computing the corresponding weights $\xi$ and
$\bar\xi$.

\begin{lemma}\label{weights}
All cluster variables in $\FF_n$ satisfy~\eqref{eq:T-hom}. 
The corresponding weights are given by
\begin{equation}\label{explweights}
\begin{aligned}
\xi_{x_{1n}}&=\bar\xi_{x_{1n}}=n,\qquad\xi_{x_{n1}}=\bar\xi_{x_{n1}}=1,\qquad 
\xi_{\det X}=\bar\xi_{\det X}=\frac{n(n+1)}2,\\
\xi_{\ec_r}&=\bar\xi_{\ec_r}=\frac{n(n+3)}2-r,\\
\xi_{\eg_{ij}}&=\frac{(i-j)(i-j+1)}2+\frac{(n-i)(n-i-1)}2+(j+1)n-1,\\
\bar\xi_{\eg_{ij}}&=\frac{n(n+1)}2+\frac{(j+1)(j-2)}2+i,\\
\xi_{\ef_m}&=\begin{cases} \dfrac{m(n+1)}2\qquad\quad&\text{for $m$ even},\\
                               \dfrac{(m-1)(n+1)}2+1\quad&\text{for $m$ odd},\end{cases}\\
\bar\xi_{\ef_m}&=\begin{cases} nm-\dfrac{m^2}4\qquad\quad&\text{for $m$ even},\\
                               nm-\dfrac{m^2-1}4\quad&\text{for $m$ odd},\end{cases}\\	
\xi_{\phi_{kl}}&=\frac{n(n-k-l)(k+l+4)}2-\frac{n(n-1)}2+\frac{l(l-1)}2+\frac{k(k+1)}2,\\
\bar\xi_{\phi_{kl}}&=\frac{(n+1)(n+2)}2(n-k-l)-\frac{(n-k-l-1)(n-k-l+2)}2-n+k.												
\end{aligned}
\end{equation}
\end{lemma}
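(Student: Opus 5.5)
The plan is to compute the weights directly from the explicit matrix descriptions of the cluster variables, using one general principle. Under $X\mapsto t^{D_n}Xs^{\bar D_n}$ the entry $x_{ij}$ is multiplied by $t^{n+1-i}s^{j}$. Each function in $\FF_n$ is the determinant of a square matrix whose entries are zeros or entries of $X$. For such a determinant, homogeneity follows once we exhibit diagonal matrices $T$ and $S$ (depending on $t,s$) such that the transformed block matrix equals $T\cdot(\text{original})\cdot S$.

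\textbf{Row and column factors.} For each of $\Nphi_{kl}$, $\NG_{ij}$, $\NF$, $\NA$ and $\NB$:
\begin{itemize}
\item every row is a (possibly shifted) copy of a row $X_{[a,a]}$ of $X$, so it receives the factor $t^{n+1-a}$;
\item every column occupies a fixed position $c$ of a block that starts at shift $d$, so it receives $s^{c-d}$.
\end{itemize}
The column factors must be consistent across blocks. This holds because the successive blocks of the staircase structure are shifted by exactly one column, and the $s$-weight $j$ of $x_{ij}$ changes by a constant when the shift changes. Hence $\bar\xi$ is the sum of column labels $c-d$, corrected by the rows' shifts, and $\xi$ is the sum over rows of $n+1-a$.

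\textbf{The individual families.}
\begin{itemize}
\item $x_{1n}$, $x_{n1}$ and $\det X$ are immediate: for instance, $\det X$ picks up $\prod_i t^{n+1-i}\prod_j s^j$.
\item For $\ec_r$, homogeneity of $\det(\lambda\NA+\NB)$ needs a compensating rescaling of $\lambda$. The $\lambda$-power $r$ shifts the weight by $-r$, which is the source of the $-r$ in the formula.
\item For $\ef_m$, the trailing $m\times m$ minor of $\NF$ consists of alternating rows $X_{[n,n]}$ and $X_{[1,1]}$ (weights $t^1$ and $t^n$). Counting these rows and summing the shifted column weights gives the parity-dependent formulas.
\item For $\eg_{ij}$, the rows are $j-2$ copies of $\{1,n\}$, then $1\cup[n+j-i,n]$, then $1\cup[i+1,n]$. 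The $t$-weight is summed directly, and the $s$-weight follows from the block shifts $0,\dots,j-1$.
\item For $\phi_{kl}$, one sums over the $n-k-l$ block columns the tiers $X_{I\cup n}$ and $X_{1\cup\gamma(J)}$, with the indicated $I$ and $J$.
\end{itemize}

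\textbf{Main obstacle.} I expect the hard part to be the bookkeeping for $\phi_{kl}$ and $\bar\xi_{\eg_{ij}}$. In $\Nphi_{kl}$, the third tier uses $\gamma(J)$, i.e.\ row indices shifted by one, and the block is placed one column to the right. One must check that the column weights of the two tiers within a block column agree; this is what makes the determinant homogeneous rather than a mixture. I would verify this first by attaching to column $c$ of block column $b$ the weight $s^{c-(b-1)}$ (or similar) and checking that both tiers conform. The total is then a sum of arithmetic progressions, which I would simplify to the stated closed forms and test against small cases, such as $n=4$ and the displayed $\Nphi_{21}$ for $n=6$.

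As an independent check, Theorem~\ref{expressions} combined with the torus-equivariance of $\Psi'$ and $\Psi''$ (acting on $U$ and on $\bar M$ by the Cremmer–Gervais torus) gives the weights of $\phi_{kl}$, $\eg_{ij}$ and $\ef_m$ modulo the known weights of $x_{1n}$ and $\det X$.
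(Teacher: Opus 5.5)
Your overall strategy, realizing $X\mapsto t^{D_n}Xs^{\bar D_n}$ on each defining matrix as $T\cdot(\text{matrix})\cdot S$ with diagonal $T,S$, is the paper's. Your bookkeeping is correct for $\NG_{ij}$ and $\NF$, whose rows really are shifted copies of single rows of $X$.

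For $\ec_r$ you take a different route. The paper writes $\ec_r=x_{1n}\det X\,c_r(\Psi'(X))$ and uses the torus-equivariance of $\Psi'$ together with the fact that $c_r$ is a coefficient of a characteristic polynomial. Your direct rescaling of $\lambda$ by $t$ (resp.\ $s$) also works, but not for the reason you state. $\NB$ is not assembled from whole rows of $X$: its first column is the last column of $X$, while its columns $2,\dots,n+1$ carry $X$ shifted down one row. So row $r$ of $\lambda\NA+\NB$ involves rows $r$ and $r-1$ of $X$. You need a separate factor on the first column, after which the count gives $n(n+3)/2-r$.

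The genuine gap is in the family $\phi_{kl}$, which you rightly flag as the hard case but misdiagnose.

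\emph{The $t$-part.} Your rule that every row of $\Nphi_{kl}$ is a copy of a single row $X_{[a,a]}$ is false. The third tier of block column $b$ and the second tier of block column $b+1$ share rows, and such a shared row carries one row of $X$ in block $b$ and a different row of $X$ in block $b+1$. For example, row $7$ of the displayed $\Nphi_{21}$ for $n=6$ contains $x_{31},\dots,x_{36}$ and $x_{21},\dots,x_{26}$. So ``$\xi$ is the sum over rows of $n+1-a$'' is not even defined. The $t$-action needs column factors as well, namely $\diag(t^{-(n-k-l-1)}\one_{n+1},\dots,t^{-1}\one_{n+1},\one_{n+1})$ on the block columns, with row factors chosen so that both pieces of each shared row agree.

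\emph{The $s$-part.} The check you propose would fail. Within a block column the two tiers do not have matching column weights: the third tier sits one column to the right, so its entries carry $s$-weight one less. This is not an obstruction to homogeneity. It is absorbed by giving the row factor $s^{-(b-1)}$ to the $b$-th layer of rows (the second tier of block column $b$ together with the third tier of block column $b-1$), and the weight $s^{c+b-1}$ to column $c$ of block column $b$. That is, the column factor is $\diag(\bar D_{n+1},\bar D_{n+1}+\one_{n+1},\dots)$, as in the paper.

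With these two pairs of diagonal matrices, the arithmetic-progression sums produce the stated $\xi_{\phi_{kl}}$ and $\bar\xi_{\phi_{kl}}$. As written, your recipe would either stall or lead you to conclude, wrongly, that $\phi_{kl}$ is a ``mixture'' rather than homogeneous.
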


\begin{proof}
Homogeneity for $x_{1n}$, $x_{n1}$, and $\det X$ is evident, and the corresponding expressions in~\eqref{explweights} follow immediately from the definition~\eqref{eq:T-hom}. To treat functions $\ec_r$ note that by~\eqref{badc}, 
$\ec_r(X)=x_{1n}\det X c_r(\Psi'(X))$. Observe that
\begin{align*}
\left ( t^{D_n} X s^{\bar D_n} \right )_L &= t^{D_{n-1}} X_L s^{\bar D_{n-1}}, \\
\left ( t^{D_n} X s^{\bar D_n} \right )_R &= t^{D_{n-1}+\one_{n-1}} X_L s^{\bar D_{n-1}+\one_{n-1}},
\end{align*}
and so
\[
\Psi' \left ( t^{D_n} X s^{\bar D_n} \right ) = \frac{1}{t s} \Psi'(X).
\]
Since $c_r(\Psi'(X))$ is, up to a sign, the $r$th coefficient of the characteristic polynomial of $\Psi'(X)$, we conclude that $c_r(\Psi'(X))$ satisfies~\eqref{eq:T-hom} with $\xi_{c_r}=\bar\xi_{c_r}=-r$. Consequently,
$\xi_{\ec_r}=\xi_{x_{1n}}+\xi_{\det X}+\xi_{c_r}=n(n+3)/2-r$, and the same holds true for $\bar\xi_{\ec_r}$.

To treat the $t$ part of the action $X\mapsto t^{D_n}Xs^{\bar D_n}$ for functions $\eg_{ij}(X)$ note that each row of 
$\NG_{ij}(X)$ equals to some row of $X$ prepended and appended by a number of zeros. It follows immediately that $\eg_{ij}$ are homogeneous with respect to the $t$ part of the action and that
\begin{multline*}
\xi_{\eg_{ij}}=(j-2)(n+1)+n+(1+2+\dots+i-j+1)+n+(1+2+\dots+n-i)\\
=\frac{(i-j)(i-j+1)}2+\frac{(n-i)(n-i-1)}2+(j+1)n-1.
\end{multline*}
To treat the $s$ part of the action write
\begin{multline*}
\NG_{ij}(Xs^{\bar D_n})=\diag(\one_2, s^{-1}\one_2,\dots,s^{-j+3}\one_2,s^{-j+2}\one_{i-j+2},s^{-j+1}\one_{n-i+1}
\NG_{ij}(X)\\
\times\diag(s,s^2,\dots,s^{n+j-1}).
\end{multline*}
It follows that $\eg_{ij}$ are homogeneous with respect to the $s$ part of the action and that
\begin{multline*}
\bar\xi_{\eg_{ij}}=-(j-2)(j-3)-(j-2)(i-j+2)-(j-1)(n-i+1)+(1+2+\dots+n+j-1)\\
=\frac{n(n+1)}2+\frac{(j+1)(j-2)}2+i.
\end{multline*}

To treat the $t$ part of the action for functions $\ef_m(X)$ recall that they are defined as trailing minors of the
matrix $\NG_{n-1,n-1}(X)$, and that by~\eqref{explweights}, $\xi_{\eg_{n-1,n-1}}=n^2-1$. Consequently, for $m$ even
\[
\xi_{\ef_m}=\xi_{\eg_{n-1,n-1}}-(n+1)\left(n-1-\frac m2\right)=\frac{m(n+1)}2,
\]
whereas for $m$ odd
\[
\xi_{\ef_m}=\xi_{\eg_{n-1,n-1}}-(n+1)\left(n-1-\frac {m-1}2\right)+1=\frac{(m-1)(n+1)}2+1.
\]
The $s$ part of the action for $\ef_m(X)$ is treated similarly $\eg_{ij}(X)$: for $m$ even the corresponding matrix is multiplied by $\diag(s^{m/2-1},s^{m/2-1},\dots,s,s,1,1)$ on the left and by $\diag(s^{n-m+1},\dots,s^n)$ on the right, which gives
\[
\bar\xi_{\ef_m}=mn-(1+\dots+m-1)+2\left(1+\dots+\frac m2-1\right)=mn-\frac{m^2}4.
\]
For $m$ odd the corresponding matrix is multiplied by $\diag(s^{(m-1)/2},s^{(m-1)/2-1},\allowbreak s^{(m-1)/2-1},\dots,s,s,1,1)$ on the left and by $\diag(s^{n-m+1},\dots,s^n)$ on the right, which gives
\[
\bar\xi_{\ef_m}=mn-(1+\dots+m-1)+2\left(1+\dots+\frac {m-1}2-1\right)+\frac {m-1}2=mn-\frac{m^2-1}4.
\]

To treat the $t$ part of the action for functions $\phi_{kl}(X)$ write
\begin{multline*}
\Nphi_{kl}(t^{D_n}X)\\=\diag(t^{D'_n+(n-k-l-1)\one_{n-1}},t^{2n-k-l-1},\dots,t^{D'_n+\one_{n-1}},t^{n+1},t^{D'_{n-l+1}},
t^n,t^{D'_{n-k}})\\
\times\Nphi_{kl}(X)\diag(t^{-n+k+l+1}\one_{n+1},\dots,t^{-1}\one_{n+1},\one_{n+1})
\end{multline*} 
with $D'_m=\diag(m-1,\dots,1)$, which implies
\begin{multline*}
\xi_{\phi_{kl}}=(1+\dots+n-1)(n-k-l-1)+(1+\dots+n-k-l-1)n+n(n-k-l)+(1+\dots+n-l)\\
+(1+\dots+n-k-1)-(1+\dots+n-k-l-1)(n+1)\\
=\frac{n(n-k-l)(k+l+4)}2-\frac{n(n-1)}2+\frac{l(l-1)}2+\frac{k(k+1)}2.
\end{multline*}
Finally, the $s$ part of the action for functions $\phi_{kl}(X)$ is described via
\begin{multline*}
\Nphi_{kl}(Xs^{\bar D_n})\\=\diag(\one_{n-1},s^{-1}\one_n,\dots,s^{-n+k+l+2}\one_n,s^{-n+k+l+1}\one_{n-l+1},
s^{-n+k+l}\one_{n-k})\\
\times\Nphi_{kl}(X)\diag(\bar D_{n+1},\bar D_{n+1}+\one_{n+1},\dots, \bar D_{n+1}+(n-k-l-1)\one_{n+1}).
\end{multline*} 
Consequently,
\begin{multline*}
\bar\xi_{\phi_{kl}}=\frac{(n+1)(n+2)}2(n-k-l)+(1+\dots+n-k-l-1)(n+1)-(1+\dots+n-k-l-2)n\\
-(n-k-l-1)(n-l+1)-(n-k-l)(n-k)\\
=\frac{(n+1)(n+2)}2(n-k-l)-\frac{(n-k-l-1)(n-k-l+2)}2-n+k.
\end{multline*}
\end{proof} 

\subsection{Compatibility}
The proof of the compatibility statement in Theorem~\ref{initialseed} relies on~\cite[Proposition~2.1]{GSVnewdouble}
which is reproduced below for readers’ convenience. 

\begin{proposition}\label{compatchar}
Assume that all Laurent monomials 
\[
\hat p_{kr}=\frac{\left(p_{kr}v_{k;>}^{[r]}v_{k;<}^{[d_k-r]}\right)^{d_k}}
{\left(v_{k;>}^{[d_k]}\right)^r \left(v_{k;<}^{[d_k]}\right)^{d_k-r}} 
\]
are Casimirs of the bracket $\Poi$ and for any mutable vertex $\alpha\in Q$
\begin{equation}\label{xybracket}
\{x_\beta,y_\alpha\}=\lambda d_\alpha\delta_{\alpha\beta}x_\beta y_\alpha\quad\text{for any $\beta\in Q$,}
\end{equation}
where $d_\alpha$ is the multiplicity at $\alpha$, $y_\alpha$ is the $y$-variable at $\alpha$ given
 by~{\rm\eqref{yvar}}, $\lambda$ is a rational number not depending on $\alpha$, and $\delta_{\alpha\beta}$ is the Kronecker symbol.
 Then the bracket $\Poi$ is compatible with $\GCC(\Sigma)$.
\end{proposition}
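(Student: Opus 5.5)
The plan is to argue by induction along mutation sequences. I read the proposition, as in~\cite{GSVnewdouble}, as presupposing that the initial extended cluster $\widetilde{\x}$ is log-canonical, say $\{x_\beta,x_\gamma\}=\omega_{\beta\gamma}x_\beta x_\gamma$. The inductive claim is that every seed $\Sigma'$ mutation equivalent to $\Sigma$ enjoys three properties. First, its extended cluster is log-canonical. Second, its Laurent monomials $\hat p'_{kr}$ are Casimirs. Third, its $y$-variables satisfy~\eqref{xybracket} with the \emph{same} $\lambda$. Compatibility then follows at once. It suffices to show that all three properties pass from $\Sigma$ to the adjacent seed in direction $k$.

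\textbf{Log-canonicity of the new variable.} Write the right hand side of~\eqref{genger} as $\sum_{r=0}^{d_k}T_r$ with $T_r=p_{kr}u_{k;>}^r v_{k;>}^{[r]}u_{k;<}^{d_k-r}v_{k;<}^{[d_k-r]}$. A direct check from the definitions of $y_k$ and $\hat p_{kr}$ gives
\[
\left(T_r/T_0\right)^{d_k}=y_k^{\,r}\,\hat p_{kr}/\hat p_{k0},
\]
up to the bookkeeping of the floors in $v^{[s]}$, which is exactly what $\hat p_{kr}$ is designed to absorb. Since the $\hat p$'s are Casimirs, for any $\beta\ne k$ we get
\[
d_k\bigl(\{x_\beta,\log T_r\}-\{x_\beta,\log T_0\}\bigr)=r\{x_\beta,\log y_k\}=r\lambda d_k\delta_{k\beta}=0.
\]
Hence all $T_r$ have the same log-bracket with $x_\beta$, so $\{x_\beta,x_k'\}=\omega'_{\beta k}x_\beta x_k'$ with $\omega'_{\beta k}=\{x_\beta,\log T_0\}-\omega_{\beta k}$. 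The other pairs of variables are unchanged, so the new extended cluster is log-canonical.

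\textbf{Propagation of the hypotheses.} I will use the mutation rule for generalized $y$-variables, namely $y_k'=y_k^{-1}$ and $y'_\alpha=y_\alpha\,y_k^{[b_{\alpha k}]_+}\,(1+\dots)$-type formulas. Here the only frozen contributions beyond powers of $y_k$ are the floor corrections in $v^{[s]}$, and these combine into products of the $\hat p_{kr}$'s. Using Casimirness of these factors and the relations already obtained, I check $\{x'_\beta,y'_\alpha\}=\lambda d_\alpha\delta_{\alpha\beta}x'_\beta y'_\alpha$ in two cases. For $\beta\ne k$ it comes from linearity in $\log$. For $\beta=k$ I write $x'_k=T_0(1+\dots)/x_k$; the log-bracket of $T_0/x_k$ with $y_k^{-1}$ contributes $+\lambda d_k$ from the $x_k^{-1}$ factor, and $T_0$ contributes nothing because $T_0$ does not contain $x_k$. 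The strings are reversed ($p'_{kr}=p_{k,d_k-r}$), and the $v$'s at $k$ swap roles. So $\hat p'_{kr}$ is, up to the Casimir $\hat p_{k,d_k}$, equal to $\hat p_{k,d_k-r}$ or its inverse. The $\hat p'_{jr}$ for $j\ne k$ change only through frozen arrows created by the mutation rules. These again differ from the old ones by monomials in the Casimirs $\hat p_{k\cdot}$. I expect this to be verifiable term by term.

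\textbf{Main obstacle.} The delicate point is this last bookkeeping, since the frozen part of $y'_\alpha$ and the new monomials $\hat p'_{jr}$ involve floors $\lfloor s b/d\rfloor$ that do not transform linearly. My first attempt would be to raise everything to the power $d_k d_j$ to clear the floors. I would then express the resulting monomials explicitly as products of the old $\hat p_{kr}$, $\hat p_{jr}$ and powers of $y$'s. If that becomes unwieldy, the fallback is to follow~\cite[Proposition~2.5]{GSVdouble}, which already records how the exponents $c_r$ behave along the exchange relation.
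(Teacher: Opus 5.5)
The paper gives no proof of this statement; it imports it from \cite[Proposition~2.1]{GSVnewdouble}. Your induction along mutations, with log-canonicity of the initial extended cluster made explicit (it is genuinely needed), is the standard route.

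Your first step is correct, and cleaner than you fear. Read $y_k$ as $T_{d_k}/T_0$, i.e.\ with mutable exponents equal to $d_k$ times the arrow counts; this is the normalization that makes the factor $d_\alpha$ in \eqref{xybracket} consistent. Then one has exactly $(T_r/T_0)^{d_k}=\hat p_{kr}\,y_k^{\,r}$ with $\hat p_{k0}=\hat p_{kd_k}=1$, and there is no floor bookkeeping at all.

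The gap is in the propagation of \eqref{xybracket}. Your two cases are $\beta\ne k$ and $\beta=k$. The first is correct, since each such $x_\beta$ log-commutes with $y_k$ and with $F=\sum_rT_r/T_0$. In the second you only treat $y'_k=y_k^{-1}$. The pair $\beta=k$, $\alpha\ne k$ adjacent to $k$ is never checked, and it is the one case that is not formal. Let $m$ be the number of arrows between $\alpha$ and $k$ and put $c=d_\alpha m$. Then $y'_\alpha=y_\alpha F^{c}$ if $k\to\alpha$, and $y'_\alpha=y_\alpha y_k^{c}F^{-c}$ if $\alpha\to k$. So both $x'_k=T_0F/x_k$ and $y'_\alpha$ contain the non-monomial factor $F$, and the brackets $\{\log x_k,\log F\}$ and $\{\log F,\log y_\alpha\}$ are not constants.

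To close the gap, use $\{\log x_k,\log(T_r/T_0)\}=r\lambda$ and $\{\log(T_r/T_0),\log y_\alpha\}=\pm r\lambda c$. The second holds because the exponent of $x_\alpha$ in $T_r/T_0$ is $\pm rm$, with the plus sign for $k\to\alpha$. The two non-constant contributions to $\{\log x'_k,\log y'_\alpha\}$ are then $\pm\lambda c\sum_r rT_r/\sum_rT_r$, and they cancel. For $\alpha\to k$ the remaining constants also cancel: $\{\log T_0,\log y_\alpha\}=\lambda d_kc$ and $\{-\log x_k,c\log y_k\}=-\lambda d_kc$. This is exactly where the normalization $\lambda d_\alpha$ with a common $\lambda$ is used. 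Your proposal never uses it, which is the symptom of the omission.

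Conversely, the obstacle you single out is not one:
\begin{itemize}
\item The frozen part of $y_\alpha$ has exponents equal to arrow counts, with no floors.
\item $\hat p'_{kr}=\hat p_{k,d_k-r}$ exactly.
\item For $j\ne k$, the exponent of a frozen $x_i$ in $\hat p_{jr}$ is $-(rs\bmod d_j)$, where $s$ is the signed number of arrows $j\to i$. The mutation rule changes $s$ by a multiple of $d_j$, since each path through $k$ contributes $d_j$ arrows.
\end{itemize}
Hence $\hat p'_{jr}=\hat p_{jr}$ exactly, not merely up to monomials in the $\hat p_{k\cdot}$.
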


We start with checking the condition on monomials $\hat p_{kr}$ in Proposition~\ref{xybracket}. In our case $k=1$, and the only generalized exchange relation is given by~\eqref{ger}:
\[
\phi_{11}\phi'_{11}=x_{1n}\phi_{12}^{n-1}+\sum_{r=1}^{n-2} \frac{\ec_r}{\det X}\phi_{12}^{n-1-r}\phi_{21}^r+
x_{n1}\phi_{21}^{n-1}.
\]
Therefore, $d_1=n-1$, $v_{1;>}^{[d_1]}=x_{n1}$, $v_{1;<}^{[d_1]}=x_{1n}$, $v_{1;>}^{[r]}=v_{1;<}^{[d_1-r]}=1$ for 
$r\in[1,n-2]$, $p_{1r}=\ec_r/\det X$, so that
\begin{equation}\label{casimir}
\hat p_{1r}=\left(\frac{\ec_r}{x_{1n}}\right)^{n-1}\left(\frac{x_{1n}}{x_{n1}}\right)^r\frac1{\det X^{n-1}}.
\end{equation}

Note that $\det X$ is a Casimir by construction (see the paragraph iimediately preceding Theorem~\ref{thm:main}), and 
$x_{1n}/x_{n1}$ is a Casimir by Lemma~\ref{xn1/x1n}. Further, by~\eqref{badc}, $\ec_r/x_{1n}=c_r(\Psi'(X))\det X$, so it remains to prove that $c_r(\Psi'(X))$ is a Casimir of the bracket $\Poi$. Recall that $c_r(U)$ is a Casimir of the
bracket $\Poi^\dag$, as explained in~\cite[Section 3.2]{GSVnewdouble}, consequently, by Theorem~\ref{twomaps}(i), 
$\{c_r(\Psi'(X)),f(\Psi'(X))\}$ vanishes for any function $f(U)$. Further, by Theorem~\ref{twomaps}(iii), 
$\{c_r(\Psi'(X)),x_{1m}/x_{1n}\}=\{c_r(\Psi'(X)),x_{nm}/x_{1n}\}=0$ for all $m\in[1,n]$. To prove that $\{c_r(\Psi'(X)),x_{1n}\}=0$ we use Lemma~\ref{x1nbra} and the fact that $\xi_{c_r}=\bar\xi_{c_r}=-r$ (see the proof of Lemma~\ref{weights} above).

Lemma~\ref{weights} and the description of the quiver $Q_n$ in Section~\ref{outline} imply the following statement.

\begin{lemma}\label{yweights}
For any mutable vertex $\alpha\in Q_n$ the variable $y_\alpha$ satisfies~\eqref{eq:T-hom} and $\xi_{y_\alpha}=\bar\xi_{y_\alpha}=0$.
\end{lemma}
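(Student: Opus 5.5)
Since every cluster and frozen variable of $\FF_n$ is homogeneous with respect to $X\mapsto t^{D_n}Xs^{\bar D_n}$ (Lemma~\ref{weights}), and $y_\alpha$ is a Laurent monomial in these variables (by~\eqref{yvar}, using only the arrows of $Q_n$ at $\alpha$), $y_\alpha$ is automatically homogeneous. Its weights are
\[
\xi_{y_\alpha}=\sum_{\alpha\to\beta}\xi_\beta-\sum_{\beta\to\alpha}\xi_\beta ,
\]
and similarly for $\bar\xi$, with multiplicities of arrows. At the special vertex $(1,1)$ we use the convention of~\eqref{yvar}, where the frozen contributions enter via $v^{[d]}$. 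So the task reduces to checking that these signed sums vanish at every mutable vertex. I would not do this one vertex at a time. Instead I would organize the check by the origin of each vertex: left triangle, right triangle, Toda part, and vertex $D$.

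For vertices coming from $Q_{n-1}^\dag$ and $\bar Q_{n-1}^T$ there is a cleaner route than direct computation. By Proposition~\ref{regpullback}, $y_\alpha$ for the pulled-back seed equals $y_\alpha^\dag\circ\Psi'$ (respectively $y_\alpha^T\circ\Psi''$). In the proof of Lemma~\ref{weights} we saw that $\Psi'(t^{D_n}Xs^{\bar D_n})=(ts)^{-1}\Psi'(X)$. Similarly, $\Psi''$ changes only by rescaling the variable $\lambda$ and scaling $\bar M$, since $\bar q_i,p_i$ are ratios of entries of the first and last rows. Any $y$-variable of $\GCC_{n-1}^\dag$ is a ratio of monomials in cluster variables that is invariant under the scaling $U\mapsto cU$; this is the balancing of degrees at each mutable vertex, coming from homogeneity in the entries of $U$ in~\cite{GV}. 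Hence $y_\alpha^\dag\circ\Psi'$ has zero weights.

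For the Toda part, one checks that $y$-variables of $\bar\CC^T_{n-1}$ are invariant under the torus $\bar h_i\mapsto a b^i\bar h_i$. Under this torus $\bar t_m^\pm$ scale by explicit monomials, and the quiver $\bar Q^T_{n-1}$ is balanced for both gradings. This is a short computation on the two rows of the quiver.

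The exceptional vertices need direct treatment. These are $(1,1)$, whose exchange relation involves $x_{1n}$, $x_{n1}$ and $\det X$ through the new frozen vertices $A,B$; the vertices $(n-2,1)$ and $\la n-1,2\ra$, adjacent to $A,B$; $(1,-)$, adjacent to $A$; and $D$, which was unfrozen and is now joined to $C$. Vertices adjacent to the glued vertices $A,C,D$ are also covered by the pullback argument, provided the gluing identifies functions exactly. At $D$ I would use Proposition~\ref{Dexchange}: both terms on the right of~\eqref{dexchange} are monomials in the seed, and they correspond to the incoming and outgoing monomials of $y_D$. A homogeneous identity forces these two terms to have equal weights, so $\xi_{y_D}=\bar\xi_{y_D}=0$ follows immediately. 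At $(1,1)$ the same trick applies to~\eqref{ger}: the extreme terms $x_{1n}\phi_{12}^{n-1}$ and $x_{n1}\phi_{21}^{n-1}$ must have equal weights, and $y_{(1,1)}$ is essentially their ratio raised to the power $1/(n-1)$.

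The main obstacle is the vertices $(n-2,1)$ and $\la n-1,2\ra$, together with any vertex whose $y$ is not the ratio of the two terms of a known exchange relation. For these I would fall back on~\eqref{explweights} and compute the signed sums explicitly. Alternatively, I would note that the discrepancy construction in Proposition~\ref{readytoglue} adds exactly the arrows to $A$ and $B$ needed to cancel the powers of $x_{1n}$ and $\det X$ that appear in~\eqref{badphi}–\eqref{badg}. Combined with the scaling invariance of $y^\dag$, this yields the vanishing.
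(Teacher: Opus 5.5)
\textbf{Verdict: there is a genuine gap.} Reducing to the source structures via Proposition~\ref{regpullback} is a reasonable alternative to the paper's route, which checks each vertex against the explicit weights \eqref{explweights} and writes out only $D$. However, the central step of your $\Psi'$-part fails as written.

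\textbf{The equivariance of $\Psi'$.} The identity $\Psi'(t^{D_n}Xs^{\bar D_n})=(ts)^{-1}\Psi'(X)$ is not correct. From $(t^{D_n}Xs^{\bar D_n})_L=t^{D_{n-1}}X_Ls^{\bar D_{n-1}}$ and $(t^{D_n}Xs^{\bar D_n})_R=t^{D_{n-1}+\one_{n-1}}X_Rs^{\bar D_{n-1}+\one_{n-1}}$ one gets
\[
\Psi'(t^{D_n}Xs^{\bar D_n})=(ts)^{-1}\,t^{D_{n-1}}\,\Psi'(X)\,t^{-D_{n-1}}.
\]
The proof of Lemma~\ref{weights} applies this only to $c_r$, which is conjugation invariant, so nothing is lost there. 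The functions $\fy_{kl}$ and $g_{ij}$ are not conjugation invariant. For example, $g_{n-1,2}(U)=u_{n-1,2}$ picks up $t^{3-n}$, in agreement with $\xi_{\eg_{n-1,2}}-\bar\xi_{\eg_{n-1,2}}=3-n$ from \eqref{explweights}.

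So invariance of $y^\dag_\alpha$ under $U\mapsto cU$ gives you $\bar\xi_{y_\alpha}=0$, but $\xi_{y_\alpha}$ equals the weight of $y^\dag_\alpha$ under conjugation by $t^{D_{n-1}}$. To finish, you need the extra fact that every $y^\dag_\alpha$ is invariant under conjugation by diagonal matrices in geometric progression (the Cremmer--Gervais torus). You can get this in one of two ways:
\begin{itemize}
\item cite a global toric action of $\GCC^\dag_{n-1}$ that contains this conjugation, or
\item check it on $Q^\dag_{n-1}$, using the conjugation weights of $\fy_{kl}$ and $g_{ij}$ computed as in the proof of Lemma~\ref{baromega1}.
\end{itemize}
This is not cosmetic: $\xi_{y_\alpha}=\bar\xi_{y_\alpha}$ is exactly what feeds into Lemma~\ref{x1nbra} in the compatibility proof. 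Your $\Psi''$-part, by contrast, is fine: there $\bar h_i\mapsto t^{1-n}s^{-i}\bar h_i$ and no conjugation occurs.

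\textbf{The special vertex $(1,1)$.} The lemma holds at $(1,1)$ only if
\[
y_{(1,1)}=\prod_i x_i^{b_{ki}}=(\phi_{21}/\phi_{12})^{n-1}x_{n1}/x_{1n},
\]
which is exactly the ratio of the two extreme terms of \eqref{ger}. It is not the $(n-1)$st root of that ratio, which is not even a Laurent monomial. If you take $u_{k;>},u_{k;<}$ to the first power with frozen part $v^{[d]}$, as you announce, you get $\phi_{21}x_{n1}/(\phi_{12}x_{1n})$. That monomial has $\xi=\bar\xi=2-n$. Its source $y^\dag_{(1,1)}=\fy_{21}\det U/\fy_{12}$ is not invariant under $U\mapsto cU$ either, so your scaling argument would also break here. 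You must fix the convention explicitly.

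\textbf{The vertices you flagged as obstacles.} $(n-2,1)$ and $\la n-1,2\ra$ are no obstacle. The arrows to $A$ and $B$ are the discrepancy arrows, so Proposition~\ref{regpullback} covers these vertices like all the others.

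\textbf{Vertex $D$.} Your argument at $D$ works and is cleaner than the paper's explicit computation. You only need to note that $\eg$ is homogeneous, since it is a minor of a matrix whose rows are zero-padded rows of $X$. Then the two monomials on the right of \eqref{dexchange} must have equal weights.
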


\begin{proof} A straightforward though tedious check based on~\eqref{explweights}. For example, for $\alpha=D$, by~\eqref{dexchange},
\[
y_D=\frac{\eg_{n-1,n-2}(X)\ef_{2n-4}(X)x_{n1}}{\eg_{n-2,n-2}(X)\ef_{2n-3}(X)},
\]
so that 
\begin{multline*}
\xi_{y_D}=\left(\frac{1{\cdot} 2}2+\frac{1{\cdot} 0}2+(n-1)n-1+(n-2)(n+1)+1\right)\\
-\left(\frac{0{\cdot} 1}2+\frac{2{\cdot} 1}2+(n-1)n-1+(n-2)(n+1)+1\right)=0
\end{multline*}
and
\begin{multline*}
\bar\xi_{y_D}=\left(\frac{n(n+1)}2+\frac{(n-1)(n-4)}2+n-1+n(2n-4)-(n-2)^2+1\right)\\
-\left(\frac{n(n+1)}2+\frac{(n-1)(n-4)}2+n-2+n(2n-3)-\frac{(2n-3)^2-1}4\right)=0.
\end{multline*}
\end{proof} 

In order to prove~\eqref{xybracket}, we distinguish the following three cases.

{\it Case 1:\/} $\alpha\in\Psi'^*Q_{n-1}^\dag$, $\alpha\ne D$.  Assume first that $\beta=(k,l)$, then
\[
\{x_\beta,y_\alpha\}=\{\phi_{kl}(X),y_\alpha(X)\}=\es_{kl}\left\{(x_{1n}\det X)^{n-k-l}\fy_{kl}(\Psi'(X)),y_\alpha(X)\right\}
\]
by~\eqref{badphi}. The latter bracket has three ingredients: 
\begin{align*}
\{&x_{1n}^{n-k-l},y_\alpha(X)\}=(n-k-l)\{x_{1n},y_\alpha(X)\},\\
\{&\det X^{n-k-l},y_\alpha(X)\}=(n-k-l)\{\det X,y_\alpha(X)\},
\end{align*}
and $\{\fy_{kl} (\Psi'(X)),y_\alpha(X)\}$. The first expression above vanishes by Lemma~\ref{x1nbra} and Lemma~\ref{yweights}. The second expression vanishes since $\det X$ is a Casimir of the bracket $\Poi$. Further,
\[
\{\fy_{kl} (\Psi'(X)),y_\alpha(X)\}=\{\fy_{kl} (\Psi'(X)),y_\alpha(\Psi'(X))\}=-\{\fy_{kl} (U),y_\alpha(U)\}^\dag
\]
by~\cite[Proposition 4.1 and Remark 4.8]{GSVpullback} and Theorem~\ref{twomaps}(i). It is proved in~\cite[Section 6.2]{GSVdouble} that the latter expression
vanishes for $(k,l)\ne\alpha$ and equals $\fy_{kl} (U)y_\alpha(U)$ for $(k,l)=\alpha$. Consequently,~\eqref{xybracket} holds with $\lambda=1$.
The case $\beta=\langle i,j\rangle$ is treated in the same way. Cases $\beta=A, B,C$ follow from Lemma~\ref{x1nbra} and Lemma~\ref{yweights} (for $A$ and $C$) and the Casimir property of $\det X$ (for $B$). Finally, cases $\beta=(m,+)$ and $\beta=(m,-)$ are treated similarly
based on Theorem~\ref{twomaps}(iii).

{\it Case 2:\/} $\alpha\in\Psi''^*\bar Q_{n-1}^T$, $\alpha\ne D$. This case is treated similarly to the previous one. The nontrivial ingredient of the bracket~\eqref{xybracket} is
\[
\{\bar t_m^\pm (\Psi''(X)),y_\alpha(X)\}=\{\bar t_m^\pm (\Psi''(X)),y_\alpha(\Psi''(X))\}=\{\bar t_m^\pm(\bar M),y_\alpha(\bar M)\}^T
\]
by Theorem~\ref{twomaps}(ii). It is proved in~\cite[Lemma 5.1]{GSV_Acta} that the latter expression
vanishes for $(m,\pm)\ne\alpha$ and equals $\bar t_m^\pm (\bar M)y_\alpha(\bar M)$ for $(m,\pm)=\alpha$. Consequently,~\eqref{xybracket} holds with $\lambda=1$.

{\it Case 3:\/} $\alpha=D$. Write $y_D(X)=\eg(X)\ef(X)$ with
\begin{align*}
\eg(X)&=\frac{\eg_{n-1,n-2}(X)}{\eg_{n-2,n-2}(X)}=\pm\frac{g_{n-1,n-2}(\Psi'(X))}{g_{n-2,n-2}(\Psi'(X))},\\
\ef(X)&=\frac{\ef_{2n-4}(X)x_{1n}}{\ef_{2n-3}(X)}=\pm\frac{\bar t^+_{n-2}(\Psi''(X))}{\bar t^-_{n-1}(\Psi''(X))}
\end{align*}
(exact expressions for signs are inessential since they do not enter into the final formula). 
Consider fist the case $\beta=D$. 
The bracket $\{x_D,y_D(X)\}$ has two ingredients: $\{x_D,\eg(X)\}$ and $\{x_D,\ef(X)\}$. To treat the first one we write
$x_D=\ef_{2n-2}(X)=x_{1n}^{2n-2}\bar t^+_{n-1}(\Psi''(X))$, so that
\begin{multline*}
\{x_D,\eg(X)\}=x_{1n}^{2n-2}\left\{\bar t^+_{n-1}(\Psi''(X)), \frac{g_{n-1,n-2}(\Psi'(X))}{g_{n-2,n-2}(\Psi'(X))}\right\}
\\+(2n-2)x_{1n}^{2n-3}\bar t^+_{n-1}(\Psi''(X))\{x_{1n},\eg(X)\}.
\end{multline*}
The first bracket above vanishes due to Theorem~\ref{twomaps}(iii). The second bracket equals $\frac{x_{1n}\eg(X)}n
(\bar\xi_\eg-\xi_\eg)=\frac{x_{1n}\eg(X)}n$ by Lemma~\ref{x1nbra}. 

To treat the second ingredient of $\{x_D,y_D(X)\}$ we write
$x_D=\eg_{n-1,n-1}(X)=\pm x_{1n}^{n-2}\det Xg_{n-1,n-1}(\Psi'(X))$, so that up to trivial terms
\begin{multline*}
\{x_D,\ef(X)\}=x_{1n}^{n-2}\det X\left\{g_{n-1,n-1}(\Psi'(X)),\frac{\bar t^+_{n-2}(\Psi''(X))}
{\bar t^-_{n-1}(\Psi''(X))}\right\}\\
+(n-2)x_{1n}^{n-3}\det X\{x_{1n},\ef(X)\}.
\end{multline*}
The first bracket above vanishes for the same reason as before. The second bracket equals $\frac{x_{1n}\ef(X)}n
(\bar\xi_\ef-\xi_\ef)=-\frac{x_{1n}\ef(X)}n$ by Lemma~\ref{x1nbra}. Summing up, we get
\[
\{x_D,y_D(X)\}=\left(\frac{2n-2}n-\frac{n-2}n\right)x_Dy_D(X)=x_Dy_D(X),
\]
hence~\eqref{xybracket} in this case is satisfied with $\lambda=1$.

Assume now that $\beta=\langle i,j\rangle\ne D$. Similarly to above, the bracket $\{\eg_{ij}(X),y_D(X)\}$ has two components: $\{\eg_{ij}(X),\ef(X)\}$ and $\{\eg_{ij}(X),\eg(X)\}$. For the first one we use~\eqref{badg}, 
Lemma~\ref{x1nbra}, computations for the weights $\xi_\ef$ and $\bar\xi_\ef$, and Theorem~\ref{twomaps}(iii) to get
\begin{multline*}
\{\eg_{ij}(X),\ef(X)\}=\{x_{1n}^{j-1}\det X g_{ij}(\Psi'(X)),\ef(X)\}\\
=\frac{j-1}nx_{1n}^{j-2}\det X g_{ij}(\Psi'(X))
\{x_{1n},\ef(X)\}=-\frac{j-1}n\eg_{ij}(X)\ef(X)
\end{multline*}
(all the other components of the bracket vanish). For the second ingredient we use~\eqref{badg}, 
Lemma~\ref{x1nbra}, computations for the weights $\xi_\eg$ and $\bar\xi_\eg$, Theorem~\ref{twomaps}(i), 
and~\cite[Proposition 4.1]{GSVpullback} to get
\begin{multline*}
\{\eg_{ij}(X),\eg(X)\}=\{x_{1n}^{j-1}\det X g_{ij}(\Psi'(X)),\eg(X)\}\\
=\frac{j-1}nx_{1n}^{j-2}\det X g_{ij}(\Psi'(X))
\{x_{1n},\eg(X)\}\\
+x_{1n}^{j-1}\det X\left\{g_{ij}(\Psi'(X)),\frac{g_{n-1,n-2}(\Psi'(X))}{g_{n-2,n-2}(\Psi'(X))}\right\}\\
=\frac{j-1}n\eg_{ij}(X)\eg(X)-x_{1n}^{j-1}\det X\left\{g_{ij}(U),\frac{g_{n-1,n-2}(U)}{g_{n-2,n-2}(U)}\right\}^\dag
\end{multline*}
with $U=\Psi'(X)\in GL_{n-1}$.
The contribution of the first term above cancels the contribution of the first ingredient. To compute the second term, introduce variables $z_j(U)=g_{n-1,j}(U)/g_{jj}(U)$ for $j\in[2,n-1]$, so that $z_{n-1}(U)=1$. Additionally, define
$z_1(U)=\fy_{n-2,1}(U)/\det U$. It is easy to derive from the description of the quiver $Q_{n-1}^\dag$ that
\begin{equation}\label{yprod}
y_j(U)=\prod_{i=j}^{n-1}y_{ij}(U)=\frac{z_{j-1}(U)z_{j+1}(U)}{z_j^2(U)}, \qquad j\in[2,n-2].
\end{equation}
By the compatibility of the bracket $\Poi^\dag$ and the generalized cluster structure $\G\CC_{n-1}^\dag$ (see 
Theorem~\ref{dualgroupcs}), we have $\{g_{ij}(U),y_m(U)\}^\dag=-\delta_{jm}g_{ij}(U)y_m(U)$ for $i\in[j,n-1]$.
Introducing constants $\omega_m^{ij}$ via $\{g_{ij}(U),z_m(U)\}^\dag=\omega_m^{ij}g_{ij}(U)z_m(U)$, we rewrite the above relation using~\eqref{yprod} as a linear system
\[
\omega_{m-1}^{ij}+\omega_{m+1}^{ij}-2\omega_m^{ij}=-\delta_{mj},\quad m\in[2,n-2],\qquad \omega_{n-1}^{ij}=0,
\]
which yields $\omega_1^{ij}=-(j-1)+(n-2)\omega_{n-2}^{ij}$.

To proceed further we need the following statement.

\begin{lemma}\label{omega1}
For all $j\in[2,n-2]$ and $i\in[j,n-1]$, $\omega_1^{ij}=-(j-1)$.
\end{lemma}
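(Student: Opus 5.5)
By the linear system preceding the statement, $\omega_1^{ij}=-(j-1)+(n-2)\omega_{n-2}^{ij}$. So the lemma is equivalent to $\omega_{n-2}^{ij}=0$. I plan to prove it by computing $\omega_1^{ij}$ directly, since $z_1$ is much simpler than $z_{n-2}$. In $GL_{n-1}$ the matrix $\Phi_{n-2,1}(U)$ consists of the first $n-2$ columns of $\one_{n-1}$ followed by the first column of $U$. Hence $\fy_{n-2,1}(U)=\pm u_{n-1,1}$, and $z_1(U)=\pm u_{n-1,1}/\det U$. Since $\det U$ is a Casimir of $\Poi^\dag$ (it is $c_{n-1}(U)$), we get $\omega_1^{ij}g_{ij}z_1=\{g_{ij},z_1\}^\dag$. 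This reduces the lemma to computing $\{g_{ij}(U),u_{n-1,1}\}^\dag$.

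\textbf{Step 1: a Lemma~\ref{x1nbra}-type formula for the corner entry.} I will prove an analogue of Lemma~\ref{x1nbra} for $u_{n-1,1}$ in $(GL_{n-1},\Poi^\dag)$ using~\eqref{dualbra}. Here $\nabla u_{n-1,1}=E_{1,n-1}$, and $[E_{1,n-1},U]$ has only a first row and a last column. Applying $R_+$ from~\eqref{Rplusgamma} for the Cremmer--Gervais data, the terms $\frac1{1-\gamma}\pi_>$ and $\frac{\gamma^*}{1-\gamma^*}\pi_<$ should act on these border pieces almost trivially: $\gamma$ kills the first row and last column after the shift. Only the Cartan part $R_0$ and a few border terms should survive.

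I expect these surviving border terms to cancel in pairs, as the terms $\langle e_1X_{[n,n]},\nabla gX\rangle-\langle X^{[1,1]}e_n^T,X\nabla g\rangle$ cancelled in the derivation of~\eqref{xn1sigizmund}. The expected outcome is
\[
\{u_{n-1,1},f\}^\dag=c\,\langle D_{n-1},[\nabla f,U]_0\rangle\, u_{n-1,1}f
\]
for some constant $c$. In other words, $u_{n-1,1}$ is log-canonical with every function homogeneous under the conjugation action $U\mapsto t^{D_{n-1}}Ut^{-D_{n-1}}$, with coefficient proportional to the conjugation weight.

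\textbf{Step 2: conjugation weights of $g_{ij}$.} Conjugation by $t^{D_{n-1}}$ commutes with $\gamma$ and $\gamma^*$ up to a fixed power of $t$, because $\gamma$ just shifts indices by one. Hence the maps $H_k$ in~\eqref{Fmaps} are equivariant, and by~\eqref{gdef} each $g_{ij}$ is homogeneous. Its weight can be read off from any single term of~\eqref{gdef}, for instance by tracking the row index sets $[i,n]$, $\gamma^*(I_1)\cup[n-i+j,n]$ and so on, against the column sets. I expect the weight to be a linear function of $j$ alone, giving $\omega_1^{ij}=-(j-1)$ once the constant $c$ from Step 1 is fixed. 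An independent check on $c$ is the diagonal case $i=j$ combined with $\omega^{ij}_{n-1}=0$.

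\textbf{Main obstacle and fallback.} The main difficulty is the $R_+$ bookkeeping in Step 1, specifically showing that the non-Cartan border contributions really cancel. If that fails, I would instead pull back to $GL_n$ via $\Psi'$. Theorem~\ref{expressions}(i) gives $\fy_{n-2,1}(\Psi'(X))=\pm\phi_{n-2,1}(X)/(x_{1n}\det X)$, and~\eqref{badg} expresses $g_{ij}(\Psi'(X))$ through $\eg_{ij}$. Theorem~\ref{twomaps}(i) then turns $\{g_{ij},z_1\}^\dag$ into (minus) a $\Poi$-bracket on $GL_n$. There Lemma~\ref{x1nbra} and the weights~\eqref{explweights} handle the $x_{1n}$ factors, leaving $\{\eg_{ij},\phi_{n-2,1}\}$. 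Note that $\phi_{n-2,1}$ is the determinant of an $(n+1)\times(n+1)$ matrix built from two rows-blocks of $X$, which I expect to be computable by the same gradient technique as in the proof of Theorem~\ref{twomaps}(ii).
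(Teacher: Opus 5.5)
There is a genuine gap: the claim in Step 1 is false, and Step 2 could not give the answer even if Step 1 held.

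\textbf{Step 1.} Write $m=n-1$, so that $[\nabla u_{m1},U]=e_1U_{[m,m]}-U^{[1,1]}e_m^T$. Its $\pi_<$ part does vanish. But $\gamma$ shifts entries down and to the right, so it kills the last row and last column, not the first row (killing the first row is what $\gamma^*$ does). Hence $\frac1{1-\gamma}$ carries the first-row piece $\sum_{b\ge2}u_{mb}E_{1b}$ onto every lower diagonal. These terms pair nontrivially with the lower triangular part of $[\nabla f,U]$, and nothing cancels them. Already for $n=4$, \eqref{dualbra} gives $\{u_{31},u_{33}\}^\dag=u_{31}u_{33}+u_{32}^2$, even though $u_{33}$ is invariant under diagonal conjugation.

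This failure is structural. Decomposing any regular $g$ into torus-homogeneous components, your Step 1 would make $\{u_{n-1,1},g\}^\dag/u_{n-1,1}$ regular for every regular $g$. However, $\fy_{n-2,1}=\pm u_{n-1,1}$ sits at a mutable vertex of $Q^\dag_{n-1}$. Proposition~\ref{frozenchar}, applied to the structure of Theorem~\ref{dualgroupcs}, forbids exactly this property.

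\textbf{Step 2.} This step contradicts the lemma itself. $g_{22}(U)=\det U^{[2,n-1]}_{[2,n-1]}$ is a principal minor, so its weight under any diagonal conjugation is $0$, yet the lemma asserts $\omega_1^{22}=-1$. So $\omega_1^{ij}$ is not a conjugation weight; it reflects the specific structure of $g_{ij}$.

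\textbf{Fallback.} Your fallback only relocates the problem to computing $\{\eg_{ij},\phi_{n-2,1}\}$ in $\Poi$, a bracket of two large determinants, and offers no mechanism for it. The technique used for Theorem~\ref{twomaps}(ii) worked only because $x_{1k}$ and $x_{nj}$ have rank-one gradients.

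\textbf{Missing idea.} You need a way to make $g_{ij}$ tractable. The paper does this as follows:
\begin{itemize}
\item It writes $g_{ij}$ as a product of flag minors of $H(U)$.
\item It passes to $\widehat H(U)=\bgamma(H(U)_+)^{-1}H(U)$, which has the same relevant flag minors; $u_{n-1,1}$ becomes the corresponding entry of $\widehat H$.
\item It uses \cite[Proposition~3.15]{GV}: the map $\widehat H(U)\mapsto U$ is Poisson from $(GL_{n-1},\Poi^\dag_\varnothing)$ to $(GL_{n-1},\Poi^\dag)$, where $\Poi^\dag_\varnothing$ is the dual bracket with $R_+=R_0+\pi_>$.
\end{itemize}
All functions involved then depend only on the $\B_-$ factor $N$ of the Gauss decomposition. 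On such functions $\Poi^\dag_\varnothing$ equals the standard Sklyanin bracket plus explicit Cartan corrections. Each of the $j-1$ logarithmic brackets in the product formula for $g_{ij}$ then evaluates to $1$, which gives $\omega_1^{ij}=-(j-1)$.
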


\begin{proof}
Since $\fy_{n-2,1}(U)=u_{n-1,1}$, we have to prove that 
\begin{equation}\label{eq:omega1}
\{u_{n-1,1},g_{ij}(U)\}^\dag=(j-1)u_{n-1,1}g_{ij}(U). 
\end{equation}
In Section~\ref{badgproof} we defined a map $U\mapsto H(U)$ such that functions $g_{ij}(U)$ can be written via flag minors of $H(U)$, see~\eqref{eq:fopflags} where $n$ should be replaced by $n-1$.  By Corollary~\ref{cor:Fprincipal}(i), 
$H(U)$ satisfies the relation $U=H(U)\bgamma(H(U)_+)^{-1}$. We refactor the right hand side and define $\widehat{H}(U)=
\bgamma(H(U)_+)^{-1}H(U)$. Note that $\det [\widehat{H}(U)]_{[i,n-1]}^{[j,n+j-i-1]}=\det[H(U)]_{[i,n-1]}^{[j,n+j-i-1]}$ 
for $j\le i$. Following~\cite{GSVuni}, define the bracket $\Poi_{\varnothing}^\dag$ as the 
bracket~\eqref{dualbra} with $R_+=R_0+\pi_>$ where $R_0$ is given by~\eqref{R0}. By~\cite[Proposition 3.15]{GV}, the map
$(GL_{n-1},\Poi_{\varnothing}^\dag)\to(GL_{n-1},\Poi^\dag)$ given by $\widehat{H}(U)\mapsto U$ is Poisson. Consequently,
by~\eqref{eq:fopflags},
\begin{multline}\label{briq}
\{u_{n-1,1},g_{ij}(U)\}^\dag=\left(\left\{\log v_{n-1,1}, \log\det[H(V)]_{[i,n-1]}^{[j,n+j-i-1]}\right\}_\varnothing^\dag\right.\\
+\left.\sum_{s=2}^{j-1}\left\{\log v_{n-1,1}, \log\det[H(V)]_{[s,n-1]}^{[s,n-1]}\right\}_\varnothing^\dag\right)
u_{n-1,1}g_{ij}(U).
\end{multline}

To proceed further, denote $N=N(V)=V_{0,-}$ and $B=B(V)=V_+$, then for any function $f$ on $\B_-$ we have
\[
(\nabla f\circ N){\cdot} V=\Ad_{N^{-1}}(N\nabla f)_\ge,\qquad
V{\cdot}(\nabla f\circ N)=\Ad_B(N\nabla f)_\ge\in\b_+.
\]
Therefore, for any two functions $f_1$, $f_2$ on $\B_-$ we get
\begin{multline}\label{interimbra}
\{f_1\circ N,f_2\circ N\}_\varnothing^\dag\\=
\left\langle(R_0+\pi_>)\left(\Ad_{N^{-1}}(N\nabla_1)_\ge-\Ad_B(N\nabla_1)_\ge\right),
\Ad_{N^{-1}}(N\nabla_2)_\ge-\Ad_B(N\nabla_2)_\ge\right\rangle\\
-\left\langle\Ad_{N^{-1}}(N\nabla_1)_\ge-\Ad_B(N\nabla_1)_\ge,\Ad_{N^{-1}}(N\nabla_2)_\ge\right\rangle
\end{multline}
with $\nabla_1=\nabla f_1$, $\nabla_2=\nabla f_2$. The second term above can be rewritten as
\[
\langle (N\nabla_1)_\ge,(N\nabla_2)_\ge\rangle-\left\langle\Ad_B(N\nabla_1)_\ge,
\Ad_{N^{-1}}(N\nabla_2)_\ge\right\rangle,
\]
which is equal to
\begin{multline*}
%\langle (N\nabla_1)_\ge,(N\nabla_2)_\ge\rangle-\left\langle\Ad_B(N\nabla_1)_\ge,
%\Ad_{N^{-1}}(N\nabla_2)_\ge\right\rangle\\=
\hskip-8pt\langle (N\nabla_1)_0,(N\nabla_2)_0\rangle-\left\langle(N\nabla_1)_0,\Ad_{N^{-1}}(N\nabla_2)_\ge\right\rangle
-\left\langle(\Ad_B(N\nabla_1)_\ge)_>,\Ad_{N^{-1}}(N\nabla_2)_\ge\right\rangle\\
=\langle (N\nabla_1)_0,(N\nabla_2)_0\rangle-\langle (N\nabla_1)_0,(\nabla_2{\cdot} N)_0\rangle
-\left\langle(\Ad_B(N\nabla_1)_\ge)_>,\Ad_{N^{-1}}(N\nabla_2)_\ge\right\rangle.
\end{multline*}
To treat the first term, note that
\begin{align*}
(R_0&+\pi_>)\left(\Ad_{N^{-1}}(N\nabla_1)_\ge-\Ad_B(N\nabla_1)_\ge\right)\\
&=(R_0+\pi_>)(\nabla_1{\cdot} N)-R_0(N\nabla_1)_0-(\Ad_B(N\nabla_1)_\ge)_>\\
&=R_0([\nabla_1,N]_0)+(\nabla_1{\cdot} N)_>-(\Ad_B(N\nabla_1)_\ge)_>.
\end{align*}
Note that $\left\langle \left(\nabla_1{\cdot} N-\Ad_B(N\nabla_1)_\ge\right)_>,\Ad_B(N\nabla_2)_\ge\right\rangle$ vanishes
since it is the pairing of an element in $\n_+$ and an element in $\b_+$. Further, 
\begin{multline*}
\left\langle (\nabla_1{\cdot} N)_>,\Ad_{N^{-1}}(N\nabla_2)_\ge\right\rangle=
\left\langle (\nabla_1{\cdot} N)_>,\nabla_2{\cdot} N -\Ad_{N^{-1}}(N\nabla_2)_<\right\rangle\\
=\left\langle (\nabla_1{\cdot} N)_>,\nabla_2{\cdot} N\right\rangle-\left\langle N\nabla_1, (N\nabla_2)_<\right\rangle
=\left\langle (\nabla_1{\cdot} N)_>,\nabla_2{\cdot} N\right\rangle-\left\langle (N\nabla_1)_>, N\nabla_2\right\rangle,
\end{multline*}
so that the first term in~\eqref{interimbra} equals
\begin{multline*}
\left\langle R_0([\nabla_1,N]_0)+(\nabla_1{\cdot} N)_>-(\Ad_B(N\nabla_1)_\ge)_>,
\Ad_{N^{-1}}(N\nabla_2)_\ge-\Ad_B(N\nabla_2)_\ge\right\rangle\\
=\left\langle R_0([\nabla_1,N]_0),[\nabla_2,N]_0\right\rangle+
\left\langle (\nabla_1{\cdot} N)_>,\nabla_2{\cdot} N\right\rangle-\left\langle (N\nabla_1)_>, N\nabla_2\right\rangle\\
-\left\langle (\Ad_B(N\nabla_1)_\ge)_>,\Ad_{N^{-1}}(N\nabla_2)_\ge\right\rangle.
\end{multline*}
Therefore,
\begin{multline*}
\{f_1\circ N,f_2\circ N\}_\varnothing^\dag=\left\langle R_0([\nabla_1,N]_0),[\nabla_2,N]_0\right\rangle
-\langle (N\nabla_1)_0,(N\nabla_2)_0\rangle\\+\langle (N\nabla_1)_0,(\nabla_2{\cdot} N)_0\rangle
+\left\langle (\nabla_1{\cdot} N)_>,\nabla_2{\cdot} N\right\rangle-\left\langle (N\nabla_1)_>, N\nabla_2\right\rangle.
\end{multline*}
Recall that the standard Sklyanin bracket is given by
\begin{multline*}
\{f_1\circ N,f_2\circ N\}_\varnothing=\frac12\langle(\nabla_1{\cdot} N)_0,(\nabla_2{\cdot} N)_0\rangle
+\left\langle (\nabla_1{\cdot} N)_>,\nabla_2{\cdot} N\right\rangle\\-\frac12\langle (N\nabla_1)_0,(N\nabla_2)_0\rangle
-\left\langle (N\nabla_1)_>, N\nabla_2\right\rangle,
\end{multline*}
see, e.g., \cite[p.~9]{GSVMem}, so that finally
\begin{multline}\label{finalbra}
\{f_1\circ N,f_2\circ N\}_\varnothing^\dag=\{f_1\circ N,f_2\circ N\}_\varnothing+\left\langle R_0([\nabla_1,N]_0),[\nabla_2,N]_0\right\rangle\\
+\langle (N\nabla_1)_0,(\nabla_2{\cdot} N)_0\rangle-\frac12\left(\langle (N\nabla_1)_0,(N\nabla_2)_0\rangle
+\langle(\nabla_1{\cdot} N)_0,(\nabla_2{\cdot} N)_0\rangle\right).
\end{multline}

Note that the functions $\log v_{n-1,n}$ and $\log\det[H(V)]_{[i,n-1]}^{[j,n+j-i-1]}$ that feature in the right hand side of~\eqref{briq} are, in fact, functions on $\B_-$, so to complete the computation we can use~\eqref{finalbra} for $$
f_1=\log n_{n-1,1}\qquad \text{and}\qquad f_2=\log\det[H(N)]_{[i,n-1]}^{[j,n+j-i-1]}, i\ge j\ge 2.
$$
 Recall that the standard Sklyanin bracket for the entries of $N$ is given by
\[
\{n_{ij},n_{kl}\}_\varnothing=\frac12(\sign(k-i)+\sign(l-j))n_{il}n_{kj},
\]
see, e.g.,~\cite[p.~12]{GSVb}. In particular,
\[
\{n_{n-1,1},n_{kl}\}_\varnothing=\begin{cases} 0\qquad &\text{for $k<n-1$,}\\
\frac12n_{n-1,1}n_{kl}\qquad &\text{for $k=n-1$, $l>1$.} \end{cases}
\]
Using the Laplace expansion of $\det[H(N)]_{[i,n-1]}^{[j,n+j-i-1]}$ with respect to the last row, we then obtain
\[
\{n_{n-1,1},\det[H(N)]_{[i,n-1]}^{[j,n+j-i-1]}\}_\varnothing=\frac12n_{n-1,n}\det[H(N)]_{[i,n-1]}^{[j,n+j-i-1]},
\]
so for $f_1$ and $f_2$ as above we get $\{f_1\circ N,f_2\circ N\}_\varnothing=1/2$. Further
\begin{gather*}
(\nabla_1{\cdot}N)_0=e_{11}, \qquad (\nabla_2{\cdot}N)_0=e_{jj}+\dots+e_{n+j-i-1,n+j-i-1},\\
(N\nabla_1)_0=e_{n-1,n-1}, \qquad (N\nabla_2)_0=e_{ii}+\dots+e_{n-1,n-1}.
\end{gather*}
As we have seen in Section~\ref{twomapsiproof} above, $R_0$ acts as $1/(1-\gamma)$ on $\sl_{n-1}$, hence
\[
R_0([\nabla_1,N]_0)=\frac1{(1-\gamma)}(e_{11}-e_{n-1,n-1})=\one_{n-1}-e_{n-1,n-1},
\]
and so $\left\langle R_0([\nabla_1,N]_0),[\nabla_2,N]_0\right\rangle=1-\delta_{ij}$. Next,
\[
\langle (N\nabla_1)_0,(\nabla_2{\cdot} N)_0\rangle=\delta_{ij},\quad
\langle (N\nabla_1)_0,(N\nabla_2)_0\rangle=1,\quad
\langle(\nabla_1{\cdot} N)_0,(\nabla_2{\cdot} N)_0\rangle=0,
\]
so that finally~\eqref{finalbra} gives $\{f_1\circ N,f_2\circ N\}_\varnothing^\dag=1$. Plugging into~\eqref{briq} 
yields~\eqref{eq:omega1} as required.
\end{proof}

It follows immediately that $\omega_{n-2}^{ij}=0$, so that $\left\{g_{ij}(U),\frac{g_{n-1,n-2}(U)}{g_{n-2,n-2}(U)}\right\}^\dag$ vanishes, and hence~\eqref{xybracket} holds for $\beta=\langle i,j\rangle\ne D$.

The case $\beta=(k,l)$ is treated in a similar way leading to
\[
\{\phi_{kl}(X),\ef(X)\}=-\frac{n-k-l}n\phi_{kl}(X)\ef(X)
\]
and
\begin{multline*}
\{\phi_{kl}(X),\eg(X)\}=\frac{n-k-l}n\phi_{kl}(X)\eg(X)\\-
(x_{1n}\det X)^{n-k-l}\left\{\fy_{kl}(U),\frac{g_{n-1,n-2}(U)}{g_{n-2,n-2}(U)}\right\}^\dag.
\end{multline*}
To treat the second term in the latter expression we introduce constants $\bar\omega_m^{kl}$ via $\{\fy_{kl}(U),z_m(U)\}^\dag=\bar\omega_m^{kl}\fy_{kl}(U)z_m(U)$. By the compatibility of the bracket $\Poi^\dag$ and the generalized cluster structure $\G\CC_{n-1}^\dag$, we have $\{\fy_{kl}(U),y_m(U)\}^\dag=0$, which can be rewritten using~\eqref{yprod} as a linear system
\[
\bar\omega_{m-1}^{kl}+\bar\omega_{m+1}^{kl}-2\bar\omega_m^{kl}=0,\quad m\in[2,n-2],\qquad \bar\omega_{n-1}^{kl}=0.
\]
Therefore, $\bar\omega_1^{kl}=(n-2)\bar\omega_{n-2}^{kl}$.

Similarly to the previous case, we need the following statement.

\begin{lemma}\label{baromega1}
For all $k$, $l$ such that $1\le k,l\le n-2$, $k+l\le n-1$, $\bar\omega_1^{kl}=0$.
\end{lemma}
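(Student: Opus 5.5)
Since $z_1(U)=\fy_{n-2,1}(U)/\det U$ and $\det U=c_{n-1}(U)$ is a Casimir of $\Poi^\dag$, the claim $\bar\omega_1^{kl}=0$ is equivalent to
\[
\{u_{n-1,1},\fy_{kl}(U)\}^\dag=0 ,
\]
because $\fy_{n-2,1}(U)=u_{n-1,1}$ up to sign. I would prove this in the same way as Lemma~\ref{omega1}, but the computation should be simpler, since $\fy_{kl}$ is a single determinant and carries no product of flag minors.

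The plan is to express both functions in coordinates in which the bracket is explicit. By~\eqref{phidef}, $\fy_{kl}(U)=s^{n-1}_{kl}\det\Phi_{kl}(U)$. The columns of $\Phi_{kl}(U)$ are $e_1,\dots,e_k$, the first $l$ columns of $U$, and the first columns of $U^p$ for $2\le p\le n-k-l$.

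Instead of passing through $\widehat H$ and $\Poi^\dag_\varnothing$, I would compute the bracket directly from~\eqref{dualbra} with $R_+=R^{\bfG}_+$ on $\gl_{n-1}$. For $f_1=\log u_{n-1,1}$ we have $\nabla f_1=E_{1,n-1}/u_{n-1,1}$. The commutator is
\[
[\nabla f_1,U]=\bigl(E_{1,n-1}U-UE_{1,n-1}\bigr)/u_{n-1,1}.
\]
The first term is supported in row $1$ and equals the last row of $U$ moved to row $1$. The second term is supported in column $n-1$ and equals the first column of $U$ moved to column $n-1$.

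The quantity $\langle R_+[\nabla f_1,U],[\nabla f_2,U]\rangle-\langle[\nabla f_1,U],\nabla f_2\, U\rangle$ then reduces to evaluating $\nabla f_2$ against matrices of this row-plus-column shape. The terms $\frac1{1-\gamma}\pi_>$ and $\frac{\gamma^*}{1-\gamma^*}\pi_<$ shift row $1$ and column $n-1$ along the diagonals. Here I would use the invariance properties of $\fy_{kl}$:

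\begin{itemize}
\item $\det\Phi_{kl}(U)$ is invariant under $U\mapsto NUN^{-1}$ for $N$ lower unipotent fixing $e_1$, more precisely for conjugation preserving the flag spanned by $e_1,\dots,e_k$ and its action on the Krylov-type columns;
\item $\fy_{kl}$ is homogeneous under diagonal conjugation, and under scaling of $U$ it has degree $l+\sum_{p=2}^{n-k-l}p$.
\end{itemize}

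These give formulas for $(\nabla f_2\, U)_0$ and $(U\nabla f_2)_0$, for the strictly lower parts, and for $\langle \nabla f_2,E_{1j}U\rangle$-type pairings. Concretely, the pairing $\langle \nabla\log\det\Phi_{kl},\, E_{1,n-1}U\rangle$ is the derivative of $\det\Phi_{kl}$ when row $1$ of $U$ is perturbed by the last row of $U$. Since $e_1,\dots,e_k$ span the first $k$ coordinates, perturbing row $1$ of $U$ does not change $\det\Phi_{kl}$ at all when $k\ge1$, to first order, in the $U$-columns; for the powers $U^p$ it yields terms $U^{a}E_{1,n-1}U^{b+1}e_1$ whose first-row component is killed. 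This is where I would carefully check each contribution.

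The diagonal (Cartan) contributions would be computed as in Lemma~\ref{omega1}, with $R_0=1/(1-\gamma)$ on $\sl_{n-1}$. The nonzero scalar contributions from $R_0$ should cancel against $\langle (U\nabla_1)_0,(\nabla_2U)_0\rangle$-type terms, because $(\nabla f_1\,U)_0=e_{11}$ and $(U\nabla f_1)_0=e_{n-1,n-1}$. I would then need the $e_{11}$ and $e_{n-1,n-1}$ weights of $\fy_{kl}$ under left and right diagonal scaling. For $\fy_{kl}$, left scaling of row $1$ acts trivially on the span of $e_1,\dots,e_k$ after the column operation, while right scaling of column $1$ by $t$ multiplies the Krylov part by $t$. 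This would give matching weights, so the total is $0$.

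The main obstacle is the off-diagonal part: showing that the $\pi_>$ and $\pi_<$ terms of $R_+$, applied to the row-$1$/column-$(n-1)$ pieces, pair to zero with $[\nabla f_2,U]$. If the direct computation becomes unwieldy, I would fall back on the route of Lemma~\ref{omega1}. That route uses the Poisson map $\widehat H(U)\mapsto U$ from~\cite[Proposition 3.15]{GV}, together with the fact that $\fy_{kl}$ pulls back to functions of the $\B_-$-factor $N$, and then applies~\eqref{finalbra} with the Sklyanin bracket formula for $n_{n-1,1}$. In that setting $\{n_{n-1,1},\cdot\}_\varnothing$ is a multiple of the last-row weight, and the Cartan terms are computed exactly as above. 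The target is for all contributions to sum to $0$.
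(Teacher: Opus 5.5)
Your reduction to $\{u_{n-1,1},\fy_{kl}(U)\}^\dag=0$ is correct. After that, however, the proposal leaves the decisive step open. You call the off-diagonal contributions ``the main obstacle'' but give no argument for them, and the invariance you list in your first bullet is misstated. The missing idea is that $\fy_{kl}$ is invariant under conjugation by $\N_+$, the \emph{upper} unipotent group. Invariance under lower unipotent matrices fixing $e_1$ fails as soon as $k\ge2$ or $l\ge2$, because such matrices move the span of $e_1,\dots,e_k$ and the columns $Ue_j$, $j\le l$. Conjugation invariance under $\N_+$ gives $[\nabla\fy_{kl},U]\in\b_+$. Moreover, $\fy_{n-2,1}=\pm u_{n-1,1}$ is both left and right $\N_+$-invariant, so $[\nabla u_{n-1,1},U]$ and $\nabla u_{n-1,1}\cdot U$ also lie in $\b_+$. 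Since $\b_+\perp\n_+$, putting $\fy_{kl}$ in the \emph{first} slot of \eqref{dualbra} kills every $\pi_>$, $\pi_<$ and off-diagonal contribution. What remains is $\langle R_0([\nabla\fy_{kl},U]_0),[\nabla u_{n-1,1},U]_0\rangle-\langle[\nabla\fy_{kl},U]_0,(\nabla u_{n-1,1}\cdot U)_0\rangle$. Write $[\nabla\fy_{kl},U]_0=-w_{kl}\fy_{kl}$, where $w_{kl}$ is the weight of diagonal conjugation. Use that $R_0$ acts as $1/(1-\gamma)$ modulo scalars on traceless diagonal matrices, and that $\Tr w_{kl}=0$. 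Then both terms equal $-w^1_{kl}\fy_{kl}u_{n-1,1}=(n-k-l)\fy_{kl}u_{n-1,1}$ and cancel. This is the paper's argument.

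Your route, as set up, would not get there.
\begin{itemize}
\item With $u_{n-1,1}$ in the first slot, the $R_+$-term does become Cartan once the $\b_+$ facts are known. The term $\langle[\nabla u_{n-1,1},U],\nabla\fy_{kl}\cdot U\rangle$ does not. Since $\fy_{kl}$ is not right $\N_+$-invariant, $\nabla\fy_{kl}\cdot U\notin\b_+$. Its strictly lower part pairs nontrivially with the row-$1$/column-$(n-1)$ matrix, already for $\fy_{11}$ on $GL_3$. So your expectation that only scalar Cartan pieces survive is false in that ordering.
\item It can be repaired, but only via the same $\b_+$ fact. That pairing equals $([U,\nabla\fy_{kl}]\,U)_{n-1,1}$, which by $[\nabla\fy_{kl},U]\in\b_+$ equals $w^{n-1}_{kl}\fy_{kl}u_{n-1,1}$. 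The total then becomes $-\Tr w_{kl}\,\fy_{kl}u_{n-1,1}=0$.
\item $\fy_{kl}$ has no weight under one-sided scaling of row or column $n-1$. So the ``$e_{n-1,n-1}$ weights under left and right diagonal scaling'' you plan to compute do not exist; only the conjugation weight is meaningful.
\item Your fallback rests on a false claim. Unlike the bottom-row flag minors used in Lemma~\ref{omega1}, $\fy_{kl}(V)$ is not a function of the $\B_-$-factor of $V=\widehat H(U)$ alone; again, $\fy_{11}$ on $GL_3$ depends on the $\N_+$-factor. So \eqref{finalbra} does not apply.
\end{itemize}
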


\begin{proof}
According to the definition of $\bar\omega_1^{kl}$, we have to prove that 
\[
\{\fy_{kl}(U),\fy_{n-2,1}(U)\}^\dag=0
\]
with the bracket $\Poi^\dag$ given by~\eqref{dualbra} and $R_+$ given by~\eqref{Rplusgamma}. Note that every $\fy_{kl}(U)$ is invariant under the conjugation by elements of $\N_+$, while $\fy_{n-2,1}(U)$ is additionally both left and right $\N_+$-invariant. Consequently, $[\nabla\fy_{kl}(u),U]\in\n_+$ and $\nabla\fy_{n-2,1}(U){\cdot} U\in\n_+$, hence
\begin{multline*}
\{\fy_{kl}(U),\fy_{n-2,1}(U)\}^\dag\\=\langle R_0([\nabla \fy_{kl}(U),U]_0),[\nabla\fy_{n-2,1}(U),U]_0\rangle
-\langle[\nabla \fy_{kl}(U),U]_0,(\nabla\fy_{n-2,1}(U){\cdot} U)_0\rangle
\end{multline*}
with $R_0$ given by~\eqref{R0}. To proceed further we compute the weights $w_{kl}=(w_{kl}^1,\dots,\allowbreak 
w_{kl}^{n-1})$ of the conjugation action by diagonal matrices $D=\diag(d_1,\dots,d_{n-1})$ given by
\[
\fy_{kl}(DUD^{-1})=\prod_{i=1}^{n-1}d_i^{w_{kl}^i}\fy_{kl}(U).
\]
Clearly, $[\nabla \fy_{kl}(U),U]_0=-w_{kl}\fy_{kl}(U)$.

We have
\begin{multline*}\nar
\fy_{kl}(DUD^{-1})=\pm\det\begin{bmatrix}\one_{n-1}^{[1,k]}& (DUD^{-1})^{[1,l]}& (DUD^{-1})^2e_1& \dots&  (DUD^{-1})^{n-k-l}e_1\end{bmatrix}\\
=\pm d_1{\cdot} d_{n-1}\det\nar\begin{bmatrix}(D^{-1})^{[1,k]}& (UD^{-1})^{[1,l]}& U^2D^{-1}e_1& \dots & U^{n-k-l}D^{-1}e_1\end{bmatrix}\\
=\frac{d_{k+1}{\cdot} d_{n-1}}{d_1{\cdot} d_l}\fy_{kl}(U)d_1^{-(n-k-l-1)}=\frac{d_{k+1}{\cdot} d_{n-1}}
{d_1^{n-k-l}d_2{\cdot} d_l}\fy_{kl}(U),
\end{multline*}
so that
$w_{kl}=(0,\dots,0,1,\dots,1)-(n-k-l,1,\dots,1,0,\dots,0)$,
where the first vector has $k$ zeros, and the second one has $n-l-1$ zeros. In particular, $w_{n-2,1}=(-1,0,\dots,0,1)$.

Further, the first two terms in~\eqref{R0} vanish, and the the third term is proportional to $\one_{n-1}$, so its contribution vanishes. The contribution of the remaining term is 
\[
-\left\langle\frac{\gamma^*}{1-\gamma^*}w_{kl},w_{n-2,1}\right\rangle\fy_{kl}(U)\fy_{n-2,1}(U)=(n-k-l)\fy_{kl}(U)\fy_{n-2,1}(U).
\]
 On the other hand,  
\begin{multline*}
\left\langle[\nabla \fy_{kl}(U),U]_0,(\nabla\fy_{n-2,1}(U){\cdot} U)_0\right\rangle\\=
-\langle w_{kl},(1,0,\dots,0)\rangle\fy_{kl}(U)\fy_{n-2,1}(U)=(n-k-l)\fy_{kl}(U)\fy_{n-2,1}(U),
\end{multline*}
and hence the bracket $\{\fy_{kl}(U),\fy_{n-2,1}(U)\}^\dag$ vanishes.
\end{proof}

It follows immediately that $\bar\omega_{n-2}^{kl}=0$, so that $\left\{\fy_{kl}(U),\frac{g_{n-1,n-2}(U)}{g_{n-2,n-2}(U)}\right\}^\dag$ vanishes, and hence~\eqref{xybracket} holds for $\beta=(k,l)$.

Assume now that $\beta=(m,+)\ne D$. Similarly to the two previous cases, we have
\[
\{\ef_{2m}(X),\eg(X)\}=\frac {2m}n \ef_{2m}(X)\eg(X)
\]
and
\[
\{\ef_{2m}(X),\ef(X)\}=-\frac {2m}n \ef_{2m}(X)\ef(X)+x_{1n}^{2m}\left\{\bar t_m^+(\bar M),\frac{\bar t^+_{n-2}(\bar M)}
{\bar t^-_{n-1}(\bar M)}\right\}^T.
\]
The contributions of the former expression and of the first term in the latter cancel each other. To treat the remaining term we introduce $\zeta_j(\bar M)=\bar t_j^+(\bar M)/\bar t_{j+1}^-(\bar M)$ for $j\in[1,n-2]$ with
$\zeta_{n-1}(\bar M)=0$ 
and $\zeta_0=1/\bar h_0$. It is easy to derive from the description of the quiver $\bar Q_{n-1}^T$ that
\begin{equation}\label{baryprod}
\bar y_j(\bar M)=y_{j,+}(\bar M)y_{j+1,-}(\bar M)=\frac{\zeta_{j-1}(\bar M)\zeta_{j+1}(\bar M)}{\zeta_j^2(\bar M)}, \qquad j\in[1,n-2],
\end{equation}
similarly to~\eqref{yprod}. By the compatibility of the bracket $\Poi^T$ and the cluster structure $\bar\CC_{n-1}^T$ (see 
Theorem~\ref{todacs}), we have $\{\bar t_m^+(\bar M),\bar y_j(\bar M)\}^T=\delta_{jm}\bar t_m^+(\bar M)\bar y_j(\bar M)$.
Introducing constants $\omega_j^{m}$ via $\{\bar t_m^+(\bar M),\zeta_j(\bar M)\}^T=\omega_j^{m}\bar t_m^+(\bar M)\zeta_j(\bar M)$, we rewrite the above relation using~\eqref{baryprod} as a linear system
\[
\omega_{j-1}^{m}+\omega_{j+1}^{m}-2\omega_j^{m}=\delta_{mj},\quad j\in[1,n-3],
\]
which yields $\omega_0^{m}=m+(n-1)\omega_{n-2}^{m}$. 

To proceed further we need the following statement.

\begin{lemma}\label{omega0}
$\omega_0^m=m$.
\end{lemma}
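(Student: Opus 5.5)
The plan is to reduce the statement to a computation of $\{\bar h_0,\cdot\}^T$ on the Hankel determinants $\bar t_m^+$. By definition $\zeta_0=1/\bar h_0$, so
$$\{\bar t_m^+,\zeta_0\}^T=\{\bar h_0,\bar t_m^+\}^T/\bar h_0^2.$$
Hence $\omega_0^m=m$ is equivalent to
\begin{equation*}
\{\bar h_0,\bar t_m^+(\bar M)\}^T=m\,\bar h_0\,\bar t_m^+(\bar M).
\end{equation*}
Letting $\lambda\to\infty$ in \eqref{bartodabra} (where $\bar M(\lambda)=\bar h_0+\bar h_1/\lambda+\dots$), the right-hand side tends to $-\bar M(\mu)(\bar h_0-\bar M(\mu))$. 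Writing $N(\mu)=\bar M(\mu)-\bar h_0\in\RR_{n-1}$, this gives
\begin{equation*}
\{\bar h_0,N(\mu)\}^T=\bar h_0N(\mu)+N(\mu)^2 .
\end{equation*}
This is consistent with \eqref{bartodabramoment} for $i=0$, which one checks directly. So the Hamiltonian vector field of $\bar h_0$ acts on the moments $h_j=\bar h_{j+1}$ of $N$ as the sum of two derivations:
\begin{align*}
D_1\colon\ & h_j\mapsto \bar h_0h_j,\\
D_2\colon\ & h_j\mapsto\sum_{a+b=j-1}h_ah_b .
\end{align*}

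Since $\bar t_m^+=(-1)^m\det(h_{\alpha+\beta-2})_{\alpha,\beta=1}^m$ is homogeneous of degree $m$ in the $h_j$, the derivation $D_1$ contributes exactly $m\bar h_0\bar t_m^+$. It therefore remains to show that $D_2$ annihilates the Hankel determinants. (For $m=1$ this is immediate, since $N^2=O(\lambda^{-2})$ has no $1/\lambda$ term.)

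The derivation $D_2$ is the infinitesimal version of the Möbius change $N\mapsto N/(1-\varepsilon N)$, that is, $1/N\mapsto 1/N-\varepsilon$. I will exploit this in one of two ways.

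The first approach is algebraic. Write $N=\sum_j h_j\lambda^{-j-1}$ as a formal moment functional $h_j=\mathcal L(x^j)$. Then show that $D_2$ acts on the Hankel matrix $H=(h_{\alpha+\beta-2})$ as $D_2H=LH+HL^T$ with $L$ strictly lower triangular. The natural candidate is $L_{\alpha\gamma}=h_{\alpha-\gamma-1}$ for $\alpha>\gamma$, or a close variant; this comes from splitting $\sum_{a+b=\alpha+\beta-3}h_ah_b$ according to whether $a<\alpha-1$ or $a\ge\alpha-1$. Then Jacobi's formula gives
$$D_2\det H=\det H\,\Tr(H^{-1}D_2H)=2\det H\,\Tr L=0.$$

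The second approach is via continued fractions, as a backup. Expand $N$ in its $J$-fraction $N=h_0/(\lambda-b_0-a_1/(\lambda-b_1-\cdots))$. The substitution $1/N\mapsto 1/N-\varepsilon$ only shifts $b_0$. Then check that $\det(h_{\alpha+\beta-2})=h_0^m\prod_i a_i^{m-i}$ does not depend on $b_0$.

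The main obstacle is the exact bookkeeping of the splitting in the first approach. The convolution $\sum_{a+b=\alpha+\beta-3}h_ah_b$ must be matched precisely with $(LH+HL^T)_{\alpha\beta}$, including the boundary terms $a=\alpha-1$ and $b=\beta-1$. Those boundary terms may force a correction to $L$ by a diagonal or a rank-one piece whose trace then has to be shown to vanish, or to be absorbed into the $D_1$ count. If the matrix identity resists, I will fall back on the continued-fraction argument, which settles the invariance conceptually.

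Once $D_2\bar t_m^+=0$ is established, combining it with the $D_1$ contribution yields $\{\bar h_0,\bar t_m^+\}^T=m\bar h_0\bar t_m^+$, hence $\omega_0^m=m$.
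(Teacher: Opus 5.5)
Your argument is correct, and it takes a different route from the paper's. The paper takes the log-canonical form $\{\bar h_0,\bar t_m^+\}^T=\omega_0^m\bar h_0\bar t_m^+$ as given. This follows from the compatibility in Theorem~\ref{todacs}, since $\bar h_0=\bar t_1^-$ lies in the same cluster as $\bar t_m^+$. The paper then only has to identify the constant, and does so by tracking one monomial: $\bar h_0\bar h_m^m$, coming from the antidiagonal of $\bar H_m^+$. By \eqref{bartodabramoment}, its only source is the $\bar h_0\bar h_m$ term of $\{\bar h_0,\bar h_m\}^T$ applied to $\bar h_m^m$. Since $\bar H_m^+$ contains no $\bar h_0$, no other monomial can produce it, and the coefficient is $m$.

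You instead prove the identity outright. You split the Hamiltonian field of $\bar h_0$ into a scaling part, which contributes $m$ by homogeneity, and the infinitesimal M\"obius part $D_2$, which you show annihilates Hankel determinants. Your first approach works exactly as you guessed, with no boundary correction. Splitting $\sum_{a+b=\alpha+\beta-3}h_ah_b$ into the ranges $a\le\alpha-2$ and $a\ge\alpha-1$ gives precisely $(LH)_{\alpha\beta}$ and $(HL^T)_{\alpha\beta}$. Here $L$ is the lower triangular Toeplitz matrix with $L_{\alpha\gamma}=h_{\alpha-\gamma-1}$ for $\alpha>\gamma$. Then $D_2\det H=\Tr\bigl(\operatorname{adj}(H)(LH+HL^T)\bigr)=\det H\,(\Tr L+\Tr L^T)=0$, with no invertibility needed. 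The $J$-fraction backup is also valid, but it needs a density argument, since the fraction exists only where the Hankel minors are nonzero.

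The paper's route is shorter once compatibility is available. Yours is self-contained and does not depend on Theorem~\ref{todacs}, whose proof sits in the Appendix. It also explains structurally why the answer is simply the degree $m$: the non-scaling part of the flow of $\bar h_0$ acts on $H$ by a unimodular congruence.
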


\begin{proof} Recall that $\zeta_0(\bar M)=1/\bar h_0$, so we have to evaluate $\omega_0^m$ from 
$\{\bar h_0,\bar t_m^+(\bar M)\}^T=\omega_0^m\bar h_0\bar t_m^+(\bar M)$. Note that $\omega_0^m$ can be evaluated by looking at any term in the product $\bar h_0\bar t_m^+(\bar M)$ and finding with which coefficient it enters the left hand side. Let us consider the term $\bar h_0\bar h_m^m$ that represents the antidiagonal of the Hankel matrix $H_m^+$. By~\eqref{bartodabramoment}, the only source of such a term in the left hand side is the bracket 
$\{\bar h_0,\bar h_m^m\}^T=mh_m^{m-1}\{\bar h_0,\bar h_m\}^T=m\sum_{k=0}^{m-1}\bar h_{k+1}\bar h_{m-k-1}$, which
contains the term $m\bar h_0\bar h_m^m$, so that $\omega_0^m=m$.
\end{proof}

As an immediate corollary we get $\omega_{n-2}^{m}=0$, wchih means that the bracket~\eqref{xybracket} vanishes for 
$\beta=(m,+)\ne D$.

The case $\beta=(m,-)$ is treated similarly. The corresponding equations are  $\{\bar t_m^-(\bar M),\bar y_j(\bar M)\}^T=\delta_{j-1,m}\bar t_m^-(\bar M)\bar y_j(\bar M)$, and they are translated to
\[
\bar\omega_{j-1}^{m}+\bar\omega_{j+1}^{m}-2\bar\omega_j^{m}=\delta_{m,j-1},\quad j\in[1,n-3]
\]
for constants $\bar\omega_j^{m}$ defined via $\{\bar t_m^-(\bar M),\zeta_j(\bar M)\}^T=\bar\omega_j^{m}\bar t_m^-(\bar M)\zeta_j(\bar M)$. Consequently, $\bar\omega_0^{m}=m-1+(n-1)\bar\omega_{n-2}^{m}$.

The analog of Lemma~\ref{omega0} looks as follows.

\begin{lemma}\label{baromega0}
$\bar\omega_0^m=m-1$.
\end{lemma}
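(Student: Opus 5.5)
The plan is to prove the statement by the same device as Lemma~\ref{omega0}. First I will pin down what has to be computed. Since $\zeta_0(\bar M)=1/\bar h_0$, the definition of $\bar\omega_0^m$ gives
\[
\{\bar t_m^-(\bar M),1/\bar h_0\}^T=\bar\omega_0^m\,\bar t_m^-(\bar M)/\bar h_0 .
\]
By antisymmetry this is equivalent to
\[
\{\bar h_0,\bar t_m^-(\bar M)\}^T=\bar\omega_0^m\,\bar h_0\,\bar t_m^-(\bar M),
\]
and compatibility (Theorem~\ref{todacs}) guarantees that this bracket is indeed log-canonical. Hence it suffices to pick one monomial of $\bar h_0\bar t_m^-$, with $\bar t_m^-=\det(\bar h_{\alpha+\beta-2})_{\alpha,\beta=1}^m$, and compare its coefficient on both sides. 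For $m=1$ we have $\bar t_1^-=\bar h_0$ and the bracket vanishes, which agrees with $m-1=0$. So from now on $m\ge2$.

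I would work with the derivation $\mathcal D=\{\bar h_0,\cdot\}^T$. By \eqref{bartodabramoment}, $\mathcal D\bar h_j=\sum_{k=0}^{j-1}\bar h_{k+1}\bar h_{j-k-1}$. In words, $\mathcal D$ replaces one factor $\bar h_j$ by a product $\bar h_a\bar h_b$ with $a+b=j$ and $a\ge1$. I take as test monomial $T=\bar h_0\bar h_{m-1}^m$, that is, $\bar h_0$ times the antidiagonal monomial of the Hankel matrix. Its coefficient in $\bar h_0\bar t_m^-$ is $\varepsilon=(-1)^{m(m-1)/2}$.

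The key step is to list every monomial $P$ of $\bar t_m^-$ for which $\mathcal D P$ contains $T$. Such a $P$ arises from $T$ by merging two factors $\bar h_a\bar h_b$ with $a\ge1$ back into $\bar h_{a+b}$. There are two possibilities.
\begin{itemize}
\item[(a)] $(a,b)=(m-1,0)$. Then $P=\bar h_{m-1}^m$, the antidiagonal, with coefficient $\varepsilon$. Each of its $m$ factors contributes exactly one copy of $\bar h_{m-1}\bar h_0$ (the term $k=m-2$), so this case contributes $m\varepsilon$.
\item[(b)] $(a,b)=(m-1,m-1)$. Then $P=\bar h_0\bar h_{m-1}^{m-2}\bar h_{2m-2}$. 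This monomial does occur in the determinant: take the entries in positions $(1,1)$ and $(m,m)$ together with the antidiagonal of the middle $(m-2)\times(m-2)$ block. Its sign is $(-1)^{(m-2)(m-3)/2}=-\varepsilon$, because the exponent differs from $m(m-1)/2$ by $2m-3$, which is odd. Moreover $\mathcal D\bar h_{2m-2}$ contains $\bar h_{m-1}^2$ exactly once (again $k=m-2$). So this case contributes $-\varepsilon$.
\end{itemize}
Any other merge would require two factors $\bar h_0$ in $P$, which is impossible because $\bar h_0$ occupies only position $(1,1)$.

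Adding the two contributions gives $(m-1)\varepsilon$, hence $\bar\omega_0^m=m-1$. The main obstacle is the exhaustiveness check in the previous step. Unlike in Lemma~\ref{omega0}, $\bar h_0$ is itself an entry of $\bar H_m^-$, so the second source (b) appears and has to be caught, and its relative sign must be computed correctly.
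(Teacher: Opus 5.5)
Your proof is correct and follows the paper's argument: the paper also tests the monomial $\bar h_0\bar h_{m-1}^m$. It finds the same two sources, namely the antidiagonal $\bar h_{m-1}^m$, which contributes $m$, and $\bar h_0\bar h_{m-1}^{m-2}\bar h_{2m-2}$, which contributes $-1$ relative to it. Your version also spells out three things the paper leaves implicit: the exhaustiveness of these two sources, the relative sign $(-1)^{2m-3}=-1$, and the trivial case $m=1$.
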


\begin{proof} Similarly to the proof of Lemma~\ref{omega0}, we evaluate $\bar\omega_0^m$ via looking at the term
$\bar h_0\bar h_{m-1}^m$ in the product $\bar h_0\bar t_m^-(\bar M)$. By~\eqref{bartodabramoment}, there are two sources of such a term in the bracket $\{\bar h_0,\bar t_m^-(\bar M)\}^T$: 
$\{\bar h_0,\bar h_{m-1}^m\}^T=mh_{m-1}^{m-2}\{\bar h_0,\bar h_{m-1}\}^T$, which
produces the term $m\bar h_0\bar h_{m-1}^m$, and $-\{\bar h_0,\bar h_0\bar h_{m-1}^{m-2}\bar h_{2m-2}\}^T$, which 
produces the term $-\bar h_0\bar h_{m-1}^m$
so that $\bar\omega_0^m=m-1$.
\end{proof}
As an immediate corollary we get $\bar\omega_{n-2}^{m}=0$, wchih means that the bracket~\eqref{xybracket} vanishes for 
$\beta=(m,-)$. Finally, the check of~\eqref{xybracket} for the case when $\beta$ is a frozen vertex, that is, $\beta=A, B, C$, is trivial, which completes the proof of the compatibility condition in Theorem~\ref{initialseed}. \qed

\subsection{Toric action} 
We can now prove Proposition~\ref{prop:torus}. Indeed, all cluster variables in $\FF_n$ are homogeneous polynomials. Further, a straightforward check shows that all right hand sides of exchange relations are homogeneous as well.
Next, by Lemma~\ref{weights}, all cluster variables in $\FF_n$ are homogeneous with respect to the action 
$X\mapsto t^{D_n}Xs^{\bar D_n}$, and by Lemma~\ref{yweights}, all right hand sides of exchange relations are homogeneous with respect to this action as well; for the generalized exchange relation this requires, additionally,
to check relations
\[
\xi_{\ec_r}-\xi_{\det X}+r(\xi_{\phi_{21}}-\xi_{\phi_{12}})=\xi_{x_{1n}},\qquad
\bar\xi_{\ec_r}-\bar\xi_{\det X}+r(\bar\xi_{\phi_{21}}-\bar\xi_{\phi_{12}})=\bar\xi_{x_{1n}},
\]
which follow immediately from~\eqref{explweights}. Finally, the Casimirs $\hat p_{1r}$ given by~\eqref{casimir} are homogeneous functions of weight~0 and
\[
\xi_{\hat p_{1r}}=\bar\xi_{\hat p_{1r}}=(n-1)\left(\frac{n(n+3)}2-r-n\right)+r(n-1)-(n-1)\frac{n(n+1)}2=0.
\]
Proposition~\ref{prop:torus} now follows from the standard condition of extendability of a local toric action for generalized cluster structures, see~\cite[Proposition 2.6]{GSVdouble}. \qed

\section{Regularity and Completeness}\label{regcomplet}

\subsection{Proof of Theorem~\ref{thm:InitLaurent}}\label{proofInitLaurent}
Our first goal is to invert the combined map $\Psi : (GL_n,\Poi)\to \widetilde{GL}_{n-1}^{\dag}\times_\C \bar\RR_{n-1}^T$ featuring in Theorem~\ref{twomaps}(iii). Denote
\begin{equation}
\label{eq:tildeXdef}
\widetilde{X} = X \begin{bmatrix} \one_{n-1} & 0 \\ -x_{1n}^{-1}X_{[1,1]}^{[1,n-1]} & 1\end{bmatrix} =   
\begin{bmatrix} 0 & x_{1n} \\ X_L & X^{[n,n]}_{[2,n-1]}\end{bmatrix},
\end{equation}
then the defining equation $X_L = U X_R$ for $U=U(X)$ can be rewritten as
\begin{equation}
\label{eq:tildeX}
\begin{bmatrix}{\widetilde{X}}^{[1,n-1]}_{[2,n-1]}\\ \widetilde{X}_{[n,n]}^{[1,n-1]}\end{bmatrix}  = 
U  \begin{bmatrix} 0 & x_{1n}\\ {\widetilde{X}}^{[1,n-1]}_{[2,n-1]}  & X^{[n,n]}_{[2,n-1]}\end{bmatrix} 
\begin{bmatrix} M \\ \mu \end{bmatrix},
\end{equation}
where
\begin{align*}
 M &= \begin{bmatrix}  -{x_{n1}}^{-1}X_{[n,n]}^{[2,n-1]} & - {x_{n1}}^{-1}{x_{nn}}\\ \one_{n-2} & 0 \end{bmatrix},\\
\mu&=-(x_{1n} x_{n1})^{-1}\begin{bmatrix}\det X_{1n}^{12} & \cdots & \det X_{1n}^{1n}\end{bmatrix}.
\end{align*}
We view~\eqref{eq:tildeX} as a linear equation on $\wx_{[2,n-1]}$.  To treat this equation, it will be convenient to 
perform a change of variables. Using Lemma~\ref{lemma:Fstable} as a guidance, denote by $V=V_+V_{0,-}$ 
%($V_{\leq 0}\in \B_-,  V_+\in \N_+$)  
the element of $GL_{n-1}$ related to $U$ via $U = V \gamma(V_+)^{-1}$. Set
\begin{equation}
\label{eq:YZdef}
Y= (V_+^{-1})_{[1,n-2]}^{[1,n-2]}{\wx}^{[1,n-1]}_{[2,n-1]},\qquad y=(V_+^{-1})_{[1,n-2]}^{[1,n-2]} X^{[n,n]}_{[2,n-1]},
\end{equation}
then~\eqref{eq:tildeX} becomes
\begin{equation}
\label{eq:YZ}
\begin{bmatrix}Y \\ 0 \end{bmatrix}  + 
v \ m=  V_{0,-}   \begin{bmatrix} 0 & x_{1n}\\ Y & y \end{bmatrix} \begin{bmatrix} M \\ \mu \end{bmatrix},
\end{equation}
where we denote 
\[
m =\wx_{[n,n]}^{[1,n-1]} = - {x_{1n}}^{-1} \begin{bmatrix} \det X_{1n}^{1n} & \dots & \det X_{1n}^{n-1,n}\end{bmatrix},
\qquad v=(V_+^{-1})^{[n-1,n-1]}.
\]

\begin{lemma}
\label{eq:vtomu}
Let
\[
C= \begin{bmatrix}\mu\\ \mu M \\ \vdots \\ \mu M^{n-2}\end{bmatrix},
\]
then $m=\begin{bmatrix}x_{n,n-1} &\dots & x_{n1}\end{bmatrix}C$.
\end{lemma}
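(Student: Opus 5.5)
The plan is to read the identity $m=\begin{bmatrix}x_{n,n-1}&\dots&x_{n1}\end{bmatrix}C$ as a Horner-type evaluation and to prove it by induction on the number of Horner steps. Expanded, the claim is
\[
m=\sum_{k=0}^{n-2}x_{n,n-1-k}\,\mu M^k .
\]
Put $T_1=x_{n1}\mu$ and $T_{p+1}=T_pM+x_{n,p+1}\mu$ for $p\in[1,n-2]$. Then $T_{n-1}$ is exactly the right hand side, and the task becomes to identify every intermediate vector $T_p$ in closed form.

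Step 1 rewrites both $\mu$ and right multiplication by $M$ in convenient form. Since $\det X_{1n}^{1,j+1}=x_{11}x_{n,j+1}-x_{1,j+1}x_{n1}$, we have
\[
\mu=-\frac{x_{11}}{x_{1n}x_{n1}}\,X_{[n,n]}^{[2,n]}+\frac1{x_{1n}}\,X_{[1,1]}^{[2,n]} .
\]
Thus $\mu$ is a combination of the first and last rows of $X$, each with its first entry removed. Because $M$ is a companion matrix, for any row vector $w$ we get
\[
(wM)_j=w_{j+1}-\frac{w_1x_{n,j+1}}{x_{n1}},
\]
with the convention $w_n=0$. So right multiplication by $M$ shifts $w$ one step to the left and then subtracts $w_1/x_{n1}$ times the truncated last row of $X$. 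In other words, $M$ acts as multiplication by $\lambda^{-1}$ on vectors viewed as polynomials modulo the polynomial $\sum_i x_{ni}\lambda^{i}$ built from the last row of $X$.

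Step 2 guesses the closed form of $T_p$ by working in the polynomial language. Encode a row $w$ by $w(\lambda)=\sum_j w_j\lambda^{j}$, and encode the first and last rows of $X$ by $a(\lambda)=\sum_i x_{1i}\lambda^{i-1}$ and $b(\lambda)=\sum_i x_{ni}\lambda^{i-1}$. I expect each $T_p$ to be (a multiple of $x_{1n}^{-1}$ times) the truncation to degrees $[p,\dots]$ of a quadratic expression $a(\lambda)b_{\le p}(\lambda)-b(\lambda)a_{\le p}(\lambda)$, where $a_{\le p}$ and $b_{\le p}$ keep only the first $p$ coefficients. Entrywise, $(T_p)_j$ should be a sum of $2\times2$ minors $\det X_{1n}^{\,i,\,j+i'}$ with $i\le p<i'$. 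For $p=1$ this reproduces $x_{n1}\mu_j=-x_{1n}^{-1}\det X_{1n}^{1,j+1}$. For $p=n-1$ every column index except $n$ is absorbed into the "head", and only the minors $\det X_{1n}^{j,n}$ should survive, which gives $-x_{1n}^{-1}\det X_{1n}^{jn}=m_j$.

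Step 3 verifies the inductive step $T_{p+1}=T_pM+x_{n,p+1}\mu$. This amounts to checking that the subtraction term $-(T_p)_1x_{n,j+1}/x_{n1}$, together with the new term $x_{n,p+1}\mu_j$, turns the shifted pattern of minors for $p$ into the pattern for $p+1$. Both corrections are combinations of $x_{11}x_{n,\cdot}$ and $x_{n1}x_{1,\cdot}$ terms, so the check reduces to bookkeeping of bilinear terms in the first and last rows of $X$. Factors of $x_{n1}$ must cancel; they cancel precisely because $(T_p)_1$ is itself a minor containing column $1$ or $p$.

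The main obstacle is Step 2: guessing the exact closed form of $T_p$, including signs and the precise ranges of the column indices. I would first compute $T_2$ and $T_3$ by hand for $n=4$ and read off the pattern. If a clean minor pattern proves elusive, the fallback is to prove the identity entirely in the polynomial language. There it becomes the statement that $\sum_k x_{n,n-1-k}\lambda^{-k}\mu(\lambda)$, reduced modulo $b(\lambda)$ and truncated, equals $m(\lambda)$. That statement follows by expanding $a(\lambda)b(\lambda)-b(\lambda)a(\lambda)=0$ and comparing coefficients.
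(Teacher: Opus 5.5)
Your Horner induction is correct and is a genuinely different route from the paper's. The closed form you are guessing in Step 2 is right once the indices are pinned down: for $j\in[1,n-1]$,
\[
(T_p)_j=-\frac{1}{x_{1n}}\sum_{i=1}^{p}\det X_{1n}^{\,i,\,j+p+1-i},
\]
with minors having a column index $>n$ read as zero. By antisymmetry, the terms with both columns in $[1,p]$ cancel. The surviving minors therefore have columns $i\le p<k$ with $i+k=j+p+1$; this is how your ``$j+i'$'' description should read. In your polynomial language the formula is the exact identity $x_{1n}T_p(\lambda)=\lambda^{1-p}\bigl(a\,b_{\le p}-b\,a_{\le p}\bigr)$, and no truncation is needed, since the low-degree terms vanish. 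In Step 3 one has $(T_p)_1=-x_{1n}^{-1}\det X_{1n}^{1,p+1}$, which involves columns $1$ and $p+1$, not ``$1$ or $p$''. The cancellation of $x_{n1}$ is the three-term identity
\[
x_{n,p+1}\det X_{1n}^{1,j+1}-x_{n,j+1}\det X_{1n}^{1,p+1}=x_{n1}\det X_{1n}^{p+1,j+1}.
\]
At $p=n-1$ only the term $i=j$ survives, and it gives $m_j$.

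The paper argues differently. Cayley--Hamilton gives $CM=KC$, with $K$ the companion-type matrix built from the last row of $X$. A single identity of the above type (yours with $p+1=n$, rearranged) gives $mM=-x_{nn}\mu$. The paper then reads off $mC^{-1}=-x_{nn}e_1^TK^{-1}$ directly. That route is shorter, but it inverts $C$ and $M$, so it is a generic argument; this is enough for a rational identity. Your induction needs only $x_{1n}x_{n1}\ne0$, requires no cyclicity of $\mu$ for $M$, and gives every partial Horner sum in closed form.

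Your polynomial fallback is essentially the paper's proof in another guise. In $\C[\lambda]/(b(\lambda))$ one has $\mu\equiv a/x_{1n}$, right multiplication by $M$ is multiplication by $\lambda^{-1}$, and $m\equiv -x_{nn}\lambda\mu$, which is exactly $mM=-x_{nn}\mu$. Note that identifying row vectors with this quotient needs $\deg b=n-1$, i.e.\ $x_{nn}\ne0$.
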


\begin{proof} 
First, observe that by Cayley--Hamilton theorem,
\[
C M = \begin{bmatrix}  0 & \one_{n-2}\\- \frac{x_{nn}}{x_{n1}} & \cdots   - \frac{x_{n2}}{x_{n1}}  \end{bmatrix} C.
\]
Next, the identity
\[
x_{n, j+1} \det X_{1n}^{1n}  - x_{n1}\det X_{1n}^{j+1,n}  =  x_{nn}\det X_{1n}^{1, j+1}.
\]
implies $mM = -x_{nn} \mu$. Consequently,
\begin{align*}
 mC^{-1} &=  -x_{nn} \mu (CM)^{-1} =  -x_{nn}  \mu C^{-1}  \begin{bmatrix}  0 & \one_{n-2}\\  
 - \frac{x_{nn}}{x_{n1}} & \cdots   - \frac{x_{n2}}{x_{n1}}  \end{bmatrix} ^{-1}\\
 &  = -x_{nn}  e_1^T  \begin{bmatrix}  0 & \one_{n-2}\\   - \frac{x_{nn}}{x_{n1}} & \cdots   - \frac{x_{n2}}{x_{n1}}  \end{bmatrix} ^{-1} = \begin{bmatrix}x_{n,n-1} &\ldots & x_{n1}\end{bmatrix}.
\end{align*}
\end{proof}

Using Lemma~\ref{eq:vtomu}, we rewrite the system~\eqref{eq:YZ} once more by introducing 
\begin{equation}
\label{eq:subst1}
\wY = Y + v_{[1,n-2]} m, \qquad \tilde y = y + x_{nn} v_{[1,n-2]},
\end{equation}
so that~\eqref{eq:YZ} becomes
\begin{equation}
\label{eq:tildeYZ}
\begin{bmatrix} \wY \\ m \end{bmatrix}   =  V_{0,-}   \begin{bmatrix}  0 & x_{1n}\\ \wY & \tilde y \end{bmatrix} 
\begin{bmatrix} M \\ \mu \end{bmatrix}.
\end{equation}

To perform the last transformation of our linear system, consider $N\in \B_-$ such that $V_{0,-} = \gamma^*(N)^{-1}N$. It is easy to check that such $N$ is given by
\[
N= \cdots (\gamma^*)^2(V_{0,-}) \gamma^*(V_{0,-}) V_{0,-}.
\]
Note that the upper left entry of $N$ is equal to the product of all diagonal entries of  $V_{0,-}$, therefore
\begin{equation}
\label{eq:fibercond}
n_{11}=\det V_{0,-} = \det V = \det U = \frac{x_{n1}}{x_{1n}}.
\end{equation}
Introducing
\begin{equation}
\label{eq:subst2}
\Upsilon = N_{[2,n-1]}^{[2,n-1]} \wY,\qquad \smup =  N_{[2,n-1]}^{[2,n-1]} \tilde y + x_{1n} N_{[2,n-1]}^{[1,1]}, 
\end{equation}
we can use~\eqref{eq:fibercond} to further simplify~\eqref{eq:tildeYZ}:
\begin{equation}
\label{eq:greekYZ}
\begin{bmatrix} \Upsilon \\ m \end{bmatrix}   =   \begin{bmatrix} 0 &  x_{n1} \\ \Upsilon & \smup \end{bmatrix}
\begin{bmatrix} M \\ \mu \end{bmatrix}.
\end{equation}
 
\begin{lemma}
\label{greekYZsols}
 The solution of~\eqref{eq:greekYZ} is given by
\[
\smup_j= x_{n, j+1}, \quad j\in[1, n-2],\qquad  \Upsilon = \begin{bmatrix} x_{n1}& 0&\dots & 0 \\ x_{n2} & x_{n1}  &\dots &0\\ \vdots & \ddots & \ddots &0 \\  
x_{n,n-2}  &  \dots &  x_{n2}  & x_{n1}\end{bmatrix}C_{[1,n-2]}. 
\]
\end{lemma}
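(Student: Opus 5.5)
The plan is to read \eqref{eq:greekYZ} row by row, which turns it into a linear recursion for the rows of $\Upsilon$ driven by the unknown column $\smup$. The last row will then be matched against Lemma~\ref{eq:vtomu}. First, the sizes. $\Upsilon$ is $(n-2)\times(n-1)$ and $\smup$ is a column of length $n-2$. $M$ is $(n-1)\times(n-1)$ and $\mu$ is a row of length $n-1$. The middle matrix in \eqref{eq:greekYZ} therefore has first row $(0,\dots,0,x_{n1})$, and its remaining rows are $\begin{bmatrix}\Upsilon & \smup\end{bmatrix}$. Denote by $\Upsilon_j$ the $j$-th row of $\Upsilon$. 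Comparing the two sides of \eqref{eq:greekYZ}, the system is equivalent to
\[
\Upsilon_1=x_{n1}\mu,\qquad \Upsilon_{j+1}=\Upsilon_jM+\smup_j\mu\quad (j\in[1,n-3]),\qquad m=\Upsilon_{n-2}M+\smup_{n-2}\mu .
\]

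Next, I solve the first two groups of equations for $\Upsilon$ in terms of $\smup$. Induction on $j$ gives
\[
\Upsilon_j=x_{n1}\mu M^{j-1}+\sum_{k=1}^{j-1}\smup_k\,\mu M^{j-1-k},\qquad j\in[1,n-2].
\]
In matrix form, $\Upsilon$ equals the lower triangular Toeplitz matrix with first column $(x_{n1},\smup_1,\dots,\smup_{n-3})^T$ times $C_{[1,n-2]}$, since the rows of $C$ are $\mu M^i$. So once $\smup_j=x_{n,j+1}$ is established, the claimed formula for $\Upsilon$ follows at once.

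It remains to determine $\smup$ from the last equation. Substituting the expression for $\Upsilon_{n-2}$ turns it into
\[
m=x_{n1}\mu M^{n-2}+\sum_{k=1}^{n-2}\smup_k\,\mu M^{n-2-k},
\]
that is, $m=\begin{bmatrix}\smup_{n-2}&\dots&\smup_1&x_{n1}\end{bmatrix}C$. Lemma~\ref{eq:vtomu} gives $m=\begin{bmatrix}x_{n,n-1}&\dots&x_{n2}&x_{n1}\end{bmatrix}C$, so $\smup_k=x_{n,k+1}$ is a solution.

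The main point needing care is uniqueness, which requires $C$ to be invertible; the argument of Lemma~\ref{eq:vtomu} already uses $C^{-1}$. I would justify invertibility on the generic locus, with $x_{1n}x_{n1}\ne0$ assumed throughout. The intertwining relation $CM=KC$, where $K$ is the companion-type matrix in that proof, shows that $C$ is the Krylov matrix of the row $\mu$ under $M$. Hence $C$ is invertible exactly when $\mu$ is a cyclic vector for $M$. It therefore suffices to check that $\det C$ is a nonzero rational function, which can be done at a single convenient specialization of $X$.

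Given this, the last equation forces $\smup_k=x_{n,k+1}$. The formula for $\Upsilon$ then follows from the recursion.
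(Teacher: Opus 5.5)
Your proposal is correct and follows the paper's proof essentially line by line: you rewrite \eqref{eq:greekYZ} row by row as the recursion $\Upsilon_{j+1}=\Upsilon_jM+\smup_j\mu$, write $\begin{bmatrix}\Upsilon\\ m\end{bmatrix}$ as a lower triangular Toeplitz matrix (first column $x_{n1},\smup_1,\dots,\smup_{n-2}$) times $C$, and read off $\smup$ by comparing the last row with Lemma~\ref{eq:vtomu}. Your uniqueness argument via generic invertibility of $C$ makes explicit what the paper uses without comment, and the check is immediate: if the first row of $X$ is $(0,\dots,0,x_{1n})$, then $\mu=e_{n-1}^T$ and $C$ is the antidiagonal permutation matrix.
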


\begin{proof} Re-writing~\eqref{eq:greekYZ} row by row:
\begin{align*}
\Upsilon_{[1,1]} &= x_{n1} \mu,\\
\Upsilon_{[i+1,i+1]} &= \Upsilon_{[i,i]} M + \smup_i \mu, \quad i\in[1, n-3],\\
m &= \Upsilon_{[n-2,n-2]} M + \smup_{n-2} \mu,
\end{align*}
one sees that~\eqref{eq:greekYZ} is equivalent to
\begin{equation}
\label{eq:greekYp}
\begin{bmatrix} \Upsilon \\ m \end{bmatrix}   =   
\begin{bmatrix} x_{n1}&0 & \dots& 0 \\ \smup_1 & x_{n1}  &\dots &0\\ \vdots & \ddots & \ddots & 0\\
 \smup_{n-2} &  \dots & \smup_1 & x_{n1}\end{bmatrix} C.
\end{equation}
Comparing the last row above  with Lemma~\ref{eq:vtomu}, we obtain the desired result for $\smup_j$, and the formula for $\Upsilon$ follows.
\end{proof}

Now we can use \eqref{eq:subst2},\eqref{eq:subst1},\eqref{eq:YZdef},\eqref{eq:tildeXdef} to restore $X_{[2,n-1]}$ from $U, X_{\{1,n\}}$. First, define
\begin{gather*}
\Theta = \begin{bmatrix}   1 & 0\\ 0 & V_+  \end{bmatrix} \begin{bmatrix}   N^{-1} & 0\\ 0 & 1  \end{bmatrix},\\
L= \begin{bmatrix} x_{n1}& 0&\dots & 0 \\ x_{n2} & x_{n1}  &\dots &0\\ \vdots & \ddots & \ddots &0 \\  
x_{n,n-1}  &  \dots &  x_{n2}  & x_{n1}\end{bmatrix}C.
\end{gather*}

\begin{proposition} 
\label{prop:theta}
\begin{equation}
\label{eq:ThetaL}
X = \Theta \cdot \left ( \begin{bmatrix}0 & 0\\ L &0\end{bmatrix} + X_{[n,n]}^T x_{1n}^{-1} X_{[1,1]} \right ).
\end{equation}
\end{proposition}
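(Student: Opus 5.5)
The plan is to reverse the chain of substitutions \eqref{eq:tildeXdef}, \eqref{eq:YZdef}, \eqref{eq:subst1}, \eqref{eq:subst2}, using the explicit solution from Lemma~\ref{greekYZsols}, and to compare the result row by row with the right-hand side of \eqref{eq:ThetaL}. The first row of $\Theta$ is $e_1^T$. The first row of the bracket on the right equals $x_{1n}x_{1n}^{-1}X_{[1,1]}=X_{[1,1]}$, since the block $\left[\begin{smallmatrix}0&0\\ L&0\end{smallmatrix}\right]$ has zero first row. So the first row of \eqref{eq:ThetaL} holds trivially. It then remains to check rows $2,\dots,n$. Since $\Theta$ is block diagonal with blocks $1$ and $V_+\,(N^{-1})_{[2,n-1]}^{[2,n-1]}$ acting on rows $2,\dots,n$, up to the correction coming from the first column of $N^{-1}$, the task reduces to verifying
\[
\begin{bmatrix} N & 0\\ 0&1\end{bmatrix}\begin{bmatrix}1&0\\0&V_+^{-1}\end{bmatrix}X=\begin{bmatrix}0&0\\ L&0\end{bmatrix}+X_{[n,n]}^Tx_{1n}^{-1}X_{[1,1]}.
\]
Both sides are compared after right multiplication by the invertible matrix $\left[\begin{smallmatrix}\one_{n-1}&0\\ -x_{1n}^{-1}X_{[1,1]}^{[1,n-1]}&1\end{smallmatrix}\right]$ from \eqref{eq:tildeXdef}. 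This turns $X$ into $\widetilde X$, and the rank-one term into $X_{[n,n]}^T\,[\,0\ \ 1\,]$.

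The right-hand side, multiplied this way, becomes $\left[\begin{smallmatrix}0&0\\ L&0\end{smallmatrix}\right]$ plus the column $X_{[n,n]}^T$ placed in the last position. Its lower-left $(n-1)\times(n-1)$ block is therefore $L$, whose first $n-2$ rows equal $\Upsilon$ by Lemma~\ref{greekYZsols} and whose last row equals $m$ by Lemma~\ref{eq:vtomu}. Its last column is $(x_{1n},x_{2n},\dots,x_{nn})^T$.

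On the left-hand side, I would first apply $V_+^{-1}$ to rows $2,\dots,n$ of $\widetilde X$. Since $V_+^{-1}$ is unipotent upper triangular, its last row is $e_{n-1}^T$, so the last row $m$ of $\widetilde X$ is preserved. For rows $2,\dots,n-1$, applying $V_+^{-1}$ gives $Y+v_{[1,n-2]}m=\widetilde Y$ in the first $n-1$ columns, and $y+x_{nn}v_{[1,n-2]}=\tilde y$ in the last column, by \eqref{eq:YZdef} and \eqref{eq:subst1}. Next I apply $N$. Its upper-left entry is $n_{11}=x_{n1}/x_{1n}$ by \eqref{eq:fibercond}, and $N$ is lower triangular, so the first row $(0,x_{1n})$ of $\widetilde X$ becomes $(0,x_{n1})$. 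The rows $2,\dots,n-1$ become $N_{[2,n-1]}^{[2,n-1]}\widetilde Y=\Upsilon$ in the first block and $N_{[2,n-1]}^{[2,n-1]}\tilde y+x_{1n}N_{[2,n-1]}^{[1,1]}=\smup$ in the last column, by \eqref{eq:subst2}. By Lemma~\ref{greekYZsols}, $\smup_j=x_{n,j+1}$. Matching the two sides gives the first row $(0,x_{n1})$ versus $(0,x_{1n})$ in the last column.

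That mismatch is where I expect the real obstacle: the correct ordering and indexing of the blocks in $\Theta$, in particular whether $N$ acts on rows $1,\dots,n-1$ or on rows $2,\dots,n$, and the resulting index shift between $\smup_j=x_{n,j+1}$ and the column $x_{jn}$ of the rank-one term. I would resolve this by carefully recomputing which rows of $\widetilde X$ the $(n-1)\times(n-1)$ matrices $V_+$ and $N$ act on, according to \eqref{eq:tildeX}, and then rechecking the last column entry by entry. The remaining identities are just the definitions of $\widetilde Y,\tilde y,\Upsilon,\smup$ read backwards, with invertibility of $N$ and $V_+$ allowing each substitution to be undone.
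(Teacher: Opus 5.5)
Your strategy is sound and uses exactly the paper's ingredients. You verify the equivalent identity $\Theta^{-1}XE=\left[\begin{smallmatrix}0&0\\ L&0\end{smallmatrix}\right]E+X_{[n,n]}^Tx_{1n}^{-1}X_{[1,1]}E$, where $E$ is the unipotent factor in \eqref{eq:tildeXdef}, by unwinding \eqref{eq:YZdef}, \eqref{eq:subst1} and \eqref{eq:subst2}. The paper runs the same substitutions in the other direction to rebuild $X_{[2,n-1]}$ column-block by column-block.

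Your left-hand side is also correct: $\Theta^{-1}\widetilde X$ has rows $(0,x_{n1})$, $(\Upsilon,\smup)$, $(m,x_{nn})$.

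The proposal still does not reach the conclusion. It ends at a ``mismatch'' that does not exist, and your suggested remedy, re-examining which rows $N$ and $V_+$ act on, is misdirected. Your placement is already right: $\left[\begin{smallmatrix}N&0\\0&1\end{smallmatrix}\right]$ acts on rows $1,\dots,n-1$, and $V_+^{-1}$ acts on rows $2,\dots,n$.

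The actual error is a misreading of notation. In the paper, lower indices select rows, so $X_{[n,n]}$ is the last row of $X$. Hence $X_{[n,n]}^T=(x_{n1},x_{n2},\dots,x_{nn})^T$, not the last column $(x_{1n},\dots,x_{nn})^T$. After right multiplication by $E$, the last column of the right-hand side is therefore $(x_{n1},x_{n2},\dots,x_{n,n-1},x_{nn})^T$. By Lemma~\ref{greekYZsols} this equals $(x_{n1},\smup_1,\dots,\smup_{n-2},x_{nn})^T$. Combined with $L_{[1,n-2]}=\Upsilon$ and $L_{[n-1,n-1]}=m$, this completes the proof.

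The same misreading affects your first-row paragraph, which contains two errors that happen to cancel:
\begin{itemize}
\item The first row of $\Theta$ is $(N^{-1})_{11}e_1^T=n_{11}^{-1}e_1^T=(x_{1n}/x_{n1})e_1^T$, not $e_1^T$. Relatedly, $\Theta$ is not block diagonal with a $1$ in the corner.
\item The first row of the rank-one term is $x_{n1}x_{1n}^{-1}X_{[1,1]}$, not $x_{1n}x_{1n}^{-1}X_{[1,1]}$.
\end{itemize}
Their product is $X_{[1,1]}$, and this is exactly where the paper uses \eqref{eq:fibercond}. In your reformulation that paragraph is unnecessary anyway: the first row is covered by comparing $(0,x_{n1})$ on both sides.
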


\begin{proof}
From~\eqref{eq:subst2} and Lemma~\ref{greekYZsols},
\begin{align*}
\tilde y&= \left ( N_{[2,n-1]}^{[2,n-1]}\right ) ^{-1} (X_{[n,n]}^{[2,n-1]})^T -  
\left ( N_{[2,n-1]}^{[2,n-1]}\right ) ^{-1} N^{[1,1]}_{[2,n-1]} x_{1n} \\
&= \left ( N_{[2,n-1]}^{[2,n-1]}\right ) ^{-1} (X_{[n,n]}^{[2,n-1]})^T - 
e_1^T\frac{x_{n1}}{n_{11}} \\
&= \left ( N^{-1}\right )_{[2,n-1]}(X_{[n,n]}^{[1,n-1]})^T ,
\end{align*}
thus
\begin{align*}
y &= \tilde y - x_{nn} v_{[1,n-2]} = \left [ \left ( N^{-1}\right )_{[2,n-1]} \ (- v_{[1,n-2]})\right ] X_{[n,n]}^T,\\
X^{[n,n]}_{[2,n-1]}&=(V_+)_{[1,n-2]}^{[1,n-2]}\left[\left( N^{-1}\right )_{[2,n-1]}\  (- v_{[1,n-2]})\right]X_{[n,n]}^T.
\end{align*}
Note that
\[
-(V_+)_{[1,n-2]}^{[1,n-2]} v_{[1,n-2]} = (V_+)_{[1,n-2]}^{[n-1,n-1]}, 
\]
so that  we get
\[
X^{[n,n]}_{[2,n-1]}= (V_+)_{[1,n-2]} \begin{bmatrix}  \left ( N^{-1}\right )_{[2,n-1]}  & 0\\ 0 & 1  \end{bmatrix} 
X_{[n,n]}^T= \Theta_{[2,n-1]} X_{[n,n]}^T.
\]

Similarly,
\[
\wY= \left ( N_{[2,n-1]}^{[2,n-1]}\right ) ^{-1} \Upsilon,\qquad  
Y =  \left [  \left ( N_{[2,n-1]}^{[2,n-1]}\right )^{-1}  \  (- v_{[1,n-2]}) \right ] 
\begin{bmatrix}\Upsilon \\ m\end{bmatrix},
\]
and
\begin{align*}
\tilde X_{[2,n-1]}^{[1,n-1]}& = (V_+)_{[1,n-2]}^{[1,n-2]} Y = (V_+)_{[1,n-2]}^{[1,n-2]}  \left [   \left ( N_{[2,n-1]}^{[2,n-1]}\right )^{-1} \  (- v_{[1,n-2]}) \right ] 
\begin{bmatrix}\Upsilon \\ m\end{bmatrix}\\& =\Theta_{[2,n-1]}^{[2,n]}  L.
\end{align*}
Finally,
\begin{align*}
X_{[2,n-1]}^{[1,n-1]}& = \tilde X_{[2,n-1]}^{[1,n-1]} +  X^{[n,n]}_{[2,n-1]} x_{1n}^{-1}X_{[1,1]}^{[1,n-1]} \\
&=\Theta_{[2,n-1]} \left ( \begin{bmatrix}0\\ L\end{bmatrix} + X_{[n,n]}^T x_{1n}^{-1}X_{[1,1]}^{[1,n-1]}   \right ).
\end{align*}
Taking into account that $m=L_{[n-1,n-1]}$ by Lemma~\ref{eq:vtomu} and using~\eqref{eq:fibercond} we obtain the desired formula for $X$.
\end{proof}

To finish the proof of Theorem~\ref{thm:InitLaurent} we need the following auxiliary statement. 
Let $\overline{\UC}(\GCC_{n-1}^\dag)$ be the generalized upper cluster algebra defined by the generalized cluster structure $\GCC_{n-1}^\dag$ and $\overline{\UC}(\bar\CC_{n-1}^T)$ be the  upper cluster algebra defined by the cluster structure 
$\bar\CC_{n-1}^T$, where in both cases the ground ring is the corresponding polynomial ring in frozen variables. 
By Theorem~\ref{dualgroupcs} and Remark~\ref{grfordual}, $\UC(\GCC_{n-1}^\dag)$ is obtained from 
$\overline{\UC}(\GCC_{n-1}^\dag)$ by localizing at $\det U$. Similarly, by Theorem~\ref{todacs} and Remark~\ref{grfortoda}, $\UC(\bar\CC_{n-1}^T)$ is obtained from $\overline{\UC}(\bar\CC_{n-1}^T)$ by localizing at $t_{n-1}^+$. 

\begin{lemma}
\label{lem:upperalgebras}
Let $\psi'$ be an element of $\overline{\UC}(\GCC_{n-1}^\dag)$ and $\psi''$ be an element of $\overline{\UC}(\bar\CC_{n-1}^T)$. Then $\psi'(\Psi'(X))$ and $\psi''(\Psi''(X))$ are elements of the generalized upper cluster algebra $\UC(\GCC_n)$ divided by monomials in $x_{1n}$, $x_{n1}$, and $\det X$.
\end{lemma}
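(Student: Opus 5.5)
Since $\Psi'^*Q_{n-1}^\dag$ and $\Psi''^*\bar Q_{n-1}^T$ are subquivers of $Q_n$, the plan is to transport Laurent expansions through the regular pullbacks. I treat $\psi'$ in detail; $\psi''$ is identical. Take any seed $\Sigma$ of $\GCC_{n-1}^\dag$ reachable from the initial seed $(\widetilde\DD_{n-1},Q_{n-1}^\dag,\PP_{n-1})$ by a sequence of mutations at vertices of $Q_{n-1}^\dag$. Perform the same sequence in $\G\CC_n$, starting at $(\FF_n,Q_n,\PP_n)$.

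\emph{Step 1: coherence.} The resulting seed of $\G\CC_n$ restricted to the vertices of $\Psi'^*Q_{n-1}^\dag$ is the regular pullback $\Psi'^*\Sigma$. In particular, each cluster variable there equals $x\circ\Psi'$ times a Laurent monomial in $x_{1n}$ and $\det X$. For this I invoke the coherence results of \cite{GSVpullback}: the sufficient conditions for a coherent regular pullback, together with the treatment of the special vertex $(1,1)$ in \cite[Section~4.4, Example~4.7(i)]{GSVpullback} already used in Proposition~\ref{readytoglue}. The only extra point is that in $Q_n$ the vertices $(k,l),\langle i,j\rangle$ have additional neighbours: the frozen vertex $C$ and the glued vertex $D=\la n-1,n-1\ra$, which is now mutable. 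Since we never mutate at $D$ or at vertices of the Toda part, $D$ behaves as a frozen vertex for these mutations, and its function $\eg_{n-1,n-1}$ is itself a pullback variable. By Proposition~\ref{regpullback}, exchange relations and $y$-variables are preserved, so the mutated variables coincide as claimed. Likewise, the coefficients $\ec_r/\det X$ of the string are pullbacks of $c_r$ times $x_{1n}$ (Theorem~\ref{expressions}(iii)).

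\emph{Step 2: expanding $\psi'$.} Since $\psi'\in\overline\UC(\GCC_{n-1}^\dag)$, in the seed $\Sigma$ we can write $\psi'=P/\mathfrak m$. Here $P$ is a polynomial in cluster variables, in the frozen $g_{nn}$-type variable, and in the coefficients $c_r$, and $\mathfrak m$ is a monomial in mutable cluster variables. Composing with $\Psi'$ and substituting $x\circ\Psi'=u/(\text{monomial in }x_{1n},\det X)$ from Step 1, we obtain a Laurent polynomial in the cluster variables of the corresponding seed of $\G\CC_n$. Its only denominators beyond cluster monomials are powers of $x_{1n}$, $\det X$, and (through $\det U=x_{n1}/x_{1n}$ and $c_{n-1}$) $x_{n1}$. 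Each monomial in $x_{1n},x_{n1},\det X$ can be cleared, giving $\psi'\circ\Psi'\cdot\mathfrak m_0$, with $\mathfrak m_0$ a monomial in these three variables, that is Laurent in this seed of $\G\CC_n$ over the ground ring.

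\emph{Step 3: uniformity over all seeds.} Membership in $\UC(\G\CC_n)$ needs Laurentness in \emph{every} seed of $\G\CC_n$, not only in those obtained by mutating inside the $Q_{n-1}^\dag$ part. Here I would use the standard reduction that it suffices to check the initial seed and its adjacent seeds (the starfish-type lemma underlying Proposition~\ref{regcomplete}, valid for generalized structures with coprime exchange polynomials). For the initial seed and for neighbours in directions from $Q_{n-1}^\dag$, Step 2 applies. For neighbours in Toda directions, the variables of $\Psi'^*\Sigma_0$ are untouched, so the expansion from Step 2 in the initial seed remains Laurent. For the neighbour at $D$, the initial expansion involves $\eg_{n-1,n-1}$ only polynomially, never in a denominator, once we expand in a seed where $D$ is frozen. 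Substituting $\eg_{n-1,n-1}=(\text{RHS of }\eqref{dexchange})/\eg$ therefore keeps only $\eg$ and monomials in the denominator. Thus the same uniform monomial $\mathfrak m_0$ works.

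The main obstacle I expect is Step 1: rigorously verifying coherence at the generalized vertex $(1,1)$ and making sure the extra arrows at $C$ and $D$ in $Q_n$ do not spoil the identification of mutated variables with regular pullbacks. I would handle this by checking that the discrepancies stay balanced under mutation, via Proposition~\ref{regpullback} applied vertex by vertex.
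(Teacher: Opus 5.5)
Your proposal is correct and follows essentially the same route as the paper. You identify the pulled-back cluster variables with those of $\GCC_n$ up to Laurent monomials in $x_{1n}$, $x_{n1}$, $\det X$, check Laurentness in the star of the initial seed, and conclude by the starfish-type argument. The star is handled exactly as in the paper: the upper-cluster-algebra property covers directions inside the pulled-back quiver, polynomial dependence on $\eg_{n-1,n-1}$ covers direction $D$, and mutations in the other part do not touch the expansion. The only difference is that your Steps~1--2 claim coherence for all seeds reachable inside the $Q_{n-1}^\dag$ part, which is more than needed: your Step~3, like the paper, uses only seeds adjacent to the initial one, and the uniform clearing of frozen denominators can be obtained there via Proposition~\ref{nofrozen}.
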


\begin{proof} We prove the claim for $\psi'$, a similar argument applies to $\psi''$ as well. In view of the general construction described in Section~\ref{outline} and of Proposition~\ref{readytoglue}, standard quasi-isomorphism  considerations imply that any cluster variable in $\GCC_{n-1}^\dag$ 
%=(\GCC(\DD_{n-1},Q_{n-1}^\dag, \P_{n-1})$ 
evaluated at the image of $\Psi'(X)$ is equal to the corresponding cluster variable in $\GCC_n$ times a Laurent monomial in frozen variables $x_{1n}$, $x_{n1}$, $\det X$. Since $\psi'\in \overline{\UC}(\GCC_{n-1}^\dag)$, it has a Laurent polynomial expression in all seeds adjacent to $(\DD_{n-1},Q_{n-1}^\dag, \P_{n-1})$. Therefore, $\psi'(\Psi'(X))$ has a Laurent polynomial expression in all seeds adjacent to the initial seed in $\GCC_n$ in all directions $(k,l)$ and $\langle k,l\rangle$. Direction $\langle n-1,n-1\rangle$ needs to be considered separately since the corresponding vertex is frozen in 
$\GCC_{n-1}^\dag$, but the variable $\eg_{n-1,n-1}(X)$ enters $\psi'(\Psi'(X))$ polynomially, hence the claim is still true. Finally, mutations in directions $(i,-)$, $i\in[1,n-1]$, $(i,+)$, $i\in[1, n-2]$, do not affect the Laurent polynomial expression of $\psi'(\Psi'(X))$. Thus $\psi'(\Psi'(X))$ has Laurent polynomial expression in the star of the initial seed in $\GCC_n$ with denominators possibly containing $x_{1n}$, $x_{n1}$, $\det X$, and the assertion of the lemma follows.
\end{proof}

We are now ready to complete the proof of Theorem~\ref{thm:InitLaurent}. By Proposition~\ref{prop:theta}, $X$ can be expressed as a product of two matrices. The entries of the second matrix are, up to a Laurent monomial in $x_{1n}$ 
and $x_{n1}$, polynomials in the entries of the first and the last row of $X$. Then the definition of the map $\Psi''$ implies that each of this entries is a polynomial $P$ in coefficients of $\bar q(\lambda)$, $p(\lambda)$  defined in~\eqref{qandp} times a Laurent monomial in $x_{1n}$ and $x_{n1}$. As explained above, $P$ belongs to the upper cluster algebra $\overline{\UC}(\bar\CC_{n-1}^T)$ localized at $t^+_{n-1}$. Using Lemma~\ref{lem:upperalgebras} 
and~\eqref{badbart}, we conclude that every matrix entry of the second factor in~\eqref{eq:ThetaL} is an element of 
$\UC(\GCC_n)$ divided by a monomial in $\ef_{2n-2}=\eg_{n-1,n-1}$ and $x_{1n}, x_{n1}$.

On the other hand, the entries of  $\Theta$ in~\eqref{eq:ThetaL} are rational functions in matrix entries of 
$U=\Psi'(X)$. It follows from the definitions of $V$, $N$, and $\Theta$ that denominators of these functions written in terms of matrix entries of $V$ are monomials in terms of  trailing principal minors of $V$. 
By Corollary~\ref{cor:Fprincipal} and~\eqref{eq:fopflags}, 
\[
\det V_{[j,n-1]}^{[j,n-1]} = \begin{cases}
    \dfrac{g_{jj}(U)}{g_{j-1,j-1}(U)} \quad &\text{if $j\geq 3$},\\
    g_{jj}(U) \quad &\text{otherwise}.
\end{cases}
\]
Moreover,~\eqref{Fminors} together with~\eqref{eq:fopflags} show that every matrix entry  $v_{ij}$ for $j>2$ is a polynomial in $U$ (and therefore an element of $\overline{\UC}(\GCC_{n-1}^\dag)$) divided by $g_{j-1,j-1}(U)$, while 
$v_{i1}=u_{i1}$, $v_{i2}=u_{i2}$, $i\in[1, n-1]$. 
Using once again Lemma~\ref{lem:upperalgebras} together with~\eqref{badg}, we see that every matrix entry of $\Theta$ is an element of $\UC(\G\CC_n)$ divided by a monomial that may contain only $\eg_{22},\ldots,\eg_{n-1,n-1}$ and frozen variables $x_{1n}, x_{n1}, \det X$. Consequently, the same is true for matrix entries of $X$. \qed

\begin{remark} 
One can show that the matrix $L$ featured in Proposition~\ref{prop:theta} is, in fact, symmetric with  entries given by
\[
L_{ij} = -\frac{1}{x_{1n}} \sum_{s=0}^{\min (i-1,n-j-1)} \det X_{1n}^{i-s,j+1+s}
\]
for $i\leq j$. However, this explicit formula is not needed in the proof of Theorem~\ref{thm:InitLaurent}, and we omit its derivation.
\end{remark}

\subsection{Mutation sequence $\Ws_n$: proof of Theorem~\ref{fish_to_cuttlefish}(i)} 
\subsubsection{Quiver $Q_n^0$ and universal notation} We start with defining matrices 
$\NK_{ij}(X)$ of size $(n-j+2)\times(n-j+2)$ for $1\le j<i\le n$, $(i,j)\ne(n,1)$. These are {\it maximal trailing submatrices\/} of matrices of type $\NG(I_1,I_2)$, that is, square submatrices that occupy all rows and the last 
$|I_1|+|I_2|$ columns (see Section~\ref{badgproof} immediately after the proof of Lemma~\ref{auxPlucker} for the definition of such matrices).  The subsets $I_1$ and $I_2$ that correspond to $\NK_{ij}$ are $I_1=[i,n]$ and $I_2=[n+j-i,n]$. Recall that $\ek_{ij}(X)=\det\NK_{ij}(X)$ for $1\le j<i\le n$, $(i,j)\ne(n,1)$, and $\ek_{ii}(X)=\det X_{[i,n]}^{[i,n]}$.

Since functions $\phi_{kl}(X)$, $k,l\ge1$, $k+l\le n-1$, and $\ek_{nn}(X)=f_1(X)$ are already present in the initial cluster $\FF_n$, we invoke \cite[Th.~3.6]{CL} that states that if two seeds are mutationally equivalent and share a set of common cluster variables, there exists a sequence of mutations that connects these
seeds and does not involve the common cluster variables. Consequently, we will have to mutate the subquiver $Q_n^0$ of the initial quiver $Q_n$ that does not contain vertices $(k,l)$ for $k+l<n-1$. The vertices $(k,n-k-1)$ for $1\le k\le n-2$ enter $Q_n^0$ as frozen, same as vertices $B$ and $C$; vertex $(1,-)$ is frozen as well, so that vertex $A$ is not contained in $Q_n^0$. For convenience of describing the mutation sequence, we introduce a universal numbering of the vertices of quivers $Q_n^0$, so that the same vertex gets the same number in all $Q_n^0$ for different $n$. Vertex $D$ gets number~0, mutable vertices in the part of $Q_n^0$ that corresponds to $Q_{n-1}^\dagger$ are numbered by positive integers from~1 to $n(n-3)/2$ layer by layer starting from the layer closest to $D$, bottom up in each layer. 
Frozen vertices in this part of $Q_n^0$ are numbered by positive integers from $n(n-3)/2+1$ to $n(n-1)/2$ as follows: first $B$, then $C$, and then the vertices $(k,n-k-1)$ bottom up. Mutable vertices in the part of $Q_n^0$ that corresponds to $Q_n^T$ are numbered by negative integers from $-1$ to $-2n+4$ 
layer by layer starting from the layer closest to $D$, bottom up in each layer. The remaining frozen vertex is numbered $-2n+3$. The quiver $Q_6^0$ with the universal numbering of its vertices is shown in Fig.~\ref{Q60}. 

\begin{figure}[ht]
\begin{center}
\includegraphics[width=12cm]{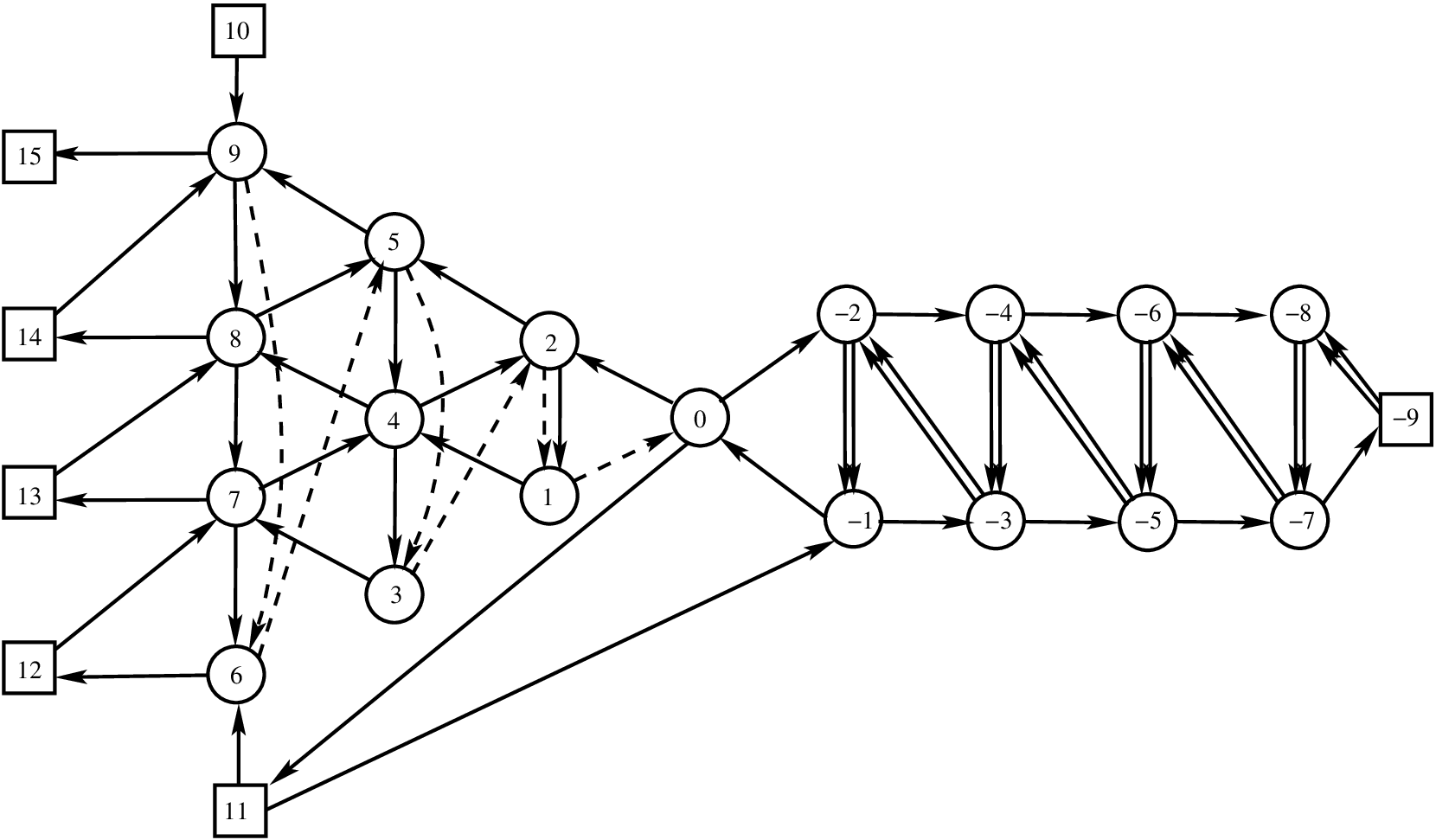}
\caption{Quiver $Q_6^0$}
\label{Q60}
\end{center}
\end{figure}

Our goal is to find a mutation sequence that takes cluster variables that correspond to mutable vertices of $Q_n^0$ to
the functions $\ek_{ij}(X)$ for $1\le j\le i\le n$, $(i,j)\ne (1,1), (n,1), (n,n)$.   
We will see that all cluster variables that arise along the mutation path (excluding $\ek_{ij}$ themselves) are determinants of maximal trailing submatrices of certain matrices $\NG(I_1,\dots,I_p)$ of the folowing two types: 

(i) $I_j$ for $j\in[1,p]$ consists of the first row and $|I_j|-1>0$ last rows, or 

(ii) $I_1$ consists of the $|I_1|>0$ last rows, and $I_j$ for $j\in[2,p]$ consists of the first row and $|I_j|-1>0$ last rows.

To facilitate notation, the functions of the first type are denoted $[k_1\, k_2\ldots k_p]$,  and the functions of the second type, $[\bar k_1\, k_2\ldots k_p]$, where $k_j=|I_j|$. Note that the symbol $[k_1\, k_2\ldots k_p]$ defines a function whenever $n\ge k_i$, $i\in[1,p]$, and $n+p-1\ge\sum_{i=1}^pk_i$.
If $r$ consecutive values of $k_j$ are equal to the same value $k$, we occasionally write $k^r$ instead of 
$k\ldots k$. The corresponding matrices are denoted $\NG(k_1,\ldots,k_p)$ and $\NG(\bar k_1,\ldots,k_p)$, respectively.
Adopting this notation, we write the cluster variable $\eg_{n-1,n-1}(X)$ at the vertex~0 of $Q_n^0$ as $[2^{n-1}]$. Further, cluster variables at the vertices of the $k$th layer to the left of~0 bottom up are $[2^{n-k-3}\ 2\ k+2]$, $[2^{n-k-3}\ 3\ k+1], \ldots, [2^{n-k-3}\ k+2\ 2]$. The cluster variable at $B$ is 
$[n]$, the cluster variables at the frozen layer bottom up are $[\bar 2\ n-1]$, $[\bar 3\ n-2],\ldots, [\overline{n-1}\ 2]$. The variable at vertex $-2k+1$ is $[\bar 1\ 2^{n-k-1}]$, the variable at vertex $-2k$ is $[2^{n-k-1}]$. 

We define the {\it promotion\/} 
of the functions described above as follows: the promotion of  $[k_1\ldots k_p]$ is $[2\, k_1\ldots k_p]$, 
the promotion of  $[\bar1\, k_2\ldots k_p]$ is $[\bar1\,2\, k_2\ldots k_p]$, 
the promotion of $[\bar k_1\, k_2\ldots k_p]$ for $k_1>1$ is $[\bar 1\, k_1+1\,k_2\ldots k_p]$.  
The {\it demotion\/} of a promoted function is defined as follows: the demotion of $[2\, k_1\ldots k_p]$ is
$[\bar1\, k_1\ldots k_p]$, and the demotion of $[\bar1\, k_1\ldots k_p]$ is  $[k_1\ldots k_p]$. Note that promotion and demotion are not inverse of each other; they commute if both are applicable.
Promotions and demotions of matrices $\NG$ are defined in a similar way.
Note that for any mutable vertex of $Q_{n-1}^0$ with the universal number $i$, the variable attached to $i$ in
$Q_n^0$ is the promotion of the variable  attached to $i$ in $Q_{n-1}^0$. We call this the {\it promotion property\/}
for  $Q_{n-1}^0$ and $Q_n^0$. Assume that $Q_{n-1}'$ and $Q_n'$ are obtained from $Q_{n-1}^0$ and $Q_n^0$, respectively, via the same sequence of mutations. We say that the promotion property is valid for $Q_{n-1}'$ and $Q_n'$ at $i$ if the variable attached to $i$ in $Q_n'$ is the promotion of the variable attached to $i$ in $Q_{n-1}'$. 

Using notation introduced above, exchange relation~\eqref{dexchange} at vertex $D$ reads
\begin{equation}\label{0exchange}
[2^{n-1}]\cdot[\bar 1\ 2^{n-4}\ 3]=
[\bar 1\ 2^{n-2}]\cdot [2^{n-3}\ 3]+[2^{n-2}]\cdot[\bar 1\ 2^{n-4}\ 3\ 2].
\end{equation}
 Note that the last term in the right hand side corresponds to the product of $x_{n1}$ and $\eg_{n-1,n-2}(X)$; in all mutations along the sequence %$\Ws_n$ 
we are looking for that involve vertex $C$, $x_{n1}$ enters exchange relations in a similar way.
Recall that relation~\eqref{0exchange} was obtained in Section~\ref{dexchangeproof} as the four-term Pl\"ucker relation for the matrix obtained by prepending rows $[0,\dots,0,1]$ and $[1, 0,\dots,0]$ to $\NG(2^{n-3},3,2)$ and choosing 
the indices as follows: $\alpha$ and $\beta$ correspond to the prepended rows, $\gamma$ to the first row of the first block row, $\delta$ to the second row of the penultimate block row, $\xi$ and $\zeta$ to the rows of the last block row. Slightly abusing terminology, we call it the four-term Pl\"ucker relation for $\NG(2^{n-3},3,2)$.
More generally, the four-term Pl\"ucker relation for $\NG(k_1,\dots,k_{p-1},2)$ is obtained by prepending rows 
$[0,\dots,0,1]$ and $[1, 0,\dots,0]$ to its last $\sum_{i=1}^p k_i-1$ columns and defining $\alpha$, $\beta$, $\gamma$, 
$\delta$, $\xi$, and $\zeta$ in the same way as above. Note that one of the terms in such relations vanishes identically, so, in fact, they have only three terms. In particular, the four-term Pl\"ucker relation for 
$\NG(k,2)$ with $k>2$ reads
\begin{equation}\label{4pluck}
[k-1\,2]\cdot[\overline{k-1}]=[\overline{k-2}\,2]\cdot[k]+[\overline{k-1}\,2]\cdot[k-1].
\end{equation}

The following statement is obtained straightforwardly from the definitions.

\begin{proposition}\label{promo1}
{\rm (i)} The four-term Pl\"ucker relation for the promotion of a matrix $\NG(k_1,\dots,k_{p-1},2)$ is obtained by promoting every function involved in the four-term Pl\"ucker relation for $\NG(k_1,\dots,k_{p-1},2)$.

{\rm(ii)} The four-term Pl\"ucker relation for the demotion of a promoted matrix is obtained by demoting every function involved in the four-term Pl\"ucker relation for the promoted matrix.
\end{proposition}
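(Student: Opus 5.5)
The plan is to make the construction of the four-term Plücker relation fully explicit and to observe that promotion acts on the underlying matrix in a uniform way, compatible with the choice of the six distinguished rows.

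\emph{Step 1: the shape of a promoted matrix.} Promotion prepends one block row to $\NG(k_1,\dots,k_{p-1},2)$: either a block $\{1,n\}$ of size $2$, or it converts a first block of type (ii) into a block of type (i) preceded by $\bar1$. In each case the new block occupies one additional column on the left, and every other block is shifted right by one column. Hence the maximal trailing submatrix of the promoted matrix has the form
\[
\begin{bmatrix} P & \star\\ 0 & T\end{bmatrix},
\]
where $T$ is the maximal trailing submatrix of the original matrix (or of the matrix with the prepended rows) and $P$ is a small square block. For promotion by $[2\,\cdot]$ the block $P$ has one row, coming from $x_{n\cdot}$ or $x_{1\cdot}$. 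I would verify the precise shape using the staircase definition of $\NG(I_1,\dots,I_p)$.

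\emph{Step 2: placing the prepended rows and the six indices.} Prepending $[0,\dots,0,1]$ and $[1,0,\dots,0]$ to the last $\sum k_i-1$ columns involves only the last column and the first trailing column. The indices $\alpha,\beta$ (the prepended rows) and $\gamma,\delta,\xi,\zeta$ all lie in the last blocks or in the first row of the first block.

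The key point is the index $\gamma$, which is the first row of the first block row. After promotion the first block row changes, so I must check that the relation used for the promoted matrix is the one obtained by keeping $\gamma$ as the distinguished first row in the appropriate sense. Concretely, each minor $\widehat{i_1,i_2,i_3}$ of the promoted matrix is the determinant of a submatrix that is block upper triangular with the top-left corner at the new block. It therefore factors as the original minor times the determinant of the new top block, or else it coincides with the promoted function by definition.

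\emph{Step 3: matching the minors.} Identify each of the six three-row-deleted minors of the promoted matrix with the promotion of the corresponding minor of the original matrix. By the definitions of $[k_1\ldots k_p]$ and $[\bar k_1\ldots]$, deleting rows that lie only in the last blocks commutes with prepending a block on top. The vanishing term also vanishes after promotion, for the same structural reason: it has two equal rows or a zero column. This proves (i).

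\emph{Step 4: part (ii).} Demotion removes the top block $\{1,n\}$ or the row $x_{n\cdot}$ of the $\bar1$ block. The same block-triangular argument read backwards gives (ii). The hypothesis that the matrix is promoted guarantees that the first block has the required form, so that demotion is defined for every minor appearing in the relation.

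The main obstacle is the careful bookkeeping in Step 2. I need to check that after promotion the rows $\alpha,\beta,\gamma,\delta$ are still chosen "in the same way" and that no sign discrepancies arise. Since the Plücker relation is homogeneous of degree two in minors of one matrix, a common factor and a common sign cancel across all terms. I would therefore argue that every term acquires the same factor, namely the determinant of the added block, or that the identification holds on the nose. I would test this on the example $\NG(k,2)$ in \eqref{4pluck} to fix conventions.
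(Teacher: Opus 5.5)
Your argument rests on Step~1: that the trailing part of the promoted matrix is block triangular over the original one, so that each promoted minor equals the original minor times the determinant of a new corner block. This is false. In $\NG(2,k_1,\ldots,k_{p-1},2)$ the new block $\{1,n\}$ is not confined to the two new leftmost trailing columns, and the old first block, shifted right by one column, already meets those columns, so there is no zero corner. Accordingly, promoted functions are not multiples of the original ones. For example, $[2^{n-1}]=\eg_{n-1,n-1}(X)$ is the promotion of $[2^{n-2}]=\ef_{2n-4}(X)$, yet it is irreducible by Proposition~\ref{giiirr}. So the promoted relation is not a rescaling of the original one. It is a different Pl\"ucker relation for a larger matrix, and the proposition is a purely combinatorial statement about how its six minors are labelled by symbols. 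Your description of demotion is also off: demoting $[2\,k_1\ldots]$ deletes only the $x_{1\cdot}$ row of the top block, giving $[\bar1\,k_1\ldots]$; it does not remove the whole block.

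What is needed is the explicit dictionary. For $\NG(k_1,\ldots,k_{p-1},2)$ with $K=\sum k_i$, read off each minor, expanding along whichever prepended rows are retained. This gives $\widehat{\alpha\beta\gamma}=[\overline{k_1-1}\,k_2\ldots k_{p-1}\,2]$, $\widehat{\alpha\beta\delta}=[k_1\ldots k_{p-1}-1\,2]$, $\widehat{\beta\gamma\delta}=[\overline{k_1-1}\,k_2\ldots k_{p-1}-1\,2]$, $\widehat{\delta\xi\zeta}=(-1)^{K-1}[k_1\ldots k_{p-1}-1]$, $\widehat{\gamma\xi\zeta}=(-1)^{K-1}[\overline{k_1-1}\,k_2\ldots k_{p-1}]$, $\widehat{\alpha\xi\zeta}=(-1)^{K}[k_1\ldots k_{p-1}]$ and $\widehat{\beta\xi\zeta}=0$; for $p=2$, both the $\gamma$- and the $\delta$-modification act on $k_1$. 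Hence the relation is $(-1)^{K-1}$ times an identity among symbols whose form does not depend on $K$, so signs cause no trouble under promotion ($K\mapsto K+2$) or demotion ($K\mapsto K-1$). Now apply the same dictionary to $\NG(2,k_1,\ldots,k_{p-1},2)$. The three nonzero minors that keep $\gamma$ are visibly the promotions; this is the correct, symbol-level version of your Step~3 remark. The three that delete $\gamma$ are the point you flagged but left open. In the promoted matrix $\gamma$ is the first row of the \emph{new} block, so the deletion migrates from the old first entry to the new one. A leading $\bar m$ (with $m=k_1-1$, or $m=k_1-2$ when $p=2$) is replaced by $\bar1\,m+1$. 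This is exactly the promotion rule $[\bar m\ldots]\mapsto[\bar1\,m+1\ldots]$, which for $m=1$ is the rule $[\bar1\ldots]\mapsto[\bar1\,2\ldots]$. For (ii), in $\NG(\bar1,k_1,\ldots,k_{p-1},2)$ the row $\gamma$ is the lone $x_{n\cdot}$ row. Deleting it leaves $\NG(k_1,\ldots,k_{p-1},2)$ shifted by one column, which matches the demotion $[\bar1\,k_1\ldots]\mapsto[k_1\ldots]$ of the $\gamma$-deleting minors. The minors keeping $\gamma$ match $[2\,k_1\ldots]\mapsto[\bar1\,k_1\ldots]$. This bookkeeping is all that the paper's ``straightforward from the definitions'' amounts to.
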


Consequently, the four-term Pl\"ucker relations for $\NG(2^q,k,2)$ and its demotion $\NG(\bar1, 2^{q-1},k,2)$ read
\begin{equation}\label{promo4pluck}
\begin{aligned}
&[2^q\,k-1\,2]\cdot[\bar1\, 2^{q-1}\,k]=[\bar1\,2^{q-1}\,k-1\,2]\cdot[2^q\,k]+[\bar1\,2^{q-1}\,k\,2]\cdot[2^q\,k-1],\\
&[\bar1\,2^{q-1}\,k-1\,2]\cdot[2^{q-1}\,k]=[2^{q-1}\,k-1\,2]\cdot[\bar1\,2^{q-1}\,k]+[2^{q-1}\,k\,2]\cdot[\bar1\,2^{q-1}\,k-1].
\end{aligned}
\end{equation}

We will also use the Desnanot--Jacobi identity for an $(r+1)\times r$ matrix $A$ and three rows $\alpha$, $\beta$, and $\gamma$ (see \cite[Eq.~(6.2)]{GSVMem}) that reads
\begin{equation}\label{genDJ}
\widehat{\beta}\cdot\widehat{\alpha,\gamma}=\widehat{\gamma}\cdot\widehat{\alpha,\beta}+\widehat{\alpha}\cdot
\widehat{\beta,\gamma},
\end{equation}
where $\widehat{i}$ denotes the $r\times r$ minor of $A$ obtained by deleting row $i$, and $\widehat{i,j}$ denotes
the $(r-1)\times(r-1)$ minor of $A$ obtained by deleting rows $i$ and $j$ and column~1. The Desnanot--Jacobi identity
for $\NG(k_1,\dots,k_{p})$ with $k_p>2$ is obtained from the above relation when $A$ is the submatrix in the last 
$\sum_{i=1}^p k_i-1$ columns of $\NG(k_1,\dots,k_{p})$,
$\alpha$ is the first row of the first block row, $\beta$ is the second row of the penultimate block row, and $\gamma$ is the second row of the last block row of $\NG(k_1,\dots,k_{p})$. In particular, the Desnanot--Jacobi identity for $\NG(k_1,k_2)$ with $k_1,k_2>2$ reads
\begin{equation}\label{DJ}
[k_1-1\,k_2]\cdot[\overline{k_1-1}\,k_2-1]=[k_1\,k_2-1]\cdot[\overline{k_1-2}\,k_2]+[\overline{k_1-1}\,k_2]
\cdot[k_1-1\,k_2-1].
\end{equation}

The following analog of Proposition~\ref{promo1} is obtained straightforwardly from the definitions.
 
\begin{proposition}\label{promo2}
{\rm (i)} The Desnanot--Jacobi identity for the promotion of a matrix $\NG(k_1,\dots,k_{p})$ is obtained by promoting every function involved in the Desnanot--Jacobi identity for $\NG(k_1,\dots,k_p)$.

{\rm (ii)} The Desnanot--Jacobi identity for the demotion of a promoted matrix is obtained by demoting every function involved in the Desnanot--Jacobi identity for the promoted matrix.
\end{proposition}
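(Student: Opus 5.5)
The statement to prove is Proposition~\ref{promo2}. Both claims are about the rule that selects the three distinguished rows $\alpha,\beta,\gamma$ and the column submatrix $A$. I will show that this rule is compatible with the block structure added by promotion and removed by demotion. The Desnanot--Jacobi identity~\eqref{genDJ} is a polynomial identity valid for any $(r+1)\times r$ matrix. So it suffices to show that each of the five minors $\widehat\alpha,\widehat\beta,\widehat\gamma,\widehat{\alpha,\beta},\widehat{\alpha,\gamma},\widehat{\beta,\gamma}$ for the promoted matrix is exactly the promotion of the corresponding minor for the original matrix. The same must then be checked for demotion.

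First I fix how the minors look in $\NG(k_1,\dots,k_p)$. Take $A$ to be the last $\sum k_i-1$ columns. Deleting $\gamma$, the second row of the last block, leaves a square matrix. It is the maximal trailing submatrix of $\NG$ with $k_p$ decreased by one, so $\widehat\gamma=[k_1\dots k_{p-1}\,k_p-1]$. This uses the type (i)/(ii) convention that a block consists of the first row and some last rows; deleting the second row of a block changes which last rows appear only after relabeling, and I will check this is consistent with the ``first row plus last rows'' shape. In the same way, $\widehat\beta$ lowers $k_{p-1}$. Deleting $\alpha$, the first row of the first block, turns the first block into ``last rows only''. That is the barred type, with $\bar k_1$ replaced by $\overline{k_1-1}$. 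Deleting both a row and the first column of $A$ drops one more column, and this keeps the trailing-submatrix shape. The six functions in~\eqref{DJ} are recovered exactly this way in the case $p=2$, and I will use that case as a template.

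Now consider promotion, which prepends a block. For an unbarred function it prepends a block of type $\{1,n\}$ (size 2), shifting everything right by one column. For $[\bar k_1\dots]$ it replaces $\bar k_1$ by $\bar1\,k_1+1$. In the unbarred case the rows $\beta$ and $\gamma$ sit in the penultimate and last blocks, and these blocks are unchanged. The row $\alpha$, however, moves: it is now the first row of the new block $2$. The key check is that deleting $\alpha$ from the new block $2$ gives $\bar1$ followed by the old first block with all its rows. On the old side, deleting $\alpha$ produced $\overline{k_1-1}$. Since the promotion of $[\overline{k_1-1}\dots]$ is $[\bar1\,k_1\dots]$, the two sides agree. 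The deletions of $\beta$ and $\gamma$ happen far from the new block, so those minors promote directly. The barred case ($\bar k_1\mapsto\bar1\,k_1+1$) is handled the same way. Part (ii) follows by running the argument backwards: demotion removes the leading $2$ (or the leading $\bar1$), and since every function in the identity for a promoted matrix carries that leading entry, each one demotes consistently.

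The main obstacle is bookkeeping in the ``first column'' deletion for $\widehat{i,j}$. Column~1 of $A$ lies under the first block only. Removing it together with a row of the first block should reduce that block's size, but in some configurations the zero pattern lets the determinant factor instead. I would first verify the general case against~\eqref{DJ} by explicit block-triangular expansion. I would then check that the extra $\{1,n\}$ block contributes no sign or factor $x_{1n}$: its first-column entry is isolated in the new block, so the determinants should match exactly.
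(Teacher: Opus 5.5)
Your treatment of (i) is the direct verification the paper intends (it says the statement follows straightforwardly from the definitions), and it is correct. In $\NG(2,k_1,\dots,k_p)$ the rows $\beta,\gamma$ are unchanged, so $\widehat\beta,\widehat\gamma,\widehat{\beta,\gamma}$ are the corresponding $[2\ldots]$ functions. In the three minors involving $\alpha$, the new block shrinks to $\bar1$ and the old blocks are as before. For example, $\widehat\alpha=[\bar1\,k_1\ldots k_p]$, which is the promotion of $[\overline{k_1-1}\,k_2\ldots k_p]$. This uses the rule $[\bar m\ldots]\mapsto[\bar1\,m+1\ldots]$, which is also valid for $m=1$. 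It also covers $p=2$, where $\widehat{\alpha,\beta}=[\overline{k_1-2}\,k_2]$ promotes to $[\bar1\,k_1-1\,k_2]$.

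\textbf{The gap is in (ii).} You say demotion ``removes the leading $2$'' and that (ii) follows by running (i) backwards. By definition, however, the demotion of $[2\,k_1\ldots k_p]$ is $[\bar1\,k_1\ldots k_p]$, not $[k_1\ldots k_p]$. Promotion and demotion are not mutually inverse, and the paper says so explicitly. Hence the demotion of the promoted matrix $\NG(2,k_1,\dots,k_p)$ is $\NG(\bar1,k_1,\dots,k_p)$, which differs from $\NG(k_1,\dots,k_p)$. Reversing (i) tells you nothing about the Desnanot--Jacobi identity of $\NG(\bar1,k_1,\dots,k_p)$.

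What you need is a separate six-minor check for $\NG(\bar1,k_1,\dots,k_p)$:
\begin{itemize}
\item Here $\alpha$ is the single row of the $\bar1$ block, so deleting it removes that block entirely.
\item The remaining blocks start in column $2$, so $\widehat\alpha$, $\widehat{\alpha,\beta}$, $\widehat{\alpha,\gamma}$ are the trailing minors $[k_1\ldots k_p]$, $[k_1\ldots k_{p-1}-1\,k_p]$, $[k_1\ldots k_p-1]$. These are the demotions of the $[\bar1\ldots]$ terms of the promoted identity.
\item The minors $\widehat\beta,\widehat\gamma,\widehat{\beta,\gamma}$ keep the $\bar1$ block, and they are the demotions of the $[2\ldots]$ terms.
\end{itemize}
This check is what actually yields the second line of \eqref{promoDJ}.

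Two smaller points. First, your remark that the barred case $\bar k_1\mapsto\bar1\,k_1+1$ is ``handled the same way'' is false with the paper's choice of $\alpha$ (the first row of the first block). For $\NG(\bar k_1,k_2,\dots,k_p)$ one gets $\widehat\alpha=[\overline{k_1-1}\,k_2\ldots k_p]$, whose promotion is $[\bar1\,k_1\,k_2\ldots k_p]$. In $\NG(\bar1,k_1+1,k_2,\dots,k_p)$, deleting $\alpha$ removes the $\bar1$ block and gives $[k_1+1\,k_2\ldots k_p]$ instead. Since (i) is stated only for unbarred $\NG(k_1,\dots,k_p)$, just drop that sentence. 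Second, the ``main obstacle'' you anticipate does not arise. Each $\widehat{i,j}$ is by definition the trailing minor, with one column fewer, of the matrix with rows $i,j$ deleted, and that is already the function with the reduced block sizes. There is no factorization, sign, or stray factor of $x_{1n}$ to track.
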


Consequently, the Desnanot--Jacobi identities for $\NG(2^q,k_1,k_2)$ with $k_1,k_2>2$ and its demotion
$\NG(\bar1, 2^{q-1},k_1,k_2)$ read
\begin{equation}\label{promoDJ}
\begin{aligned}
[2^q\,k_1-1\,k_2]\cdot[\bar1\, 2^{q-1}\,k_1\,k_2-1]&=[2^q\,k_1\,k_2-1]\cdot[\bar1\,2^{q-1}\,k_1-1\,k_2]\\
&+[\bar1\,2^{q-1}\,k_1\,k_2]\cdot[2^q\,k_1-1\,k_2-1],\\
[\bar1\,2^{q-1}\,k_1-1\,k_2]\cdot[2^{q-1}\,k_1\,k_2-1]&=[\bar1\,2^{q-1}\,k_1\,k_2-1]\cdot[2^{q-1}\,k_1-1\,k_2]\\
&+[2^{q-1}\,k_1\,k_2]\cdot[\bar1\,2^{q-1}\,k_1-1\,k_2-1].
\end{aligned}
\end{equation}

The sequence $\Ws_n$ is defined as the concatenation of two sequences: the {\it head\/} $\Hs_n$ and the {\it tail\/} 
$\Ts_n$, which will be treated separately.

\subsubsection{Sequence $\Hs_n$} The sequence $\Hs_n$, in turn, is the concatenation of four subsequences: the {\it prefix\/} $\Ps_n$, the {\it infix\/} $\Is_n$, the {\it root\/} $\Rs_n$, and the {\it suffix\/} $\S_n$. The root $\Rs_n$ consists of the vertices in the $(n-3)$rd layer to the left of~0 bottom up. It is the leftmost mutable layer in
$Q_n^0$, so that $\Rs_4=1\, 2$, $\Rs_5=3\, 4\, 5$, etc.
The other three subsequences are defined recursively via
\[
\Ps_n=\Hs_{n-1},\qquad \Is_n=\Rs_{n-1}^-\S_{n-1}^-,\qquad \S_n=\Is_n^-,
\] 
where $\mathbf A^-$ means subtracting~1 from every index in $\mathbf A$. The recursion starts at $n=4$ with $\Ps_4=0$, 
$\Is_4=-1$, and $\S_4=-2$, so that $\Hs_4=0\, {-}1\, 1\, 2\, {-}2$. Further, $\Ps_5=0\, {-}1\, 1\, 2\, {-}2$, $\Is_5=0\, 1\, {-}3$, $\S_5={-}1 \, 0\, {-}4$, so that $\Hs_5=0\, {-}1\, 1\, 2\, {-}2\, 0\, 1\, {-}3\, 3\, 4\, 5\, {-}1 \, 0\, {-}4$, etc.

Consider the quiver $Q_n^n$ that has the same set of vertices as $Q_n^0$ (both frozen and mutable) arranged in a different way. Vertices of $Q_n^n$ are arranged into $n-1$ boomerang shapes of sizes $n+1, n,\ldots,3$. Each boomerang shape contains a subset of vertices with consecutive numbers in descending order. The shapes are placed on the 
$(n+1)\times(n+1)$ greed as shown in Fig.~\ref{boomer} for the case $n=6$. The vertex placed at the point with coordinates $i$ and $j$ is denoted $|i,j|$.

\begin{figure}[ht]
\begin{center}
\includegraphics[width=8cm]{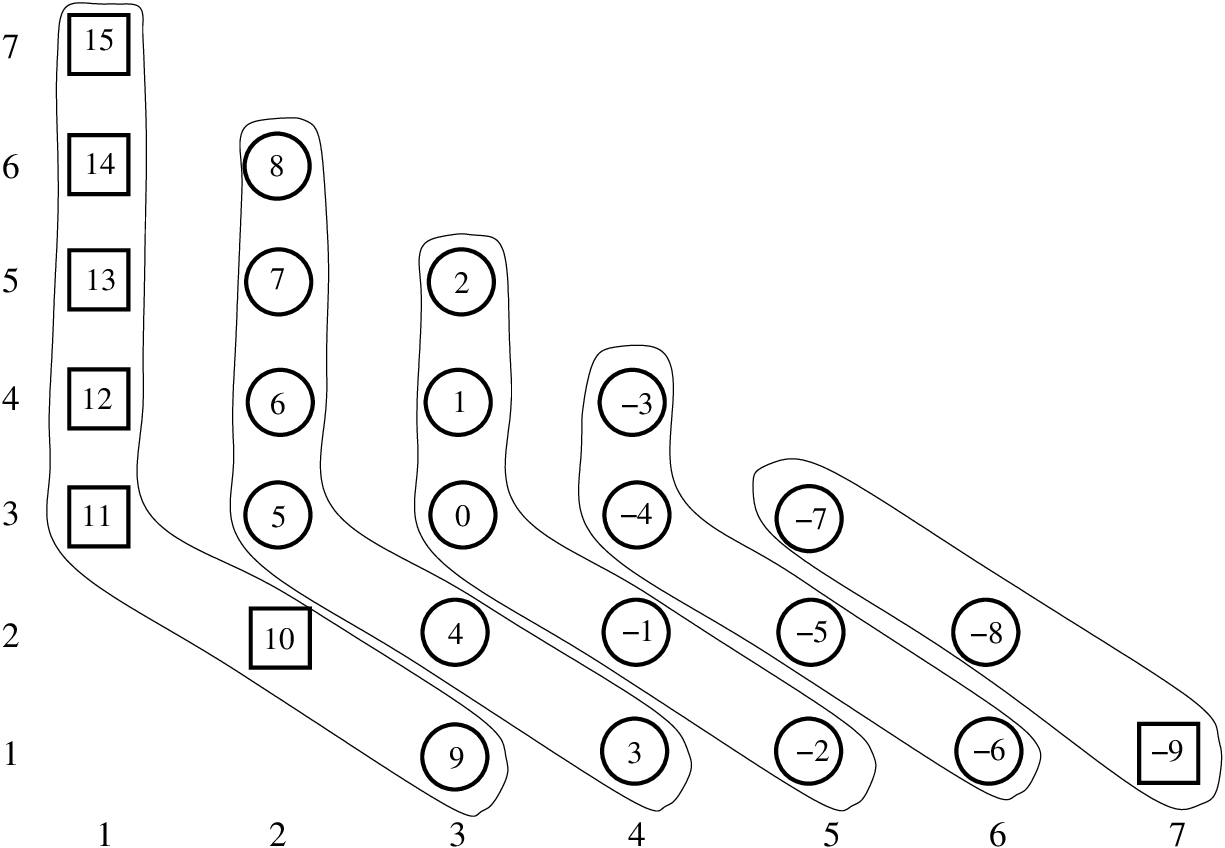}
\caption{Vertex arrangement for the quiver $Q_6^6$}
\label{boomer}
\end{center}
\end{figure}

The arrows of $Q_n^n$ are $|i,j|\to|i-1,j+1|$ for $i\in [2,n+1]$, $j\in[1,n-i+2]$, $|i,j|\ne|2,2|$, $|i,j|\to|i,j-1|$ for $i\in[2,n]$, $j\in[2,n-i+2]$, $|i,j|\ne|2,3|$, $|i,j|\to |i+1,j|$ for $i\in[1,n-1]$, $j\in [2,n-i+1]$. Additionally, there is an oriented 
path $|1,n+1|\to|3,1|\to|2,n|\to\cdots\to |n-k-1,k+3|\to |n-k+1,1|\to |n-k,k+2|\to\cdots\to|n-1,3|\to|n+1,1|\to|1,n+1|$.
The quiver $Q_6^6$ is shown in Fig.~\ref{Q66}; the arrows in the additional path are shown by dashed lines.

\begin{figure}[ht]
\begin{center}
\includegraphics[width=8cm]{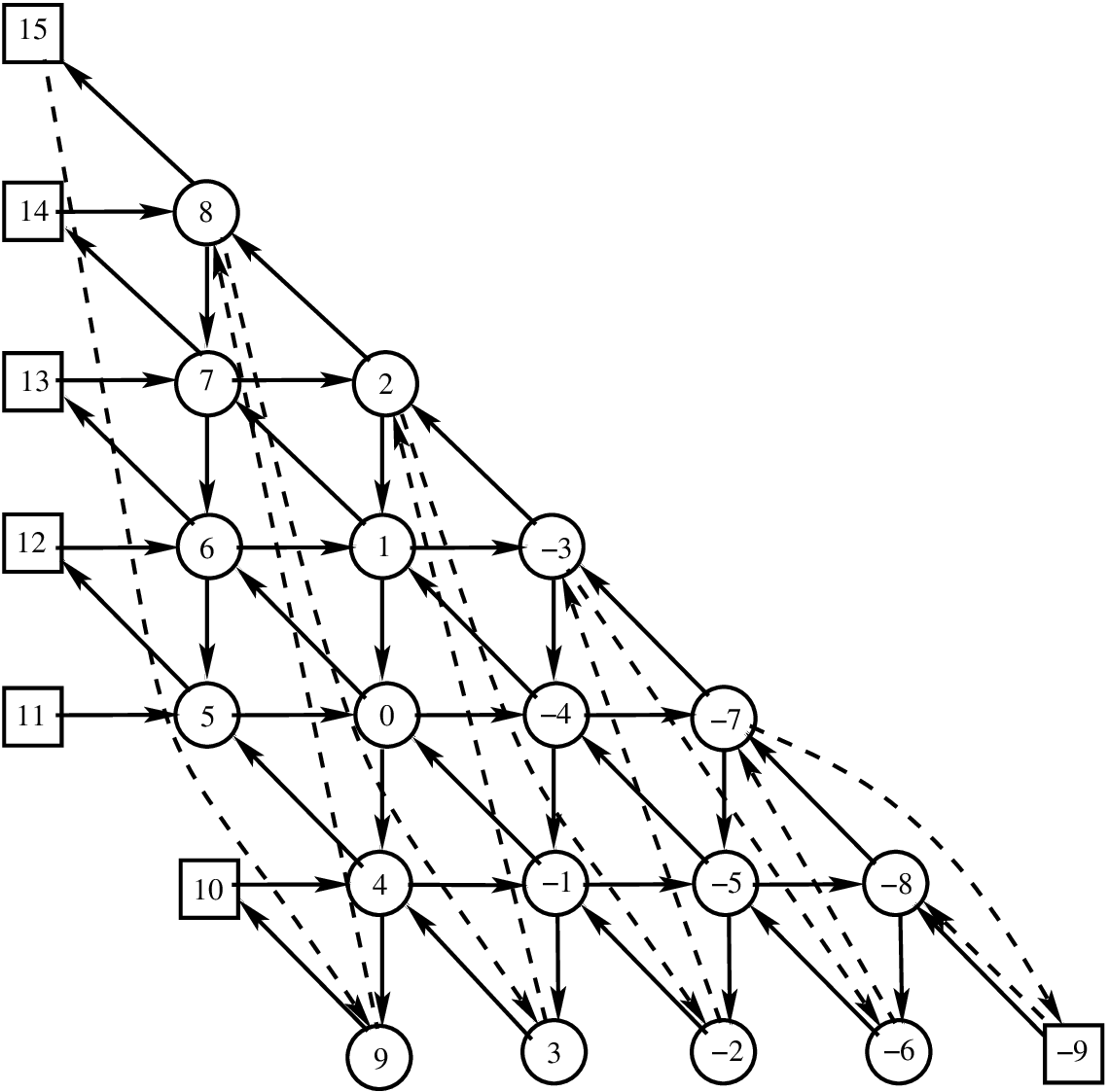}
\caption{Quiver $Q_6^6$}
\label{Q66}
\end{center}
\end{figure}

\begin{lemma} Mutation sequence $\Hs_n$ applied to the quiver $Q_n^0$ results in $Q_n^n$. The variable attached to vertex $|i,j|$ in $Q_n^n$ is $[\overline{j-2}\ n-i-j+4]$ for $j\ge 3$ except for vertex $C=|1,3|$, $[n-i+2]$ for 
$j=2$, and $[\overline{n-i+2}]$ for $j=1$. %the function attached to $C$ is $x_{n1}$.
\end{lemma}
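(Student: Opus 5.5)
We argue by induction on $n$, exploiting the recursive structure $\Hs_n=\Hs_{n-1}\,\Is_n\,\Rs_n\,\S_n$ together with the promotion property. For the base case $n=4$ the sequence $\Hs_4=0\,{-}1\,1\,2\,{-}2$ is short. We mutate $Q_4^0$ along it directly, recording the quiver after each step. The cluster variables are identified through the exchange relations: at vertex $0$ this is \eqref{0exchange}, and the remaining steps are instances of the four-term Pl\"ucker relation \eqref{4pluck} and the Desnanot--Jacobi identity \eqref{DJ}. At the end we compare with $Q_4^4$ and the claimed labels $[\overline{j-2}\ n-i-j+4]$, $[n-i+2]$ and $[\overline{n-i+2}]$.

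For the inductive step we first treat the prefix $\Ps_n=\Hs_{n-1}$ acting on $Q_n^0$. The quiver $Q_{n-1}^0$ embeds in $Q_n^0$ via the universal numbering; the complement consists of the new leftmost layer (the root vertices), the new frozen layer and the vertices $-2n+4,-2n+5$. We check that mutations at vertices of $Q_{n-1}^0$ affect this complement only locally: the arrows between the new leftmost mutable layer and the previous one evolve exactly as the arrows between the frozen layer of $Q_{n-1}^0$ and its neighbours. This gives the quiver after $\Ps_n$ as ``$Q_{n-1}^{n-1}$ plus an appended boomerang-type strip''.

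For the variables, every exchange relation used in $\Hs_{n-1}$ is a Pl\"ucker or Desnanot--Jacobi identity. By Propositions~\ref{promo1}(i) and~\ref{promo2}(i), promoting all functions yields the corresponding identity for $n$. The promotion property therefore propagates along $\Ps_n$, so after $\Ps_n$ each vertex carries the promotion of its label in $Q_{n-1}^{n-1}$. The induction hypothesis then describes these labels as promotions of $[\overline{j-2}\ n-i-j+3]$, and so on.

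Next we handle $\Is_n=\Rs_{n-1}^-\S_{n-1}^-$ and $\S_n=\Is_n^-$. These are shifted copies of earlier segments. The shift by $-1$ in universal numbering corresponds to moving one position along the boomerangs, and the relevant exchange relations are the demoted versions \eqref{promo4pluck} and \eqref{promoDJ}, justified by Propositions~\ref{promo1}(ii) and~\ref{promo2}(ii). The root $\Rs_n$ mutates the leftmost layer bottom up, and each step there is a Desnanot--Jacobi identity \eqref{DJ} for $\NG(k_1,k_2)$. The bottom step, which involves $B=[n]$ and the frozen $[\bar2\ n-1]$, is instead a four-term Pl\"ucker relation of type \eqref{4pluck}. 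At each mutation we read off the neighbours of the vertex in the current quiver, match the two monomials with the right-hand side of the appropriate identity, and conclude that the new variable is the claimed one. Vertex $C$ requires separate treatment, since $x_{n1}$ enters as a factor of a determinant, as already happens in \eqref{0exchange}.

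The main obstacle is the quiver bookkeeping. Cluster variables are pinned down only once we know, at each step, the exact neighbourhood of the mutated vertex. We would therefore formulate a stronger inductive statement describing the intermediate quivers after $\Ps_n$, after $\Is_n$ and after $\Rs_n$ in boomerang coordinates, including the dashed oriented path. We verify that this description is stable under the index shift $\mathbf A\mapsto\mathbf A^-$ and under the passage $n-1\to n$. For a sanity check we would compute $n=5,6$ explicitly against Fig.~\ref{Q66}. Once this quiver statement is established, the variable identification reduces to matching each mutation with one of the determinantal identities \eqref{4pluck}, \eqref{DJ}, \eqref{promo4pluck}, \eqref{promoDJ}.
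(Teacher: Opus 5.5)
Your skeleton is the paper's: induction on $n$ along $\Hs_n=\Hs_{n-1}\Is_n\Rs_n\S_n$, carrying the promotion property with Propositions~\ref{promo1} and~\ref{promo2}, and identifying each new variable through \eqref{4pluck}, \eqref{DJ}, \eqref{promo4pluck}, \eqref{promoDJ}. However, the step you treat as automatic, propagating the promotion property through $\Ps_n=\Hs_{n-1}$, fails in the segment $\Rs_{n-1}$, and the mechanism you propose there does not work.

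Propositions~\ref{promo1}(i) and~\ref{promo2}(i) only say that the promoted identity holds. For a mutation in $Q_n$ to realize it, its two exchange monomials must be the promotions of those in $Q_{n-1}$. In $Q_{n-1}$ the vertices mutated by $\Rs_{n-1}$ have neighbours in the frozen layer carrying $[\bar k\ n-k]$. The promotion $[\bar1\ k+1\ n-k]$ of such a variable is not attached to any single vertex of $Q_n$. The first column of $\NG(\bar1,k+1,n-k)$ contains only $x_{n1}$, so $[\bar1\ k+1\ n-k]=x_{n1}\cdot[k+1\ n-k]$. This is the product of the variable at a vertex of the new leftmost mutable layer and the variable at $C$. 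So letting the new leftmost layer simply play the role of the frozen layer of $Q_{n-1}^0$ gives exchange relations missing the factor $x_{n1}$; that correspondence is not even a bijection ($n-2$ versus $n-3$ vertices).

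What is needed, and what the paper does, is an extra inductive hypothesis on the quiver obtained from $Q_n^0$ by $\Ps_{n-1}\Is_{n-1}$ that tracks $C$. Let $i=(n-3)(n-4)/2$ be the bottom vertex of the layer about to be mutated. The hypothesis says:
\begin{itemize}
\item the arrow from the top vertex of that layer to $i$ is deleted;
\item the arrow between $i$ and $i-1$ is reversed;
\item there are arrows $i\to C$ and $C\to i-1$.
\end{itemize}
Then the frozen neighbour $j=(n-2)(n-3)/2+2$ of $i$ in $Q_{n-1}$, carrying $[\bar2\ n-2]$, is replaced in $Q_n$ by the pair $j-1$ (carrying $[3\ n-2]$) and $C$. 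The exchange monomials then match. The same local picture recurs at $i+1$ shifted by one, and the last mutation of the segment re-establishes the hypothesis one layer further left. Consequently your base case must also record the arrows at $C$ created when $\Hs_4$ acts inside $Q_5^0$, not only that $\Hs_4$ turns $Q_4^0$ into $Q_4^4$. You say ``vertex $C$ requires separate treatment'' only when discussing $\Rs_n$, but this is where $C$ is essential.

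Several of your identifications of exchange relations are also wrong, and this matters because the variables are pinned down only by matching monomials.
\begin{itemize}
\item In $\Rs_n$ the bottom mutation is \eqref{DJ} with $k_1=3$, $k_2=n-1$, namely $[2\ n-1]\cdot[\bar2\ n-2]=[3\ n-2]\cdot[\bar1\ n-1]+[\bar2\ n-1]\cdot[2\ n-2]$. The relation \eqref{4pluck} with $k=n$, which involves $B=[n]$ and produces $[\overline{n-1}]$, is the top mutation, at the vertex carrying $[n-1\ 2]$.
\item Along $\Is_n$ the exchange relations are those along $\Rs_{n-1}\S_{n-1}$ promoted and then demoted. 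Along $\S_n$ they are literally the same relations as along $\Rs_{n-1}\S_{n-1}$. So ``demoted versions'' describes neither segment.
\item In the base case the relation at $-1$, $[\bar1\,2^2]\cdot[3]=[\bar1\,3]\cdot[2^2]+[3\,2]\cdot[\bar1\,2]$, is the demoted relation in \eqref{promo4pluck}, not an instance of \eqref{4pluck} or \eqref{DJ}.
\end{itemize}
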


\begin{proof} The proof is by induction on $n$. For $n=4$, the exchange relations along the mutation path $\Hs_4$ are
\begin{equation}\label{exchange4}
\begin{aligned}
0&:\quad [2^3]\cdot[\bar 1\,3]=[\bar 1\,2^2]\cdot[2\,3]+[2^2]\cdot[\bar 1\,3\,2], \\
-1&: \quad [\bar 1\,2^2]\cdot[3]=[\bar 1\,3]\cdot[2^2]+[3\,2]\cdot[\bar 1\,2],\\
1&: \quad [2\,3]\cdot[\bar 2\,2]=[\bar 1\,3]\cdot[3\,2]+[2^2]\cdot[\bar 2\,3], \\
2&: \quad [3\,2]\cdot[\bar 3]=[\bar 2\,2]\cdot[4]+[3]\cdot[\bar 3\,2],\\
-2&: \quad [2^2]\cdot[\bar 2]=[3]\cdot[\bar 1\,2]+[\bar 2\,2]\cdot[2].
\end{aligned}
\end{equation}
A direct check shows that the assertions of the lemma hold true. Additionaly, one can check that for $n>4$ the above mutation sequence adds the arrow from $3$ to $C$ and deletes the arrow from $5$ to $3$.

Note that the first two exchange relations above are 
particular cases of~\eqref{promo4pluck} for $k=3$ and $q=2$,   
the third relation is~\eqref{DJ} for $k_1=k_2=3$, while the last two are~\eqref{4pluck} for $k=4$ and $k=3$, respectively. We thus see that to handle the case $n=4$ we have to consider matrices $\NG(3,2)$ (more exactly, its promotion $\NG(2,3,2)$), $\NG(3,3)$, and $\NG(4,2)$.

Assume that the assertion of the lemma holds true for $\Hs_{n-1}$, and consider the evolution of $Q_n^0$ and the associated cluster variables along the mutation sequence $\Hs_n$ divided into seven consecutive segments: $\Hs_n=\Ps_{n-1}\Is_{n-1}\Rs_{n-1}\S_{n-1}\Is_n\Rs_n\S_n$. Recall that the promotion property for $Q_{n-1}^0$ and $Q_n^0$ is valid at every mutable vertex of $Q_{n-1}^0$. Moreover, mutations in the first two segments of $\Hs_n$ are identical to mutations in the first two segments of $\Hs_{n-1}=\Ps_{n-1}\Is_{n-1}\Rs_{n-1}\S_{n-1}$, that is, involve vertices with the same universal numbers connected with the same arrows. Therefore, by Propositions~\ref{promo1} and~\ref{promo2},
the promotion property remains valid for all mutated vertices along the sequence.

Further, compare mutations in the third segment of $\Hs_{n-1}$ and the third segment of $\Hs_{n}$. Recall that these
are mutations in the first from the left mutable layer of $Q_{n-1}^0$ and the second from the left mutable layer of $Q_n^0$. The bottom vertex of this layer has universal number $i=(n-3)(n-4)/2$.
The arrows inside this layer and to the right of it are identical in both mutated quivers, as explained above. 
An additional inductive assumption for the quiver obtained from $Q_n^0$ claims the following: 

(i) the arrow from the top vertex to $i$ is deleted; 

(ii) all other arrows inside this layer are the same as in $Q_{n}^0$;

(iii) the arrow from $i$ to $i-1$ is reversed; 

(iv) there are arrows  from $i$ to $C$ and from $C$ to $i-1$. 

\noindent The arrows on the left of this layer are nor involved in the previous mutations, so they remain exactly as in  $Q_{n-1}^0$ and $Q_n^0$, respectively. This assumption can be verified by direct observation for $n=5$.

We start the third segment with the mutation at $i$. The left neighborhood of $i$ in the quiver obtained from $Q_{n-1}^0$ before and after the mutation is shown in Fig.~\ref{ileft}a). The variable attached to the frozen vertex 
$j=(n-2)(n-3)/2+2$ is $[\bar2\,n-2]$. A similar neighborhood in the quiver obtained from $Q_{n}^0$ is shown in 
Fig.~\ref{ileft}b). The variable attached to the mutable vertex $j-1$ is $[3\,n-2]$. Additionally, in the second case, there is $x_{1n}$ attached to vertex $C$. Note that the product of variables attached to $j-1$ and $C$ is $[\bar1\,3\,n-2]$, that is, the promotion of  $[\bar2\,n-2]$. Consequently, Proposition~\ref{promo2} guarantees that the promotion property at $i$ remains valid after this mutation. Moreover, the situation before the mutation at $i+1$ is exactly the same as before, up to shifting all numbers up by one. Consequently, the promotion property at all mutable vertices of 
$Q_{n-1}^0$ remains valid after the third segment is applied. Note that in the last mutation in this segment, the vertices that correspond to $i+1$ and $j-1$ in Fig.~\ref{ileft}b) are the bottom and the top vertices of the next layer, respectively, so that deletion of the arrow from $j-1$ to $i+1$ proves inductive assumption (i), reversal 
of the arrows from $i+1$ to $i$ and from $i$ to $C$ and addition of the arrow from $i+1$ to $C$ proves inductive assumptions (iii) and (iv). All other mutations of this segment do not affect arrows in the leftmost mutable layer of 
$Q_n^0$, which proves inductive assumption (ii). 

\begin{figure}[ht]
\begin{center}
\includegraphics[width=8cm]{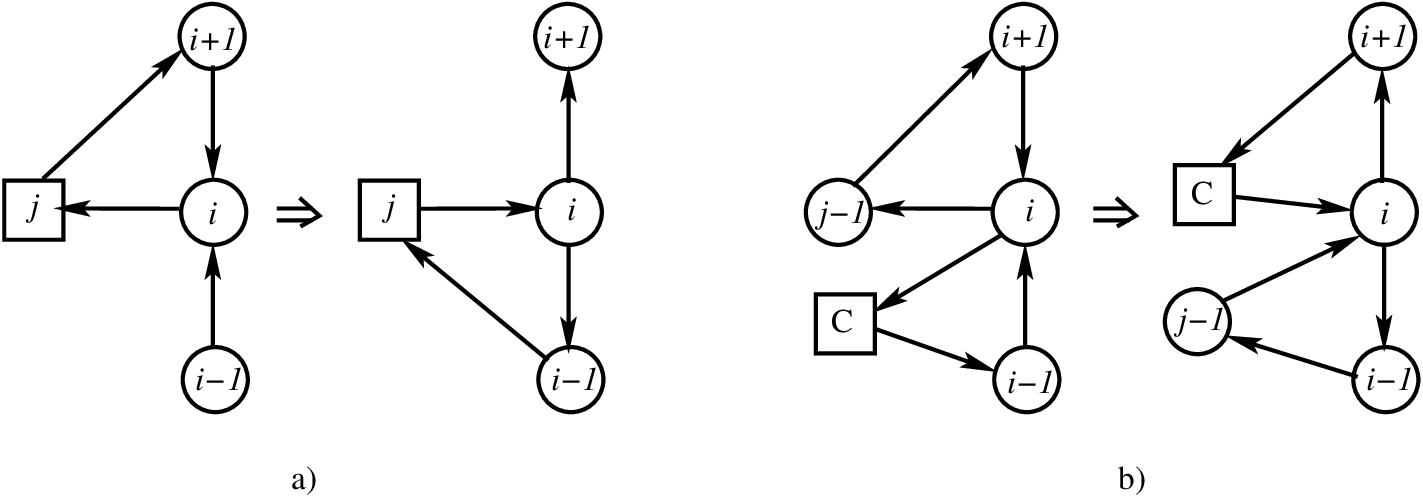}
\caption{Left neighborhood of $i$ before and after the first mutation in $\Rs_{n-1}$: a) for $\Hs_{n-1}$; 
b) for $\Hs_n$}
\label{ileft}
\end{center}
\end{figure}

Finally, mutations in the fourth segment in $\Hs_{n-1}$ and $\Hs_n$ again involve only vertices with the same numbers connected with the same arrows, and hence by the end of the first four segments in $\Hs_n$ the promotion property remains valid for all mutable vertices of $Q_{n-1}^0$. Taking into account the induction assumption concerning the 
structure of $Q_{n-1}^{n-1}$, we can describe the quiver $Q_n^{n-1}$ obtained from $Q_n^0$ via the mutation sequence 
$\Hs_{n-1}$ and the variables attached to its vertices, as compared to $Q_{n-1}^{n-1}$. 

Quiver $Q_n^{n-1}$ is placed on the grid, same as $Q_{n-1}^{n-1}$. The restriction of $Q_n^{n-1}$ to positions $|i,j|$, 
$i\in[3,n+1]$, $j\in[1,n-i+2]$ coincides with restriction of $Q_{n-1}^{n-1}$ to positions $|i,j|$, $i\in[2,n]$, $j\in[1,n-i+1]$, so that the universal number for the vertex at position $|i,j|$ in $Q_n^{n-1}$ is the same as for $|i-1,j|$ in $Q_{n-1}^{n-1}$. Consequently, by the promotion property, the variable attached to position $|i,j|$ for $i\in[3,n+1]$, 
$j\in[1,n-i+2]$ is $[\bar1\, j-1\, n-i-j+5]$ for $j\ge 3$, $[2\, n-i+3]$ for $j=2$, and $[\bar1\, n-i+2]$ 
for $j=1$. The only difference between the two restrictions is that  in $Q_{n-1}^{n-1}$ vertices at positions $|2,2|$ and $|n,1|$ are frozen, while in $Q_n^{n-1}$ the corresponding vertices at $|3,2|$ and $|n+1,1|$ are mutable. Frozen vertices from the leftmost layer of $Q_n^0$ are placed at positions $|1,j|$ for $j\in[3,n]$ in the same vertical order. Vertices of the next layer are placed at positions $|2,j|$ for $j\in[2,n]$ in the same vertical order, except for the bottom mutable vertex $(n-2)(n-3)/2$ that has been already placed at $|3,2|$. The arrows between the vertices in these two layers are not affected by the mutations in $\Hs_{n-1}$, except for the cases described in induction assumptions 
(i)--(iv) that are already proved for this layer. Evolution of arrows between this layer and the layer to the right of it can be seen in Fig.~\ref{ileft}b). Finally, vertices $-2n+4$ and $-2n+5$ are placed at positions $|n+1,3|$ and $|n+2,2|$, respectively. Arrows between them and the rest of the vertices are not affected by mutations in $\Hs_{n-1}$. The quiver $Q_6^5$ is shown in Fig.~\ref{Q65}a). The part of $Q_6^5$ that coincides with the described above part of $Q_5^5$ is contained inside the triangle shown by a thin line.

\begin{figure}[ht]
\begin{center}
\includegraphics[width=12cm]{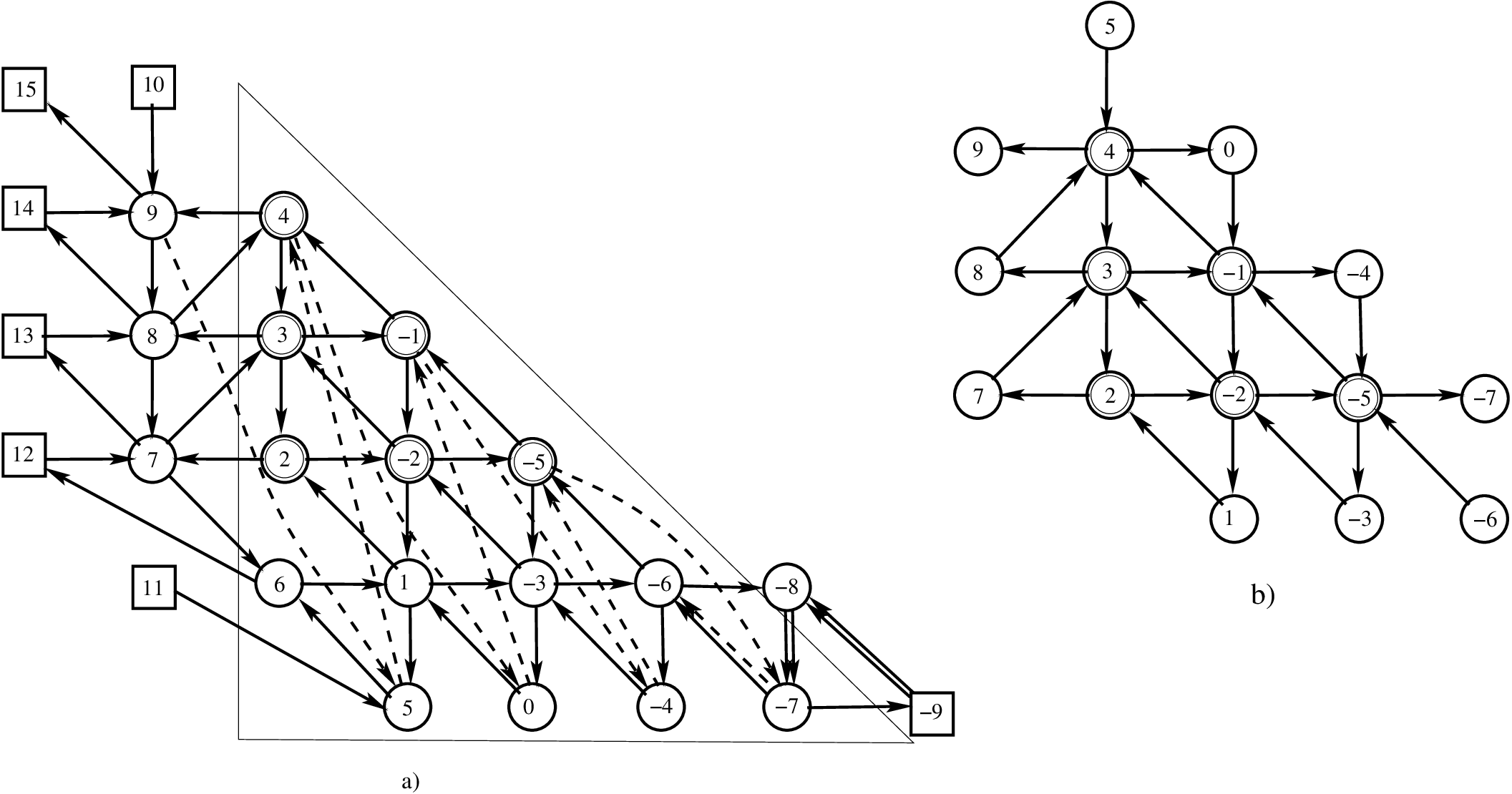}
\caption{Quiver $Q_6^5$: a) the quiver; b) the part of $Q_6^5$ relevant for the sequence $\Is_6$}
\label{Q65}
\end{center}
\end{figure}

By the promotion property, the variables inside this triangle are promotions of the 
corresponding variables in $Q_5^5$, so that by the inductive assumption, the variable attached to $|i,j|$ is
$[\bar1\, j-1\,n-i-j+3]$. The variables outside the triangle retain their initial values. It is easy to see 
that vertices that are mutated in the sequence $\Is_n$ fill the triangle $i\in[3,n-1]$, $j\in[3,n-i+2]$ and that order is by columns from left to right, bottom up within each column. The corresponding vertices for the quiver $Q_6^5$ are shown by double circles in Fig.~\ref{Q65}.  Consequently, the relevant vertices outside the triangle 
are $|2,j|$ with $j\in[3,n-1]$, and the attached variables are $[j\,n-j+1]$. The part of $Q_6^5$ relevant for the sequence $\Is_6$ is shown in Fig.~\ref{Q65}b).

Let us rearrange the vertices of $Q_n^{n-1}$ 
after the sequence $\Is_n$ is applied in the following way: the vertices in rows $j=1,2$ are shifted left by one, and the vertices $|i,j|$ on the upper diagonal $i+j=n+2$ are shifted down by $j$ and right by one, so that they now form the lower row of the quiver. The result of this rearrangement for the quiver $Q_6^5$ is shown in Fig.~\ref{Q65I}.

\begin{figure}[ht]
\begin{center}
\includegraphics[width=8cm]{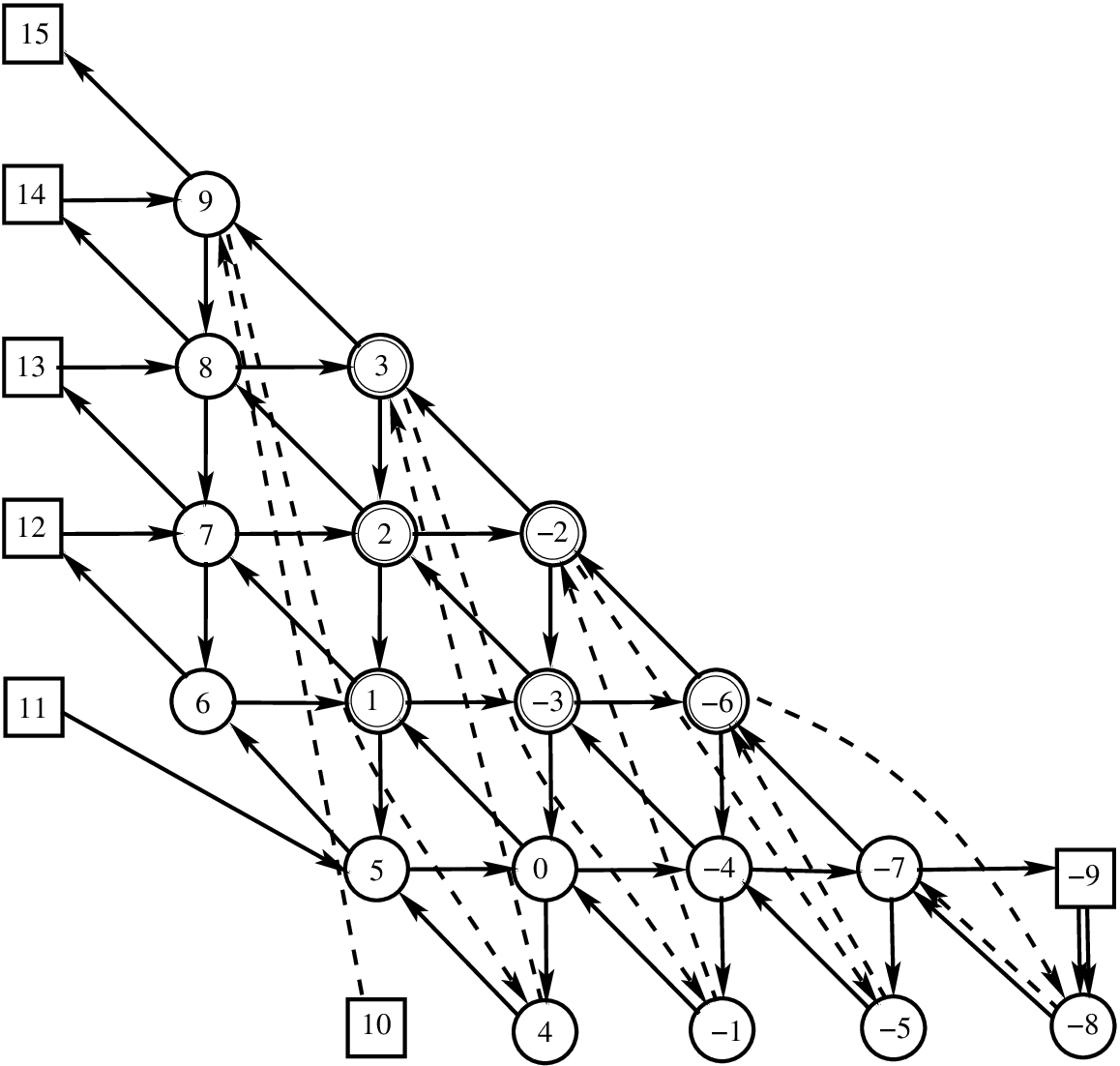}
\caption{Quiver $Q_6^5$ after mutation sequence $\Is_6$ and rearrangement}
\label{Q65I}
\end{center}
\end{figure}
 
Comparing quivers $Q_n^{n-1}$ (before the application of $\Is_n$) and $Q_{n-1}^{n-2}$ after the application of $\Is_{n-1}$ we observe that the subquiver of the latter quiver that is relevant for the sequence $\Rs_{n-1}\S_{n-1}$ is isomorphic to 
the subquiver of the former one that is relevant for the sequence  $\Is_n$. Moreover, the variable attached in the latter 
quiver to the vertex that correspond to the vertex $|i,j|$ of $Q_n^{n-1}$ is $[j-1\, n-i-j+3]$. It is easy to see that
the variable $[\bar1\, j-1\,n-i-j+3]$ attached to $|i,j|$ in $Q_n^{n-1}$ is obtained from $[j-1\, n-i-j+3]$ by consecutive promotion and demotion, and the same holds true for all other vertices in the two isomorphic subquivers. 
So, by the promotion and demotion properties, we conclude that all exchange relations along the sequence $\Is_n$ are
obtained by consecutive promotion and demotion of the exchange relations along the sequence $\Rs_{n-1}\S_{n-1}$. Therefore, the variable attached to the vertex $|i,j|$ of the rearranged quiver is $[j-1\,n-i-j+4]$ for $i\in[2,n-1]$, 
$j\in [3,n-i+2]$, $[\bar1\, n-i+2]$ for $j=2$, $i\in[3,n]$,  $[n-i+3]$ for $i\in[3,n+1]$, and 
$[\overline{j-2}\, n-j+3]$ for $i=1$, $j\in[4,n+1]$.

Further, the sequence $\Rs_n$ is applied. It is easy to see that the vertices that participate in this sequence are the vertices in the second column of the current quiver bottom up (vertices $6, 7, 8, 9$ in Fig.~\ref{Q65I} for $n=6$). The corresponding exchange relations are~\eqref{DJ} for
$k_1=j$ and $k_2=n-j+2$ with $j\in[3,n-1]$, and~\eqref{4pluck} for $k=n$. The resulting variables are $[\overline{j-1}\,
n-j+1]$ for $j\in[3,n-1]$ and $[\overline{n-1}]$ for $j=n$.

Finally, the sequence $\S_n$ is applied. It is easy to see that the vertices that participate in this sequence fill the triangle $i\in[3,n-1]$, $j\in[3,n-i+2]$ (it is shown by double circles in Fig.~\ref{Q65I}). The order of mutations is 
by columns from left to right, bottom up in each column. Direct inspection shows that the relevant part of the quiver is exactly the same as for sequences $\Rs_{n-1}\S_{n-1}$ and $\Is_n$, see Fig.~\ref{Q65}b) for the case $n=6$. Moreover, 
the functions attached to the vertices involved in the mutations are exactly the same as in the case of $\Rs_{n-1}\S_{n-1}$, and hence the variables attached to the vertices after the mutation coincide with those obtained at the end of $\Hs_{n-1}$. It remains to rearrange the resulting quiver similarly to what was done before: the vertices in the rows $j=1,2$ 
are shifted left by one, the vertices of the first column are shifted down by one,  and the vertices $|i,j|$ on the upper diagonal $i+j=n+2$ are shifted down by $j$ and right by one, so that they now form the lower row of the quiver. The result of this rearrangement coincides with the quiver $Q_n^n$.
\end{proof}

\subsubsection{Sequence $\Ts_n$} Note that the variables attached to the first row of the quiver $Q_n^n$ (see 
Fig.~\ref{Q66} for $n=6$) are exactly 
$\ek_{ii}(X)$, so we invoke once again~\cite[Th.~3.6]{CL} and freeze them. 
The remaining mutable vertices are mutated by columns from right to left (in a contrast with mutations in $\Hs_n$). As we shall see, mutations in the same column commute. 

The exchange relations
are Desnanot--Jacobi identities for matrices $\NG(\bar k_1,k_2)$ with $k_2>2$ that are obtained from~\eqref{genDJ} 
when $A$ is the submatrix in the last $k_1+k_2-1$ columns of $\NG(\bar k_1, k_{2})$,
$\alpha$ is the first row of the first block row, $\beta$ is the first row of the second block row, and $\gamma$ is the second row of the second block row of $\NG(\bar k_1,k_{2})$. Consequently, the corresponding Desnanot--Jacobi identity for $k_1>1$ reads
\begin{equation}\label{barDJ}
[\overline{k_1-1}\,k_2-1]\cdot[\bar k_1\,\overline{k_2-1}]=
[\bar k_1\,k_2-1]\cdot[\overline{k_1-1}\,\overline{k_2-1}]+[\overline{k_1-1}\,k_2]
\cdot[\bar k_1\,\overline{k_2-2}], 
\end{equation}
and for $k_1=1$,
\begin{equation}\label{bar1DJ}
[k_2-1]\cdot[\bar 1\,\overline{k_2-1}]=[\bar 1\,k_2-1]\cdot[\overline{k_2-1}]+[k_2]\cdot[\bar1\,\overline{k_2-2}]. 
\end{equation}

The first mutation in $\Ts_n$ is in column $n$ that contains the only mutable vertex $|n,2|$. The exchange relation at $|n,2|$ is~\eqref{bar1DJ} for $k_2=3$, which produces $\ek_{n,n-1}(X)$. As a result of the mutation at $|n,2|$, the arrow between the vertices in column $n-1$ disappears, so mutations at $|n-1,2|$ and $|n-1,3|$ commute, as mentioned above. 
The corresponding exchange relations are~\eqref{bar1DJ} for $k_2=4$ at $|n-1,2|$, which produces $\ek_{n,n-2}(X)$, 
and~\eqref{barDJ} for $k_1=2$, $k_2=3$ at $|n-1,3|$, which produces $\ek_{n-1,n-2}(X)$. As a result of these two mutations the arrows between the vertices in column $n-2$ disappears, so mutations at the vertices in this column commute. 
Following this pattern, we establish that the exchange relation at the vertex $|i,j|$, $j\in[2,n-i+2]$, of column $i$ 
is~\eqref{bar1DJ} for $k_2=n-i+4$ if $j=2$, which produces $\ek_{n,i-1}$, and~\eqref{barDJ} with $k_1=j-1$, $k_2=n-i-j+4$ if $j>2$, which produces $\ek_{n-j+2,i-1}(X)$. Note that for $i=2$, $j=3$, that is, for $k_1=2$, $k_2=n$, 
the function $[\bar1\, n]$ in the right hand side of~\eqref{barDJ} is the product $x_{n1}\det X$.

This concludes the proof of Theorem~\ref{fish_to_cuttlefish}(i). \qed

\subsection{Proof of Theorem~\ref{fish_to_cuttlefish}(ii)}
Recall that mutable and frozen variables in the cluster built in the previous section are 
 \begin{equation}\label{cuttlemut}
\{\phi_{kl}(X): k,l \geq 1, \ k+l \leq n-1 \}\cup \{\ek_{ij}(X) : 1\le j \leq i \leq n, (i,j)\ne (1,1), (n,1) \}.
\end{equation}
and
\begin{equation}\label{cuttlefroz}
\{x_{1n}, x_{n,1}, \ek_{11}(X)=\det X\}\cup \{\ec_r/\det X: 1\le r\le n-1\}, 
\end{equation}
respectively. 
Slightly abusing the language, we will say that a funcion of $X$ is {\it Laurent\/} if it admits a Laurent polynomial expression in terms of the variables in~\eqref{cuttlemut},~\eqref{cuttlefroz}. Further, a Laurent function is {\it good Laurent\/} if this Laurent polynomial expression does not contain frozen variables in the denominator. A matrix is said 
to be Laurent (or good Laurent) if every its entry possesses this property. In this terms, our goal is to prove that
$X$ is good Laurent.

Consider the Gauss factorization $X=X_+ X_{0,-}$ of $X$. We first treat the second factor.

\begin{lemma}
\label{lem:X<Laurent}
$X_{0,-}$ is good Laurent depending only on $\ek_{ij}(X)$ and $x_{n1}$.
\end{lemma}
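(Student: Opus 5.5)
The plan is to express the entries of $X_{0,-}$ through ratios of column-solid minors of $X$ in the last rows, and then to identify these minors with the functions $\ek_{ij}(X)$. Since $X=X_+X_{0,-}$ with $X_+\in\N_+$ unipotent upper triangular, left multiplication by $X_+$ preserves every minor of $X$ that uses a final block of rows $[i,n]$. Hence for $j\le i$:
\[
(X_{0,-})_{ij}=\frac{\det X_{[i,n]}^{\{j\}\cup[i+1,n]}}{\det X_{[i+1,n]}^{[i+1,n]}},\qquad
(X_{0,-})_{ii}=\frac{\ek_{ii}(X)}{\ek_{i+1,i+1}(X)} .
\]
The denominators are therefore the trailing principal minors $\ek_{i+1,i+1}$, which are cluster variables in~\eqref{cuttlemut}; for $i=n$ the denominator is $1$. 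Denominators that are mutable variables are allowed in a good Laurent expression. The frozen $\ek_{11}=\det X$ appears only as a numerator, in $(X_{0,-})_{11}=\det X/\ek_{22}$.

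The main step is to relate the off-diagonal numerators $\det X_{[i,n]}^{\{j\}\cup[i+1,n]}$ to the $\ek_{ij}$. Recall that $\NK_{ij}$ is the maximal trailing submatrix of $\NG([i,n],[n+j-i,n])$. First I would reduce $\ek_{ij}$ by expanding along the shifted block, which amounts to a Laplace/Binet--Cauchy expansion in the style of Lemma~\ref{lemma:U(X)minors}. The goal is a triangular recursion of the form
\[
\ek_{ij}=\sum_{s}(\pm)\,\det X_{[i,n]}^{\{s\}\cup[i+1,n]}\cdot(\text{products of }\ek\text{'s and }x_{n1}),
\]
whose leading term is proportional to the desired minor times a previously known $\ek$ with a unit-like coefficient. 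An alternative route is to view each $\ek_{ij}$ as a polynomial in the entries of $X_{0,-}$: the $\NK$ matrices are built from the rows $[i,n]$ and $[n+j-i,n]$, so they factor through left multiplication by block-diagonal copies of $X_+$, and we may replace $X$ by $X_{0,-}$ in $\NK_{ij}$.

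With this replacement, $\det\NK_{ij}(X_{0,-})$ should be linear in a single new entry $(X_{0,-})_{\cdot,\cdot}$, with coefficient a product of diagonal entries of $X_{0,-}$, plus terms in entries already determined. I would order the entries of $X_{0,-}$ by increasing distance from the diagonal and check which entry is new in $\ek_{ij}$; presumably it is the entry at position $(n,\,j+n-i)$ or a similar one. Inverting this triangular system gives each entry as a polynomial in the $\ek$'s and in $x_{n1}$, divided by products of the diagonal entries $\ek_{ii}/\ek_{i+1,i+1}$, and hence by mutable variables only. The last column of $X_{0,-}$ has a single nonzero entry, equal to $x_{nn}=\ek_{nn}$. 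The entry $(X_{0,-})_{n1}=x_{n1}$ accounts for the missing $\ek_{n1}$, so $x_{n1}$ enters only in numerators.

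The main obstacle is the precise bookkeeping of this triangular structure: which entry of $X_{0,-}$ is new in each $\NK_{ij}$, and the check that its coefficient is a monomial in the $\ek_{mm}$ with $m\ge2$, so that no frozen variable, in particular $\det X$ or $x_{n1}$, appears in a denominator. I would first work it out for $n=4$ to fix the pattern, and then prove the general case by induction on $i-j$ using a Desnanot--Jacobi identity of the type~\eqref{barDJ}.
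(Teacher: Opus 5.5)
Your preliminary steps are correct. The ratio formula for $(X_{0,-})_{ij}$ holds, and left $\N_+$-invariance does let you replace $X$ by $X_{0,-}$ in $\NK_{ij}$. The gap is the triangular structure you then postulate: it does not exist in the coordinates of $X_{0,-}$. Because $\NK_{ij}$ contains two overlapping, shifted copies of rows of $X_{0,-}$, $\ek_{ij}$ is in general nonlinear in the entries, and the divisors it forces are not diagonal entries.

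Already for $n=3$, take $X_{0,-}=\left[\begin{smallmatrix} d_1&0&0\\ a&d_2&0\\ b&c&d_3\end{smallmatrix}\right]$ with $b=x_{31}$. Then $\ek_{32}=d_3(d_2c-d_3a)$ and $\ek_{21}=d_3(d_2ac-d_3a^2-d_2^2b)$. The latter is quadratic in $a$, and your guessed position $(n,j+n-i)$ equals $(3,2)$ for both functions. Eliminating $c$ gives $\ek_{21}=\ek_{32}\,a-d_2^2d_3x_{31}$, hence $(X_{0,-})_{21}=(\ek_{21}\ek_{33}+x_{31}\ek_{22}^2)/(\ek_{32}\ek_{33})$. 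So the off-diagonal variable $\ek_{32}$ must appear in a denominator; for general $n$, $(X_{0,-})_{n-1,1}$ needs $\ek_{n2}$. This contradicts your claim that one only divides by the diagonal $\ek_{mm}$.

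The lemma's conclusion survives, since $\ek_{32}$ is mutable, but your mechanism cannot produce it. Once non-diagonal quantities become divisors, elimination in these coordinates gives no control over whether the final denominators are monomials; solving $\ek_{21}$ for $c$, for instance, would divide by $a$. That control is exactly the content of the lemma, and your proposal leaves it as the unresolved ``main obstacle''.

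The missing idea is the second invariance the paper uses, $\hat\ek_{ij}(X,Y)=\hat\ek_{ij}(\beta_1X\nu,\beta_2Y\gamma(\nu))$ for $\nu\in\N_-$, which strips the unipotent part off one of the two copies. Write $X_{0,-}=X_0X_-$ and $N=X_-\gamma(X_-^{-1})$. Then $\ek_{ij}=\ek_{ii}\,\ek_{n+j-i,n+j-i}\det N_{[i,n]}^{[j,n+j-i]}$.

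In the coordinates of the lower unipotent matrix $N$, each $\ek_{ij}$ is therefore a single contiguous minor. These minors, together with $N_{n1}$, form the standard initial cluster on lower unipotent matrices, and the paper obtains Laurentness of the entries of $N$ from completeness of that structure. In these coordinates a genuinely triangular argument also works. The minor $\det N_{[i,n]}^{[j,n+j-i]}$ equals $N_{ij}\det N_{[i+1,n]}^{[j+1,n+j-i]}$ plus terms involving only the entries $N_{ic}$ with $c>j$ and entries in rows below $i$. Solving bottom-up and right to left therefore divides only by initial minors.

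Finally, $X_{0,-}=X_0N\gamma(N)\gamma^2(N)\cdots$ with $(X_0)_{ii}=\ek_{ii}/\ek_{i+1,i+1}$. In this expression $\det X=\ek_{11}$ and $x_{n1}$ occur only in numerators.
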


\begin{proof}
Clearly 
\[
(X_{0,-})_{ii} =(X_{0})_{ii} = \frac{\det X_{[i,n]}^{[i,n]}}{\det X_{[i+1,n]}^{[i+1,n]}} =\frac{\ek_{ii}(X)}{\ek_{i+1,i+1}(X)},\quad 1\le i\le n.
%(X_{0,-})_{11} = \frac{\det X}{\ek_{22}(X)}.
\]
For $X,Y\in \Mat_n$ and $i>j$ define
\begin{equation}
\label{hatekfunctions}
\hat\ek_{ij}(X,Y)= \det
\begin{bmatrix*}[l]
Y_{[i,n]}\ 0& \\
\ 0\ X_{[n+j-i,n]}\hspace{-1em}
\end{bmatrix*}^{[j,n+1]}, 
%1< j < i \leq n \ ,
%\nonumber
%&\ek_{ii}(X)= \det X_{[i,n]}^{[i,n]}, i=2,\ldots, n\ .
\end{equation}
so that $\ek_{ij}(X)=\hat\ek_{ij}(X,X)$. Then for any $\beta_1,\beta_2\in\N_+$ and $\nu\in\N_-$,
\[
\hat\ek_{ij}(X,Y)= \hat\ek_{ij}(\beta_1 X\nu,\beta_2Y\gamma(\nu)), %\ (1< j < i \leq n)
\]
and, in particular,
\begin{align*}
\ek_{ij}(X)=\hat\ek_{ij}(X,X)&= \hat\ek_{ij}(X_{0,-},X_{0,-}) =\hat\ek_{ij}\left(X_{0},X_{0} X_-\gamma(X_-^{-1})\right)\\
&=\det \left(X_-\gamma(X_-^{-1})\right)_{[i,n]}^{[j,n+j-i]} \det X_{[i,n]}^{[i,n]}\det X_{[n+j-i,n]}^{[n+j-i,n]}.
\end{align*}
Denote $N=X_-\gamma(X_-^{-1})$. We see that
\begin{equation}
\label{eq:Nminors}
\det N_{[i,n]}^{[j,n+j-i]}=\frac{\ek_{ij}(X)}{\ek_{ii}(X)\ek_{n+j-i,n+j-i}(X)}, \quad 1\leq j < i \leq n,\ (i,j)\neq (n,1).
\end{equation}
Viewed as functions of $N$, minors above together with $N_{n1}=x_{n1}$ form an initial cluster for the standard cluster structure on $n\times n$ lower unipotent matrices.
It is well-known (for details see~\cite[Remark~3.4]{GSVdouble}, which, in turn, relies on~\cite{BFZ}) that  this cluster structure is regular and complete and thus all matrix entries $N_{ij}$ have Laurent polynomial expressions in terms of the initial cluster. Being a frozen variable, $N_{n1}=x_{n1}$ appears only in the numerators of these expressions. Since 
\[
X_{0,-}=X_0 X_- = X_0N\gamma(N)\gamma^2(N)\dots
\] 
and
\begin{equation}\label{X0viak}
(X_0)_{ii}=\frac{\ek_{ii}(X)}{\ek_{i+1,i+1}(X)},\quad i\in[1,n-1],\qquad (X_0)_{nn}=\ek_{nn}(X),
\end{equation}
the proof is complete.
\end{proof}

We start the treatment of the first factor in the Gauss decomposition of $X$ from the following claim.

\begin{lemma}
\label{lem:X+1Laurent}
Matrix entries of the first row of $X_+$ are  good Laurent.
\end{lemma}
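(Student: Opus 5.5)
The approach is to read off the first row of $X_+$ from $X=X_+X_{0,-}$ and then express it through the variables \eqref{cuttlemut}, \eqref{cuttlefroz}. Write $X_+=X X_{0,-}^{-1}$. The first row of $X_+$ is $e_1^TX_+$, with $(X_+)_{11}=1$. Set $Z=X_{0,-}^{-1}$. This matrix is lower triangular, and its diagonal entries are $\ek_{i+1,i+1}(X)/\ek_{ii}(X)$ by \eqref{X0viak}. Hence $(X_+)_{1j}=\sum_{k\ge j}x_{1k}Z_{kj}$.

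Rather than invert $X_{0,-}$, I would use the Cramer-type formula for the upper unipotent factor of the Gauss decomposition. It is
\[
(X_+)_{1j}=\frac{\det X_{1\cup[j+1,n]}^{[j,n]}}{\det X_{[j,n]}^{[j,n]}}
=\frac{\det X_{1\cup[j+1,n]}^{[j,n]}}{\ek_{jj}(X)}.
\]
For $j=n$ this is $x_{1n}/\ek_{nn}(X)$.

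The denominators $\ek_{jj}$ with $j\ge2$ are mutable cluster variables, so they are allowed in a good Laurent expression. The case $j=1$ is trivial. It therefore suffices to show that each numerator $\det X_{1\cup[j+1,n]}^{[j,n]}$ is good Laurent. I would try to realize this numerator directly as a ratio of determinants of the form $\hat\ek$ evaluated at a pair of unipotent-twisted matrices, as in the proof of Lemma~\ref{lem:X<Laurent}. The numerator is exactly the minor in rows $1\cup[j+1,n]$ of the trailing columns. Consider the matrix $\NK_{n,j}$ type block $\NG([j+1,n]\text{ with row }1,\dots)$. Equivalently, express $X_{[1,1]}$ through $x_{1n}$ and the functions $\phi_{kl}$: the first row enters $\phi_{kl}$ only through third-tier rows.

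A cleaner route is to use the identity $x_{1n}$-row relation from Lemma~\ref{eq:vtomu} together with Lemma~\ref{lem:X<Laurent}. Write $e_1^TX_+=e_1^TX\,X_{0,-}^{-1}$, and note that $X_{0,-}^{-1}$ is a product of factors $\gamma^k(N)^{-1}$ and $X_0^{-1}$. The entries of these factors are polynomials in the entries of $N^{-1}$, which are good Laurent by the same lower-unipotent cluster argument, times ratios of $\ek_{ii}$. This leaves the first row $x_{1k}$ of $X$ itself. Here I would use the functions $\phi_{kl}$. By \eqref{badphi} and the proof of Theorem~\ref{expressions}(i), $\phi_{k,n-k-1}$ is, up to sign, $x_{1n}\det X$ times a minor of $[\one^{[1,k]}\ U^{[1,n-k-1]}]$, that is, essentially a single entry $u_{\cdot,\cdot}$ of $U$. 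Combining this with $X_L=UX_R$ and the Gauss factors should express $e_1^TX_+$ through $\phi_{k,n-k-1}$, $\ek_{ij}$ and $x_{1n}$. I would aim for a triangular recursion in $j$ from $j=n$ downward: each step introduces only one new $\phi$ in the numerator, and the previous entries come with coefficients built from entries of $X_{0,-}$.

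The main obstacle is controlling the denominators. Naively, $x_{1n}$ appears in the denominator (from $X_L$), and so does $\det X$ (from $\phi_{kl}$ normalization). I would handle this by checking that the relevant $\phi_{k,n-k-1}$ already carry the factor $x_{1n}\det X$, as in \eqref{badphi} with $n-k-l=1$, so the two factors cancel. If the cancellation is not manifest, I would fall back on Proposition~\ref{twolaurent}. That requires a second Laurent expression, from Theorem~\ref{thm:InitLaurent}, whose denominator is coprime to $x_{1n}$ and $\det X$; since frozen variables are units only after localization, a coprimality argument against the cluster monomials would then remove them from the denominator.
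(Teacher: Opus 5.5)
Your Cramer-type formula $(X_+)_{1j}=\det X^{[j,n]}_{1\cup[j+1,n]}/\ek_{jj}(X)$ is correct, but it only relocates the problem. Since $\ek_{jj}$ is a mutable cluster variable, good Laurentness of the numerator is \emph{equivalent} to the lemma, and you never produce an expression for that numerator.

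The ``cleaner route'' is circular in the same sense. Writing $e_1^TX_+=e_1^TX\,X_{0,-}^{-1}$ merely trades the first row of $X_+$ for the first row of $X$, which is what Theorem~\ref{fish_to_cuttlefish}(ii) is trying to establish.

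Your sketch via $\phi_{k,n-k-1}$ names the right family but stops short of the one computation that matters, and two details are off. First, $\fy_{k,n-1-k}(U)=\pm\det U^{[1,n-1-k]}_{[k+1,n-1]}$ is a minor of size $n-1-k$, not a single entry (unless $k=n-2$). Second, a recursion downward from $j=n$ cannot work. Modulo the rows $X_{[k+2,n]}$ of the lower block of $\Nphi_{k,n-1-k}$, the first row of $X$ reduces to $\sum_{j\le k+1}(X_+)_{1j}(X_{0,-})_{[j,j]}$. So $\phi_{k,n-1-k}$ sees only $(X_+)_{11},\dots,(X_+)_{1,k+1}$, and the system is triangular \emph{upward} from $j=2$.

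The fallback to Proposition~\ref{twolaurent} also fails. The Laurent expressions from Theorem~\ref{thm:InitLaurent} may have $x_{1n}$, $x_{n1}$, $\det X$ in their denominators, so the coprimality you need is not available. In any case, that proposition consumes Theorem~\ref{fish_to_cuttlefish}(ii) as an input; it cannot be used to prove it.

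The missing step is the following. Start from $\phi_{k,n-1-k}(X)=\det\left[\begin{smallmatrix}X_{[n-k,n]}&0\\0&X_{1\cup[k+2,n]}\end{smallmatrix}\right]$. Use the same $\N_+\times\N_-$ invariance as for $\hat\ek_{ij}$ to replace the two blocks by $(X_0N)_{[n-k,n]}$ and $(X_+X_0)_{1\cup[k+2,n]}$, with $N=X_-\gamma(X_-^{-1})$. Pulling out $\ek_{n-k,n-k}(X)\ek_{k+2,k+2}(X)$ and expanding along the remaining row gives
\[
\phi_{k,n-1-k}(X)=\ek_{n-k,1}(X)\,(X_+)_{1,k+1}+\ek_{n-k,n-k}(X)\,\ek_{k+2,k+2}(X)\sum_{j=1}^{k}(-1)^{j+k}(X_+X_0)_{1j}\det N^{[1,k+1]\setminus\{j+1\}}_{[n-k,n]}.
\]
The leading coefficient collapses to $\ek_{n-k,1}(X)$ by \eqref{eq:Nminors} with $(i,j)=(n-k,1)$ and \eqref{X0viak}.

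For $k=1,\dots,n-2$ this is a triangular system for $(X_+)_{12},\dots,(X_+)_{1,n-1}$ with three properties:
\begin{itemize}
\item its left-hand sides are cluster variables;
\item its other coefficients are good Laurent by Lemma~\ref{lem:X<Laurent};
\item its pivots $\ek_{n-1,1},\dots,\ek_{21}$ are \emph{mutable} cluster variables.
\end{itemize}
Solving it therefore never puts a frozen variable in a denominator. Because this route never passes through $U=\Psi'(X)$, the $x_{1n}\det X$ denominators you were worried about never arise.
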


\begin{proof} Since $(X_+)_{11}=1$ and $(X_+)_{1n}={x_{1n}}/{x_{nn}}={x_{1n}}/{\ek_{nn}(X)}$, we only need to consider 
$(X_+)_{1j}$ for $j\in[2, n-1]$. Recall the determinantal description for $\phi_{k,n-1-k}$ given 
at the beginning of Section~\ref{explicitphi}:
\begin{align*}
&\phi_{k,n-1-k}(X) =\det 
\begin{bmatrix*}[l]
X_{[n-k,n]}\ 0& \\
\ 0\ X_{1\cup [k+2,n]}\hspace{-1em}
\end{bmatrix*}
= \det 
\begin{bmatrix*}[l]
(X_0 N)_{[n-k,n]}\ 0& \\
\ 0\ (X_+X_0)_{1\cup [k+2,n]} \hspace{-1em}
\end{bmatrix*}\\
&\ = {\det X_{[n-k,n]}^{[n-k,n]} }{\det X_{[k+2,n]}^{[k+2,n]}} 
\det\begin{bmatrix*}[l]
N_{[n-k,n]}^{[1,k+2]} \\
0\ (X_+X_0)_{1}^{[1,k+1]}
\end{bmatrix*}\\
&\ ={\ek_{n-k,n-k}}(X){\ek_{k+2,k+2}}(X)\\
& \quad\quad\times \left (  (X_+X_0)_{1,k+1} \det N_{[n-k,n]}^{[1,k+1]} + \sum_{j=1}^k (-1)^{j+k}(X_+X_0)_{1j} \det N_{[n-k,n]}^{[1,k+1]\setminus \{j+1\}}\right )\\
&\ =\ek_{n-k,1}(X)\cdot(X_+)_{1,k+1}\\ &\quad\quad+{\ek_{n-k,n-k}}(X){\ek_{k+2,k+2}}(X)
  \left (\sum_{j=1}^k (-1)^{j+k}(X_+X_0)_{1j} \det N_{[n-k,n]}^{[1,k+1]\setminus \{j+1\}}\right );
\end{align*}
in the last equality we used~\eqref{eq:Nminors} for $i=n-k$ and $j=1$ and~\eqref{X0viak} for $i=k+1$. 
 The resulting formulas for $\phi_{1,n-2}(X)$, $\phi_{2,n-3}(X),\dots,\phi_{n-1,1}(X)$ can be viewed as a triangular system of linear equations for $(X_+)_{12},\ldots, (X_+)_{1,n-1}$. Due to Lemma~\ref{lem:X<Laurent}, the coefficients of this system  are good Laurent,  and in solving the system recursively for $(X_+)_{12},\ldots, (X_+)_{1,n-1}$, one only needs to divide by $\ek_{n-k,1}(X),\ldots,\ek_{21}(X)$, which proves the claim.
\end{proof}

To obtain  required Laurent polynomial expressions for the remaining matrix entries of $X_+$, we first generalize the map 
$$
\Psi':GL_n \to GL_{n-1} ,\quad  \Psi': X\mapsto U(X)=X_L X^{-1}_R
$$ 
from Section~\ref{twomapsiproof} to a map 
$$
{\widehat\Psi}':GL_n\times GL_n\to GL_{n-1} , \quad  {\widehat\Psi}': (X,Y)\mapsto {\widehat U}(X,Y)=X_L Y^{-1}_R
$$
and observe that it satisfies the following easily verifiable invariance property:
\[
{\widehat U}(\gamma(\beta)X\nu,\beta Y\gamma(\nu))=\beta_{[1,n-1]}^{[1,n-1]}{\widehat U}(X,Y)\left (\beta_{[1,n-1]}^{[1,n-1]}\right )^{-1}
\]
for any $\beta\in\N_+$ and $\nu\in\N_-$. Therefore,
\begin{align}
\label{eq:Ured}
U(X)&={\widehat U}\left (X_+X_{0,-},X_+X_{0,-}\right )\\
\nonumber
&=(X^{-1}_+)_{[1,n-1]}^{[1,n-1]}{\widehat U}\left (\gamma(X^{-1}_+)X_+X_0,X_0X_-\gamma(X_-^{-1}) \right ) (X_+)_{[1,n-1]}^{[1,n-1]}.
\end{align}

Denote 
\begin{equation}
\label{eq:Vfrom7.2}
V=\gamma(X^{-1}_+)X_+ .
\end{equation}
Since the functions~\eqref{phidef} are invariant under conjugation by upper unipotent elements of $GL_{n-1}$, the formula above implies
\begin{equation}
\label{eq:UhatU}
\varphi_{kl}(U(X)) = \varphi_{kl}\left ( {\widehat U}\left (V X_0,X_0 N) \right )\right ).
\end{equation}
To better understand the structure of  $\widehat\UU={\widehat U}\left (V X_0,X_0 N\right )$, observe that for an upper triangular matrix $B$ and a lower triangular matrix $C$, 
\begin{align*}
B_L &=B_{[2,n]}^{[2,n]}
\begin{bmatrix} 
0 &  \one_{n-2}\\
-\frac{b_{11}}{b_{1n}} &\cdots  -\frac{b_{1,n-1}}{b_{1n}} 
\end{bmatrix}, \\
C^{-1}_R &=
\begin{bmatrix} 
-\frac{c_{n2}}{c_{n1}} &\cdots  -\frac{c_{nn}}{c_{n1}} \\
 \one_{n-2}& 0\\
\end{bmatrix}^{-1} \tilde C^{-1}
%\left(C_{[1,n-1]}^{[1,n-1]}\right)^{-1} 
= \begin{bmatrix} 
0 &  \one_{n-2}\\
-\frac{c_{n1}}{c_{nn}} &\cdots  -\frac{c_{n,n-1}}{c_{nn}} 
\end{bmatrix}\tilde C^{-1}
%\left(C_{[1,n-1]}^{[1,n-1]}\right)^{-1}.
\end{align*}
with $\tilde C=C_{[1,n-1]}^{[1,n-1]}$.
Consequently, if we choose $B= V X_0$, $C= X_0 N$, then
\begin{equation*}
%\label{eq:Uhat}
\widehat{\UU}
%&:={\hat U}\left (V X_0,X_0 N\right ) \\
= B_{[2,n]}^{[2,n]}
\begin{bmatrix} 
0 &  \one_{n-2}\\
-\frac{(X_{+,0})_{11}}{x_{1n}} &\cdots  -\frac{(X_{+,0})_{1,n-1}}{x_{1n}} 
\end{bmatrix}
 \begin{bmatrix} 
0 &  \one_{n-2}\\
-N_{n1} &\cdots  -N_{n,n-1}
\end{bmatrix}\tilde C^{-1}.
%\\(N^{-1})_{[1,n-1]}^{[1,n-1]}(X^{-1}_0)_{[1,n-1]}^{[1,n-1]}.
\end{equation*}

Denote $M= \left (B_{[2,n]}^{[2,n]}\right )^{-1}\hat{\mathrm U}$, 
and let
\begin{equation}
\label{eq:D12}
D_1=\mbox{diag}(\one_{n-2}, x_{1n}),\quad  D_2=\mbox{diag}(\one_{n-2}, \det X).
\end{equation}
\begin{lemma}
\label{lem:MgoodLaurent}
The  matrices
$\left (M w_0\right )^{-1}_{+}$ and $\left (M w_0\right )^{-1}_{0,-} (D_1 D_2)^{-1}$
 are good Laurent.
\end{lemma}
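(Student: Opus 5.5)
The plan is to compute $(Mw_0)^{-1}$ explicitly as a product of simple factors whose entries are already known to be good Laurent. Then I will read off its Gauss factors through minors that can be identified with cluster variables from~\eqref{cuttlemut}, \eqref{cuttlefroz}.

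By the displayed formula for $\widehat{\UU}$, $M=P_1P_2\tilde C^{-1}$. Here $P_1$ is the companion-type matrix with last row $-(X_+X_0)_{1j}/x_{1n}$, $P_2$ is the companion-type matrix with last row $-N_{nj}$, and $\tilde C=(X_0N)_{[1,n-1]}^{[1,n-1]}$. Hence $(Mw_0)^{-1}=w_0P_2^{-1}P_1^{-1}\tilde C$. The inverse of a companion matrix $\left[\begin{smallmatrix}0&\one_{n-2}\\ -a_1&\cdots\ -a_{n-1}\end{smallmatrix}\right]$ has entries $-a_{j+1}/a_1$ and $1$, so only $a_1$ appears in denominators. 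For $P_2$ this is $N_{n1}=x_{n1}$, and for $P_1$ it is $(X_+X_0)_{11}/x_{1n}=\ek_{11}(X)/(\ek_{22}(X)x_{1n})$ by~\eqref{X0viak}.

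\textbf{Good Laurentness of the entries.} All remaining entries are good Laurent:
\begin{itemize}
\item entries of $N$ and $X_0$, by Lemma~\ref{lem:X<Laurent} and~\eqref{X0viak};
\item $(X_+X_0)_{1j}$, by Lemma~\ref{lem:X+1Laurent} together with Lemma~\ref{lem:X<Laurent}.
\end{itemize}
So $(Mw_0)^{-1}$ is Laurent. The only frozen variables in the denominators are $x_{n1}$ and $\det X$, and they sit in a single row of $P_2^{-1}$ and of $P_1^{-1}$, namely the row that $w_0$ moves to the top. Since $\tilde C$ is lower triangular, multiplying by it affects only how columns combine.

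\textbf{Gauss factors via minors.} For $A=(Mw_0)^{-1}$, I will write the factors of $A=A_+A_{0,-}$ as ratios of minors of $A$. The entries of $A_{0,-}$ are ratios of column-solid minors $\det A_{I\cup[j+1,n-1]}^{[j,n-1]}$ divided by trailing principal minors $\det A_{[j+1,n-1]}^{[j+1,n-1]}$. The entries of $A_+$ have the same trailing principal minors in the denominator. So the key step is to compute the trailing principal minors $\det A_{[j,n-1]}^{[j,n-1]}$.

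Using the block structure (companion inverses are shifts plus one full row, and $\tilde C$ is lower triangular), each such minor should factor as a product of three pieces:
\begin{itemize}
\item a product of diagonal entries of $X_0N$, i.e.\ ratios of $\ek_{ii}$;
\item a single $(j\times j)$ minor mixing the last rows of $P_1$ and $P_2$;
\item powers of the frozen denominators $x_{1n}^{\pm1}$ and $\det X^{\pm1}$.
\end{itemize}
I expect the mixed minor to be $\ek_{ij}$-type or $\phi_{k,n-1-k}$-type, via the computation in the proof of Lemma~\ref{lem:X+1Laurent}, where $\phi_{k,n-1-k}$ was expanded exactly in terms of $(X_+X_0)_{1j}$ and minors of $N$. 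Once these minors are monomials in cluster variables times frozen factors, $A_+$ is good Laurent immediately, since its denominators involve no frozen variables. For $A_{0,-}$, the frozen factors accumulate only in the last diagonal position. These are precisely what right multiplication by $(D_1D_2)^{-1}=\diag(\one_{n-2},(x_{1n}\det X)^{-1})$ absorbs, which explains the form of the statement.

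The main obstacle is this minor identification: showing that every trailing principal minor of $(Mw_0)^{-1}$ is, up to frozen factors, a \emph{monomial} in $\phi_{kl}$ and $\ek_{ij}$, rather than a genuinely new polynomial. I would first try to establish it by expressing each minor through~\eqref{eq:UhatU}, since $\phi_{kl}(U)=\phi_{kl}(\widehat\UU)$ and $\widehat\UU=B_{[2,n]}^{[2,n]}M$ with $B$ upper triangular. I would then combine this with~\eqref{badphi} and~\eqref{eq:Nminors}, checking the small cases $n=4,5$ directly to pin down the pattern and signs.
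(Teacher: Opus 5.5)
Your proposal has a genuine gap: it targets the wrong matrices. In the paper, $(Mw_0)^{-1}_+$ and $(Mw_0)^{-1}_{0,-}$ are the \emph{inverses of the Gauss factors} of $Mw_0=(Mw_0)_+(Mw_0)_{0,-}$. This is exactly how they arise in Lemma~\ref{lem:B+}. They are not the Gauss factors of $A=(Mw_0)^{-1}$.

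Under your reading the objects do not exist for $n\ge5$. By Jacobi's complementary minor formula, $\det A_{[2,n-1]}^{[2,n-1]}=(Mw_0)_{11}/\det(Mw_0)$. Rows $1,\dots,n-2$ of $P_1$ and of $P_2$ are $e_2^T,\dots,e_{n-1}^T$, so the first row of $M=P_1P_2\tilde C^{-1}$ is the third row of the lower triangular matrix $\tilde C^{-1}$. Hence $(Mw_0)_{11}=\pm M_{1,n-1}=0$, and the ``main obstacle'' you isolate has a negative answer in your formulation.

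The bookkeeping is also off in two places. First, $(Mw_0)^{-1}=w_0^{-1}\tilde C P_2^{-1}P_1^{-1}$, so $\tilde C$ mixes rows, not columns. Second, the frozen denominators $x_{n1}$ and $\det X$ of $M^{-1}$ cannot be removed by right multiplication by $(D_1D_2)^{-1}=\diag(\one_{n-2},(x_{1n}\det X)^{-1})$, which only divides the last column by $x_{1n}\det X$. The matrices $D_1,D_2$ are tailored to $M$ itself. Its frozen denominators are $x_{1n}$ in the last row (from $P_1$) and $\det X$ in the first column (from $(X_0)_{11}=\det X/\ek_{22}(X)$ inside $\tilde C^{-1}$).

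The missing idea is never to invert $M$, and to control the Gauss factors of $Mw_0$ through its own trailing principal minors. These are lower-left corner minors $\pm\det M_{[i,n-1]}^{[1,n-i]}$.
\begin{itemize}
\item Passing to $\widehat\UU=B_{[2,n]}^{[2,n]}M$ multiplies them only by the diagonal of the upper triangular matrix $B_{[2,n]}^{[2,n]}$, which gives a mutable $\ek_{i+1,i+1}(X)$.
\item The upper unipotent conjugation in \eqref{eq:Ured} does not change them.
\item So, up to such factors, they equal $\det U(X)_{[i,n-1]}^{[1,n-i]}=\pm\fy_{i-1,n-i}(U(X))=\pm\phi_{i-1,n-i}(X)/(x_{1n}\det X)$, by \eqref{badphi} with $n-k-l=1$.
\end{itemize}
Now $\widetilde M=D_1M\,w_0D_2w_0^{-1}$ has good Laurent entries. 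The trailing principal minors of $\widetilde Mw_0=D_1(Mw_0)D_2$ of sizes $1,\dots,n-2$ are Laurent monomials in mutable variables, so both Gauss factors of $\widetilde Mw_0$ are good Laurent. One then transports back via $(Mw_0)_+=D_1^{-1}(\widetilde Mw_0)_+D_1$ and $(Mw_0)_{0,-}=D_1^{-1}(\widetilde Mw_0)_{0,-}D_2^{-1}$. This is the paper's proof. Your suggestion to go through \eqref{eq:UhatU} and $\phi_{k,n-1-k}$ points this way, but you apply it to the wrong matrix.

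One caution if you carry this out. It handles $(Mw_0)_+^{-1}$ cleanly, but inverting the lower factor divides by its $(1,1)$ entry $\det(Mw_0)/\det(Mw_0)_{[2,n-1]}^{[2,n-1]}$. By \eqref{detU}, $\det(Mw_0)=\pm x_{n1}/(x_{1n}\ek_{22}(X))$. So the $(1,1)$ entry of $(Mw_0)^{-1}_{0,-}(D_1D_2)^{-1}$ is, up to sign and a ratio of $\ek$'s, $\phi_{1,n-2}/(x_{n1}\det X)$, which is not good Laurent. The paper's step from $(\widetilde Mw_0)_{0,-}$ to its inverse passes over this. The frozen factors cancel only in the product with $(\bar\nu w_cw_0)_{0,-}$ used in the proof of Theorem~\ref{fish_to_cuttlefish}(ii), whose $(1,1)$ entry is $\pm x_{n1}\det X/\phi_{1,n-2}$. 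That product, rather than the second matrix in the statement on its own, is what can actually be proved good Laurent.
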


\begin{proof}
By Lemmas~\ref{lem:X<Laurent} and~\ref{lem:X+1Laurent}, all matrix entries of $M$ are Laurent. Furthermore, after multiplying $M$ by $D_1$ on the left and by $w_0 D_2 w_0^{-1}=\mbox{diag}(\det X, \one_{n-2})$ on the right, all the matrix entries of the resulting matrix $\widetilde M$ become good Laurent.

Further, $\left (M w_0\right )_{0,-} = ( \widehat{\UU}w_0)_{0,-}$, so by~\eqref{eq:Ured},~\eqref{phidef}, 
and~\eqref{badphi},
\begin{align*}
\det(M w_0)_{[i,n-1]}^{[i,n-1]}&=\det\widehat{\UU}_{[i,n-1]}^{[1,n-i]}=\det U(X)_{[i,n-1]}^{[1,n-i]}
= \pm\frac{\phi_{i-1,n-i}(X)}{x_{1n}\det X},\\
\det(\widetilde M w_0)_{[i,n-1]}^{[i,n-1]}&= \det(D_1 M w_0 D_2)_{[i,n-1]}^{[i,n-1]}=\pm\phi_{i-1,n-i}(X).
\end{align*}
Recall that the denominators that enter the Gauss factorization of a matrix are its principal trailing minors, hence
matrix entries of all factors in the Gauss factorization of $M w_0$ are Laurent, and those of $\widetilde M w_0$ are good Laurent. Consequently, $\left (M w_0\right )_{+} =  D_1^{-1}   (\widetilde M w_0)_{+} D_1$ is also good Laurent and so is its inverse.  Finally, $\left (M w_0\right )_{0,-} =  D_1^{-1} (\widetilde M w_0 )_{0,-} D_2^{-1}$ and so
\[
\left (M w_0\right )^{-1}_{0,-} (D_1 D_2)^{-1} = D_2  (\widetilde M w_0)^{-1}_{0,-} D_2^{-1}
\]
is good Laurent.
\end{proof}

In order to restore the matrix entries of $B_{[2,n]}^{[2,n]}$ as Laurent polynomials in~\eqref{cuttlemut}, \eqref{cuttlefroz}, we need to briefly review some technical results from~\cite{GSVdouble}. (Note that the statements and formulas we will need are obtained from their counterparts in~\cite{GSVdouble} via consistently applied conjugation 
by $w_0$ and replacing $n$ with $n-1$.) Let $w_c = E_{12} + \ldots + E_{n-2, n-1} +E_{n-1,1}$ 
be a representative in $SL_{n-1}$ of the Coxeter element $s_{n-2}\cdots s_1\in S_{n-1}$. By~\cite[Lemma 8.2]{GSVdouble}, one can uniquely represent $\widehat{\UU}$ as
$\widehat{\UU} = \hat\beta  (\hat\nu w_c)  {\hat\beta}^{-1}$
with $\hat\beta \in \N_+$, $\hat\nu \in \B_-$ depending rationally on matrix entries of $\widehat{\UU}$.
Moreover, by~\cite[Section 7.3.2]{GSVdouble}, the matrix entries of $\hat\nu=\left (\hat\nu_{ij} \right )_{i,j=1}^{n-1}$ are Laurent polynomials in terms of functions  $\varphi_{kl}(\widehat{\UU})$.
The latter claim follows from the formulas~\cite[equation~(5.8)]{GSVdouble} that can be rewritten as
\begin{equation}
\label{eq:phi_from_nu}
\begin{aligned}
&\varphi_{kl}(\widehat{\UU}) = \pm\det \hat\nu^{[1,l-1]}_{[k+1,k+l-1]} \eta_1\cdots\eta_{n-k-l}, \quad l >1, k+l\leq n-1, \\ 
&\varphi_{k1}(\widehat{\UU}) = \pm\eta_1\cdots\eta_{n-k-1},\quad  k\in [1,n-2],
\end{aligned}
\end{equation}
where
$\eta_s= \hat\nu_{n-1,n-1} \cdots \hat\nu_{n-s,n-s}$ for  $s\in [1,n-2]$.
From~\eqref{eq:phi_from_nu} one obtains monomial expressions in terms of functions $\varphi_{kl}(\widehat{\UU})$ for the diagonal entries and flag minors of $\hat{\nu}_{[1,n-2]}^{[1,n-2]}$ :
\begin{equation}\label{eq:nu_from_phi}
\begin{aligned}
&\hat\nu_{11} = \pm \frac{x_{n1} \varphi_{21}(\widehat{\UU})}{x_{1n} \varphi_{11}(\widehat{\UU})},\\
&\hat\nu_{kk} = \pm\frac{\varphi_{k-1,1}(\widehat{\UU})\varphi_{k+1,1}(\widehat{\UU})}{\varphi_{k1}(\widehat{\UU})^2},\quad k\in [2,n-2],\\ 
&\hat\nu_{n-1,n-1} = \pm \varphi_{n-2,1}(\widehat{\UU}),\\ 
& \det \hat\nu^{[1,l-1]}_{[k+1,k+l-1]}=\pm \frac{\varphi_{kl}(\widehat{\UU})}{\varphi_{k+l-1,1}(\widehat{\UU})}, \quad
 l\leq n-1, k < n-l.
\end{aligned}
\end{equation}
These form an initial cluster for the standard cluster structure on $(n-2)\times(n-2)$ lower triangular matrices which is well known to be complete (see the proof of Lemma~\ref{lem:X<Laurent}).  This ensures that the matrix entries of 
 $\hat{\nu}_{[1,n-2]}^{[1,n-2]}$ are Laurent polynomials in~\eqref{cuttlemut},~\eqref{cuttlefroz}.
Finally, Laurent polynomial expressions for 
the last row of $\hat{\nu}$ were obtained in~\cite[Section 7.3.2]{GSVdouble} by representing the entries 
$\hat{\nu}_{n-1,j}$,  $j\in[1, n-2]$ as polynomials in off-diagonal entries of $\hat{\nu}_{[1,n-2]}^{[1,n-2]}$ and coefficients $c_1(\widehat{\UU}),\ldots, c_{n-2}(\widehat{\UU})$ divided by monomials in $\hat\nu_{kk}$,  $k\in [1,n-2]$.

\begin{lemma}
\label{lem:B+}
The matrix $B_{[2,n]}^{[2,n]}$ in~\eqref{eq:Ured} can be written as 
\[
B_{[2,n]}^{[2,n]}= w_0 (M w_0)^{-1}_{0,-} (\hat\nu w_cw_0)_{0,-} w_0^{-1} (\hat\nu w_cw_0)_+(M w_0)_+^{-1}.
\]
\end{lemma}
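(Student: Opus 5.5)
The plan is to obtain two Gauss-type factorizations of the same matrix $M = \bigl(B_{[2,n]}^{[2,n]}\bigr)^{-1}\widehat{\UU}$ and compare them. The data are as follows.

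\begin{itemize}
\item By definition, $\widehat{\UU} = B_{[2,n]}^{[2,n]} M$.
\item By \cite[Lemma 8.2]{GSVdouble}, $\widehat{\UU} = \hat\beta\,(\hat\nu w_c)\,\hat\beta^{-1}$ with $\hat\beta\in\N_+$.
\item $B = VX_0$ is upper triangular, so $B_{[2,n]}^{[2,n]}$ is upper triangular as well.
\end{itemize}

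The key observation is that the explicit formula for $\widehat{\UU}$ preceding Lemma~\ref{lem:MgoodLaurent} gives $M$ as a product of two companion-type matrices and $\tilde C^{-1}$. Since $\tilde C$ is lower triangular, $M$ has a prescribed structure. I would verify that $Mw_0$ admits a Gauss factorization $Mw_0=(Mw_0)_+(Mw_0)_{0,-}$. This holds because its trailing principal minors are, up to the monomials in $D_1,D_2$, the nonvanishing functions $\phi_{i-1,n-i}$, as computed in the proof of Lemma~\ref{lem:MgoodLaurent}.

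First I write
\[
B_{[2,n]}^{[2,n]}=\widehat{\UU}M^{-1}=\hat\beta\,\hat\nu w_c\,\hat\beta^{-1}\,w_0\,(Mw_0)_{0,-}^{-1}(Mw_0)_+^{-1}.
\]
Then I factor $\hat\nu w_cw_0=(\hat\nu w_cw_0)_+(\hat\nu w_cw_0)_{0,-}$ and rearrange. The target identity is equivalent to
\[
B_{[2,n]}^{[2,n]}(Mw_0)_+ = w_0 (Mw_0)^{-1}_{0,-} w_0^{-1}\cdot w_0(\hat\nu w_cw_0)_{0,-}w_0^{-1}(\hat\nu w_cw_0)_+ .
\]
On the right, conjugation by $w_0$ turns lower triangular factors into upper triangular ones. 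The right-hand side is then a product of upper triangular factors and the upper unipotent $(\hat\nu w_cw_0)_+$, so it is upper triangular; the left-hand side is upper triangular too. So both sides lie in $\B_+$.

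The task thus reduces to a uniqueness argument. I would show that $X_1:=B_{[2,n]}^{[2,n]}(Mw_0)_+$ and $X_2:=w_0(Mw_0)^{-1}_{0,-}(\hat\nu w_cw_0)_{0,-}w_0^{-1}(\hat\nu w_cw_0)_+$ satisfy the same characterizing equation. The natural candidate comes from $\widehat{\UU}w_0 = B_{[2,n]}^{[2,n]}(Mw_0)_+(Mw_0)_{0,-}$. This displays $\widehat{\UU}w_0$ as (upper triangular) times (lower triangular), and $\widehat{\UU}=\hat\beta\hat\nu w_c\hat\beta^{-1}$ yields another such expression; the decomposition $\B_+\cdot\B_-$ of a generic element is unique up to the diagonal torus.

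The main obstacle is dealing with the factor $\hat\beta$: it is not a priori expressible through $(Mw_0)_\pm$, and the torus ambiguity must be pinned down. I would try to eliminate $\hat\beta$ using the normalization in \cite[Lemma 8.2]{GSVdouble}, where $\hat\beta$ is uniquely fixed by requiring $\hat\nu\in\B_-$. Concretely, I would show that
\[
\hat\beta' := B_{[2,n]}^{[2,n]}(Mw_0)_+ (\hat\nu w_cw_0)_+^{-1}\,w_0(\ldots)w_0^{-1}
\]
is upper unipotent and conjugates $\hat\nu w_c$ to $\widehat{\UU}$, hence equals $\hat\beta$ by uniqueness. Comparing diagonal parts, namely $(Mw_0)_0$ against $(\hat\nu w_cw_0)_0$, then fixes the torus factor and yields the stated formula.
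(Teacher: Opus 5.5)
Your strategy is the paper's: write $\widehat{\UU}w_0=B'(Mw_0)_+(Mw_0)_{0,-}$ and $\widehat{\UU}w_0=\hat\beta(\hat\nu w_cw_0)_+\cdot(\hat\nu w_cw_0)_{0,-}w_0^{-1}\hat\beta^{-1}w_0$, where $B'=B_{[2,n]}^{[2,n]}$, and then read off the factors. You are right that the torus must be handled. Indeed, $B'=(VX_0)_{[2,n]}^{[2,n]}$ is upper triangular with diagonal $d=(X_0)_{[2,n]}^{[2,n]}\neq\one$. However, the argument breaks at the step you only assert, namely that comparing diagonals ``fixes the torus factor and yields the stated formula''. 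Your $\hat\beta'$ also still contains a placeholder, so the crux is never written down.

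Carry out that comparison explicitly. Left multiplication by $\hat\beta\in\N_+$ and right multiplication by $w_0^{-1}\hat\beta^{-1}w_0\in\N_-$ preserve trailing principal minors, and $B'$ is upper triangular. So computing the trailing principal minors of $\widehat{\UU}w_0$ both ways gives $d\,(Mw_0)_0=(\hat\nu w_cw_0)_0$.

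Now look at your rearranged identity, where both sides lie in $\B_+$. The left side $B'(Mw_0)_+$ has diagonal $d$. The right side has diagonal $w_0(Mw_0)_0^{-1}(\hat\nu w_cw_0)_0w_0^{-1}=w_0dw_0^{-1}$, which is $d$ reversed. So with the paper's convention $A=A_+A_{0,-}$, $A_+\in\N_+$, the displayed identity fails for generic $X$: equality of the $(1,1)$ entries would force $(X_0)_{22}=x_{nn}$.

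What uniqueness of the $\N_+\B_-$ factorization of $\widehat{\UU}w_0$ actually gives is
$B'(Mw_0)_+d^{-1}=\hat\beta(\hat\nu w_cw_0)_+$ and $d\,(Mw_0)_{0,-}=(\hat\nu w_cw_0)_{0,-}w_0^{-1}\hat\beta^{-1}w_0$. Hence
\[
B'=w_0(Mw_0)_{0,-}^{-1}\,d^{-1}(\hat\nu w_cw_0)_{0,-}w_0^{-1}(\hat\nu w_cw_0)_+\,d\,(Mw_0)_+^{-1}.
\]
No appeal to the uniqueness in \cite[Lemma 8.2]{GSVdouble} is needed for this.

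The paper's own proof makes the same slip: its ``comparing the factors'' equates $B'(Mw_0)_+$ with the unipotent matrix $\hat\beta(\hat\nu w_cw_0)_+$. The displayed formula is literally correct only if $(\cdot)_+$ is read as the $\B_+$-factor and $(\cdot)_{0,-}$ as the $\N_-$-factor.

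For the application the discrepancy is harmless. The entries of $d^{\pm1}$ are $\ek_{kk}/\ek_{k+1,k+1}$ for $k\in[2,n-1]$, together with $\ek_{nn}$, and their reciprocals; all of these are good Laurent. So the concluding argument for $B'$ goes through with the corrected formula. Your proposal as written, however, cannot produce the identity it claims.
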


\begin{proof} Recall that $\widehat{\UU} = B^{[2,n]}_{[2,n]} M = \hat\beta  (\hat\nu w_c)  {\hat\beta}^{-1}$. Multiplying by $w_0$ on the right and taking the Gauss factorization of both sides we arrive at
\[
B^{[2,n]}_{[2,n]} (M w_0)_+ (M w_0)_{0,-} = \hat\beta  (\hat\nu w_c w_0)_+  (\hat\nu w_c w_0)_{0,-} w_0^{-1}  {\hat\beta}^{-1} w_0.
\]
Comparing the factors in the latter equality, we obtain
\[
\hat\beta^{-1} = w_0 (\hat\nu w_c w_0)_{0,-}^{-1} (M w_0)_{0,-}  w_0^{-1} 
\]
and
\[
B^{[2,n]}_{[2,n]}= \hat\beta (\hat\nu w_c w_0)_+ (M w_0)_+^{-1} = w_0 (M w_0)^{-1}_{0,-} (\hat\nu w_cw_0)_{0,-} w_0^{-1} (\hat\nu w_cw_0)_+(M w_0)_+^{-1}.
\]
\end{proof}

We are now ready to complete the proof that $X$ is good Laurent.
Indeed the first row of $V$ defined in~\eqref{eq:Vfrom7.2} coincides with the first row of $X_+$ , while
$V_{[2,n]}^{[2,n]}=B^{[2,n]}_{[2,n]} \left ((X_0)^{[2,n]}_{[2,n]} \right )^{-1}$. 
%By Lemmas \ref{lem:X+1Laurent},\ref{lem:B+}, 
If $V$ is good Laurent, then the same is true about $X_+= \cdots \gamma^2(V) \gamma(V) V$ and, due to Lemma \ref{lem:X<Laurent}, about $X= X_+ X_{0,-}$.

In view of  Lemmas~\ref{lem:X<Laurent} and~\ref{lem:X+1Laurent}, to verify that $X$ is good Laurent, we only need to establish a similar claim for $B^{[2,n]}_{[2,n]}$. First, we analyze how  the frozen variables $x_{n1}$, $x_{1n}$,
 and $\det X$ enter the Laurent polynomial expressions for the matrix entries $\hat\nu_{ij}$ discussed above. Observe that by~\eqref{eq:UhatU} and~\eqref{badphi}, equations~\eqref{eq:nu_from_phi} can be re-written as
\begin{equation}\label{eq:nu_from_phiX}
\begin{aligned}
&\hat\nu_{11} = \pm \frac{x_{n1} \det X \phi_{21}(X)}{ \phi_{11}(X)},\\
&\hat\nu_{kk} = \pm\frac{\phi_{k-1,1}(X)\phi_{k+1,1}(X)}{\phi_{k1}(X)^2},\quad k\in [2,n-2],\\ 
&\hat\nu_{n-1,n-1} = \pm \frac{\phi_{n-2,1}(X)}{x_{1n}\det X},\\ 
& \det \hat\nu^{[1,l-1]}_{[k+1,k+l-1]}=\pm \frac{\phi_{kl}(X)}{\phi_{k+l-1,1}(X)}, \quad l\leq n-1, k < n-l.
\end{aligned}
\end{equation}
This means, in particular, that expressions for entries of the rows 2 through $n-2$ of $\hat\nu$ do not contain frozen variables at all. 

Let us look more closely at the Laurent polynomial expressions for the entries of the last row of $\hat\nu$, following the strategy of~\cite[Section 7.3.2]{GSVdouble}.
It involves a conjugation of $\hat\nu w_c$ to its companion form performed in two steps. The first is the diagonal conjugation by 
\[
T=\mbox{diag}(t_1,\ldots,t_{n-1}):=\mbox{diag}(\hat\nu_{11}\cdots\hat\nu_{n-2,n-2}, \hat\nu_{22}\cdots\hat\nu_{n-2,n-2},\ldots,\hat\nu_{n-2,n-2}, 1)
\]
that results in $T^{-1} \hat\nu w_c T =\begin{bmatrix}0 & \zeta\\ \varkappa & \xi   \end{bmatrix}$, where 
\[
\varkappa= \hat\nu_{11}\cdots\hat\nu_{n-1,n-1}= \det\hat\nu=\pm \det U(X) = \pm \frac{x_{n1}}{x_{1n}}, \quad
\xi = (\hat\nu_{n-1,j}t_{j+1})_{j=1}^{n-2}, 
\]
and $\zeta$ is a unipotent $(n-2)\times (n-2)$ lower triangular matrix whose off-diagonal entries $\zeta_{ij}= \hat\nu_{ij}t_i^{-1}t_{j+1}$,  $1\le j<i<n-1$, are good Laurent by the second and the fourth equations 
in~\eqref{eq:nu_from_phiX}. The second step is the conjugation by the matrix 
$\eta=\begin{bmatrix} 1 & 0\\ 0 & \zeta^{-1} \gamma(\zeta^{-1}) \gamma^2(\zeta^{-1}) \cdots  \end{bmatrix}$. The resulting matrix
$\eta^{-1}T^{-1} \hat\nu w_c T\eta$ is in a  companion form with coefficients of the characteristic polynomial in the last row. Since $\hat\nu w_c$ is similar to $U(X)$, the last row of this companion matrix is made of functions 
$\pm c_p(U(X))$. We conclude that the matrix entries $\hat\nu_{n-1,j}$,  $j\in[1, n-2]$ are represented by expressions that are linear in $c_i(U(X))$, polynomial in $\zeta_{ij}$, and Laurent polynomial in $\hat\nu_{ii}$, $i\in[2, n-2]$. 
By~\eqref{badc},
$c_p(U(X))=\frac{\ec_p(X)}{x_{1n}\det X}$. Together with the third equation in~\eqref{eq:nu_from_phiX}, this means that every matrix entry of the last row of $\hat\nu$ is a good Laurent function divided by $x_{1n}\det X$. To summarize, the matrix $\bar \nu=D_1 D_2 \hat\nu$, where $D_1, D_2$ are defined in~\eqref{eq:D12}, is good Laurent.

Now, consider the Gauss factorization of $\bar \nu w_c w_0$. The trailing principal minors are
\begin{multline*}
\det(\bar \nu w_c w_0)_{[i,n-1]}^{[i,n-1]} =\pm \bar \nu_{n-1,n-1} \det(\bar \nu)_{[i,n-2]}^{[1,n-i-1]}\\
= \pm x_{1n}\det X \hat\nu_{n-1,n-1} \det(\hat\nu)_{[i,n-2]}^{[1,n-i-1]}
= \pm \phi_{n-2,1}(X)  \frac{\phi_{i-1,n-i}(X)}{\phi_{n-2,1}(X)}.
\end{multline*}
Thus, the factors in the Gauss factorization of $\bar \nu w_c w_0$ are good Laurent, and so is
$(\hat\nu w_c w_0)_+ = (D_1 D_2)^{-1} (\bar\nu w_c w_0)_+ D_1 D_2$, while
 $(\hat\nu w_c w_0)_{0,-} = (D_1 D_2)^{-1} (\bar \nu w_c w_0)_{0,-}$ is Laurent, but not good Laurent. However, we can rewrite the equation for $B_{[2,n]}^{[2,n]}$ from Lemma~\ref{lem:B+} as follows:
 \begin{align*}
 B_{[2,n]}^{[2,n]}&= w_0 (M w_0)^{-1}_{0,-} (\hat\nu w_cw_0)_{0,-} w_0^{-1} (\hat\nu w_cw_0)_+(M w_0)_+^{-1}\\
 &= w_0 \left((M w_0)^{-1}_{0,-} (D_1 D_2)^{-1}\right )  (\bar \nu w_cw_0)_{0,-} w_0^{-1} (\hat\nu w_cw_0)_+(M w_0)_+^{-1}.
 \end{align*}
 Taking into account Lemma~\ref{lem:MgoodLaurent}, we see that in the last expression above all factors are good Laurent, which concludes the proof. \qed

\subsection{Final steps in the proof of Theorem~\ref{initialseed}} To complete the proof of Theorem~\ref{initialseed} we have to check that conditions of Proposition~\ref{twolaurent} are valid. We will apply this proposition to the Laurent representations obtained in Theorems~\ref{thm:InitLaurent} and~\ref{fish_to_cuttlefish}(ii). Therefore, we have to check 
coprimality for any polynomial in the family $\left\{\eg_{ii}(X), i\in[2,n-1]; x_{1n}; x_{n1}; \det X\right\}$, with any 
polynomial in the family 
$\left\{\ek_{ij}(X), 1\le j\le i\le n, (i,j)\ne (n,1), (1,1); \phi_{kl}(X), k,l\in[1,n-2], k+l\le n-1\right\}$.

We start from the following statement.

\begin{proposition}\label{giiirr}
Polynomials $\eg_{ii}(X)$, $i\in[2,n-1]$, are irreducible.
\end{proposition}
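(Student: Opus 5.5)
We prove irreducibility of $\eg_{ii}(X)=\det\NG_{ii}(X)$ by induction on $i$, combining a degree argument with a specialization argument. Recall that $\NG_{ii}(X)=\NG(\{1,n\}^{i-2},1\cup[n,n],1\cup[i+1,n])$ is the $(n+i-1)\times(n+i-1)$ matrix whose rows are rows $1$ and $n$ of $X$ repeated with shifts, together with the last block $X_{1\cup[i+1,n]}$.

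\textbf{Step 1: degree in each variable.} Each variable $x_{pq}$ with $p\in[i+1,n-1]$ occurs in exactly one row of $\NG_{ii}(X)$. Hence $\eg_{ii}$ is multilinear in these variables, and similarly has bounded degree in the entries of rows $1$ and $n$. By Lemma~\ref{weights}, $\eg_{ii}$ is bihomogeneous with respect to the torus action $X\mapsto t^{D_n}Xs^{\bar D_n}$ and also homogeneous in the entries of $X$. Any factorization $\eg_{ii}=PQ$ therefore has both factors homogeneous with respect to the same gradings.

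\textbf{Step 2: the factor containing the middle rows.} Expanding along the last block row shows that $\eg_{ii}$ is a nonzero linear combination of maximal minors of $X_{[i+1,n-1]}$ restricted to the relevant columns, with coefficients polynomial in rows $1$ and $n$. A nonzero polynomial that is multilinear in the entries of a generic block and depends on them only through its maximal minors must have all of that dependence in a single factor, since the Plücker coordinates of that block are irreducible and algebraically independent up to the Plücker relations. So exactly one factor $P$ contains all entries of the rows $[i+1,n-1]$. The other factor $Q$ depends only on rows $1$ and $n$ of $X$.

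\textbf{Step 3: ruling out $Q$.} By Theorem~\ref{expressions}(ii), $\eg_{ii}(X)=\pm x_{1n}^{i-1}\det X\,g_{ii}(\Psi'(X))$. Moreover $g_{ii}(U)$ is a cluster variable in a complete regular structure, hence not divisible by $x_{1n}$ or $x_{n1}$ in a way that would give a polynomial in rows $1,n$ alone. To make this rigorous, I would specialize $X$ so that rows $2,\dots,n-1$ are generic while rows $1$ and $n$ are chosen freely. Then:
\begin{itemize}
\item $Q$ must divide $\eg_{ii}$ for every choice of the middle rows;
\item choosing the middle rows so that the expansion along them picks out a single minor of a shifted two-row Sylvester-type block (this block is $\NF(X)$-like, with resultant structure, since $\eg_{n-1,n-1}=\ef_{2n-2}$) reduces the claim to irreducibility of a specific determinant in rows $1,n$. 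Alternatively, exhibit two specializations of the middle rows whose resulting polynomials in rows $1,n$ are coprime, for instance one being a power of $x_{1n}$ times something and the other a power of $x_{n1}$ times something.
\end{itemize}
Coprimality of these two specializations forces $Q$ to be a constant.

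\textbf{Main obstacle.} The case $i=n-1$ is the hardest, because there are no middle rows and $\eg_{n-1,n-1}=\ef_{2n-2}$ is a polynomial in rows $1$ and $n$ only. There I would use the Hankel interpretation of Theorem~\ref{expressions}(iv): $\ef_{2n-2}=x_{1n}^{2n-2}\bar t^+_{n-1}$, and $\bar t^+_{n-1}$ is, up to a factor $p(0)$-type unit, the resultant $\mathrm{Res}(p,\bar q)$-like Hankel determinant of $\bar M$. The resultant of two generic polynomials is irreducible. Checking that the substitution introduces no extra factor, by comparing degrees and the absence of $x_{1n}$-divisibility (set $x_{1n}=0$ and verify that the determinant is nonzero), would finish that case. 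I expect this resultant identification and the $x_{1n}$-divisibility check to be the delicate step.
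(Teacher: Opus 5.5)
\textbf{Step 3 has a genuine gap.} Step 2 is fine in substance, although the Pl\"ucker justification is not really an argument. The clean reason is that $\eg_{ii}$ has degree one in each row $r\in[i+1,n-1]$ of $X$ and is unchanged when one such row is added to another, and this prevents those rows from being split between two factors.

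For $i\in[2,n-2]$, Step 3 carries the whole content of the proposition, and you only sketch it. You must show that no nonconstant polynomial $Q$ in the entries of rows $1$ and $n$ divides $\eg_{ii}(X)$.

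The appeal to $g_{ii}(U)$ being a cluster variable does not do this. Even granting that $g_{ii}$ is irreducible in $\C[U]$, irreducibility is not preserved under pullback by the rational map $\Psi'$. Whether the polynomial $\eg_{ii}=\pm x_{1n}^{i-1}\det X\,g_{ii}(\Psi'(X))$ factors is exactly the question being asked.

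The specialization strategy is legitimate in principle, but you exhibit no specialization and prove no irreducibility or coprimality. For example, setting rows $i+1,\dots,n-1$ equal to unit rows turns $\eg_{ii}$ into a maximal minor of the $2i\times(n+i-1)$ matrix $\NG(\{1,n\}^{i})$ with $n-i-1$ of its last $n$ columns deleted. Proving one such minor irreducible, or two of them coprime, is a problem of the same kind as the original. Knowing that one is a power of $x_{1n}$ times something and the other a power of $x_{n1}$ times something gives nothing, because the cofactors are uncontrolled.

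\textbf{The missing idea is induction on the size of $X$ with $i$ fixed.} Since $x_{i+1,n}$ occurs only in the last column of $\NG_{ii}(X)$, its coefficient $c_{i+1,n}$ in $\eg_{ii}(X)$ is a cofactor. Setting $x_{1n}=x_{nn}=0$ turns this cofactor into $\pm\eg_{ii}(Y)$, where $Y$ is obtained from $X$ by deleting row $i+1$ and column $n$. By the inductive hypothesis and homogeneity, $c_{i+1,n}$ is therefore irreducible.

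Linearity of $\eg_{ii}$ in $x_{i+1,n}$ then leaves two options for a factorization $PQ$. Either $Q$ is constant, or $Q$ is an associate of $c_{i+1,n}$ and $P$ carries all of row $i+1$. The second option fails by a degree count in $x_{1,n-1}$. The coefficient of $x_{i+1,n-1}$ in $\eg_{ii}$ has degree less than $i$ in $x_{1,n-1}$: the $i$ copies of row $1$ place $x_{1,n-1}$ in columns $n-1,\dots,n+i-2$, and column $n+i-2$ is already used by $x_{i+1,n-1}$. Yet this coefficient would be divisible by $c_{i+1,n}$, which has degree exactly $i$ in $x_{1,n-1}$.

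The base of the induction is size $i+1$, i.e.\ your case $i=n-1$. It is the easy case, not the hardest one. Up to a permutation of rows, $\NG(\{1,n\}^{n-1})$ is literally the Sylvester matrix of $\sum_t x_{1,n-t}z^t$ and $\sum_t x_{n,n-t}z^t$. So $\eg_{n-1,n-1}$ is $\pm$ their resultant and is classically irreducible; neither the Hankel detour nor the $x_{1n}$-divisibility check is needed.
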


\begin{proof} Note that $\eg_{n-1,n-1}(X)$ equals, up to a sign, to the resultant of polynomials 
$\sum_{t=0}^{n-1}x_{1,n-t}z^t$ and $\sum_{t=0}^{n-1}x_{n,n-t}z^t$, which is classically known to be irreducible 
(see, e.~g.,~\cite[Ch.~5.9]{VdW}).

For $i\in [2,n-2]$, assume that $\eg_{ii}(Y)$ are irreducible for $Y$ of size $k\times k$ with $k\in[i+1,n-1]$ (note that for $k=i+1$, $\eg_{ii}(Y)$ is once again a resultant), and consider $\eg_{ii}(X)$. Clearly,
$\eg_{ii}(X)$ is linear in all variables $x_{i+1,l}$ for $l\in[1,n]$. Let $c_{i+1,n}$ denote the coefficient at $x_{i+1,n}$ in $\eg_{ii}(X)$. It is easy to see that $c_{i+1,n}$ restricted to $x_{1n}=x_{nn}=0$ equals
$\eg_{ii}(Y)$ for the matrix $Y$ obtained from $X$ by deleting the $(i+1)$th row and the last column, and hence is irreducible by the assumption, so that $c_{i+1,n}$ itself is irreducible as well. Consequently, if $\eg_{ii}(X)=PQ$ and $P$ depends on $x_{i+1,n}$, then either $P=x_{i+1,n}c_{i+1,n}+\dots$, and hence $Q=1$, or $P=x_{i+1,n}+\dots$, and hence $Q=c_{i+1,n}$. The latter case is impossible, since $x_{i+1,n-1}$ enters $P$ together with $x_{i+1,n}$, so that $PQ$ would contain a monomial with $x_{i+1,n-1}x_{1,n-1}^i$, while $\eg_{ii}(X)$ only contains monomials with
$x_{i+1,n-1}x_{1,n-1}^k$ for $k<i$.
\end{proof}

Note that all polynomials in the first family, except for $x_{n1}$, depend on the entries of the first row of $X$, while all polynomials $\ek_{ij}(X)$ in the second family do not depend on the entries of this row and are not divisible by $x_{n1}$. Consequently, by Proposition~\ref{giiirr}, polynomials $\ek_{ij}(X)$ in the second family are coprime with all polynomials in the first family. Further, polynomials $\phi_{kl}(X)$ are not divisible by any of
$\eg_{ii}(X)$. Indeed, all $\eg_{ii}(X)$ vanish identically on $x_{11}=x_{n1}=0$. On the other hand, none of 
$\phi_{kl}(X)$ have this property: it would imply that there exists a non-trivial linear combination of rows of
$\left.\Phi_{kl}(X)\right|_{x_{11}=x_{n1}=0}$ representing zero, which is impossible, since the entries of each column of this matrix are distinct independent variables. In a similar way, the fact that $\phi_{kl}(X)$ are not divisible
by $\det X$ follows from the fact that the latter vanishes identically for $x_{12}=x_{22}=\dots=x_{n2}=0$, while the former do not. Clearly, $\phi_{kl}(X)$ are not divisible by $x_{1n}$ and $x_{n1}$ either, so the required coprimality 
follows once again from Proposition~\ref{giiirr}.

\begin{remark} In fact, one can use the above reasoning together with~\cite[Lemma 5.5]{Vol} to prove that all functions in both families are irreducible.
\end{remark}

We can now check that the ground ring for the obtained generalized cluster algebra is indeed $\C[x_{1n}, x_{n1}, \det X^{\pm1}]$, as indicated in Remark~\ref{threestruct}. To achieve this goal we will prove that the Laurent representations for the matrix entries obtained in Section~\ref{proofInitLaurent}, in fact, do not contain $x_{1n}$, $x_{n1}$, and $\det X$ in the denominator. This statement is based on the following observation.

\begin{proposition}\label{nofrozen}
If the Laurent representation of an element of a generalized upper cluster algebra in some seed $\Sigma$ does not contain frozen variables in the denominator then the same holds for Laurent representations in all seeds.
\end{proposition}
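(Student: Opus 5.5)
It suffices to treat two adjacent seeds $\Sigma$ and $\Sigma'=\mu_k(\Sigma)$; the general statement then follows by induction along any mutation path. So let $F$ be an element of the generalized upper cluster algebra whose Laurent expansion in $\Sigma$ is $F=P(\x)/\x^{\mathbf a}$, where $P$ is a polynomial in all variables of $\Sigma$ and $\x^{\mathbf a}$ is a monomial in the mutable variables only. We must show that the expansion of $F$ in $\Sigma'$ has no frozen variable in its denominator.

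Since $F$ lies in the upper cluster algebra, it is a Laurent polynomial in $\Sigma'$, say $F=P'(\x')/(\x')^{\mathbf b}\,\mathbf m$. Here $\mathbf m$ is a monomial in the frozen variables; we may assume it is minimal and want to prove $\mathbf m=1$. Suppose instead that some frozen variable $x_f$ divides $\mathbf m$. Then the numerator $P'$ is not divisible by $x_f$.

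Substitute $x_k'=R_k/x_k$ into this expression, where $R_k$ is the right hand side of the generalized exchange relation~\eqref{genger}. This gives
\[
F\,\mathbf m\,(\x')^{\mathbf b}=P'(\x\setminus x_k,R_k/x_k).
\]
Compare this with the $\Sigma$-expansion $F=P/\x^{\mathbf a}$ and clear denominators by powers of $x_k$ and of $R_k$. We obtain an identity of polynomials in the variables of $\Sigma$ of the form
\[
P\cdot x_k^{N}\cdot(\text{mutable monomial})=\mathbf m\cdot x_k^{N'}\cdot \widetilde P,
\]
where $\widetilde P=x_k^{d}P'(\x\setminus x_k,R_k/x_k)$ for $d$ the $x_k$-degree of $P'$.

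The key step is to show that $x_f$ does not divide $\widetilde P$. Setting $x_f=0$, the identity would otherwise force $P'(\x\setminus x_k,R_k/x_k)$ to vanish on $x_f=0$. The only place $x_f$ enters nontrivially is through $R_k$, via the exchange coefficients $p_{kr}$ and the stable monomials $v^{[r]}$. The main obstacle is that $R_k|_{x_f=0}$ may degenerate, because terms can be killed. For ordinary (non-almost) structures, however, the first and last terms $u_{k;>}^{d_k}v_{k;>}^{[d_k]}$ and $u_{k;<}^{d_k}v_{k;<}^{[d_k]}$ are coprime, since $x_f$ cannot appear in both of them. So at least one extreme term survives, and $R_k|_{x_f=0}$ is a nonzero polynomial that is not a monomial in $x_k$.

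Consequently, after setting $x_f=0$ the variables of $\Sigma'$ remain algebraically independent (in $\Sigma$-coordinates, $x_k'$ restricted to $x_f=0$ is still $R_k|_{x_f=0}/x_k$, which is transcendental over the other variables). Hence $P'|_{x_f=0}\ne0$ implies that the substituted expression is nonzero, and $x_f$ does not divide $\widetilde P$.

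On the other hand, $x_f$ does not divide $P\cdot x_k^N\cdot(\text{mutable monomial})$ on the left either. Indeed, $P$ is a polynomial, so the left side is a polynomial; $x_f$ divides the right side because it divides $\mathbf m$; and $x_f$ is coprime to $x_k$ and to the mutable monomial. Therefore $x_f$ divides $P$. This still gives no contradiction with the $\Sigma$-expansion, so the comparison has to be made differently: apply unique factorization in the Laurent ring of $\Sigma$ to the equality of the two rational functions $P/\x^{\mathbf a}$ and $\widetilde P/(\mathbf m\, x_k^{d}\,(\x')^{\mathbf b}\text{-image})$. The $\Sigma$-expansion shows that $F$ has no pole along $\{x_f=0\}$ in $\Sigma$-coordinates. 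The $\Sigma'$-expansion, rewritten in $\Sigma$-coordinates, has a pole along $x_f=0$, because the remaining denominator factors, namely powers of $x_k$ and of $R_k$, are not divisible by $x_f$ by the coprimality above, while $x_f\nmid\widetilde P$. This contradiction shows $\mathbf m=1$.

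The delicate point, which I expect to be the main obstacle to check, is precisely the claim that $x_f\nmid R_k$: that at least one term of the generalized exchange relation is free of $x_f$. This is where the definition of $v_{k;>}^{[r]}$ and $v_{k;<}^{[r]}$ with $r=0$ or $r=d_k$ is used.
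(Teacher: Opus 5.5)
Your argument is correct in its final form, but the middle of the write-up contains a slip you should repair. When you clear denominators, $(x'_k)^{b_k}$ contributes $R_k^{b_k}/x_k^{b_k}$, and $\mathbf m$ ends up on the same side as $P$. The correct identity is
\[
P\,\mathbf m\,R_k^{b_k}\,x_k^{d}\prod_{j\ne k}x_j^{b_j}=\widetilde P\,x_k^{b_k}\,\x^{\mathbf a},
\]
not $P\,x_k^N(\cdots)=\mathbf m\,x_k^{N'}\widetilde P$. With the correct identity the contradiction is immediate: $x_f$ divides the left side through $\mathbf m$, but not the right side, since $x_f\nmid\widetilde P$ and $x_f$ is coprime to the mutable variables. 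So you should delete the paragraph that first says $x_f$ does not divide the left side and then concludes $x_f\mid P$. Your closing valuation argument along $\{x_f=0\}$ is exactly this corrected comparison. It is sound given $x_f\nmid P'$ and $R_k|_{x_f=0}\ne0$; the latter holds because an extreme term of the exchange relation survives, which uses the absence of 2-cycles between $k$ and frozen vertices.

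The paper runs the substitution in the opposite direction. It writes the $\Sigma$-expansion as $QM/f^k$, with $M$ free of frozen denominators. It then substitutes $f=P/\bar f$, where $f\bar f=P$ is the exchange relation, to land in the polynomial ring of the adjacent seed. From $\tilde Q\,\bar f^{k-m}M=\bar Q\,\bar M P^k$ it reads off by unique factorization that $\bar M=\bar f^{k-m}M$, i.e., the monomial part changes only by a power of the mutated variable. You instead pull the $\Sigma'$-expansion back to $\Sigma$ and argue by contradiction using the $x_f$-adic valuation. Both routes rest on the same two facts: the exchange polynomial is not divisible by a frozen variable, and the substitution $x'_k\mapsto R_k/x_k$ creates no such divisibility. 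Your proof states and checks these explicitly, whereas the paper leaves them inside ``this identity is only possible if''. Your route needs only the single prime $x_f$; the paper's yields the explicit relation between the two monomial parts.
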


\begin{proof}  
Clearly, it is enough to prove that the Laurent representation in the adjacent seed $\overline\Sigma$ obtained  by replacing   a cluster variable $f$ by $\bar f$ has this property. Let $R$ and $\bar R$ be the rings of polynomials in the cluster variables of $\Sigma$ and $\overline\Sigma$, respectively. Assume that $x=QM/f^k$, where $k\in\Z$, $Q\in R$ is  not divisible by a monomial and $M$ is a Laurent monomial in cluster variables of $\Sigma$ excluding $f$ and not containing frozen variables in the denominator. Further, let $x=\bar Q\bar M$, where $\bar Q\in\bar R$ is not divisible by a monomial and $\bar M$ is a Laurent monomial in cluster variables of 
$\overline\Sigma$. Recall that $f$ and $\bar f$ satisfy the exchange relation $f\bar f=P$, where $P\in R\cap\bar R$.
Let $m$ be the minimal non-negative integer such that $\tilde Q=\bar f^m \left.Q\right|_{f=P/\bar f}\in\bar R$, then
$\tilde Q \bar f^{k-m}M=\bar Q\bar M P^k$ is an identity in $\bar R$. Since cluster variables in $\overline\Sigma$ are algebraically independent, this identity is only possible if $\tilde Q=\bar Q P^k$ and $\bar M=\bar f^{k-m}M$, therefore, $\bar M$ retains the property of not containing frozen variables in the denominator.
\end{proof}

Taking into account that Laurent representations for the matrix elements in Theorem~\ref{fish_to_cuttlefish} do not contain frozen variables in the denominator, we get the required statement. \qed

\begin{remark}\label{whyunfreeze}
We can now check that unfreezing vertex $D$ was imminent, as indicated in Section~\ref{outline}.4. Indeed, by
Proposition~\ref{Dexchange}, the bracket between the cluster variable $\eg_{n-1,n-1}(X)$ at $D$ and the function $\eg(X)$ that replaces it after the mutation at $D$ is given by
\[
\{\eg_{n-1,n-1}(X),\eg(X)\}=c_1\ef_{2n-3}(X)\eg_{n-2,n-2}(X)+c_2x_{n1}\ef_{2n-4}(X)\eg_{n-1,n-2}(X)
\]
for some constants $c_1$ and $c_2$. Computations in the proof of Proposition~\ref{compatchar} show that $c_2-c_1=1$, so that
\[
\frac{\{\eg_{n-1,n-1}(X),\eg(X)\}}{\eg_{n-1,n-1}(X)}=c_2\eg(X)-\frac{\ef_{2n-3}(X)\eg_{n-2,n-2}(X)}{\eg_{n-1,n-1}(X)}.
\]
By Proposition~\ref{giiirr}, polynomials $\eg_{n-1,n-1}(X)$ and $\eg_{n-2,n-2}(X)$ are irreducible. Further,  $\deg\,\ef_{2n-3}(X)<
\deg\,\eg_{n-1,n-1}(X)$, hence  the expression above is not a regular function on 
$GL_n$, and Proposition~\ref{frozenchar} applies.
\end{remark}
 
\subsection{Proof of Theorem~\ref{initialalmostseed}}
We perform the regular pullback of the generalized structure $\G\CC_n$ on $GL_n$ to $\Mat_n$ as 
described in~\cite{GSVpullback}, taking $g_1=\det X$ as the distinguished polynomial.
The function $\fy_1(r)$ computed via the first relation in~\cite[Eq.~(4.5)]{GSVpullback} equals~1 for $r\in[1,n-2]$ 
and vanishes for $r=0, n-1$, and 
$\tau_1$ computed via the second relation in~\cite[Eq.~(4.5)]{GSVpullback} vanishes. Therefore, we are exactly within
conditions of~\cite[Remark 4.6]{GSVpullback}, which stipulate one arrow between $B$ and $(1,1)$ in both directions in the 
quiver $\widehat Q_n$ and the rules of its mutations as described in Section~\ref{outline}.6.

The compatibility of $\widehat\GCC_n$ with $\Poi$ is an immediate consequence of the compatibility of $\G\CC$ with $\Poi$, since $\det X$ is a Casimir for $\Poi$. Regularity of  $\widehat\GCC_n$ follows from the regularity of $\G\CC_n$. To prove the completeness we invoke~\cite[Theorem 6.2]{GSVpullback}. By Theorem~\ref{fish_to_cuttlefish}(ii), every matrix element 
$x_{ij}$ of $X$ can be written as a Laurent polynomial in the cluster variables of the seed described in part (i) of the same theorem, and these Laurent polynomials do not contain frozen variables, in particular $\det X$, in the denominator.
Therefore,  every $x_{ij}$ can be written as a Laurent polynomial in these cluster variables at a generic point of the
hypersurface $\{\det X=0\}$, and~\cite[Theorem 6.2]{GSVpullback} applies. \qed

\appendix
\renewcommand{\theequation}{A.\arabic{equation}}
\setcounter{equation}{0}
\section*{Appendix: Regular complete cluster structure on the space $\bar \RR_n$ of rational functions}
%\section{Proof of Theorem~\ref{todacs}}\label{prooftodacs}
The goal of this appendix is to prove Theorem~\ref{todacs}. 
We will utilize a map $\rho$ from $\bar \RR_n$ to $\RR_n$ that sends 
\[
\bar M(\lambda) = \frac{\bar q(\lambda)}{p(\lambda)}= \sum_{i=0}^\infty\frac{\bar h_i}{\lambda^i} 
\]
with $\bar q(\lambda)=\sum_{i=0}^{n}\bar q_i\lambda^i$, $p(\lambda)=\lambda^n + \sum_{i=0}^{n-1}p_i\lambda^i$ 
to 
\[
M(\lambda) = \bar M(\lambda) - \bar q_n  = \bar M(\lambda) - \bar h_0 = \sum_{i=0}^\infty\frac{h_i}{\lambda^{i+1}}=\frac{q(\lambda)}{p(\lambda)},
\]
so that $h_n = \bar h_{n+1}$ for $n =0, 1, \ldots$ and $q(\lambda)= \bar q(\lambda) - \bar q_n \lambda^n$.

The map $\rho$ can be realized by a sequence of cluster mutations in the following way. First, note that the cluster variable $\bar t_m^+$ 
attached to the vertex $(m,+)$ in $\bar Q_n^T$ coincides with $t_m^-$ for $m=1,\ldots, n$.
 Apply to the initial seed $(\bar\TE_n, \bar Q_n^T)$ the sequence  $\mu$ of mutations  at vertices 
$(1,-), (2,-), \ldots, (n,-)$. The cluster variable $\bar t_m^-$  attached to the vertex $(m,-)$ then transforms into 
\[
(\bar t_m^-)'=\det (\bar h_{\alpha+\beta})_{\alpha,\beta=1}^m =
\det (\bar H_{m+1}^+)_{\hat 1}^{\hat 1}=
 \det H_m^+ = t_m^+
\] 
for $m=1,\ldots, n-1$ and into 
\[
\frac{t_m^+}{t_m^-} =
\frac{\det (\bar h_{\alpha+\beta})_{\alpha,\beta=1}^m}{\det (\bar h_{\alpha+\beta-1})_{\alpha,\beta=1}^m} 
\]
for $m=n$. 

Indeed, it is straightforward to see that the cluster transformations involved in $\mu$ are given by
\begin{align*}
\bar t_1^- (\bar t_1^-)'&= \bar t_2^- + (\bar t_1^+)^2 , \\
\bar t_m^- (\bar t_m^-)'&= \bar t_{m+1}^-(\bar t_{m-1}^-)' + (\bar t_m^+)^2,\quad  m\in[2, n-1],\\
\bar t_n^- (\bar t_n^-)'&= \frac{\bar t_{m+1}^-}{\bar t_n^+}(\bar t_{m-1}^-)' + \bar t_n^+\ .
\end{align*}
Then the claim follows from the Desnanot--Jacobi identity for Hankel determinants, which in our notations takes a form
\[
\bar t_m^- t_m^+ =   \bar t_{m+1}^-t_{m-1}^+ +  (\bar t_m^+)^2.
\]

We see that cluster variables attached to vertices $(m,\pm)$, $m\in[1,n]$, in the seed $\mu(\bar\TE_n, \bar Q_n^T)$ coincide with cluster variables attached to the same vertices in $(\TE_n, Q_n^T)$. In fact, it is easy to check that the restriction of $\mu(\bar Q_n^T)$ to the full subquiver on vertices $(m,\pm)$, $m\in[1,n]$, coincides with $Q_n^T$, if we consider the vertex $(n,+)$ frozen. Now we can invoke results of~\cite{GSV_Acta} about the cluster structure $\CC_n^T$ on 
$\RR_n$ defined by the seed $(\TE_n, Q_n^T)$ to establish the required properties of $\bar\CC_n^T$.

We start with regularity. Recall that the coefficients $\bar h_i$ in the Laurent expansion of $\bar M(\lambda)$ are polynomials in coefficients of $p(\lambda)$ and $\bar q(\lambda)$, as can be seen from~\eqref{barqviaph}, 
and that $\bar t_n^+$ does not vanish on $\bar\RR_n$ due to the classical criterion of coprimality of $p(\lambda)$ and  
$\bar q(\lambda)$, see, e.g.,~\cite[Theorem 8.39]{Fuh}. Therefore, mutations that constitute our sequence $\mu$ yield regular functions on $\bar\RR_n$. If  $\mu$ is followed up with mutations in the remaining vertices $(1,+),\ldots, (n-1,+)$, the resulting cluster variables are also regular on $\bar\RR_n$ as pullbacks by the map $\rho$ of cluster variables in  the structure $\CC_n^T$ known to be regular. As explained in~\cite[Section 9]{quasichris}, this guarantees regularity of $\bar\CC_n^T$.

Next, we will show that $\bar\CC_n^T$ is complete over the ground ring $\C[\bar t_{n+1}^-/\bar t_n^+, (\bar t_n^+)^{\pm1}]$, as indicated in Remark~\ref{grfortoda}. By~\cite[Lemma 5.3]{GSV_Acta}, $h_i$, $i\in[0, 2n-1]$, are cluster variables in 
$\CC_n^T$, and so $\bar h_i$, $i\in[0,2n]$, are cluster variables in $\bar\CC_n^T$.  We can rewrite equations~\eqref{barqviaph} for $j\in[1,n]$ as 
\begin{equation}
\label{qTodacomplete}
[\bar q_n,\ldots, \bar q_1]= [\bar h_0,\ldots, \bar h_{n-1}] 
\begin{bmatrix}
1 & p_{n-1} & p_{n-2} & \cdots &p_1\\
0 & 1 & p_{n-1} & p_{n-2} & \cdots\\
\vdots & \ddots & \ddots & \ddots & \vdots\\
0 & \cdots & 0 &1 & p_{n-1}\\
0 &  \cdots& 0  & 0 & 1
\end{bmatrix},
\end{equation}
and those for $j\in[-n,0]$ as 
\begin{equation}
\label{pTodacomplete}
[\bar q_0,0,\ldots, 0]= [p_0,\ldots, p_{n-1},1] \bar H^-_{n+1}.
\end{equation}
The latter equation implies that 
\[
[p_0,\ldots, p_{n-1}] = - [\bar h_{n+1},\ldots, \bar h_{2n}] \left (\bar H^+_{n} \right )^{-1}
\]
and $\bar q_0 = \bar t_{n+1}^-/\bar t_n^+$.
Since the entries of $\bar H^-_{n+1}$ are cluster variables in $\bar\CC_n^T$ and $\det \bar H^+_{n}= \bar t_n^+$ is a frozen variable, we conclude that coefficients $p_0,\ldots, p_{n-1}$ belong to the corresponding cluster algebra. The same conclusion for $\bar q_1,\ldots, \bar q_n$ follows from \eqref{qTodacomplete}.

To complete the proof, we need to show compatibility of $\bar\CC_n^T$ with $\Poi^T$. Once again, we will rely on results from~\cite{GSV_Acta}. Since, by~\eqref{todabramoment}, and~\eqref{bartodabramoment}, coefficients $h_i$ and $\bar h_i$ satisfy the same Poisson relations with respect to $\Poi^T$, the same is true for Poisson brackets between Hankel determinants built from $h_i$ and $\bar h_i$ respectively. In the case of $\RR_n$, these were computed in~\cite{GSV_Acta} as a part of the proof of compatibility of $\CC_n^T$ with $\Poi^T$. 
We will now use the following straightforward observation.
Let $Q$ be a quiver in a cluster structure compatible with a Poisson bracket $\Poi$. Suppose we apply a monomial transformation to cluster variables attached to frozen variables in $Q$. Then to adjust $Q$ to a new quiver $\tilde Q$ so that the seed with unchanged mutable variables and the new frozen variables remains compatible with $\Poi$, one needs to change arrows between frozen and mutable vertices in $Q$ in such a way that all $y$-variables at mutable vertices remain unchanged. We apply this observation to $Q^T_{n+1}$ in which functions attached to the vertices $(m,\pm)$ are $\bar t_m^\pm$
instead of $t_m^\pm$.
%are as in the description below Fig.~\ref{Q5T} but with $t_m^\pm$ replaced by $\bar t_m^\pm$. 
We then freeze the vertex $(n,+)$, which detaches the frozen vertex $(n+1,+)$ from the quiver.  The only $y$-variable that involves the frozen vertex $(n+1,-)$
is associated with vertex $(n,-)$ and is equal to
\[ \frac{(\bar t_n^+)^2 \bar t^-_{n-1}}{(\bar t_{n-1}^+)^2 \bar t^-_{n+1}}
=  \frac{\bar t_n^+\bar t^-_{n-1}}{(\bar t_{n-1}^+)^2 (\bar t^-_{n+1}/\bar t_n^+)}.
\] 
Therefore, if one  replaces the frozen variable $\bar t_{n+1}^-$ with $\bar t_{n+1}^-/\bar t_{n}^+$, the requirement of compatibility with $\Poi^T$ leads one to recover the quiver $\bar Q^T_n$. This completes the proof. \qed


\begin{thebibliography}{GSV7}

\bibitem{BD} A.~Belavin and V.~Drinfeld,
\textit{Solutions of the classical Yang-Baxter equation for simple Lie algebras}.
Funktsional. Anal. i Prilozhen. {\bf16} (1982), 1--29.

\bibitem {BFZ}  A.~Berenstein, S.~Fomin, and A.~Zelevinsky,
\textit{Cluster algebras. III. Upper bounds and double Bruhat cells}. 
Duke Math. J. \textbf{126} (2005), 1--52.

\bibitem{CL} P.~Cao and F.~Li,
{\it On some combinatorial properties of generalized cluster algebras}, J. Pure Appl. Algebra {\bf 225} (2021), 106650.

\bibitem{CGGLSS} R.~Casals, E.~Gorsky, M.~Gorsky, I.~Le, L.~Shen, and J.~Simental, \textit{Cluster structures on braid varieties},
J. Amer. Math. Soc. \textbf{38} (2025), 369--479.

\bibitem{CP} V.~Chari and A.~Pressley, \textit{A guide to quantum groups}.
Cambridge University Press, 1994.



\bibitem{FaGe} L.~Faybusovich and M.~Gekhtman,
\textit{Poisson brackets on rational functions and multi-Hamiltonian structure for integrable
lattices}.
Phys.~Lett.~A, \textbf{272} (2000), 236--244.

%\bibitem{FoPy}
%S.~Fomin and P.~Pylyavskyy, {\it Tensor diagrams and cluster algebras}.
%Adv. Math. \textbf{300} (2017), 717--787.

\bibitem{quasichris}  C.~Fraser, {\it Quasi-homomorphisms of cluster algebras}, Adv. in Appl. Math. {\bf 81} (2016), 40--77. 

\bibitem{Fuh} P.A.~Fuhrmann, {\it A Polynomial Approach to Linear Algebra}, Universitext, Springer, New 
York, 1996.

\bibitem {GSV1}  M.~Gekhtman, M.~Shapiro, and A.~Vainshtein,
\textit{Cluster algebras and Poisson geometry}.  
Mosc. Math. J. \textbf{3} (2003), 899--934.

\bibitem{GSVb}  M.~Gekhtman, M.~Shapiro, and A.~Vainshtein,
\textit{Cluster algebras and Poisson geometry}.
Mathematical Surveys and Monographs, 167. American Mathematical Society, Providence, RI, 2010.

\bibitem{GSV_Acta}  M.~Gekhtman, M.~Shapiro, and A.~Vainshtein,
{\it Generalized B\"acklund--Darboux transformations for Coxeter--Toda flows from cluster algebra perspective}, Acta Mathematica {\bf 206} (2011), no.2, 245--310.


\bibitem{GSVM}  M.~Gekhtman, M.~Shapiro, and A.~Vainshtein,  
\textit{Cluster structures on simple complex Lie groups and Belavin--Drinfeld classification},
Mosc. Math. J. \textbf{12} (2012), 899--934.


\bibitem{GSVPNAS}  M.~Gekhtman, M.~Shapiro, and A.~Vainshtein,  
\textit{Cremmer--Gervais cluster structure on $SL_n$}.  
Proc.~Natl.~Acad.~Sci. \textbf{111} (2014), 9688--9695.

\bibitem{GSVMem}  M.~Gekhtman, M.~Shapiro, and A.~Vainshtein,  
\textit{Exotic cluster structures on $SL_n$: the Cremmer--Gervais case}.  
Memoirs of the AMS \textbf{246} (2017), no.~1165, 94pp.

\bibitem{GSVdouble} M.~Gekhtman, M.~Shapiro, and A.~Vainshtein,  \textit{Drinfeld double of $GL_n$ and generalized cluster structures}, Proc. London Math. Soc. \textbf{116} (2018), 429--484.

\bibitem{GSVnewdouble} M.~Gekhtman, M.~Shapiro, and A.~Vainshtein,  \textit{Generalized cluster structures related to the Drinfeld double of $GL_n$}, J. London Math. Soc. \textbf{105} (2022), 1601--1633.

\bibitem{GSVpest}  M.~Gekhtman, M.~Shapiro, and A.~Vainshtein, 
\textit{Periodic staircase matrices and generalized cluster structures},
Int. Math. Res. Notes (2022), 4181--4221.

\bibitem{GSVple}  M.~Gekhtman, M.~Shapiro, and A.~Vainshtein,  
\textit{A plethora of cluster structures on $GL_n$}, 
Memoirs of the AMS {\bf 297} (2024), no. 1486, 104 pp.

\bibitem{GSVuni} M.~Gekhtman, M.~Shapiro, and A.~Vainshtein, \textit{A unified approach to exotic cluster structures on simple Lie groups}, arXiv:2308.16701, 40 pages.

\bibitem{GSVpullback} M.~Gekhtman, M.~Shapiro, and A.~Vainshtein, \textit{Regular pullback of generalized cluster structures}, arXiv:26xx.xxxx, 15 pages.


\bibitem{GV}   M.~Gekhtman and D.~Voloshyn, \textit{Generalized cluster structures related to Poisson duals of $SL_n$}, arXiv:2312.04859, 90 pages.

\bibitem{r-sts}  A.~Reyman and M.~Semenov-Tian-Shansky,
\textit{Integrable systems. A group-theoretical approach}. Moscow--Izhevsk, 2003, ISBN 5-93972-262-8.

\bibitem{Scott} J.~Scott, \textit{Grassmannians and cluster algebras}, Proc. London Math. Soc. {\bf 92} (2006),  345--380.

\bibitem{VdW} B.~van der Waerden, \textit{Algebra.~Vol.~I}. Springer-Verlag, New York, 1991.

\bibitem{Vol} D.~Voloshyn, {\it Starfish lemma via birational quasi-isomorphisms},  J. Pure Appl. Algebra 
\textbf{229} (2025), 108--127.


\bibitem{Ya} M.~Yakimov, 
{\it Symplectic leaves of complex reductive Poisson-Lie groups}, Duke Math. J. \textbf{112} (2002),
453--509.

\end{thebibliography}
\end{document}